\documentclass{amsart}%
\usepackage{palatino, mathpazo}
\usepackage{amsfonts}
\usepackage{amsmath}
\usepackage{amssymb,latexsym,xcolor}
\usepackage{graphicx}
\usepackage{harpoon}
\usepackage[mathscr]{eucal}
\usepackage{amssymb}%
\usepackage[
linktocpage=true,colorlinks,citecolor=magenta,linkcolor=blue,urlcolor=magenta]{hyperref}
\setcounter{MaxMatrixCols}{30}

\providecommand{\U}[1]{\protect \rule{.1in}{.1in}}

\newtheorem{theorem}{Theorem}[section]

\newtheorem{corollary}[theorem]{Corollary}

\newtheorem{Lemma}[theorem]{Lemma}

\newtheorem{Theorem}{Theorem}

\theoremstyle{remark}
\newtheorem{Remark}[theorem]{Remark}

\numberwithin{equation}{section}

\setcounter{tocdepth}{1}

\begin{document}
\title[Green functions on tori]{Green functions, Hitchin's formula and curvature equations on tori}
\author{Zhijie Chen}
\address{Department of Mathematical Sciences, Yau Mathematical Sciences Center,
Tsinghua University, Beijing, 100084, China }
\email{zjchen2016@tsinghua.edu.cn}
\author{Erjuan Fu}
\address{Beijing Institute of Mathematical Sciences and Applications, Beijing, 101408, China}
\email{fej.2010@tsinghua.org.cn, ejfu@bimsa.cn}
\author{Chang-Shou Lin}
\address{Department of Mathematics, National Taiwan University, Taipei 10617, Taiwan }
\email{cslin@math.ntu.edu.tw}

\begin{abstract}
Let $G(z)=G(z;\tau)$ be the Green function on the flat torus $E_{\tau}=\mathbb{C}/(\mathbb{Z}+\mathbb{Z}\tau)$ with the singularity at $0$. 
Lin and Wang (Ann. Math. 2010) proved that $G(z)$ has either $3$ or $5$ critical points (depending on the choice of $\tau$). Later, Bergweiler and Eremenko (Proc. Amer. Math. Soc. 2016) gave a new proof of this remarkable result by using anti-holomorphic dynamics.

In this paper, firstly, we prove that once  $G(z)$ has $5$ critical points, then these $5$ critical points are all non-degenerate. Secondly, we study the sum of two Green functions which can be reduced to $G_p(z):=\frac12(G(z+p)+G(z-p))$. We prove that for any $p$ satisfying $p\neq -p$ in $E_{\tau}$, the number of critical points of $G_p(z)$ belongs to $\{4,6,8,10\}$ (depending on the choice of $(\tau, p)$) and each number really occurs. We apply Hitchin's formula (J. Differ. Geom. 1995) in a surprising way to prove the generic non-degeneracy of critical points. This allows us to study the distribution of the numbers of critical points of $G_p(z)$ as $p$ varies. Applications to
 the curvature equation
$\Delta u+e^{u}=4\pi(\delta_{p}+\delta_{-p})$ on $E_{\tau}$ are also given, and how the geometry of the torus affects the solution structure is studied.
\end{abstract}


\maketitle

\section{Introduction}

Let $\tau \in \mathbb{H}=\left \{  \tau\in\mathbb C|\operatorname{Im}\tau>0\right \}$, $\Lambda_{\tau}=\mathbb{Z}+\mathbb{Z}\tau$, and denote
$$\omega_{0}=0,\quad\omega_{1}=1,\quad\omega_{2}=\tau,\quad\omega_{3}=1+\tau.$$Let $E_{\tau}:=\mathbb{C}/\Lambda_{\tau}$ be a flat torus in the
plane and $E_{\tau}[2]:=\{ \frac{\omega_{k}}{2}|k=0,1,2,3\}+\Lambda
_{\tau}$ be the set consisting of the lattice points and half periods
in $E_{\tau}$.  

In this paper, we study the Green function of the flat torus.
One of our motivations for this research is to understand how the geometry of 
the torus affects the solution structure of elliptic PDEs. Consider the curvature equation
\begin{equation}
\Delta u+e^{u}=4\pi(\delta_{p}+\delta_{-p})\quad\text{ on
}\; E_{\tau}, \label{mean}%
\end{equation}
where $\delta_p$ denotes the Dirac measure at $p$, 
we will prove that

\begin{theorem}\label{main-thm-10}
Let $G(z)=G(z;\tau)$ be the Green function of the torus $E_\tau$ and let $p\in E_{\tau}\setminus E_{\tau}[2]$. 
\begin{itemize}
\item[(1)]
There is a sequence of even solutions $u_p(z)$ of \eqref{mean} such that $u_p(z)$ blows up as $0\neq p\to 0$ if and only if $G(z)$ has exactly $3$ critical points and one of them is degenerate.
\item[(2)] There is a sequence of even solutions $u_p(z)$ of \eqref{mean} such that $u_p(z)$ converges to a solution of \begin{equation}\label{mfe-8pi}\Delta u+e^u=8\pi \delta_0 \quad\text{on }E_{\tau}\end{equation} as $0\neq p\to 0$ if and only if $G(z)$ has exactly $5$ critical points.
\item[(3)] There is small $\varepsilon>0$ such that \eqref{mean} has no solutions for any $0<|p|<\varepsilon$ if and only if $G(z)$ has exactly $3$ critical points that are all non-degenerate.
\end{itemize}
\end{theorem}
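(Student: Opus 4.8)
The plan is to treat the three statements by a single principle: as $0\neq p\to 0$ the family \eqref{mean} degenerates to the single equation \eqref{mfe-8pi}, and by Lin--Wang's theorem \eqref{mfe-8pi} is solvable exactly when $G(z;\tau)$ has $5$ critical points. Feeding in our first main result — that these $5$ critical points are then all non-degenerate — one obtains that whenever \eqref{mfe-8pi} is solvable, its even solution $u_0$ is non-degenerate, i.e.\ $\Delta+e^{u_0}$ has trivial kernel on $E_\tau$; this is the fact that powers the perturbation in $p$. To make the limit usable I first pass to the regular form of \eqref{mean}: the substitution $u=-4\pi\bigl(G(z-p)+G(z+p)\bigr)+v$ turns \eqref{mean} into
\begin{equation*}
\Delta v+h_p\,e^{v}=\frac{8\pi}{|E_\tau|}\quad\text{on }E_\tau,\qquad h_p(z):=e^{-4\pi(G(z-p)+G(z+p))}=e^{-8\pi G_p(z)},
\end{equation*}
where $h_p\ge 0$ vanishes to order $2$ at $\pm p$ and $h_p\to h_0:=e^{-8\pi G}$ uniformly on $E_\tau$ (and in every $L^q$); correspondingly \eqref{mfe-8pi} becomes $\Delta v+h_0e^{v}=8\pi/|E_\tau|$. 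Because $h_p$ is even in $z$, all of the function spaces below may be, and will be, taken inside the $z\mapsto-z$ symmetric subspace, which automatically yields even solutions.

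For part~(2), the ``only if'' direction is immediate, since a convergent sequence $u_p\to u_0$ solves \eqref{mfe-8pi}, forcing $G$ to have $5$ critical points. For ``if'', suppose $G$ has $5$ critical points, so $v_0:=u_0+8\pi G$ solves $\Delta v_0+h_0e^{v_0}=8\pi/|E_\tau|$ with $L_0:=\Delta+h_0e^{v_0}$ invertible on the even subspace. As $h_pe^{v_0}\to h_0e^{v_0}$ uniformly and $L_0^{-1}$ is compact, the implicit function theorem — in its standard form for this class of equations — produces, for $|p|$ small, an even $v_p\to v_0$, hence even solutions $u_p\to u_0$ of \eqref{mean}. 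The coalescing zeros $\pm p$ of $h_p$ are harmless: $h_p$ stays uniformly bounded, so they are removable for the $L^q$-theory and for the elliptic estimates.

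Parts~(1) and~(3) I would prove together by a blow-up analysis of the family. Fix $0\neq p_n\to 0$ and a solution $u_{p_n}$ of \eqref{mean}; since $\int_{E_\tau}e^{u_{p_n}}=8\pi$, either the regular parts $v_{p_n}$ are bounded in $L^\infty$ — then, along a subsequence, $v_{p_n}\to v_*$ in $C^1$ and $u_*:=v_*-8\pi G$ solves \eqref{mfe-8pi} — or $u_{p_n}$ blows up. In the blow-up case, Brezis--Merle together with the Li--Shafrir/Bartolucci--Tarantello mass quantization forces a single blow-up point carrying local mass exactly $8\pi$: a blow-up at $p_n$ or $-p_n$ would cost $8\pi(1+1)=16\pi>8\pi$, and (by the same $24\pi>8\pi$ obstruction applied to the limiting source $8\pi\delta_0$) the blow-up point $q_n$ cannot tend to $0$ either; hence, by the localization of a simple $8\pi$-bubble, $q_n$ is asymptotically a critical point of $\log h_{p_n}=-8\pi G_{p_n}$, and, passing to a subsequence, $q_n\to q_0$ with $\nabla G(q_0)=0$ and $q_0\neq 0$ — so $q_0$ is a half-period (or one of the extra critical points $\pm q$, in the $5$-critical-point case). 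A single-bubble Lyapunov--Schmidt reduction at $q_0$ then gives the key dichotomy: the reduced finite-dimensional equation for the scale and location of the bubble (with sources at $\pm p$) is solvable, for a suitable sequence $p_n\to 0$, if and only if $\det\operatorname{Hess}G(q_0)=0$. From here: if $G$ has $5$ critical points they are all non-degenerate, so no blow-up occurs and, with part~(2), the solutions of \eqref{mean} near $p=0$ are exactly the convergent family; if $G$ has $3$ critical points all non-degenerate, then both alternatives fail — \eqref{mfe-8pi} is unsolvable and the reduced equation has no solution — so \eqref{mean} has no solution for $0<|p|<\varepsilon$, which is the ``if'' of~(3); and if $G$ has $3$ critical points with one (necessarily a half-period) degenerate, the reduction does produce even solutions $u_{p_n}$ of \eqref{mean} concentrating at that half-period, which is the ``if'' of~(1). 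The converse implications are then contrapositives: a blow-up family forces, by the analysis above, $\det\operatorname{Hess}G(q_0)=0$, hence — as the $5$-critical-point case has no degenerate critical point — $G$ must have exactly $3$ critical points with one degenerate, giving the ``only if'' of~(1); and if \eqref{mean} is unsolvable for all small $p$, then $G$ is in neither the $5$-critical-point case (part~(2)) nor the degenerate $3$-critical-point case (``if'' of~(1)), so it has $3$ non-degenerate critical points, giving the ``only if'' of~(3).

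The hard part will be the second-order Lyapunov--Schmidt computation behind the dichotomy ``blow-up $\iff\det\operatorname{Hess}G(q_0)=0$''. Placing a bubble of scale $\mu$ at a point $q$ near the half-period $q_0$, one must expand the reduced energy of \eqref{mean} below the leading (critical, $\rho=8\pi$) order, down to the terms in which $\operatorname{Hess}G(q)$ and the $O(|p|^2)$ contribution of the symmetric pair of sources $\pm p$ first enter, and show that the ``scale equation'' $\partial_\mu(\text{reduced energy})=0$ can be balanced — for an appropriate relation between $\mu$ and $p_n\to 0$ — precisely when the quadratic part of $G$ at $q_0$ degenerates. Pinning down the relevant coefficient is where the explicit local expansion of $G$ around the half-periods, together with Hitchin's formula, is meant to be used; controlling the Lyapunov--Schmidt error uniformly in the presence of the coalescing singularities $\pm p$ is the accompanying technical nuisance.
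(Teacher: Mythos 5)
Your route is genuinely different from the paper's --- you propose a pure PDE argument (perturbation off the limit equation for (2), blow-up analysis plus Lyapunov--Schmidt for (1) and (3)), whereas the paper derives all three parts from the exact one-to-one correspondence of Theorem \ref{main-thm-3} between even solutions of \eqref{mean} and pairs of nontrivial critical points of $G_p$, combined with the critical-point counting results (Theorem \ref{thm-B} for the non-degenerate $3$-point case, Theorem \ref{thm-5c} for the $5$-point case, Corollary \ref{coro1-83} for the degenerate case); convergence in (2) is then read off from the explicit developing map $f_p(z)=e^{(\zeta(q_p+p)+\zeta(q_p-p))z}\sigma(z-q_p)/\sigma(z+q_p)$, and blow-up in (1) is forced merely by the non-solvability of the limit equation, with no bubbling construction at all. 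As it stands, however, your proposal has two genuine gaps. First, in part (2) you assert that $L_0=\Delta+h_0e^{v_0}$ is invertible on the even subspace ``by our first main result.'' That implication is not automatic: the solutions of \eqref{mfe-8pi} form a one-parameter scaling family $u_\beta$, so $L_0$ always has the nontrivial kernel element $\partial_\beta u_\beta|_{\beta=1}=2\tfrac{1-|f_0|^2}{1+|f_0|^2}$; this particular element is odd (since $f_0(-z)=1/f_0(z)$), but excluding further kernel in the even subspace, and relating that to $\det D^2G(\pm q)>0$, is itself a theorem that you would have to prove before the implicit function theorem applies.

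Second, and more seriously, the entire content of parts (1) and (3) in your scheme is the dichotomy ``blowing-up families exist along some $p_n\to0$ if and only if $\det D^2G(q_0)=0$,'' and you leave both directions unproven. The ``if'' direction asks you to construct bubbling solutions concentrating at a \emph{degenerate} critical point of $G$ --- precisely the situation in which the standard finite-dimensional reduction has no stable critical point to anchor the reduced problem, so solvability of the reduced equation is not a routine matter and the claimed equivalence with $\det\operatorname{Hess}G(q_0)=0$ is not justified by anything you write. The ``only if'' direction (needed for the converse of (1) and for the ``if'' of (3), where you must exclude blow-up of \emph{arbitrary}, not only even, solutions) is a necessary condition on blowing-up sequences; it does not follow from the reduction but requires a refined pointwise asymptotic analysis and Pohozaev-type identities to extract the second-order balance at $q_0$. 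You explicitly defer this computation as ``the hard part,'' but it is not a technical nuisance --- it is the theorem. The paper's correspondence makes all of this unnecessary: an even solution exists exactly when $G_p$ has a nontrivial critical point $q_p$, the solution converges exactly when $q_p$ stays away from $E_\tau[2]$, and it blows up exactly when $q_p$ is forced into a half-period, which by the implicit function theorem can only happen at a degenerate one.
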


We will explain in Section \ref{section6} that the assumption ``even'' is necessary for the validity of Theorem \ref{main-thm-10} (1)-(2).

\subsection{The Green function $G$} 
The Green function $G(z,w)=G(z,w;\tau)$ of the flat torus $E_{\tau}$ is the unique function that satisfies
\[
-\Delta_z G(z, w)=\delta_{w}-\frac{1}{\left \vert E_{\tau}\right \vert }\text{
\ on }E_{\tau},\quad
\int_{E_{\tau}}G(z,w)dxdy=0,
\]
where we use the complex variable $z=x+iy$, $\Delta_z=\frac{\partial^2}{\partial x^2}+\frac{\partial^2}{\partial y^2}=4\partial^2_{\bar z z}$ is the Laplace operator, $\delta_{w}$ is the Dirac measure at $w$ and $\vert E_{\tau}\vert$ is
the area of the torus $E_{\tau}$. By the translation invariance of $\Delta_z$, we have $G(z,w)=G(z-w,0)$ and it is enough to consider the Green function
$G(z;\tau)=G(z):=G(z,0)$.
Clearly $G(z)$ is an even function on $E_{\tau}$ with the only
singularity at $0$, so $-a$ is also a critical point of $G(z)$ if $a\in E_{\tau}\setminus\{0\}$ is. 
For $k\in \{1,2,3\}$, since
\begin{align}\label{gde}\nabla G\Big(\frac{\omega_k}{2}\Big)=\nabla G\Big(-\frac{\omega_k}{2}\Big)=-\nabla G\Big(\frac{\omega_k}{2}\Big)=0,\end{align}
we see that $\frac{\omega_k}{2}$, $k\in \{1,2,3\}$, are always critical points of $G(z)$. 

\medskip

\noindent{\bf Definition.} {\it A critical point $a\in E_{\tau}$ of $G$ (resp. $G_p$; see below) is called trivial if $a=-a$ in $E_{\tau}$, i.e. $a\in E_{\tau}[2]$.  A critical point $a\in E_{\tau}$ is called nontrivial if $a\neq-a$ in $E_{\tau}$, i.e. $a\notin E_{\tau}[2]$.}
\medskip

Therefore, nontrivial critical points, if exist, must appear in pairs.
 Lin and Wang \cite{LW} studied the number of critical points of $G(z)$, and proved the following remarkable result.
 
\begin{Theorem} \cite{LW} \label{thm-LW} $G(z;\tau)$ has at most one pair of nontrivial critical points, or equivalently, $G(z;\tau)$ has either $3$ or $5$ critical points (depends on the choice of $\tau$).
 For example, 
\begin{itemize}
\item when $\tau \in i\mathbb{R}_{>0}$, i.e. $\tau$ is purely imaginary and so $E_{\tau}$ is a rectangular torus,  $G(z;\tau)$ has exactly $3$ critical points  $\frac{\omega_k}{2}$, $k\in \{1,2,3\}$, which are all non-degenerate;
\item while for $\tau=\frac{1}{2}+\frac{\sqrt{3}}{2}i$, i.e. $E_{\tau}$ is a rhombus torus, $G(z;\tau)$ has exactly $5$ critical points, or equivalently, has a unique pair of nontrivial critical points $\pm\frac{\omega_3}{3}$. 
\end{itemize}
 \end{Theorem}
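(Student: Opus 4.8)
\medskip
The plan is to write $G$ explicitly and reduce the counting of its critical points to the counting of zeros of a real-analytic elliptic function. Using $G(z;\tau)=-\tfrac1{2\pi}\log|\theta_1(z;\tau)|+\tfrac{(\operatorname{Im}z)^2}{2\operatorname{Im}\tau}+c$ for a suitable constant $c$, together with $\tfrac{\theta_1'}{\theta_1}(z)=\zeta(z)-\eta_1z$ (here $\zeta$ is the Weierstrass zeta function of $\Lambda_\tau$ and $\eta_1$ its quasi-period, $\zeta(z+1)=\zeta(z)+\eta_1$), one finds that $\nabla G(z)=0$ if and only if
\[
F(z):=\zeta(z)-\eta_1z+\frac{\pi}{\operatorname{Im}\tau}(z-\bar z)=0.
\]
The Legendre relation $\eta_1\tau-\eta_2=2\pi i$ shows $F$ is $\Lambda_\tau$-periodic, hence descends to $E_\tau$; it is real-analytic with a single simple pole at $0$ (with $F(z)\sim 1/z$), and $\partial_{\bar z}F\equiv-\tfrac{\pi}{\operatorname{Im}\tau}=:-b\neq0$, $\partial_zF=-\wp(z)-\eta_1+b$. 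Thus the critical points of $G$ are precisely the zeros of $F$ on $E_\tau$.

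First I would pin down the \emph{parity} of the count and the role of the half-periods. Extending $F$ by $F(0)=\infty$ gives a smooth map $E_\tau\to\mathbb P^1$ that is a local diffeomorphism near $0$ (since $F\sim 1/z$), so $\deg F=+1$; hence, whenever $0$ is a regular value, $\sum_{F(a)=0}\operatorname{sgn}J_F(a)=+1$. Now $J_F=|\partial_zF|^2-|\partial_{\bar z}F|^2=|\wp(a)+\eta_1-b|^2-b^2$ and $J_F=-4\pi^2\det(\operatorname{Hess}G)$, while on $E_\tau\setminus\{0\}$ one has $\Delta G=1/|E_\tau|>0$, so $G$ is subharmonic there and has no local maximum. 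Therefore a critical point $a$ is a nondegenerate \emph{minimum} of $G$ iff $|\wp(a)+\eta_1-b|<b$, a nondegenerate \emph{saddle} iff $|\wp(a)+\eta_1-b|>b$, and degenerate iff equality holds. The identity $\sum\operatorname{sgn}J_F=+1$ (equivalently Poincaré--Hopf for $\nabla G$, the singularity at $0$ contributing index $+1$) gives $\#\{\text{saddles}\}=\#\{\text{minima}\}+1$. Since $\tfrac{\omega_k}2$ ($k=1,2,3$) are always critical by \eqref{gde} and nontrivial critical points occur in pairs, the number of critical points equals $2m+3$, where $m\ge0$ is the number of pairs of nontrivial ones; it remains to prove $m\le1$.

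This last bound is the heart of the matter and the main obstacle. I would rewrite $F(a)=0$ as $\bar a=\tfrac1b\,g(a)$, where $g(z):=\zeta(z)+(b-\eta_1)z$ is holomorphic (meromorphic with a single simple pole on $E_\tau$), i.e. as the fixed-point equation $a=\Psi(a)$ for the anti-holomorphic map $\Psi(z):=\tfrac1b\,\overline{g(z)}$. From $g(z+\lambda)=g(z)+b\bar\lambda$ one checks $\Psi(z+\lambda)\equiv\Psi(z)\pmod{\Lambda_\tau}$, so $\Psi$ induces an anti-holomorphic self-map of $E_\tau$ away from $0$ (near which $\Psi(z)\sim\tfrac1{b\bar z}$), whose fixed points are exactly the critical points of $G$; moreover a fixed point $a$ is \emph{attracting} (multiplier $|g'(a)|/b<1$) precisely when $a$ is a minimum of $G$, and \emph{repelling} when $a$ is a saddle. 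One then bounds the number of attracting fixed points: $\Psi\circ\Psi$ is holomorphic, each attracting fixed point attracts a singular value of $\Psi$, and analyzing the finitely many critical and asymptotic values of $\Psi$ (including its behavior near the singularity at $0$) limits the number of minima to $2$, whence $m\le1$. This is, in essence, the anti-holomorphic dynamics argument of Bergweiler and Eremenko. (Alternatively, one follows Lin--Wang \cite{LW}: solutions of the mean field equation $\Delta u+e^u=8\pi\delta_0$ on $E_\tau$ are parametrized by the nontrivial critical points of $G$, and a uniqueness theorem for that equation forces $m\le1$.) I expect everything up to $m\le1$ to be soft, while $m\le1$ genuinely requires the dynamical count (or the PDE uniqueness).

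Finally, the two examples follow from extra symmetry. For $\tau\in i\mathbb R_{>0}$ the isometry $z\mapsto\bar z$ fixes $G$, so the zeros of $F$ are symmetric under it; combined with the evenness of $G$ and the reality of $\zeta$ on the real and imaginary axes, one shows that $F$ vanishes only at the three half-periods and that $|e_k+\eta_1-b|\neq b$ there (with $e_k:=\wp(\tfrac{\omega_k}2)$), so $m=0$ and all three critical points are nondegenerate. For $\tau=\tfrac12+\tfrac{\sqrt3}2i$ the automorphism $z\mapsto e^{2\pi i/3}z$ preserves $G$; using the special values of $\zeta$ on this lattice one verifies $F(\pm\tfrac{\omega_3}3)=0$, producing exactly one nontrivial pair, so $m=1$ and $G$ has $5$ critical points.
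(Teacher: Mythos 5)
Your reduction of the problem to the zeros of $F(z)=\zeta(z)-\eta_1z+\frac{\pi}{\operatorname{Im}\tau}(z-\bar z)=-4\pi\,\partial G/\partial z$, the identification of minima with attracting fixed points of the anti-holomorphic map $\Psi$, and the degree identity $\#\{\text{saddles}\}=\#\{\text{minima}\}+1$ are all correct and are exactly the Bergweiler--Eremenko route that this paper adopts (Theorem \ref{thm-LW} is cited from \cite{LW}, and Section \ref{section2} of the paper is precisely this machinery, written for $\hat g(z)=-\frac1b(\overline{\zeta(z)}+\bar a\bar z)$ and generalized to $G_p$). The sign discrepancy with the paper ($\deg F=+1$ versus $\deg\hat\phi=-1$) is only a matter of orientation convention, since your $F$ is $b$ times the conjugate of the paper's $\hat\phi$; both bookkeepings are internally consistent.

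One genuine gap remains in the sketch as written: the identity $\#\mathrm{crit}=2\#\{\text{minima}\}+1\le 5$ is derived only under the hypothesis that $0$ is a regular value of $F$, i.e.\ that all critical points of $G$ are non-degenerate. But the theorem asserts the count $3$ or $5$ for \emph{every} $\tau$, including those on the critical curves $\mathcal{C}$ of Theorem \ref{thm-CKLW} where a half-period is a degenerate critical point. To close this you need the extra step the paper carries out (Step 2 of the proof of Theorem \ref{main-thm-1}, following \cite{BE}): show $\#F^{-1}(w)\le 5$ for every regular value $w$ (using the translated maps $g_w$), establish a uniform finite bound on $\#F^{-1}(w)$ for all $w$, and then invoke Sard's theorem together with \cite[Proposition 3]{BE-2010} (the set of points with the maximal number of preimages is open) to conclude $\#F^{-1}(0)\le 5$ even when $0$ is singular. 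The remaining deferrals --- the count of at most two attracting fixed points via the four critical grand orbits of $\Psi^2$ and the absence of asymptotic curves, and the symmetry/monotonicity arguments for the two examples --- are accurately attributed to \cite{BE} and \cite{LW} and are fine for a cited result.
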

 
Lin-Wang \cite{LW} proved Theorem \ref{thm-LW} by showing that there is a one-to-one correspondence between pairs of nontrivial critical points of $G(z;\tau)$ and even solutions of the curvature equation \eqref{mfe-8pi}.
Then by proving that \eqref{mfe-8pi} has at most one even solution by PDE methods, they finally obtained that $G(z;\tau)$ has at most one pair of nontrivial critical points. See \cite{LW} for details.
Later, Lin-Wang \cite{LW4} proved the following result.

\begin{Theorem}\cite{LW4}\label{thm-A0}
Suppose that the pair of nontrivial critical points $\pm q$ of $G(z)$ exists, then $\pm q$ are the minimal points of $G(z)$, and all the three trivial critical points $\frac{\omega_k}{2}$ are non-degenerate saddle points of $G(z)$, that is, the Hessian $\det D^2G(\frac{\omega_k}{2})<0$ for $k=1,2,3$.
\end{Theorem}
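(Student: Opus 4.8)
The plan is to combine an explicit formula for the Hessian of $G$ with the subharmonicity of $G$ away from its singularity, run a Poincar\'e--Hopf count, and then remove the remaining ambiguity via the correspondence with \eqref{mfe-8pi} recalled above. As tools, note first that $-\Delta G=\delta_0-1/|E_\tau|$ gives $\Delta G\equiv 1/\operatorname{Im}\tau>0$ on $E_\tau\setminus\{0\}$ while $G(z)\to+\infty$ as $z\to0$, so $G$ is subharmonic on the punctured torus, has no local maximum, and attains its global minimum at a critical point which is a local minimum. Writing $z=r+s\tau$ with $r,s\in\mathbb R$, one has $\partial_z G(z)=-\tfrac1{4\pi}(\zeta(z)-r\eta_1-s\eta_2)$ (so $z$ is critical iff $\zeta(z)=r\eta_1+s\eta_2$), and a further differentiation together with Legendre's relation $\eta_1\tau-\eta_2=2\pi i$ yields $\partial_z^2G(z)=\tfrac1{4\pi}\big(\wp(z)+\eta_1-\tfrac{\pi}{\operatorname{Im}\tau}\big)$ and $\partial_z\partial_{\bar z}G(z)=\tfrac1{4\operatorname{Im}\tau}$; since $\det D^2G=4\big((\partial_z\partial_{\bar z}G)^2-|\partial_z^2G|^2\big)$, this gives
\[
\det D^2G(z)=\frac1{4\pi^2}\left(\Big(\frac{\pi}{\operatorname{Im}\tau}\Big)^2-\Big|\wp(z)+\eta_1-\frac{\pi}{\operatorname{Im}\tau}\Big|^2\right),
\]
so that a critical point $a$ is a non-degenerate saddle precisely when $\big|\wp(a)+\eta_1-\tfrac{\pi}{\operatorname{Im}\tau}\big|>\tfrac{\pi}{\operatorname{Im}\tau}$ and a non-degenerate local minimum precisely when the reverse strict inequality holds. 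Finally, evenness and periodicity give $G(\tfrac{\omega_k}2+w)=G(\tfrac{\omega_k}2-w)$, so $G$ is an even function of $w$ near each half-period; together with subharmonicity and the fact that $\tfrac{\omega_k}2$ is an isolated critical point (Theorem \ref{thm-LW}), this forces each $\tfrac{\omega_k}2$ to be either a (possibly degenerate) local minimum or a saddle of index $-1$, while $D^2G(q)=D^2G(-q)$ shows that $q$ and $-q$ are of the same type.

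Next I run the index count. Apply Poincar\'e--Hopf to the field $-\nabla G$ on $M=E_\tau\setminus B_\varepsilon(0)$: near $0$, $-\nabla G$ points away from $0$, hence into $M$ along $\partial M$, so the sum of the indices of $-\nabla G$ equals $\chi(M)=-1$. Writing $a\in\{0,1,2,3\}$ for the number of half-periods that are of local-minimum type and $\epsilon\in\{0,1\}$ for whether $\pm q$ are of local-minimum type, and recalling that a local minimum contributes index $+1$ and a saddle index $-1$, the identity becomes $(2a-3)+2(2\epsilon-1)=-1$, i.e. $a+2\epsilon=2$. Hence either $(a,\epsilon)=(0,1)$ --- all three $\tfrac{\omega_k}2$ are saddles and $\pm q$ are local minima --- or $(a,\epsilon)=(2,0)$ --- two half-periods are local minima, one is a saddle, and $\pm q$ are saddles.

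The crux is to rule out the case $(a,\epsilon)=(2,0)$, equivalently to show that $\pm q$ are local minima. Here I would invoke the one-to-one correspondence recalled above: the nontrivial pair $\pm q$ gives rise to an even solution $u$ of \eqref{mfe-8pi}, constructed from a developing map tied to $q$ through $\zeta(q)=r\eta_1+s\eta_2$, and from the explicit shape of $u$ near $q$ --- or, more robustly, from the blow-up analysis of such solutions --- one extracts $\big|\wp(q)+\eta_1-\tfrac{\pi}{\operatorname{Im}\tau}\big|<\tfrac{\pi}{\operatorname{Im}\tau}$, i.e. $q$ is of local-minimum type, so $\epsilon=1$. (An alternative, more elementary route is to analyse $G$ along the three segments $[0,\omega_k]$: each is invariant under $x\mapsto\omega_k-x$, so $G$ restricted to it has a critical point at $\tfrac{\omega_k}2$, and one can pin down the sign of the transverse derivative of $G$ along the segment by applying the maximum principle to a suitable periodic conjugate function, which confines the minima away from $E_\tau[2]$.) Thus $(a,\epsilon)=(0,1)$: $\pm q$ are local minima with $G(q)=G(-q)$, and since Theorem \ref{thm-LW} leaves no room for further minima they are the global minimum points, while all $\tfrac{\omega_k}2$ are saddles.

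It remains to upgrade ``saddle'' to ``non-degenerate saddle'' at each half-period, i.e. to exclude $\det D^2G(\tfrac{\omega_k}2)=0$. Suppose it vanished. Since $\Delta G$ is constant near $\tfrac{\omega_k}2$, the quadratic part of $G$ there equals $\tfrac1{2\operatorname{Im}\tau}w_\perp^2$ and its quartic (and higher even) part is a harmonic polynomial; with $G$ even in $w$ and ``no half-period is a local minimum'' from the previous step, $G$ restricted to the kernel line has leading term $-|\mu|\,w_\parallel^{2j}+\cdots$ with $j\ge2$ and $\mu\ne0$. Perturbing $\tau$ transversally to the locus $\{\det D^2G(\tfrac{\omega_k}2)=0\}$ while remaining where $\pm q$ persists, the zero eigenvalue at $\tfrac{\omega_k}2$ becomes positive on one side, so that $G(\cdot;\tau')$ acquires --- besides the persisting $\pm q$ and the other two half-periods --- a new nontrivial pair of saddles bifurcating off $\tfrac{\omega_k}2$ (which itself becomes a local minimum): at least seven critical points, contradicting Theorem \ref{thm-LW}. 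Hence $\det D^2G(\tfrac{\omega_k}2)\ne0$, and, being of saddle type, it is strictly negative. The two steps I expect to be genuinely delicate are the exclusion of $(a,\epsilon)=(2,0)$ --- i.e. forcing the nontrivial critical point $q$ to be a minimum and not merely a critical point --- and making the bifurcation argument of the last step precise (in particular the structure of the degenerate locus and the persistence of $\pm q$); everything else is bookkeeping on the explicit Hessian formula and the topology of the torus.
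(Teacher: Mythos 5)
First, a point of order: the paper does not prove Theorem \ref{thm-A0} at all --- it is imported verbatim from \cite{LW4} --- so there is no internal proof to compare your attempt against; I can only judge it on its own terms. Your preparatory material is correct: your Hessian identity agrees with \eqref{detG}, the subharmonicity $\Delta G\equiv 1/\operatorname{Im}\tau$ off the singularity is right, and the Poincar\'e--Hopf count on $E_\tau\setminus B_\varepsilon(0)$ is a legitimate way to organize the case analysis. But the two steps you yourself flag as ``genuinely delicate'' are exactly where the entire content of the theorem lives, and neither is carried out. The exclusion of the case in which $\pm q$ are saddles is the main theorem of \cite{LW4} (the whole of that paper is devoted to it); ``invoke the correspondence with \eqref{mfe-8pi} and extract the inequality from blow-up analysis'' is not an argument --- nothing in the Liouville/developing-map construction of an even solution from $q$ visibly yields the strict inequality $\bigl|\wp(q)+\eta_1-\tfrac{\pi}{\operatorname{Im}\tau}\bigr|<\tfrac{\pi}{\operatorname{Im}\tau}$, and the parenthetical ``maximum principle along $[0,\omega_k]$'' alternative is equally unsubstantiated. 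There is also a smaller hole in the count itself: evenness of $G$ about each $\tfrac{\omega_k}{2}$ does constrain the index there to $\pm1$, but there is no reflection symmetry \emph{about} $q$, so a degenerate nontrivial critical point could have gradient index $0$ or $-2$; your dichotomy $a+2\epsilon=2$ silently excludes the cases $a=1$ and $a=3$, which your argument does not rule out.

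The final bifurcation step also has an unaddressed obstruction. To spawn a new nontrivial pair near $\tfrac{\omega_k}{2}$ you need $\det D^2G(\tfrac{\omega_k}{2};\tau')$ to become \emph{positive} for some $\tau'$ near $\tau$, not merely to vanish at $\tau$; a priori this real-analytic function of $\tau$ could be $\le 0$ throughout a neighborhood, in which case no bifurcation occurs and no contradiction with Theorem \ref{thm-LW} is produced. (The persistence of $\pm q$ under the perturbation is fine \emph{once} they are known to be local minima, since an isolated local minimum has gradient index $+1$; but that again presupposes the step you have not proved, and one must also be careful not to quote Theorem \ref{thm-A00}, which in this paper is derived \emph{from} Theorem \ref{thm-A0}.) In short: the skeleton is reasonable and the bookkeeping is correct, but the proposal establishes neither the minimality of $\pm q$ nor the non-degeneracy of the half-periods; both are deferred to unproved claims, so this does not constitute a proof of the statement.
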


In 2016, Bergweiler and Eremenko \cite{BE} gave a new proof of Theorem \ref{thm-LW} by using anti-holomorphic dynamics. We will briefly review their proof in Section \ref{section2}, and prove that a further discussion of their proof together with Theorem \ref{thm-A0} actually implies the following result.

\begin{theorem}\label{thm-A00}
Suppose that the pair of nontrivial critical points $\pm q$ of $G(z)$ exists, then $\pm q$ are also non-degenerate, i.e. $\det D^2G(\pm q)>0$.
\end{theorem}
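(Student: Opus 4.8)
The plan is to extract non-degeneracy of $\pm q$ from the Bergweiler--Eremenko dynamical picture by a counting/degree argument, using Theorem~\ref{thm-A0} to pin down the local behavior at the trivial critical points. Recall that in \cite{BE} the critical points of $G$ are realized as fixed points of an anti-holomorphic map; more precisely, $z_0$ is a critical point of $G$ iff $\overline{f(z_0)}=z_0$ for a suitable meromorphic function $f$ built from the Weierstrass $\zeta$ and $\wp$ functions (essentially $f(z) = \zeta(z) - \eta_1 z$, whose zeros are the critical points). Writing $g(z)=\overline{f(z)}$, the critical points of $G$ are the fixed points of the anti-holomorphic map $g$, and a fixed point $z_0$ is \emph{non-degenerate as a critical point of $G$} precisely when $|g'(z_0)|\neq 1$, i.e.\ when $1$ is not an eigenvalue of the real-linear differential $Dg(z_0)$. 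First I would make this dictionary precise: $D^2 G(z_0)$ is (up to a positive factor) conjugate to $\operatorname{Id} - Dg(z_0)$ in the real sense, so $\det D^2 G(z_0) = c(z_0)(1-|f'(z_0)|^2)$ for some $c(z_0)>0$; hence the sign of the Hessian determinant is the sign of $1-|f'(z_0)|^2$, and degeneracy means $|f'(z_0)|=1$.

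Next I would invoke Theorem~\ref{thm-A0}: when $\pm q$ exist, the three half-periods $\tfrac{\omega_k}{2}$ are non-degenerate saddles, so $|f'(\tfrac{\omega_k}{2})|>1$ at all three, meaning $g$ is \emph{repelling} at each trivial fixed point. Now I would run a global count of fixed points of the anti-holomorphic map $g$ with multiplicity. Because $g$ is anti-holomorphic of a known degree (inherited from the order of $f$ as an elliptic-type function on $E_\tau$, giving a fixed, $\tau$-independent total number of fixed points counted with multiplicity on the sphere/torus quotient), the Lefschetz/argument-principle count for anti-holomorphic maps forces the total \emph{index sum} of the fixed points to equal a fixed number. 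A non-degenerate repelling fixed point of an anti-holomorphic map contributes index $+1$; a non-degenerate attracting or ``orientation-mixed'' fixed point contributes index $-1$ (these are exactly the two cases $|g'|>1$ with orientation, versus the saddle-type); a degenerate fixed point contributes index $0$ or has higher multiplicity absorbing several. The point is: knowing from Theorem~\ref{thm-A0} that the three trivial fixed points are non-degenerate (contributing a known total index), and knowing the global index sum, the remaining index budget must be realized by the pair $\pm q$; if either of $\pm q$ were degenerate, the multiplicity it would absorb would overshoot the available budget, contradicting that there is exactly \emph{one} pair of nontrivial critical points (Theorem~\ref{thm-LW}). In other words, degeneracy at $q$ would force a collision of critical points that Lin--Wang's uniqueness forbids. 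This pins $|f'(\pm q)|\neq 1$, and since $\pm q$ are minima of $G$ by Theorem~\ref{thm-A0}, the Hessian is positive semidefinite there, so the only option is $\det D^2G(\pm q)>0$.

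Alternatively — and this may be cleaner — I would avoid the global index count and argue directly: at a local minimum $z_0$ of $G$, one has $D^2G(z_0)\ge 0$, so $1-|f'(z_0)|^2\ge 0$, i.e.\ $|f'(z_0)|\le 1$; so $\pm q$ are \emph{non-repelling} fixed points of $g$. If $|f'(q)|<1$ we are done. The only bad case is $|f'(q)|=1$, i.e.\ $g$ is \emph{neutral} at $q$. Here I would bring in the classification of neutral fixed points of anti-holomorphic (and holomorphic) maps: a neutral fixed point of an anti-holomorphic germ is either a center-type point with a continuum of nearby periodic points, or it has a parabolic-type structure with attracting petals. In the parabolic case there would be an attracting basin, hence (translating back) a direction in which $G$ strictly increases and another in which it behaves non-generically — but more to the point, a parabolic fixed point of $g=\bar f$ forces the \emph{second} iterate $g\circ g$ (which is holomorphic) to have a parabolic fixed point, whose immediate basin must contain a critical point of $g\circ g$. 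Tracking these critical points back through the explicit structure of $f$ on the torus, I would derive either an extra critical point of $G$ (contradicting Theorem~\ref{thm-LW}) or a contradiction with $\pm q$ being the global minima (the basin would contain points with smaller $G$-value). The center case is excluded because $G$ is real-analytic with isolated critical points — a center would produce a nondiscrete set of critical points of $G$ (the fixed point set of $g$ would accumulate), again contradicting Theorem~\ref{thm-LW}.

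The main obstacle I anticipate is the neutral case $|f'(q)|=1$: ruling it out rigorously requires either a careful global degree/index bookkeeping for the anti-holomorphic map on $E_\tau$ (making sure the ``fixed-point count with multiplicity'' is genuinely $\tau$-independent and correctly accounts for the point at the singularity $0$ of $f$), or a clean local dynamical argument showing a parabolic/neutral fixed point is incompatible with $\pm q$ being the unique nontrivial pair and the global minimum. I expect the degree-counting route to work but to need care about how $f = \zeta - \eta_1 z$ behaves near $z=0$ (where $\zeta$ has a pole) and about how the $g$-fixed-point set descends to $E_\tau$; the local-dynamics route is conceptually shorter but needs the anti-holomorphic parabolic-petal theory invoked precisely. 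Given that the paper has already set up the Bergweiler--Eremenko machinery in Section~\ref{section2}, I would lean on that setup and present the argument as: non-degeneracy $\iff$ $|f'|\ne 1$; minima $\Rightarrow$ $|f'(\pm q)|\le 1$; then exclude equality via the index/uniqueness contradiction.
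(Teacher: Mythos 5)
Your setup is the right one and matches the paper's: the dictionary $\det D^2G(z_0)=c(z_0)\bigl(1-|\overline{\partial}\hat g(z_0)|^2\bigr)$ with $c>0$ (the paper's identity $\det D^2G=\tfrac{1}{4(\operatorname{Im}\tau)^2}J_{\hat\phi}$), Theorem~\ref{thm-A0} placing the three half-periods in $\hat D^-=\{J_{\hat\phi}<0\}$, and the reduction to excluding $|\overline{\partial}\hat g(q)|=1$. The final step (minimality of $\pm q$ forces $\det D^2G(\pm q)\ge 0$, hence $>0$ once degeneracy is excluded) is also correct. But neither of your two routes actually closes the crucial step. In Route 1, the claim that a degenerate fixed point ``absorbs multiplicity that overshoots the budget'' or ``forces a collision of critical points'' is not substantiated: the local degree of the harmonic map $\hat\phi=z-\hat g(z)$ at an isolated zero with vanishing Jacobian is not something you can read off (it need not break the index sum), and no actual collision of critical points of $G$ occurs. (Your sign conventions are also reversed: a repelling fixed point of $\hat g$ has $J_{\hat\phi}<0$ and contributes local degree $-1$, not $+1$.) The missing idea is the paper's perturbation argument: if $\det D^2G(q)=0$ then $q\in\partial\hat D^-$, and since $\hat\phi$ is a local diffeomorphism on $\hat D^-$, the image $\hat\phi\bigl(B_\varepsilon(q)\cap\hat D^-\bigr)$ is an open set with $0$ on its boundary; by Sard one picks a \emph{regular} value $w$ near $0$ inside this image, so that $\hat\phi^{-1}(w)$ contains one point in $\hat D^-$ near $q$ plus three points in $\hat D^-$ near the $\tfrac{\omega_k}{2}$, giving $\#(\hat\phi^{-1}(w)\cap\hat D^-)\ge 4$; this contradicts $\deg\hat\phi=-1$ together with $\#\hat\phi^{-1}(w)\le 5$, which force $\#(\hat\phi^{-1}(w)\cap\hat D^-)\le 3$. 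Nothing in your write-up performs this perturbation of the target value, which is what makes the counting argument rigorous at a singular value.

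Route 2 also does not close as sketched. If $|\overline{\partial}\hat g(q)|=1$ then $q$ is a fixed point of the holomorphic second iterate $\hat h=\hat g^2$ with multiplier exactly $1$, i.e.\ parabolic, and the parabolic Fatou theorem indeed forces each attracting petal basin to absorb a critical orbit. However, in the Bergweiler--Eremenko setup for $G$ the critical points of $\hat h$ lie in at most $4$ grand orbits, and the symmetric pair $q,-q$ with one petal each would absorb exactly $2+2=4$ of them --- the budget is exactly saturated, so no contradiction follows from orbit counting alone; and the ``center'' alternative you mention does not arise for a multiplier-$1$ fixed point of a nonidentity holomorphic map. So the dynamical route needs genuinely more input than you provide. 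In short: right framework, right ingredients, but the decisive mechanism (Sard plus openness of $\hat\phi$ on $\hat D^-$ near the putative degenerate point) is absent.
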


A consequence of Theorems \ref{thm-A0} and \ref{thm-A00} is that if $G(z)$ has a degenerate critical point, then $G(z)$ has only trivial critical points.
It is well known that the non-degeneray of critical points of Green functions has many important applications, such as in constructing bubbling solutions via the reduction method and proving the local uniqueness of bubbling solutions for elliptic PDEs; see e.g. \cite{BKLY, CL-2, LW4, LY} and references therein. We believe that Theorem \ref{thm-A00} will have important applications to other problems.

\subsection{The Green function $G_p$}
The study of \eqref{mean} and the proof of Theorem \ref{main-thm-10} lead us to study the sum of two Green fuctions $G(z-p_1)+G(z-p_2)$. By changing variable $z\mapsto z+\frac{p_1+p_2}{2}$, we can always assume $p_2=-p_1$, so it is enough to study the Green function
 $G_p(z)=G_{-p}(z)$, which is defined by
\begin{equation}G_p(z)=G_p(z;\tau):=\frac12\big(G(z-p;\tau)+G(z+p;\tau)\big).\end{equation}
Remark that if $p\in E_{\tau}[2]$, i.e. $p=\frac{\omega_k}{2}$ for some $k$, then $G_p(z)=G(z-p)$ and so Theorem \ref{thm-LW} implies that $G_p(z)$ has either $3$ or $5$ critical points. 

In this paper, we consider the case $p\in E_{\tau}\setminus E_{\tau}[2]$. Clearly $G_p(z)$ is also even, so the same argument as \eqref{gde} implies that $\frac{\omega_k}{2}$, $k=0,1,2,3$, are all trivial critical points of $G_p(z)$, i.e. the number of critical points of $G_p(z)$ is an even number of at least $4$. 
Here we generalize Theorem \ref{thm-LW} to $G_p(z)$ as follows.

\begin{theorem}\label{main-thm-1} Let $p\in E_{\tau}\setminus E_{\tau}[2]$. Then $G_p(z)$ has at most $3$ pairs of nontrivial critical points, or equivalently, the number of critical points of $G_p(z)$ belongs to $\{4,6,8,10\}$.

Moreover, if $\wp(2p)=2\eta_1-\frac{2\pi}{\operatorname{Im}\tau}$, then $G_p(z)$ has at most one pair of nontrivial critical points, or equivalently, the number of critical points of $G_p(z)$ belongs to $\{4,6\}$.
\end{theorem}

Here $\wp(z)$ and $\eta_1=\eta_1(\tau)$ are classical special functions that will be recalled soon.
In Section \ref{section2}, we will follow the approach of using anti-holomorphic dynamics from Bergweiler-Eremenko \cite{BE} to prove Theorem \ref{main-thm-1}. After Theorem \ref{main-thm-1}, a natural question is whether there exists $(\tau, p)$ such that $G_p(z)=G_p(z;\tau)$ has exactly $N$ critical points for every $N\in \{4,6,8,10\}$. This question can not be settled from the proof of Theorem \ref{main-thm-1}.
Recently, it was proved in \cite{CKL-2025} that the lower bound $4$ occurs; we will recall this result in Theorem \ref{thm-B}. By developing new ideas,
our next result confirms that every $N\in \{6, 8, 10\}$ also occurs. 

\begin{theorem}\label{main-thm-2} Fix $k\in\{1,2,3\}$.
\begin{itemize}
\item[(1)]There exist $\tau$ and $\varepsilon>0$ small such that if $|p-\frac{\omega_k}{4}|<\varepsilon$, $G_p(z;\tau)$ has exactly $10$ critical points, which are all non-degenerate with $\det D^2G_p(\frac{\omega_l}{2};\tau)<0$ for all $l\in\{0,1,2,3\}$.
\item[(2)] There exists $\tau$ such that $G_{\frac{\omega_k}{4}}(z;\tau)$ has exactly $6$ critical points with $\det D^2G_{\frac{\omega_k}{4}}$ $(\frac{\omega_l}{2};\tau)<0$ for all $l\in\{0,1,2,3\}$, but
the unique pair of nontrivial critical points are degenerate minimal points of $G_{\frac{\omega_k}{4}}(z;\tau)$.
\item[(3)] There exists $\tau$ such that $G_{\frac{\omega_k}{4}}(z;\tau)$ has exactly $6$ critical points, among which there are exactly $2$ trivial critical points being degenerate, but the other $4$ critical points (including the pair of nontrivial critical points) are all non-degenerate saddle points of $G_{\frac{\omega_k}{4}}(z;\tau)$. Furthermore,
\begin{itemize}
\item[(3-1)] There exists $p$ close to $\frac{\omega_k}{4}$ such that $G_{p}(z;\tau)$ has exactly $8$ critical points, which are all non-degenerate.
\item[(3-2)] There also exists $p$ close to $\frac{\omega_k}{4}$ such that $G_{p}(z;\tau)$ has exactly $6$ or $10$ critical points, which are all non-degenerate.
\end{itemize}
\end{itemize}
\end{theorem}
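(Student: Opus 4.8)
\medskip

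\noindent\textbf{Proof plan for Theorem \ref{main-thm-2}.}
The plan is to reduce the case $p=\frac{\omega_k}{4}$ to the already understood Green function $G$ via a quotient torus. Since $\frac{3\omega_k}{4}\equiv-\frac{\omega_k}{4}$ in $E_\tau$, a direct check gives $G_{\omega_k/4}(z+\frac{\omega_k}{2})=G_{\omega_k/4}(z)$, so $G_{\omega_k/4}$ is periodic with respect to the intermediate lattice $\Lambda'=\Lambda_\tau+\mathbb Z\frac{\omega_k}{2}$ and descends to a function $\tilde G$ on $E'=\mathbb C/\Lambda'$. Computing the distributional Laplacian and using $|E_\tau|=2|E'|$ shows that $\tilde G$ is exactly the Green function of $E'$ with singularity at $s=[\frac{\omega_k}{4}]\in E'$, i.e. $\tilde G=G^{E'}(\cdot-s)$. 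Moreover $G_{\omega_k/4}=\tilde G\circ\pi$ for the flat $2$-to-$1$ covering $\pi\colon E_\tau\to E'$, and since $\pi$ is a local isometry, the critical points of $G_{\omega_k/4}$ are exactly the $\pi$-preimages of those of $\tilde G$, with identical Hessians (hence identical degeneracy type and Morse index). By Theorem \ref{thm-LW}, $G^{E'}$ has $3$ or $5$ critical points, so $G_{\omega_k/4}$ has $6$ or $10$; and I would record the dictionary, obtained by bookkeeping of $2$-torsion (using $\frac{\omega_j}{2}\equiv\frac{\omega_m}{2}$ in $E'$ for $\{j,m\}=\{1,2,3\}\setminus\{k\}$, because $\frac{\omega_k}{2}\in\Lambda'$), that the three half-periods $\frac{\omega_k}{4}$, $\frac{\omega_j}{2}$, $\frac{\omega_j}{2}+\frac{\omega_k}{4}$ of $E'$, as critical points of $G^{E'}$, lift respectively to the trivial critical points $\{0,\frac{\omega_k}{2}\}$, to the unique nontrivial pair, and to the trivial critical points $\{\frac{\omega_j}{2},\frac{\omega_m}{2}\}$ of $G_{\omega_k/4}$, while the extra pair $\pm q'$ of $G^{E'}$ (when present) lifts to the two further nontrivial pairs.

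With this dictionary, part $(1)$ follows by choosing $\tau$ so that $E'$ is isomorphic to $E_\sigma$ with $\sigma$ in the interior of the region where the Green function has $5$ critical points (for $k=1$, $E'\cong E_{2\tau}$, so by Theorem \ref{thm-LW} one may take $2\tau=\tfrac12+\tfrac{\sqrt3}{2}i$; similarly for $k=2,3$). By Theorems \ref{thm-A0} and \ref{thm-A00} all $5$ critical points of $G^{E'}$ are then non-degenerate (three saddles with $\det D^2<0$, two minima with $\det D^2>0$), hence $G_{\omega_k/4}$ has exactly $10$ non-degenerate critical points with $\det D^2G_{\omega_k/4}(\frac{\omega_l}{2})<0$ for all $l$. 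Since non-degenerate critical points persist, their total number is $\leq 10$ by Theorem \ref{main-thm-1}, and $\det D^2G_p(\frac{\omega_l}{2})$ is continuous in $p$, the implicit function theorem propagates the whole picture to $|p-\frac{\omega_k}{4}|<\varepsilon$. For parts $(2)$ and $(3)$ I would instead take $\sigma$ on the degeneracy locus $\mathcal D$ where the Green function of $E_\sigma$ has a degenerate critical point; $\mathcal D\neq\varnothing$ because the open $3$- and $5$-critical-point regions both occur (rectangular and rhombus tori). By Theorems \ref{thm-A0}--\ref{thm-A00} a point of $\mathcal D$ has $3$ critical points with a degenerate half-period, and since $\Delta G>0$ off the singularity the only possibilities are ``$1$ minimum $+\,2$ saddles'' versus ``$2$ minima $+\,3$ saddles'', so the degeneration is the symmetric pitchfork in which a degenerate \emph{minimal} half-period emits the pair $\pm q'$; thus at such $\sigma$ one half-period is a degenerate minimum and the other two are non-degenerate saddles. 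Using the $SL_2(\mathbb Z)/\Gamma(2)\cong S_3$ action permuting the three half-periods, each is realized as the degenerate one at suitable points of $\mathcal D$: choosing the one matching the nontrivial pair gives part $(2)$, choosing one matching a trivial pair gives the $6$-critical-point configuration of part $(3)$, with the asserted signs of $\det D^2$.

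For $(3\text{-}1)$--$(3\text{-}2)$ I would perturb $p$ near $p^*:=\frac{\omega_k}{4}$ in the situation of part $(3)$. Here $0$ and $\frac{\omega_k}{2}$ are critical points of $G_p$ for every $p$ by evenness, degenerate minima with one-dimensional kernel at $p^*$, while the other $4$ critical points are non-degenerate and persist. Let $\lambda(p)$ be the smallest eigenvalue of $D^2G_p(0)$, smooth near $p^*$ with $\lambda(p^*)=0$. From $G_p=G_{-p}$ and $G_p(\cdot+\frac{\omega_k}{2})=G_{p+\omega_k/2}(\cdot)$ one gets $\lambda(p)=\lambda(-p)$ and $\det D^2G_p(\frac{\omega_k}{2})=0\iff\lambda(p+\frac{\omega_k}{2})=0$, so the curves $\gamma=\{\lambda=0\}$ and $\gamma'=\{\lambda(\cdot+\frac{\omega_k}{2})=0\}$ are reflections of one another through $p^*$. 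Assuming $\nabla\lambda(p^*)\neq0$ and that the Hessian of $\lambda$ at $p^*$ restricted to $\ker\nabla\lambda(p^*)$ is nonzero, $\gamma$ and $\gamma'$ are two parabolas tangent at $p^*$ opening oppositely along $\nabla\lambda(p^*)$, cutting a punctured neighborhood of $p^*$ into four regions. A Lyapunov--Schmidt reduction in the kernel direction (no cubic term by evenness, positive quartic coefficient since $0$ is a degenerate minimum at $p^*$) shows that $0$ keeps a single critical point where $\lambda(p)>0$ and splits into a non-degenerate saddle plus a nontrivial pair of non-degenerate minima where $\lambda(p)<0$, and likewise for $\frac{\omega_k}{2}$; together with the standard fact that every critical point of $G_p$ stays near one of those of $G_{p^*}$ (none escape to the poles $\pm p$), this yields exactly $8$ non-degenerate critical points in two of the four regions, and exactly $6$ or exactly $10$ (according to the sign of the common curvature coefficient) in the other two, giving $(3\text{-}1)$ and $(3\text{-}2)$.

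The main obstacle is the genericity input of the last paragraph: verifying, via Hitchin's formula, that $\nabla\lambda(\frac{\omega_k}{4})\neq0$ and that the curvature of $\{\det D^2G_p(0)=0\}$ at $\frac{\omega_k}{4}$ is nonzero, at a $\tau$ realizing the configuration of part $(3)$. This calls for an explicit expression for $\det D^2G_p(z;\tau,p)$ in terms of Weierstrass functions and Eisenstein series, carefully expanded near the doubly degenerate locus $z=0$, $p=\frac{\omega_k}{4}$ (where the Hessian already has a zero eigenvalue), so that the linear and quadratic parts in $p$ of the degeneracy condition can be read off. A secondary point needing Hitchin's formula rather than Morse theory alone is the precise description of $\mathcal D$ used above — that each half-period is attained as the degenerate one, with the other two remaining non-degenerate saddles.
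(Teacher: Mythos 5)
For parts (1), (2) and the first assertion of (3), your route is essentially the paper's: $G_{\omega_k/4}$ descends to (twice) the Green function of the index-two quotient torus $E'=\mathbb C/(\Lambda_\tau+\mathbb Z\tfrac{\omega_k}{2})$ with pole at $\tfrac{\omega_k}{4}$, and the dictionary you record between the half-periods of $E'$ and the trivial/nontrivial pairs upstairs is exactly the one used in the paper. The only caveat is your treatment of the degeneracy locus $\mathcal D$: the facts you need (that on $\partial\Omega_3\cap\mathbb H$ exactly one half-period is degenerate, that each of the three half-periods is realized as the degenerate one for suitable $\tau$, and that the other two remain non-degenerate saddles) constitute precisely \cite[Theorem 1.2]{CKLW}, which the paper simply cites (Theorem \ref{thm-CKLW}); your pitchfork-plus-$S_3$ sketch is a plausible reconstruction of that result but, as you note yourself, is not a proof, so you should cite it rather than re-derive it. Your assertion that the degenerate half-period is a degenerate \emph{minimum} also needs a word (e.g.\ it is the limit of the minimum points $\pm q$ from the $\Omega_5$ side, using Theorem \ref{thm-A0}); positivity of the trace of $D^2G$ alone only gives semi-definiteness.

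For (3-1)--(3-2) your approach is genuinely different and has a genuine gap, which you correctly identify as ``the main obstacle'': the entire four-region bifurcation picture rests on $\nabla\lambda(\tfrac{\omega_k}{4})\neq 0$, on a nonzero relative curvature of the two degeneracy curves at $\tfrac{\omega_k}{4}$, and on a nonvanishing quartic term in the Lyapunov--Schmidt reduction, none of which you verify. (Note also that your own reflection argument shows the two curves $\{\det D^2G_p(0)=0\}$ and $\{\det D^2G_p(\tfrac{\omega_k}{2})=0\}$ share a tangent line at $\tfrac{\omega_k}{4}$, so the existence of all four local regions hinges entirely on the second-order data you cannot control.) The paper avoids every one of these verifications: by Lemma \ref{lemma3-1} and Lemma \ref{coro3-1} the degeneracy locus of the trivial critical point $\tfrac{\omega_l}{2}$ is the \emph{explicit} circle or line $\{\wp(p)\in\partial\mathcal B_l\}$, so the only transversality-type input needed is $\partial\mathcal B_0\neq\partial\mathcal B_k$ (equation \eqref{Bjk}); this guarantees that $\wp(\tfrac{\omega_k}{4})$ is adjacent both to a component $\Xi_1$ with $m(\Xi_1)=1$ and to a component $\Xi_2$ with $m(\Xi_2)\in\{0,2\}$. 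The global counting theorem (Theorem \ref{thm-section5}, built on the anti-holomorphic-dynamics bound and Brouwer degree) then pins the count on $\Xi_1$ to $8$ (the value $4$ being excluded because the non-degenerate nontrivial pair $\pm q_p$ persists by the implicit function theorem) and on $\Xi_2$ to $6$ or $10$. In short: your local bifurcation analysis would, if the non-degeneracy hypotheses were established, prove slightly more than the paper (both $6$ and $10$ would occur near $\tfrac{\omega_k}{4}$), but as written it is conditional, whereas the paper's degree-theoretic argument is unconditional and needs no expansion of $\det D^2G_p$ near the doubly degenerate point. To repair your proof without the global machinery you would have to carry out the explicit second-order computation you defer, and it is not clear it can be done in closed form.
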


\begin{Remark}
We compare Theorems \ref{thm-A0}-\ref{thm-A00} with Theorem \ref{main-thm-2}. 
Theorems \ref{thm-A0}-\ref{thm-A00} say that the unique pair of nontrivial critical points of $G(z)$ must be non-degenerate minimal points. However, this is not the case for $G_p(z)$. When $G_p(z)$ has $6$ critical points, or equivalently, has a unique pair of nontrivial critical points $\pm q$, then $\pm q$ could be degenerate minimal points of $G_p(z)$ for some $(\tau, p)$, and also could be saddle points of  $G_p(z)$ for some other $(\tau, p)$. 
This phenomenon indicates that the structure of critical points of $G_p(z)$ is much more complicated than that of $G(z)$.
\end{Remark}


An interesting question is that given $q\in E_{\tau}\setminus E_{\tau}[2]$, how many $\pm p\in E_{\tau}\setminus E_{\tau}[2]$ might there be such that $q$ is a nontrivial critical point of $G_p$? To settle this question, we recall from \cite{LW} that
the Green function $G(z)$ can be expressed explicitly in terms of elliptic
functions. Let $\wp(z)=\wp( z;\tau)$ be the Weierstrass $\wp$ function with periods
$\Lambda_{\tau}$, defined by%
\[\wp(z;\tau):=\frac{1}{z^{2}}+\sum_{\omega \in \Lambda_{\tau
}\backslash\{0\}  }\left(  \frac{1}{(z-\omega)^{2}}-\frac
{1}{\omega^{2}}\right) ,
\]
which satisfies the well-known cubic equation
\[\wp^{\prime}(z;\tau)^{2}=4\wp(z;\tau)^{3}-g_{2}(\tau)\wp
(z;\tau)-g_{3}(\tau)=4\prod_{k=1}^3(\wp(z;\tau)-e_k(\tau)),
\]
where $g_2, g_3$ are known as invariants of the elliptic curve, and $e_k=e_k(\tau):=\wp(\frac{\omega_k}{2};\tau)$ for $k=1,2,3$. 
Let $\zeta(z)=\zeta(z;\tau):=-\int^{z}\wp(\xi;\tau)d\xi$ be the Weierstrass
zeta function with two quasi-periods
\begin{equation}
\eta_k=\eta_{k}(\tau):=2\zeta\Big(\frac{\omega_{k}}{2};\tau\Big)=\zeta(z+\omega_{k}%
;\tau)-\zeta(z;\tau),\quad k=1,2.\label{40-2}%
\end{equation}
This $\zeta(z)$ is an odd meromorphic function with simple poles at $\Lambda_{\tau}$.
Then by using the complex variable $z\in\mathbb C$, it was proved in \cite{LW} that
\begin{equation}
\label{G_z}-4\pi \frac{\partial G}{\partial z}(z)=\zeta(z)-r\eta_{1}
-s\eta_{2},
\end{equation}
where $(r,s)$ is defined by $z=r+s\tau$ with $r,s\in \mathbb{R}.$
Note that $z\notin E_{\tau}[2]$ is equivalent to $(r,s)\in\mathbb{R}^2\setminus\frac12\mathbb{Z}^2$, where $\frac12\mathbb{Z}^2:=\{(r,s)\,:\, 2r,2s\in\mathbb Z\}.$
By \eqref{G_z}, we see that $q=r+s\tau$ with $(r,s)\in\mathbb{R}^2\setminus\frac12\mathbb{Z}^2$ is a nontrivial critical point of $G_p$ if and only if 
\begin{equation}\label{a+001p}\zeta(q+p)+\zeta(q-p)-2(r\eta_1+s\eta_2)=0.\end{equation}
Using the additional formula of elliptic functions
$$\zeta(z+w)+\zeta(z-w)-2\zeta(z)=\frac{\wp'(z)}{\wp(z)-\wp(w)},$$
it is easy to see that \eqref{a+001p} is equivalent to
\begin{align}\label{513-1} 
\wp(p)=\wp (r+s\tau)+\frac{\wp ^{\prime }(r+s\tau)}{%
2(\zeta(r+s\tau)-r\eta_1-s\eta_2)}.
\end{align}
It is well known that $\wp(\cdot): E_{\tau}\to \mathbb{C}\cup\{\infty\}$ is a double cover with branch points at $\frac{\omega_k}{2}$'s, i.e. for any $c\in \mathbb{C}\cup\{\infty\}$, there is a unique pair $\pm z_c\in E_{\tau}$ such that $\wp(\pm z_{c})=c$. Consequently, for any $(r,s)\in \mathbb{R}^{2}\backslash \frac{1}{2}\mathbb{Z}^{2}$, there exists a unique pair $\pm p=\pm p_{r,s}(\tau)\in E_{\tau}$ such that
\eqref{513-1} holds. Note that $p_{r,s}(\tau)\in E_{\tau}[2]$ might hold for some $(r,s)\in \mathbb{R}^{2}\backslash \frac{1}{2}\mathbb{Z}^{2}$.
Therefore, \eqref{513-1} implies that there is at most one pair $\pm p\in E_{\tau}\setminus E_{\tau}[2]$ such that $q=r+s\tau$ is a nontrivial critical point of $G_p$. This argument also implies the following result.

\begin{theorem}\label{thm-0B}
Let $p\in E_{\tau}\setminus E_{\tau}[2]$ and $(r,s)\in \mathbb{R}^{2}\backslash \frac{1}{2}\mathbb{Z}^{2}$. Then $\pm(r+s\tau)$ is a pair of nontrivial critical points of $G_p(z)$ if and only if \eqref{513-1} holds.
\end{theorem}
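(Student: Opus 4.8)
The plan is to make precise the computation already sketched just before the statement. First I would rewrite the criticality condition for $G_p$ in terms of Weierstrass functions. Since $G_p$ is real-valued, a point $q=r+s\tau$ with $q\notin E_\tau[2]$ is a critical point of $G_p$ exactly when $\frac{\partial G_p}{\partial z}(q)=0$. Writing $p=p_1+p_2\tau$ with $p_1,p_2\in\mathbb R$ and applying \eqref{G_z} to the points $q\pm p=(r\pm p_1)+(s\pm p_2)\tau$, the contributions of $p_1,p_2$ to the quasi-period terms cancel after averaging, and one obtains
\[
-4\pi\,\frac{\partial G_p}{\partial z}(q)=\tfrac12\big(\zeta(q+p)+\zeta(q-p)\big)-r\eta_1-s\eta_2 .
\]
Hence $q$ is a critical point of $G_p$ if and only if \eqref{a+001p} holds, and such a critical point is nontrivial precisely because $(r,s)\notin\frac12\mathbb Z^2$. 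This reduces the theorem to the equivalence of \eqref{a+001p} and \eqref{513-1}.

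Second, I would observe that if $q$ is a critical point of $G_p$ then $q\not\equiv\pm p\pmod{\Lambda_\tau}$, since otherwise $G_p$ has a logarithmic singularity at $q$; equivalently $\wp(q)\neq\wp(p)$. Under this condition the classical addition formula
\[
\zeta(q+p)+\zeta(q-p)-2\zeta(q)=\frac{\wp'(q)}{\wp(q)-\wp(p)}
\]
is valid, and substituting it into \eqref{a+001p} and solving for $\wp(p)$ produces exactly \eqref{513-1}. Conversely, if \eqref{513-1} holds then its right-hand side is finite, so the denominator $\zeta(q)-r\eta_1-s\eta_2$ is nonzero and $\wp(q)\neq\wp(p)$; running the same steps backwards recovers \eqref{a+001p}, so $\pm q$ are nontrivial critical points of $G_p$.

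Third, I would dispose of the borderline case in which $q$ is itself a critical point of $G$, i.e. $\zeta(q)-r\eta_1-s\eta_2=0$. Then $q\notin E_\tau[2]$ forces $\wp'(q)\neq0$, so the right-hand side of \eqref{513-1} is infinite while $\wp(p)$ is finite; thus \eqref{513-1} fails. Simultaneously \eqref{a+001p} becomes $\wp'(q)/(\wp(q)-\wp(p))=0$, which also fails, so both sides of the asserted equivalence are false and the statement holds in this case as well. The argument is entirely elementary given \eqref{G_z} and the addition formula; the only point requiring care is the bookkeeping of poles—guaranteeing $\wp(q)\neq\wp(p)$ before invoking the addition formula, and separately treating the vanishing of the denominator in \eqref{513-1}—but this presents no genuine obstacle.
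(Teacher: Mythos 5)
Your proposal is correct and follows essentially the same route as the paper: reduce criticality of $G_p$ at $q=r+s\tau$ to \eqref{a+001p} via \eqref{G_z} (with the quasi-period contributions of $p$ cancelling in the average), then pass to \eqref{513-1} by the addition formula for $\zeta$. Your extra care with the pole bookkeeping ($\wp(q)\neq\wp(p)$ and the vanishing denominator $\zeta(q)-r\eta_1-s\eta_2$) is a welcome but minor elaboration of what the paper leaves implicit.
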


The \eqref{513-1} is well known as Hitchin's formula \cite{Hit1}, where Hitchin studied Einstein metrics and proved that for any fixed $(r,s)\in \mathbb{C}^{2}\setminus \frac{1}{2}\mathbb{Z}^{2}$, the $p_{r,s}(\tau)$ defined by \eqref{513-1}, as a function of $\tau\in \mathbb{H}$, is a solution of the following elliptic form of a certain Painlev\'{e} VI equation
\begin{equation}\label{PVI}
\frac{d^{2}p(\tau)}{d\tau^{2}}=\frac{-1}{32\pi^{2}}\sum_{k=0}^{3}
\wp^{\prime}\left( p(\tau)+\frac{\omega_{k}}{2};
\tau \right).\end{equation}
Furthermore, those solutions $p_{r,s}(\tau)$ with one of $(r,s)$ real and the other one purely imaginary have important applications to Einstein metrics.
See e.g. \cite{CKL-PAMQ,CKL-2025,CKLW,Hit1} and references therein for introductions of Painlev\'{e} VI equations. In particular, it was introduced in \cite{CKL-PAMQ,CKL-2025} that \eqref{PVI} governs the isomonodromic deformation of a second order linear ODE (see \eqref{GLE} in Section \ref{section6}). Furthermore, it was proved in \cite[Theorem 4.1]{CKL-PAMQ} that up to a common conjugation, the basic monodromy matrices $N_1$ and $N_2$ (see \eqref{monon1n2} in Section \ref{section6}) of the ODE \eqref{GLE} associated to the solution $p_{r,s}(\tau)$ of \eqref{PVI} can be expressed as
$$N_1=\begin{pmatrix}
e^{-2\pi is}&\\
&e^{2\pi is}
\end{pmatrix},\quad N_2=\begin{pmatrix}
e^{2\pi ir}&\\
&e^{-2\pi ir}
\end{pmatrix},$$
which are both unitary matrices if we further assume $(r,s)\in \mathbb{R}^{2}\setminus \frac{1}{2}\mathbb{Z}^{2}$.
Therefore, Theorem \ref{main-thm-1} is equivalent to the following result.

\begin{theorem}\label{main-thm-PVI}
Fix $\tau_0\in\mathbb{H}$ and $p_0\in E_{\tau_0}\setminus E_{\tau_0}[2]$. Then the Painlev\'{e} VI equation \eqref{PVI} has at most $3$ solutions $p(\tau)$ satisfying $p(\tau_0)=p_0$ such that all the monodromy matrices of the associated second order linear ODE \eqref{GLE} are unitary matrices.
\end{theorem}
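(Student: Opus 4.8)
The plan is to obtain Theorem~\ref{main-thm-PVI} as a restatement of Theorem~\ref{main-thm-1}, through the correspondence underlying Theorem~\ref{thm-0B}. First I would assemble the needed facts about \eqref{PVI} and its isomonodromy interpretation. For $(r,s)\in\mathbb{C}^{2}\setminus\frac12\mathbb{Z}^{2}$, the Hitchin solution $p_{r,s}(\tau)$, that is, the function of $\tau$ cut out by \eqref{513-1} (well defined up to the sign inherent in $\wp^{-1}$), solves \eqref{PVI}, and conversely every solution of \eqref{PVI} coincides with some $p_{r,s}$ (the classical integration of \eqref{PVI}; see \cite{Hit1,CKLW}). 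From the quasi-periodicity \eqref{40-2} and the parities of $\wp,\wp',\zeta$ one checks that the right-hand side of \eqref{513-1}, hence $p_{r,s}$ itself, is unchanged under $(r,s)\mapsto\pm(r,s)+(m,n)$ with $(m,n)\in\mathbb{Z}^{2}$; conversely, using that an isomonodromic deformation is determined by its monodromy, that by \cite[Theorem~4.1]{CKL-PAMQ} the basic monodromy of \eqref{GLE} along $p_{r,s}$ is the diagonal pair $N_{1},N_{2}$ recalled above (depending only on $s$ and on $r$ respectively), and that such a diagonal pair modulo common conjugation, i.e. modulo interchanging the two diagonal entries, remembers $(r,s)$ exactly modulo $\mathbb{Z}^{2}$ and simultaneous sign, one obtains that $p_{r,s}=p_{r',s'}$ as solutions of \eqref{PVI} if and only if $(r',s')\equiv\pm(r,s)\pmod{\mathbb{Z}^{2}}$. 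Finally, the monodromy matrices of \eqref{GLE} are all unitary if and only if $N_{1},N_{2}$ are, i.e. if and only if their eigenvalues $e^{\pm 2\pi i s},e^{\pm 2\pi i r}$ are unimodular, i.e. if and only if $(r,s)\in\mathbb{R}^{2}$; and since the degenerate Hitchin solutions corresponding to $(r,s)\in\frac12\mathbb{Z}^{2}$ take values in $E_{\tau}[2]$, a solution of \eqref{PVI} with unitary monodromy passing through a point of $E_{\tau}\setminus E_{\tau}[2]$ has parameter $(r,s)\in\mathbb{R}^{2}\setminus\frac12\mathbb{Z}^{2}$.

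Now fix $\tau_{0}\in\mathbb{H}$ and $p_{0}\in E_{\tau_{0}}\setminus E_{\tau_{0}}[2]$, and let $(r,s)\in\mathbb{R}^{2}\setminus\frac12\mathbb{Z}^{2}$. Since $\wp(\cdot;\tau_{0})$ is a double cover branched over $E_{\tau_{0}}[2]$ and $p_{0}\notin E_{\tau_{0}}[2]$, the equality $p_{r,s}(\tau_{0})=p_{0}$ in $E_{\tau_{0}}$, which is only meaningful up to the sign of $p_{r,s}$ (all that \eqref{513-1} can detect), is equivalent to $\wp(p_{r,s}(\tau_{0});\tau_{0})=\wp(p_{0};\tau_{0})$; by the very definition \eqref{513-1} of $p_{r,s}$ this is precisely equation \eqref{513-1} evaluated at $(r,s,p_{0},\tau_{0})$, which by Theorem~\ref{thm-0B} holds if and only if $\pm(r+s\tau_{0})$ is a pair of nontrivial critical points of $G_{p_{0}}(z;\tau_{0})$. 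Consequently the assignment $p_{r,s}\mapsto\{\pm(r+s\tau_{0})\}$ is a well-defined bijection from the set of solutions of \eqref{PVI} that pass through $(\tau_{0},p_{0})$ and have all monodromy matrices unitary onto the set of pairs of nontrivial critical points of $G_{p_{0}}(z;\tau_{0})$: it is well defined and injective because $\{\pm(r+s\tau_{0})\}$ determines $(r,s)$ modulo $\mathbb{Z}^{2}$ and simultaneous sign, hence determines $p_{r,s}$ by the equivalence above; and it is surjective because any pair $\pm(r+s\tau_{0})$ of nontrivial critical points of $G_{p_{0}}$ has $(r,s)\in\mathbb{R}^{2}\setminus\frac12\mathbb{Z}^{2}$ and satisfies \eqref{513-1}, so that $p_{r,s}$ is such a solution and is sent to it. Therefore the number of those solutions equals the number of pairs of nontrivial critical points of $G_{p_{0}}(z;\tau_{0})$, which is at most $3$ by Theorem~\ref{main-thm-1}, and Theorem~\ref{main-thm-PVI} follows.

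I expect the only genuine difficulty to be the bookkeeping in the first step: checking precisely that two Hitchin solutions share the same \eqref{GLE}-monodromy up to common conjugation exactly when their parameters agree modulo $\mathbb{Z}^{2}$ and simultaneous sign, so that distinct Painlev\'{e} transcendents correspond to distinct pairs of critical points (and conversely), together with a clean treatment of the degenerate parameters $(r,s)\in\frac12\mathbb{Z}^{2}$, which do not occur here because $p_{0}\notin E_{\tau_{0}}[2]$. Once this dictionary is nailed down, Theorem~\ref{main-thm-PVI} is exactly the image of Theorem~\ref{main-thm-1} under the bijection.
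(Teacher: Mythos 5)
Your proposal is correct and takes essentially the same route as the paper: the paper's proof is a one-line deduction from Theorem~\ref{thm-0B}, Theorem~\ref{main-thm-1} and \cite[Theorem 4.1]{CKL-PAMQ}, and your argument is precisely a detailed unwinding of that deduction (Hitchin parametrization of solutions of \eqref{PVI} by $(r,s)$ modulo $\mathbb{Z}^{2}$ and simultaneous sign, unitarity of the monodromy forcing $(r,s)\in\mathbb{R}^{2}\setminus\frac12\mathbb{Z}^{2}$, the initial condition $p(\tau_0)=p_0$ translating via \eqref{513-1} and Theorem~\ref{thm-0B} into $\pm(r+s\tau_0)$ being a pair of nontrivial critical points of $G_{p_0}$, and the bound $3$ from Theorem~\ref{main-thm-1}). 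The bookkeeping you flag as the only delicate point is exactly what the paper outsources to \cite[Theorem 4.1]{CKL-PAMQ}, so nothing is missing.
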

\begin{proof}
Theorem \ref{main-thm-PVI} follows directly from Theorem \ref{thm-0B}, Theorem \ref{main-thm-1} and \cite[Theorem 4.1]{CKL-PAMQ}. 
\end{proof}

Theorem \ref{main-thm-PVI} is highly nontrivial from the viewpoint of Painlev\'{e} VI equations. Indeed, for fixed $\tau$ and $p\in E_{\tau}\setminus E_{\tau}[2]$, we consider the set
\begin{equation}\label{sp}\mathcal{S}(\wp(p)):=\Big\{(r,s)\in\mathbb{C}^2 \;\Big|\; r+s\tau\notin E_{\tau}[2]\;\text{ and \eqref{513-1} holds}\Big\}.\end{equation}
Note that $(-r,-s)\in \mathcal{S}(\wp(p))$ if $(r,s)\in \mathcal{S}(\wp(p))$.
Then Theorem \ref{main-thm-PVI} says that $\#((\mathcal{S}(\wp(p))\cap\mathbb R^2)/\mathbb{Z}^2)\leq 6$. In general, one might ask whether there is an upper bound for $\#((\mathcal{S}(\wp(p))\cap ((t_1,t_2)+\mathbb R^2))/\mathbb{Z}^2)$ for given $t_1,t_2\in\mathbb{C}$. We will prove in Remark \ref{section2-2} that the upper bound is $10$ for any $t_1,t_2\in\mathbb{C}$.

Conversely, we apply Hitchin's formula \eqref{513-1} in a quite different way to prove the non-degeneracy of all critical points for generic $p$.

\begin{theorem}\label{main-thm-6}
Fix any $\tau$. Then for almost all $p\in E_{\tau}\setminus E_{\tau}[2]$, all critical points of $G_p(z)$ are non-degenerate.
\end{theorem}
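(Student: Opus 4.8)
The plan is to exploit the bijective nature of Hitchin's formula \eqref{513-1}. For a fixed $\tau$, every nontrivial critical point $q = r+s\tau$ of some $G_p$ is recorded by the map sending $(r,s)\in\mathbb{R}^2\setminus\tfrac12\mathbb{Z}^2$ (modulo $\mathbb{Z}^2$ and modulo $(r,s)\mapsto(-r,-s)$) to the unique pair $\pm p_{r,s}(\tau)\in E_\tau$ determined by \eqref{513-1}; call this map $\Phi_\tau$. The key observation is that $q$ being a \emph{degenerate} critical point of $G_{p}$ with $p=p_{r,s}(\tau)$ is an additional scalar equation on $(r,s)$, namely $\det D^2 G_{p_{r,s}(\tau)}(r+s\tau)=0$. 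So I would first show that the "bad" set of parameters $p$ — those for which $G_p$ has \emph{some} degenerate nontrivial critical point — is the $\Phi_\tau$-image of the subset of the $(r,s)$-plane cut out by this extra equation. (Trivial critical points in $E_\tau[2]$ are handled separately; see below.) If that subset has measure zero in $(r,s)$-space, and $\Phi_\tau$ is Lipschitz (or at least maps null sets to null sets), then the bad set of $p$ is null, which is exactly the claim.

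The next step is to make the degeneracy condition explicit and to see it is a nontrivial real-analytic equation. Differentiating \eqref{G_z} once more, $-4\pi\,\partial_z^2 G(z) = -\wp(z) - (\text{const depending on the }(r,s)\text{-coordinates})$ in an appropriate sense; more usefully, one writes $G_p$ in the $z$-variable and computes its complex Hessian. Since $G_p$ is real-valued, $\det D^2 G_p(q)$ equals $|\partial_{zz}G_p(q)|^2 - (\partial_{z\bar z}G_p(q))^2$ up to a positive constant, and $\partial_{z\bar z}G_p = -\tfrac14\Delta G_p = \tfrac1{|E_\tau|}$ away from the singularities. Thus degeneracy at $q$ amounts to $|\partial_{zz}G_p(q)| = 1/|E_\tau|$, and $\partial_{zz}G_p(q)$ is an explicit elliptic-function expression in $q$ and $p$; after substituting Hitchin's formula for $\wp(p)$ in terms of $(r,s)$, this becomes a single real-analytic equation $F_\tau(r,s)=0$ on $\mathbb{R}^2\setminus\tfrac12\mathbb{Z}^2$. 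It then suffices to verify $F_\tau\not\equiv 0$, for which one can invoke Theorem \ref{main-thm-2}: there exist $(\tau,p)$ for which $G_p$ has $10$ critical points all non-degenerate, equivalently points $(r,s)$ with $F_\tau(r,s)\neq 0$; a continuity/real-analyticity argument then gives that $\{F_\tau=0\}$ is a proper analytic subset, hence Lebesgue-null, for every $\tau$. Finally, the trivial critical points $\tfrac{\omega_l}{2}$ are always critical points of $G_p$, and $\det D^2 G_p(\tfrac{\omega_l}{2};\tau)=0$ is, for fixed $\tau$, again a single real-analytic equation in $p$ (using the explicit formula \eqref{G_z} evaluated near the half-periods), so the set of $p$ making some trivial critical point degenerate is also null — unless that equation is identically zero, which is again ruled out by Theorem \ref{main-thm-2}(1).

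I would then assemble these pieces: the set of "bad" $p$ is contained in the union of $\Phi_\tau(\{F_\tau=0\})$ and finitely many analytic hypersurfaces in the $p$-variable (one per half-period), each of which is null in $E_\tau$; since there are at most $3$ pairs of nontrivial critical points (Theorem \ref{main-thm-1}), only finitely many branches of $\Phi_\tau$ are relevant and the image of a null set under each Lipschitz branch is null. Conclude that almost every $p\in E_\tau\setminus E_\tau[2]$ avoids all of these, so all critical points of $G_p(z)$ — trivial and nontrivial — are non-degenerate.

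The main obstacle I anticipate is twofold. First, showing $F_\tau\not\equiv 0$ uniformly in $\tau$: one must be careful that the non-degeneracy example of Theorem \ref{main-thm-2} is available at \emph{every} $\tau$, not just one special value — this is where a monotonicity or openness argument, or a direct computation of $F_\tau$ on a curve in $(r,s)$-space where $p_{r,s}(\tau)\to E_\tau[2]$, will be needed, possibly leaning on Theorems \ref{thm-A0}–\ref{thm-A00} about the $G$ (i.e. $p\to 0$) limit to seed the argument. Second, controlling the regularity of $\Phi_\tau$ near the locus where $p_{r,s}(\tau)\in E_\tau[2]$ (so that $q$ fails to be a genuine nontrivial critical point) and near $\tfrac12\mathbb{Z}^2$: there $\wp(p_{r,s})\to\infty$ and the Lipschitz bound degenerates, so one should excise a neighborhood of that locus, show it contributes a null set of $p$, and run the measure estimate on the complement.
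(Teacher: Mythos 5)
Your framework is the right one---parametrizing nontrivial critical points by $(r,s)$ via Hitchin's formula, treating degeneracy as a scalar real-analytic condition in $(r,s)$, and handling the trivial critical points separately through the explicit criterion at the half-periods (that part is fine: the equation $\det D^2G_p(\frac{\omega_k}{2})=0$ is the circle/line condition of Theorem \ref{main-thm-01}, never identically satisfied since $\wp(p-\frac{\omega_k}{2})\to\infty$ as $p\to\frac{\omega_k}{2}$). But the nontrivial part of your argument has a genuine gap exactly where you flag ``the main obstacle'': you reduce everything to showing that $F_\tau(r,s):=\det D^2G_{p_{r,s}(\tau)}(r+s\tau)$ is not identically zero, for \emph{every} $\tau$, on \emph{every} connected component of its natural domain $U=\mathbb{R}^2\setminus(\tfrac12\mathbb{Z}^2\cup f^{-1}(\{e_1,e_2,e_3,\infty\}))$, where $f$ is the Hitchin map \eqref{eq4-11}---and you do not prove this. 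That domain may be disconnected by the analytic curves $f^{-1}(\{e_k\})$ (or, if you extend $F_\tau$ across them, you must justify the extension), and the nonvanishing witnesses you propose to invoke are of no help in general: Theorem \ref{main-thm-2} produces examples only for special $\tau$, and for a $\tau$ with three non-degenerate critical points of $G$ the limit $p\to\frac{\omega_k}{2}$ produces \emph{no} nontrivial critical points at all (Theorem \ref{thm-B}), hence no point where $F_\tau\neq 0$. If $F_\tau$ vanished identically on some component, your measure estimate would collapse there.

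The paper closes this gap with an identity you are missing (Theorem \ref{thm-nonde}): differentiating the critical-point equation \eqref{frsrs} in $r$ and $s$ gives
\begin{equation*}
\det D^2G_p(q)=-\frac{c_{p,q}}{16\pi^2\operatorname{Im}\tau}\,
\det\begin{pmatrix}\partial_r\operatorname{Re}f & \partial_s\operatorname{Re}f\\ \partial_r\operatorname{Im}f & \partial_s\operatorname{Im}f\end{pmatrix}(r_0,s_0),
\qquad c_{p,q}>0,
\end{equation*}
so your $F_\tau$ is, up to a positive factor, the Jacobian determinant of $f$ itself. Consequently a degenerate nontrivial critical point occurs precisely when $\wp(p)=f(r_0,s_0)$ is a \emph{singular value} of $f:U\to\mathbb{C}$, and Sard's theorem applied to $f$ immediately shows that the set of such $\wp(p)$ is Lebesgue-null. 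This needs no proof that $F_\tau\not\equiv0$, no Lipschitz control of the inverse map $\Phi_\tau$ near $E_\tau[2]$, and it automatically covers the case where the Jacobian vanishes on an entire component (the image of that component is then null anyway). In short: you should apply Sard in the target (singular values of $f$ in the $\wp(p)$-plane) rather than in the source (the zero set of $F_\tau$ in the $(r,s)$-plane); the only way I see to salvage your version is to prove the displayed identity, at which point the paper's argument finishes the proof directly. (Two minor slips: $\det D^2u=4\big((\partial_{z\bar z}u)^2-|\partial_{zz}u|^2\big)$, the opposite sign from what you wrote, and $\partial_{z\bar z}G_p=\frac{1}{4|E_\tau|}$, not $\frac{1}{|E_\tau|}$; neither affects the degeneracy condition.)
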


Theorem \ref{main-thm-6} is extremely useful for us to count the number of critical points. To do this, the Brouwer degree theory plays an important role. For any non-degenerate critical point $q$ of $G_p(z)$, it is well known that the local degree (denoted by $\deg_p(q)$) of $\nabla G_p(z)$ at $z=q$ is $1$ (resp. $-1$) if $\det D^2G_p(q)>0$ (resp. $\det D^2G_p(q)<0$). Suppose that all critical points of $G_p(z)$ are non-degenerate, then we will see in Remark \ref{rmk3-11} that the degree counting formula is
\begin{equation}\label{deg-count}
\sum_{\text{$q$ is a critical point of $G_p$}}\deg_p(q)=-2.
\end{equation}
Therefore, the information whether the trivial critical points $\frac{\omega_k}{2}$ are degenerate or not is important.
For this purpose, we define
\begin{equation}\label{B00}
\mathcal{B}_0:=\Big\{z\in\mathbb{C}\; :\; \Big|z-\Big(\frac{\pi}{\operatorname{Im}\tau}-\eta_1\Big)\Big|<\frac{\pi}{\operatorname{Im}\tau}\Big\},
\end{equation}
and for $k\in\{1,2,3\}$, we define
\begin{align}\label{alphak0}
\alpha_k:=\frac{\frac{\pi}{\operatorname{Im}\tau}-(\eta_1+e_k)}{3e_k^2-\frac{g_2}{4}},\quad \beta_k:=\frac{\pi}{|3e_k^2-\frac{g_2}{4}|\operatorname{Im}\tau}>0,
\end{align}
\begin{equation}\label{alphak1}
\mathcal{B}_k:=\begin{cases}\bigg\{z\in\mathbb{C}\; :\; \bigg|z-e_k-\frac{\overline{\alpha_k}}{|\alpha_k|^2-\beta_k^2}\bigg|<\frac{\beta_k}{\left||\alpha_k|^2-\beta_k^2\right|}\bigg\}\quad\text{if }|\alpha_k|\neq \beta_k,\\
\Big\{z\in\mathbb{C}\; :\; \operatorname{Re}(\alpha_k (z-e_k))>\frac12\Big\}\quad\text{if }|\alpha_k|=\beta_k,\end{cases}
\end{equation}
that is, $\mathcal{B}_k$ is either an open disk or an open half plane. We will prove in Section \ref{section3} that $|\alpha_k|=\beta_k$ is equivalent to that $\frac{\omega_k}{2}$ is a degenerate critical point of $G(z)$. It was proved in \cite[Theorem 1.2]{CKLW} (we will recall it in Theorem \ref{thm-CKLW}) that for any $\tau$, there is at most one trivial critical point of $G(z;\tau)$ being degenerate. This implies that there is at most one half plane among $\mathcal{B}_k$'s. For example, if $\tau=e^{\pi i/3}=\frac12+\frac{\sqrt{3}}{2}i$ or $\tau=i$, then $\mathcal{B}_k$'s are all disks, the figures of which are presented in Figures 1 and 2.
Our next result gives the criterion for the degeneracy of trivial critical points.

\begin{theorem}\label{main-thm-01} Let $p\in E_{\tau}\setminus E_{\tau}[2]$ and fix $k\in\{0,1,2,3\}$. Then $\frac{\omega_k}{2}$ is a degenerate critical point of $G_p(z)$ if and only if $\wp(p)\in \partial\mathcal{B}_k$, where $\mathcal{B}_k$ is defined by \eqref{B00}-\eqref{alphak1}.
\end{theorem}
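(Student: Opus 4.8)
The plan is to compute the full complex Hessian of $G_p$ at each trivial critical point $\frac{\omega_k}{2}$, $k\in\{0,1,2,3\}$, in closed form from formula \eqref{G_z}, and then read off exactly when this Hessian is singular. The starting point is the elementary identity, valid for any real $C^2$ function $u$,
\[
\det D^2u \;=\; 4\Big(\big(\partial_z\partial_{\bar z}u\big)^2-\big|\partial_z^2u\big|^2\Big),
\qquad \partial_z\partial_{\bar z}u=\tfrac14\Delta u\in\mathbb R,
\]
so that a critical point of $u$ is degenerate if and only if $|\partial_z\partial_{\bar z}u|=|\partial_z^2u|$ there. Applying this with $u=G_p$: since $p\notin E_\tau[2]$, none of the points $\frac{\omega_k}{2}$ lies in the cosets $\pm p+\Lambda_\tau$, hence $G_p$ is smooth near $\frac{\omega_k}{2}$ and $\Delta G_p\equiv 1/\operatorname{Im}\tau$ in a neighbourhood (recall $|E_\tau|=\operatorname{Im}\tau$ and $\Delta G\equiv 1/\operatorname{Im}\tau$ off the lattice), so $4\pi\,\partial_z\partial_{\bar z}G_p(\frac{\omega_k}{2})=\pi/\operatorname{Im}\tau$ for every $k$; for $k\in\{1,2,3\}$ this also equals $|3e_k^2-\tfrac{g_2}{4}|\,\beta_k$ by \eqref{alphak0}.

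The next step is to differentiate \eqref{G_z} once more in $z$ for $G_p=\frac12\big(G(z-p)+G(z+p)\big)$. Writing $z=r+s\tau$ with $r,s\in\mathbb R$ viewed as $\mathbb R$-linear functions of $(z,\bar z)$ and using Legendre's relation $\eta_1\tau-\eta_2=2\pi i$, one finds $\partial_z(r\eta_1+s\eta_2)=\eta_1-\pi/\operatorname{Im}\tau$; combined with $\zeta'=-\wp$ this yields
\[
4\pi\,\partial_z^2G_p(z)\;=\;\tfrac12\big(\wp(z-p)+\wp(z+p)\big)+\eta_1-\frac{\pi}{\operatorname{Im}\tau}.
\]
For $k\in\{1,2,3\}$ I then insert the classical half-period identity $\wp\big(u+\tfrac{\omega_k}{2}\big)=e_k+\frac{(e_k-e_i)(e_k-e_j)}{\wp(u)-e_k}$ (with $\{i,j,k\}=\{1,2,3\}$), evaluated at $u=\pm p$ using $\wp(-p)=\wp(p)$, together with $(e_k-e_i)(e_k-e_j)=3e_k^2-\tfrac{g_2}{4}$ and the definition of $\alpha_k$, to get
\[
4\pi\,\partial_z^2G_p\Big(\frac{\omega_k}{2}\Big)\;=\;\Big(3e_k^2-\tfrac{g_2}{4}\Big)\Big(\frac{1}{\wp(p)-e_k}-\alpha_k\Big).
\]
For $k=0$ the identity is not needed: at $z=0$ the bracket is just $\wp(p)$, so $4\pi\,\partial_z^2G_p(0)=\wp(p)+\eta_1-\pi/\operatorname{Im}\tau$.

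Finally I impose the degeneracy condition $|\partial_z\partial_{\bar z}G_p|=|\partial_z^2G_p|$ at $\frac{\omega_k}{2}$. For $k=0$ it reads $\big|\wp(p)-(\tfrac{\pi}{\operatorname{Im}\tau}-\eta_1)\big|=\tfrac{\pi}{\operatorname{Im}\tau}$, i.e.\ $\wp(p)\in\partial\mathcal B_0$ by \eqref{B00}. For $k\in\{1,2,3\}$, dividing by $|3e_k^2-\tfrac{g_2}{4}|\neq 0$ and using $\tfrac{\pi}{\operatorname{Im}\tau}=|3e_k^2-\tfrac{g_2}{4}|\,\beta_k$, it becomes $\beta_k\,|\wp(p)-e_k|=|1-\alpha_k(\wp(p)-e_k)|$. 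Squaring and completing the square in $w:=\wp(p)-e_k$ turns this into $(\beta_k^2-|\alpha_k|^2)|w|^2+2\operatorname{Re}(\alpha_k w)=1$, hence into $\big|w-\tfrac{\overline{\alpha_k}}{|\alpha_k|^2-\beta_k^2}\big|=\tfrac{\beta_k}{\,\big||\alpha_k|^2-\beta_k^2\big|\,}$ when $|\alpha_k|\neq\beta_k$ and into $\operatorname{Re}\big(\alpha_k(\wp(p)-e_k)\big)=\tfrac12$ when $|\alpha_k|=\beta_k$ — which is precisely $\wp(p)\in\partial\mathcal B_k$ by \eqref{alphak1}. This completes the plan.

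The argument has no deep obstacle; it is essentially a careful computation, and the two places that need attention are (i) the evaluation of $\partial_z(r\eta_1+s\eta_2)$, where one must keep the $\bar z$-dependence of $(r,s)$ and invoke Legendre's relation, and (ii) tracking the sign of $|\alpha_k|^2-\beta_k^2$ when completing the square. As a byproduct, running the same computation for $G$ in place of $G_p$ gives $4\pi\,\partial_z^2G(\frac{\omega_k}{2})=-(3e_k^2-\tfrac{g_2}{4})\alpha_k$ against $4\pi\,\partial_z\partial_{\bar z}G(\frac{\omega_k}{2})=|3e_k^2-\tfrac{g_2}{4}|\,\beta_k$, so that $|\alpha_k|=\beta_k$ holds exactly when $\frac{\omega_k}{2}$ is a degenerate critical point of $G$, which is the reason for the case split in \eqref{alphak1}.
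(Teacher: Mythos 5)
Your proposal is correct and follows essentially the same route as the paper: the identity $\det D^2G_p=4\big((\partial_z\partial_{\bar z}G_p)^2-|\partial_z^2G_p|^2\big)$ together with $4\pi\,\partial_z^2G_p(z)=\tfrac12(\wp(z+p)+\wp(z-p))+\eta_1-\tfrac{\pi}{\operatorname{Im}\tau}$ is exactly the paper's formula \eqref{Gpdet} (there derived via real second partials and the Jacobian $J_\phi$), and the subsequent use of the half-period addition formula and the completing-the-square reduction to $\partial\mathcal{B}_k$ reproduces Lemmas \ref{lemma3-1} and \ref{coro3-1}. The byproduct you note, that $|\alpha_k|=\beta_k$ characterizes degeneracy of $\frac{\omega_k}{2}$ for $G$ itself, is also the paper's observation in Case 2 of Lemma \ref{coro3-1}.
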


\begin{figure}
\label{O5-1}\includegraphics[width=1.8in]{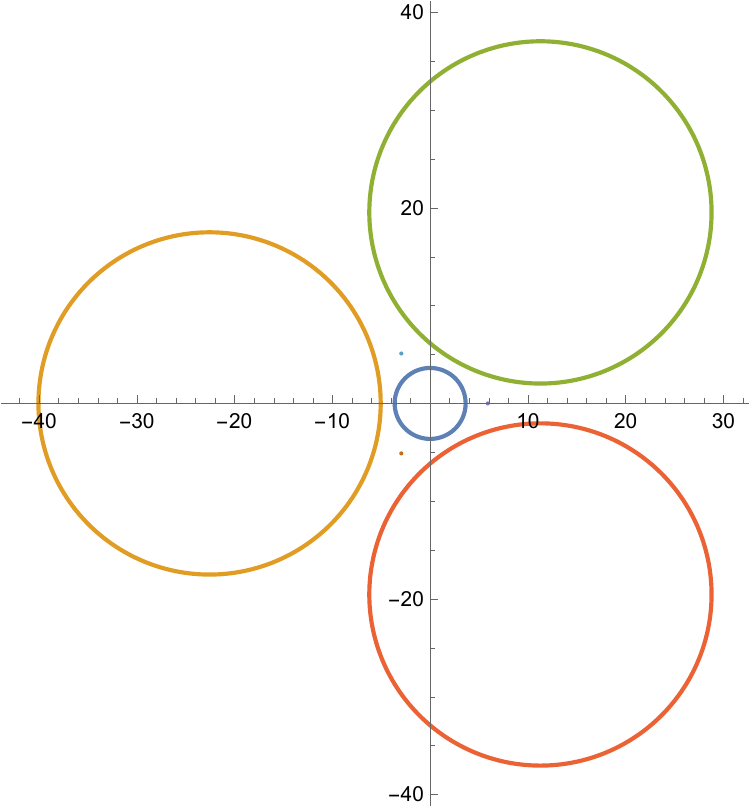}\caption{The four disks and $\{e_1,e_2,e_3\}$ for $\tau=e^{\pi i/3}=\frac12+\frac{\sqrt{3}}{2}i$: the smallest circle for $\partial\mathcal{B}_0$, left for $\partial\mathcal{B}_1$, upper right for $\partial\mathcal{B}_2$ and lower right for $\partial\mathcal{B}_3$.}%
\end{figure}

\begin{figure}
\label{O5-1}\includegraphics[width=1.8in]{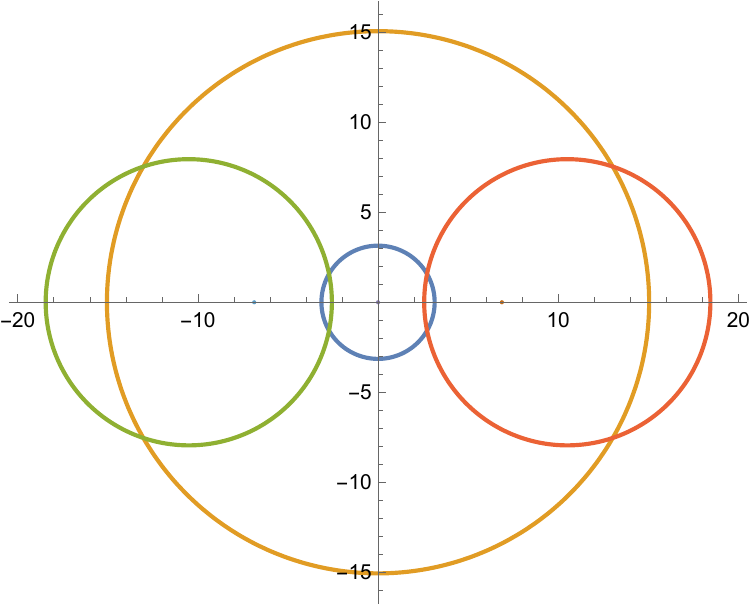}\caption{The four disks and $\{e_1,e_2,e_3\}$ for $\tau=i$: the smallest circle for $\partial\mathcal{B}_0$, biggest for $\partial\mathcal{B}_3$, left for $\partial\mathcal{B}_1$ and right for $\partial\mathcal{B}_2$.}%
\end{figure}

Thanks to Theorems \ref{main-thm-6}-\ref{main-thm-01}, we can determine more accurately the possible numbers of critical points of $G_p(z)$ for generic $p$ as follows.

\begin{theorem}
\label{thm-section5}
Fix $\tau$. Let $\Xi$ be a connected component of the open set $\mathbb{C}\setminus (\{e_1,e_2,e_3\}\cup\cup_{k=0}^3\partial \mathcal{B}_k)$, and define
$$m(\Xi):=\#\Big\{\frac{\omega_k}{2}\;: \;0\leq k\leq 3,\; \det D^2G_p\Big(\frac{\omega_k}{2}\Big)>0\;\text{for }p\in\wp^{-1}(\Xi)\Big\},$$ which is a constant independent of the choice of $p\in \wp^{-1}(\Xi)$ by Theorem \ref{main-thm-01}. Then $m(\Xi)\leq 2$, and
\begin{itemize}
\item[(1)] if $m(\Xi)=0$, then $G_p(z)$ has at least $6$ critical points for any $\wp(p)\in\Xi$, and $G_p(z)$ has exactly either $6$ or $10$ critical points that are all non-degenerate for almost all $\wp(p)\in \Xi$. Furthermore, if the number is $6$, then the unique pair of nontrivial critical points are the minimal points of $G_p(z)$.
\item[(2)] if $m(\Xi)=1$,  then $G_p(z)$ has exactly either $4$ or $8$ critical points that are all non-degenerate for almost all $\wp(p)\in \Xi$.
\item[(3)] if $m(\Xi)=2$, then $G_p(z)$ has exactly $6$ critical points for any $\wp(p)\in\Xi$, and the unique pair of nontrivial critical points are non-degenerate saddle points of $G_p(z)$ for almost all $\wp(p)\in \Xi$.
\end{itemize}
\end{theorem}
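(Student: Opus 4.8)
The plan is to reduce everything to the degree–counting identity \eqref{deg-count} together with two pieces of sign information: the parity forced by the evenness of $G_p$, and the classification of the trivial critical points from Theorems \ref{main-thm-01} and \ref{main-thm-6}. Three observations are used throughout. First, $G_p(-z)=G_p(z)$ gives $D^2G_p(-q)=D^2G_p(q)$, so nontrivial critical points, together with their Hessian signs, occur in pairs; write $n_+$ (resp.\ $n_-$) for the number of pairs $\pm q$ with $\det D^2G_p(q)>0$ (resp.\ $<0$), so that $j:=n_++n_-\le 3$ by Theorem \ref{main-thm-1}. Second, away from $\pm p$ one has $\Delta G_p=|E_\tau|^{-1}>0$, so $G_p$ is strictly subharmonic there and has no local maximum; hence a critical point with $\det D^2G_p>0$ is a local minimum, one with $\det D^2G_p<0$ is a saddle, and any isolated critical point has gradient index at most $1$, with index $1$ exactly at the minima. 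Third, on a fixed component $\Xi$, Theorem \ref{main-thm-01} gives $\det D^2G_p(\tfrac{\omega_k}2)\ne 0$ for all $\wp(p)\in\Xi$; since this determinant depends only on $\wp(p)$ and is continuous and non‑vanishing on the connected set $\Xi$, its sign is constant, so $m(\Xi)$ is well defined.

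Assume first that all critical points of $G_p$ are non‑degenerate, which by Theorem \ref{main-thm-6} holds for almost every $p$. Then \eqref{deg-count} reads $m+(4-m)(-1)+2n_+-2n_-=-2$, i.e.\ $n_+-n_-=1-m$. Together with $n_\pm\ge 0$, $n_++n_-\le 3$, and the bound $m(\Xi)\le 2$ established below, this leaves: for $m=0$, $(n_+,n_-)\in\{(1,0),(2,1)\}$, so $j\in\{1,3\}$ and $n(p)=4+2j\in\{6,10\}$, the nontrivial pair consisting of two minima — hence the minimal points — when $n(p)=6$; for $m=1$, $(n_+,n_-)\in\{(0,0),(1,1)\}$ (parity excludes odd $j$), so $n(p)\in\{4,8\}$; for $m=2$, $(n_+,n_-)\in\{(0,1),(1,2)\}$ (parity excludes $j=2$), so $n(p)\in\{6,10\}$, the nontrivial pair consisting of two saddles when $n(p)=6$. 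The assertions ``for any $\wp(p)\in\Xi$'' need no non‑degeneracy hypothesis: if $m=0$ and $n(p)=4$, then the four critical points are exactly the trivial ones, which are non‑degenerate on $\Xi$, so \eqref{deg-count} applies and yields $-4\ne-2$, a contradiction; thus $n(p)\ge 6$. Likewise $m=2$ with $n(p)=4$ gives $\sum\deg_p=0$, impossible. Finally, when $n(p)=6$ the unique nontrivial pair $\pm q$ has $\mathrm{ind}(\nabla G_p;q)=\mathrm{ind}(\nabla G_p;-q)$, and \eqref{deg-count} forces this common index to be $+1$ if $m=0$ (a minimum, as $G_p$ has no maximum) and $-1$ if $m=2$ (a saddle).

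Two inputs remain, and these carry the real content. The first is $m(\Xi)\le 2$. From $\det D^2u=4\big((u_{z\bar z})^2-|u_{zz}|^2\big)$, the identity $\Delta G_p=|E_\tau|^{-1}$, and $-4\pi G_{zz}=\tfrac{\pi}{\operatorname{Im}\tau}-\eta_1-\wp$ (so that, by the addition theorem, $\tfrac12(\wp(\tfrac{\omega_k}2+p)+\wp(\tfrac{\omega_k}2-p))=\wp(\tfrac{\omega_k}2+p)=w_k(\wp(p))$ for explicit $w_k$), one checks that $\det D^2G_p(\tfrac{\omega_k}2)>0$ is equivalent to $w_k(\wp(p))\in\mathcal{B}_0$, where $w_0=\mathrm{id}$ and, for $k\in\{1,2,3\}$, $w_k(c)=\frac{e_kc+2e_k^2-g_2/4}{c-e_k}$ is the Möbius involution interchanging $e_k\leftrightarrow\infty$ and $e_\ell\leftrightarrow e_m$; since $w_1w_2=w_2w_1=w_3$, the four sets $\{\det D^2G_p(\tfrac{\omega_k}2)>0\}$ are the images of the single disk $\mathcal{B}_0$ under a Klein four‑group of Möbius maps, and an explicit computation with $e_1+e_2+e_3=0$, $e_\ell e_m=e_k^2-g_2/4$ and Legendre's relation shows no three of them meet. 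I expect this disk‑geometry step to be the main obstacle. The second input is that when $m(\Xi)=2$ the case $j=3$, i.e.\ $n(p)=10$, never occurs — genuinely extra information, since $(n_+,n_-)=(1,2)$ is consistent with \eqref{deg-count}. I would obtain it by pushing the same analysis further, showing that the region $w_i(\mathcal{B}_0)\cap w_j(\mathcal{B}_0)$ in which $m=2$ is contained in the locus where Hitchin's formula \eqref{513-1} has at most one pair of solutions $\pm(r+s\tau)\notin E_\tau[2]$; equivalently, via the correspondence between the sign of $\det D^2G_p$ at a critical point and whether the associated anti‑holomorphic map is attracting or repelling there, this is a refinement of the anti‑holomorphic dynamics argument behind Theorem \ref{main-thm-1} in the sub‑case that two of the half‑period fixed points are attracting. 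Combined with $n(p)\ge 6$ for $m=2$ and the parity constraint $j\in\{1,3\}$, this gives $n(p)=6$ for every $\wp(p)\in\Xi$, and by Theorem \ref{main-thm-6} the nontrivial pair is a non‑degenerate saddle for almost all $\wp(p)\in\Xi$. Assembling the three cases completes the proof.
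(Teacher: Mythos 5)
Your reduction of the theorem to the degree identity \eqref{deg-count} plus two extra inputs --- the bound $m(\Xi)\le 2$ and the exclusion of $10$ critical points when $m(\Xi)=2$ --- is the right skeleton, and your bookkeeping of $(n_+,n_-)$ in each case, as well as the direct degree argument ruling out $n(p)=4$ for all $\wp(p)\in\Xi$ when $m\in\{0,2\}$, is sound. But the two inputs you correctly single out as ``the real content'' are exactly the parts you do not prove, and your proposed routes to them are doubtful. For $m(\Xi)\le 2$: the degree formula alone is consistent with $m=3$ (it would give $(n_+,n_-)=(0,2)$ and $N=8$, no contradiction), so the bound cannot come from index counting; and your plan to show that no three of the sets $w_k(\mathcal{B}_0)$ meet by ``an explicit computation with $e_1+e_2+e_3=0$ and Legendre's relation'' is not carried out and is unlikely to succeed as stated, since $\mathcal{B}_0$ is determined by the transcendental data $\eta_1$ and $\operatorname{Im}\tau$, not by algebraic relations among the $e_k$. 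The paper obtains $m(\Xi)\le 2$ dynamically (Corollary \ref{coro2-8}): each half-period in $D^+$ is an attracting fixed point of $g$, and because $g$ is odd it must absorb $4$ of the at most $8$ critical grand orbits of $\widetilde h$ (Fatou's theorem plus Lemmas \ref{lemma2}--\ref{lemma3}), so at most two half-periods can lie in $D^+$.

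The second input has the same problem in sharper form. You correctly observe that $(n_+,n_-)=(1,2)$ is compatible with \eqref{deg-count} when $m=2$, and you gesture at ``a refinement of the anti-holomorphic dynamics argument in the sub-case that two of the half-period fixed points are attracting'' --- which is indeed the paper's mechanism (two attracting half-periods absorb all $8$ critical orbits, leaving no other attracting fixed points, hence $N^+=2$ and $N=6$) --- but you do not execute it. Moreover, that dynamical argument only applies when $0$ is a regular value of $\phi$, i.e.\ for almost all $\wp(p)\in\Xi$; upgrading ``$N=6$ for almost all $\wp(p)$'' to ``$N=6$ for \emph{every} $\wp(p)\in\Xi$'' in part (3) requires a separate argument for non-generic $p$, where nontrivial critical points may be degenerate. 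The paper does this with a local Brouwer-degree computation for the Hitchin map $f:U\to\mathbb{C}$ of \eqref{eq4-11}: an isolated ``extra'' solution of $f(r,s)=\wp(p_0)$ would have local degree $0$, forcing a nearby regular value to have two preimages in a small ball and hence at least $8$ critical points, contradicting the generic count of $6$. This step is entirely absent from your proposal, so the ``for any $\wp(p)\in\Xi$'' claim in (3) is not established.
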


\begin{Remark} Theorem \ref{thm-section5}-(3) shows that the number of critical points is always $6$ for all $\wp(p)\in \Xi$ when $m(\Xi)=2$. This result is stronger than the degree counting formula \eqref{deg-count} because the possibility of $10$ critical points are excluded in Theorem \ref{thm-section5}-(3). We will see from Theorem \ref{main-thm-8}-(2) for the case $\tau=i$ that $m(\Xi)=2$ could occur. Theorem \ref{thm-section5} is almost sharp due to the following reasons.

\begin{itemize}
\item[(1)] Theorem \ref{main-thm-2}-(1) and Theorem \ref{thm-5c}  in Section \ref{section3} indicate that the numbers both $6$ and $10$ in Theorem \ref{thm-section5}-(1) could occur.  Theorem \ref{main-thm-2}-(3-1) and Theorem \ref{thm-B} in Section \ref{section3} indicate that the numbers both $4$ and $8$ in Theorem \ref{thm-section5}-(2) could occur.

\item[(2)] Since nontrivial critical points might be degenerate by Theorem \ref{main-thm-2}-(2), we can not expect that the number of critical points is the same in $\Xi$ for all $\Xi$'s. For example, for $\Xi$ with $m(\Xi)=0$, $\Xi$ might contain some curves along which $G_p(z)$ has degenerate nontrivial critical points, and then the number of critical points should change when $\wp(p)\in \Xi$ crosses these curves. In other words, there may happen that the number of critical points is $6$ for $\wp(p)$ in some open subset of $\Xi$ while the number becomes $10$ for $\wp(p)$ in some other open subset of $\Xi$. Our next result confirms that this phenomenon really occurs.
\end{itemize}
\end{Remark}

\begin{theorem}
\label{thm-section5-5}
There exist $\tau$ and a connected component $\Xi$ of $\mathbb{C}\setminus (\{e_1,e_2,e_3\}\cup\cup_{k=0}^3\partial \mathcal{B}_k)$ satisfying $m(\Xi)=0$ such that the following holds: Define 
\[\Omega_N:=\bigg\{\wp(p)\in\Xi\,:\begin{array}{l}\text{$G_p(z;\tau)$ has exactly $N$ critical points}\\\text{that are all non-degenerate}\end{array}\bigg\}\;\text{for }N=6,10,\]
then $\Omega_6\neq \emptyset$ and $\Omega_{10}\neq \emptyset$ are both open subsets of $\Xi$,  $\partial\Omega_6\cap\partial\Omega_{10}\cap\Xi\neq\emptyset$, and $G_p(z;\tau)$ has degenerate nontrivial critical points for any $\wp(p)\in(\partial\Omega_6\cup\partial\Omega_{10})\cap\Xi$. 
\end{theorem}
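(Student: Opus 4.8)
The plan is to produce a concrete $\tau$ for which the phenomenon occurs by a continuity/degeneration argument, tracking the number of critical points of $G_p$ as $\wp(p)$ moves within a single component $\Xi$ with $m(\Xi)=0$. First I would invoke Theorem \ref{main-thm-2}-(1): there is a choice of $\tau$ and a point $p_\ast$ near some $\frac{\omega_k}{4}$ with $G_{p_\ast}$ having exactly $10$ non-degenerate critical points and $\det D^2 G_{p_\ast}(\frac{\omega_l}{2})<0$ for all $l$. Since the four signs $\operatorname{sgn}\det D^2 G_p(\frac{\omega_l}{2})$ are locally constant off $\cup_k\partial\mathcal B_k$ by Theorem \ref{main-thm-01}, the component $\Xi$ of $\mathbb C\setminus(\{e_1,e_2,e_3\}\cup\cup_k\partial\mathcal B_k)$ containing $\wp(p_\ast)$ has $m(\Xi)=0$. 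By Theorem \ref{thm-section5}-(1), for almost all $\wp(p)\in\Xi$ the number of critical points is $6$ or $10$, and non-degeneracy is generic by Theorem \ref{main-thm-6}; so $\Omega_{10}$ is a nonempty open subset of $\Xi$ (it contains $\wp(p_\ast)$ up to shrinking), and it remains to show $\Omega_6\neq\emptyset$ in the \emph{same} $\Xi$.

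To force $\Omega_6\neq\emptyset$ I would use a degeneration of $p$ toward a half-period. Fix $k$ and let $\wp(p)\to\infty$, i.e. $p\to\frac{\omega_k}{2}$ (here I should instead let $p\to 0$, so $\wp(p)\to\infty$, but $0\notin$ the picture; the relevant limit making $G_p$ ``simple'' is $p\to\frac{\omega_j}{2}$ for appropriate $j$, where $G_p(z)=\frac12(G(z-\frac{\omega_j}{2})+G(z+\frac{\omega_j}{2}))=G(z-\frac{\omega_j}{2})$ degenerates to a single translated Green function with only $3$ critical points). More carefully: I claim one can choose the \emph{same} $\tau$ as above so that the segment in the $\wp(p)$-plane joining $\wp(p_\ast)$ to the relevant limiting value stays inside $\Xi$ except possibly crossing loci of degenerate \emph{nontrivial} critical points — it does not cross any $\partial\mathcal B_l$, because $m$ does not change along it (one checks the four trivial Hessian signs remain $(-,-,-,-)$ all the way, using that $G(z-\frac{\omega_j}{2})$ has all trivial critical points non-degenerate saddles by Theorem \ref{thm-LW}). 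Then for $\wp(p)$ near the limiting value the number of critical points of $G_p$ is $4$ or $6$; since $m(\Xi)=0$ forces the count to be $\geq 6$ (Theorem \ref{thm-section5}-(1)) it must be $6$. Hence $\Omega_6\neq\emptyset$.

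Granting $\Omega_6,\Omega_{10}$ are both nonempty open subsets of the connected set $\Xi$, they cannot cover $\Xi$ (else $\Xi$ is disconnected), so there is a point $c_0\in\Xi\setminus(\Omega_6\cup\Omega_{10})$; moving along a path in $\Xi$ from $\Omega_{10}$ to $\Omega_6$ and taking the first exit time produces $c_0\in\partial\Omega_6\cap\partial\Omega_{10}\cap\Xi$ (both boundaries are met: on one side the count is $6$, on the other $10$, and the almost-everywhere dichotomy of Theorem \ref{thm-section5}-(1) forces the transition point to be a limit of both). Finally, at any $c_0\in(\partial\Omega_6\cup\partial\Omega_{10})\cap\Xi$ the total count (with multiplicity, via the degree formula \eqref{deg-count} and Remark \ref{rmk3-11}) is even and the signs at trivial points are fixed, while the ``missing'' critical points compared to the nearby $10$-point regime must have collided; since trivial critical points stay non-degenerate on $\Xi$, the collision is among nontrivial ones, so $G_{p}$ has a degenerate nontrivial critical point there — one makes this precise by a local Brouwer-degree/implicit-function argument: a non-degenerate nontrivial critical point persists under perturbation, so disappearance of a nontrivial pair across $c_0$ forces degeneracy at $c_0$.

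The main obstacle is the geometric control in the second paragraph: verifying that the degeneration path from $\wp(p_\ast)$ to the simple limit stays in one component $\Xi$, i.e. that the four trivial Hessian signs do not flip along it (equivalently the path avoids all $\partial\mathcal B_l$). I expect this to need an explicit, possibly numerically-assisted, choice of $\tau$ (e.g. a rhombic $\tau$ close to $e^{\pi i/3}$, so that Figure 1's disk configuration is available) together with monotonicity of $\wp(p)$ along a suitable real/imaginary ray, so that the path is a line segment whose disjointness from the four circles $\partial\mathcal B_l$ can be checked directly from \eqref{B00}--\eqref{alphak1}.
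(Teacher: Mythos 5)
There is a genuine gap at the heart of your proposal: the claim that $\Omega_6\neq\emptyset$ and $\Omega_{10}\neq\emptyset$ hold in the \emph{same} component $\Xi$ is never actually established. Your plan is to put the $10$-point region of Theorem \ref{main-thm-2}-(1) and a $6$-point region near a half-period limit into one component, but (i) the $6$-point behaviour as $p\to\frac{\omega_j}{2}$ requires $G(z;\tau)$ itself to have $5$ critical points (Theorem \ref{thm-5c}); if $G(z;\tau)$ has $3$ non-degenerate critical points the limit gives $4$ critical points (Theorem \ref{thm-B}) and the punctured neighbourhood of $e_j$ lies in a component with $m=1$, not $m=0$. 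The $\tau$ produced by Theorem \ref{main-thm-2}-(1) is only constrained through the sublattice torus $E_{\tau/2}$, so nothing guarantees $G(z;\tau)$ has $5$ critical points (e.g. $\tau/2=e^{\pi i/3}$ gives $\tau\sim\sqrt3 i$, a rectangular torus with $3$ critical points). Your justification that the four trivial Hessian signs stay $(-,-,-,-)$ along the path, ``using that $G(z-\frac{\omega_j}{2})$ has all trivial critical points non-degenerate saddles by Theorem \ref{thm-LW}'', is also incorrect: in the $3$-critical-point case one half-period is a minimum (the degree count $N^+-N^-=-1$ forces $N^+=1$), so not all are saddles. (ii) Even in the favourable case, you must check that the segment joining $\wp(p_\ast)$ to a neighbourhood of $e_j$ avoids all four circles $\partial\mathcal{B}_l$; you explicitly defer this to an unperformed, possibly numerical, verification. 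So the essential content of the theorem remains unproved.

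For comparison, the paper proves the key nonemptiness statement (its Lemma \ref{thm-section5-50}) non-constructively, by contradiction: if in every $m=0$ component the generic count were always the same single value, one takes $\tau_0$ with $\frac{\tau_0}{2}$ on the critical curve $\mathcal{C}=\partial\Omega_5\cap\mathbb H=\partial\Omega_3\cap\mathbb H$ of Theorem \ref{thm-CKLW}, so that $G_{\tau_0/4}(z;\tau_0)$ has exactly $6$ critical points with a degenerate nontrivial pair; perturbing $\tau_0$ to either side of $\mathcal{C}$ produces nearby $(\tau_n,\frac{\tau_n}{4})$ with exactly $10$ (resp.\ exactly $6$) non-degenerate critical points, while the joint $(\tau,p)$-stability of a non-degenerate configuration (the Lemma \ref{lemma50-0} argument) and the continuity of the components $\Xi_\tau$ force a contradiction in either case. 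This bypasses entirely the geometric control of the disks $\mathcal{B}_l$ that your approach requires. Your topological endgame (openness of $\Omega_6,\Omega_{10}$, connectedness of $\Xi$ giving $\partial\Omega_6\cap\partial\Omega_{10}\cap\Xi\neq\emptyset$, and degeneracy of nontrivial critical points on the boundary) matches the paper, though your ``first exit time'' argument as stated only lands you in $\partial\Omega_{10}$; the clean route is that $\overline{\Omega_6}\cap\overline{\Omega_{10}}\cap\Xi=\emptyset$ would disconnect $\Xi\subset\overline{\Omega_6}\cup\overline{\Omega_{10}}$, and any point of $\overline{\Omega_6}\cap\overline{\Omega_{10}}$ lies in neither open set, hence in both boundaries.
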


Clearly the number of critical points changes when $\wp(p)\in\Xi$ crosses $\partial\Omega_6\cap\partial\Omega_{10}$.
Theorem \ref{thm-section5-5} shows that Theorem \ref{thm-section5}-(1) is sharp.
Theorems \ref{thm-section5} and \ref{thm-section5-5} will be proved in Section \ref{section5}, where we will also apply Theorem \ref{thm-section5} to two special cases $\tau=\frac12+\frac{\sqrt{3}}{2}i$ and $\tau=i$; See Theorems \ref{main-thm-08} and \ref{main-thm-8} for details. 
In subsequet work, we will continue to study $G_p(z)$ for $\tau=ib$ or $\tau=\frac12+ib$ with $b>0$, i.e. when $E_{\tau}$ is a general rectangular torus or rhombus torus. 



Finally, concerning the degeneracy of nontrivial critical points, Theorem \ref{main-thm-2}-(2) and Theorem \ref{thm-section5-5} indicate that nontrivial critical points could be degenerate for some $(\tau, p)$. 
Here
we propose the following conjecture.

\medskip
\noindent{\bf Conjecture A.} {\it If $G_p(z;\tau)$ has $10$ critical points, then these critical points are all non-degenerate.}
\medskip

Unfortunately, the idea of proving Theorem \ref{thm-A00} does not work for Conjecture A. We should study this conjecture in future.

\subsection{The curvature equation}
As applications, we study the following curvature equation with two singular sources
\begin{equation}
\Delta u+e^{u}=4\pi(\delta_{p_1}+\delta_{p_2})\quad\text{ on
}\; E_{\tau}. \label{0mean}%
\end{equation}
By changing variable $z\mapsto z+\frac{p_1+p_2}{2}$, we can always assume $p_2=-p_1$, namely we only need to study \eqref{mean}:
\begin{equation*}
\Delta u+e^{u}=4\pi(\delta_{p}+\delta_{-p})\quad\text{ on
}\; E_{\tau}. 
\end{equation*}

Equation (\ref{mean}) arises from conformal geometry and mathematical physics. Geometrically, a solution $u$ of (\ref{mean}) leads to a metric
$ds^{2}=\frac{1}{2}e^{u}dz^2$ with constant Gaussian curvature $+1$
acquiring conic singularities at $\pm p$. It also
appears in statistical physics as the equation for the mean field
limit of the Euler flow in Onsager's vortex model (cf. \cite{CLMP}), hence
also called a mean field equation. 
We refer the readers to \cite{BKLY,CLW,CL-1,CL-2,CL-3,CL-AJM,EG-2015,EG,LW,LW2,LY,MR,NT1} and references therein for
recent developments on mean filed equations.

Remark that if $p\in E_{\tau}[2]$, i.e. $p=-p$ in $E_{\tau}$, then by changing variable $z\mapsto z+p$, \eqref{mean} becomes the curvature equation \eqref{mfe-8pi}, which has been studied in \cite{LW}.
 Therefore, in this paper we consider the case $p\in E_{\tau}\setminus E_{\tau}[2]$. Inspired by the study of \eqref{mfe-8pi} in \cite{LW} , Chen-Kuo-Lin \cite{CKL-2025} showed that the existence of solutions of \eqref{mean} depends on the location of the singularity $p$, as stated as follows.

\begin{Theorem}\cite{CKL-2025}\label{thm-C}
Let $p\in E_{\tau}\setminus E_{\tau}[2]$. Then the following statements hold.
\begin{itemize}
\item[(1)] Once \eqref{mean} has a solution, then it has a one-parameter scaling family of solutions $u_{\beta}(z)$, where $\beta>0$ is arbitrary. Furthermore, $u_\beta(z)$ is even (i.e. $u_{\beta}(z)=u_\beta(-z)$) if and only if $\beta=1$.
\item[(2)] \eqref{mean} has solutions if and only if $G_p(z)$ has nontrivial critical points.
\end{itemize}
\end{Theorem}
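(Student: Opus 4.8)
The plan is to carry over Lin--Wang's Liouville (developing--map) argument for $\Delta u+e^u=8\pi\delta_0$ to the two--source equation \eqref{mean}. Given a solution $u$ of \eqref{mean}, the conformal metric $ds^2=\tfrac12 e^{u}|dz|^2$ has Gauss curvature $+1$ on $E_\tau\setminus\{\pm p\}$, and since $u(z)\sim 2\log|z\mp p|$ near $\pm p$ it has conical singularities of angle $4\pi$ there. By the classical local theory of such metrics, $ds^2$ admits a multivalued meromorphic developing map $f$ with $e^{u}=8|f'|^2/(1+|f|^2)^2$ whose monodromy along $\pi_1(E_\tau\setminus\{\pm p\})$ lies in $PSU(2)$; because the cone angles $4\pi$ are even multiples of $2\pi$, the local monodromies at $\pm p$ are trivial, so $f$ is actually single valued near $\pm p$ with a ramification point of index $2$, and the monodromy descends to a representation $\rho$ of $\pi_1(E_\tau)\cong\mathbb Z^2$, i.e.\ to an ordered pair of commuting elements of $PSU(2)$.

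For part (1): such a commuting pair is either coaxial --- conjugate into a diagonal torus $T\subset PSU(2)$ --- or generates a Klein four--group. I expect the latter cannot occur for \eqref{mean} when $p\notin E_\tau[2]$, and excluding it is one of the delicate points; granting this, after conjugation $\rho$ is diagonal and unitary, of the form $\operatorname{diag}(e^{-2\pi is},e^{2\pi is})$, $\operatorname{diag}(e^{2\pi ir},e^{-2\pi ir})$ for some real $(r,s)$. The one--parameter group of M\"obius maps $f\mapsto\beta f$, $\beta>0$, commutes with $T$, so $f_\beta:=\beta f$ has the same monodromy and still has ramification index $2$ at $\pm p$; hence $u_\beta:=\log\!\big(8|f_\beta'|^2/(1+|f_\beta|^2)^2\big)$ is again a solution of \eqref{mean}, giving the asserted scaling family. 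For the evenness claim, I would use that $z\mapsto-z$ fixes the source set $\{\pm p\}$ and inverts $\rho$; comparing $u_\beta(z)$ with $u_\beta(-z)$ through $f$ then reduces the evenness of $u_\beta$ to a single equation in $\beta$ with exactly one positive solution, which we normalize to be $\beta=1$.

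For part (2), the ``if'' direction is a construction. Given a nontrivial critical point $q=r+s\tau$ of $G_p$, I would write down an explicit developing map $f$, obtained as a ratio of Weierstrass $\sigma$--functions multiplied by an exponential factor: the $\sigma$--zeros and poles are chosen so that $f$ has ramification index $2$ at $\pm p$ with the correct behaviour at $\pm q$, and the exponential parameters are fixed by requiring $8|f'|^2/(1+|f|^2)^2$ to be $\Lambda_\tau$--invariant; unwinding that requirement (equivalently, that the monodromy be unitary) reduces exactly to $\zeta(q+p)+\zeta(q-p)=2(r\eta_1+s\eta_2)$, which by \eqref{G_z} says $\nabla G_p(q)=0$. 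Conversely this identity is precisely what lets one assemble $f$, and then $u:=\log\!\big(8|f'|^2/(1+|f|^2)^2\big)$ solves \eqref{mean}. For the ``only if'' direction one runs this in reverse: from an arbitrary solution, part (1) supplies a diagonal unitary $\rho$, whose exponents $(r,s)$ (determined up to a common sign) give $q=r+s\tau$; compatibility of $\rho$ with a developing map ramified at $\pm p$ forces $\nabla G_p(q)=0$, and $q\notin E_\tau[2]$ because $(r,s)\notin\tfrac12\mathbb Z^2$ (otherwise $\rho$ is trivial, incompatible with the prescribed sources at $\pm p\notin E_\tau[2]$). By Theorem~\ref{thm-0B} (Hitchin's formula), such $q$ is a nontrivial critical point of $G_p$.

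The hard part will be twofold. First, the monodromy analysis: rigorously ruling out (or otherwise handling) the Klein four--case, so that the scaling family of part (1) is available for \emph{every} solution of \eqref{mean}. Second, verifying that the $\Lambda_\tau$--invariance and unitarity conditions on the $\sigma$--function developing map are \emph{equivalent} to $\nabla G_p(q)=0$, not merely necessary, so that a genuine solution is produced from any nontrivial critical point of $G_p$; the accompanying bookkeeping of divisors and of the exponential factor is routine but also underlies the normalization used in the evenness statement of part (1).
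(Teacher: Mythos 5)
Your proposal follows essentially the same route as the paper's own treatment of this material (Theorem~\ref{thm-C} is cited from \cite{CKL-2025}, but its content is reproved in Section~\ref{section6} as Lemmas~\ref{lemma4-1}--\ref{lemma4-2}, which give Theorem~\ref{main-thm-3}): an explicit $\sigma$-quotient developing map $f(z)=e^{2(r\eta_1+s\eta_2)z}\sigma(z-q)/\sigma(z+q)$ for the construction direction, with the quasi-periodicity of $\sigma$ turning the condition $\zeta(q+p)+\zeta(q-p)=2(r\eta_1+s\eta_2)$ into exactly the double periodicity of $u_\beta$; the scaling family $f\mapsto\beta f$; evenness via $f(-z)=1/f(z)$, whence $u_\beta(-z)=u_{1/\beta}(z)$; and for the converse the Liouville/Schwarzian/monodromy analysis. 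The one substantive gap you flag --- ruling out the Klein four-group (non-coaxial) case --- is closed in the paper not at the level of the $PSU(2)$ representation of $\pi_1(E_\tau)$ but by lifting to the $SU(2)$ monodromy of the associated second-order ODE \eqref{GLE}: the local exponents at $\pm p$ are $-\tfrac12$ and $\tfrac32$ and the singularities are apparent, so each local monodromy is $-I_2$, and the commutator $N_1N_2N_1^{-1}N_2^{-1}$, being the product of the two local monodromies, equals $(-I_2)^2=I_2$. Thus $N_1,N_2$ genuinely commute in $SU(2)$ and, being unitary, are simultaneously diagonalizable; the residual scalar case, which would give $(r,s)\in\tfrac12\mathbb{Z}^2$, is excluded by the monodromy theory of \eqref{GLE} cited from \cite[Section 2]{CKL-PAMQ}. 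With that one step supplied (and the minor check that $q\neq\pm p$, which the paper gets from the same citation), your argument matches the paper's.
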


In this paper, we follow the approach from \cite{CLW,LW} to prove the following result.

\begin{theorem}\label{main-thm-3}
Let $p\in E_{\tau}\setminus E_{\tau}[2]$. Then there is a one-to-one correspondence between pairs of nontrivial critical points of $G_p(z)$ and one-parameter scaling families of solutions (or equivalently, even solutions) of \eqref{mean}. 

Consequently, \eqref{mean} has at most $3$ one-parameter scaling families of solutions, and every number in $\{0,1,2,3\}$ really occurs. 
\end{theorem}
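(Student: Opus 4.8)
The plan is to follow the developing‑map approach of \cite{CLW,LW}. By Theorem~\ref{thm-C}(1), every solution of \eqref{mean} belongs to a one‑parameter scaling family that contains exactly one even solution, so it suffices to set up a bijection between the pairs of nontrivial critical points of $G_p(z)$ and the even solutions of \eqref{mean}. Granting such a bijection, Theorem~\ref{main-thm-1} yields that \eqref{mean} has at most $3$ one‑parameter scaling families of solutions, and the four values $0,1,2,3$ are realized because, for suitable $(\tau,p)$, $G_p(z)$ has exactly $4$ critical points by Theorem~\ref{thm-B}, exactly $6$ by Theorem~\ref{main-thm-2}(2), exactly $8$ by Theorem~\ref{main-thm-2}(3-1), and exactly $10$ by Theorem~\ref{main-thm-2}(1), i.e.\ exactly $0,1,2,3$ pairs of nontrivial critical points respectively.

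For the forward map, let $\pm q$ be a pair of nontrivial critical points of $G_p(z)$; write $q=r+s\tau$ with $(r,s)\in\mathbb R^2\setminus\frac12\mathbb Z^2$, so that \eqref{513-1} holds by Theorem~\ref{thm-0B}. By \cite[Theorem~4.1]{CKL-PAMQ} there is a value of the accessory parameter for which the linear ODE \eqref{GLE} (with this $\tau$ and this $p$) has monodromy matrices $N_1=\mathrm{diag}(e^{-2\pi is},e^{2\pi is})$ and $N_2=\mathrm{diag}(e^{2\pi ir},e^{-2\pi ir})$, which lie in $SU(2)$ since $r,s\in\mathbb R$. Taking the ratio $f_q=y_1/y_2$ of a basis of solutions adapted to $N_1,N_2$, the projective monodromy of $f_q$ lies in $PSU(2)$, and one checks that the branch locus of $f_q$ on $E_\tau$ is exactly $\{\pm p\}$, with a simple branch point at each, matching the conic angle $4\pi$ that \eqref{mean} imposes there. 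Hence
\[
u_q(z):=\log\frac{8\,|f_q'(z)|^2}{\bigl(1+|f_q(z)|^2\bigr)^2}
\]
is a well‑defined real solution of \eqref{mean}. Since the construction is symmetric under $z\mapsto-z$ (equivalently $q\mapsto-q$), $f_q$ can be normalized so that $f_q(-z)=1/f_q(z)$; as $w\mapsto 1/w$ is a rotation of the round sphere, $u_q$ is even. Finally, distinct pairs $\pm q$ yield distinct $(r,s)$ modulo $\mathbb Z^2$ and sign, hence distinct projective monodromy, hence distinct $u_q$, so $q\mapsto u_q$ is injective.

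For the reverse map, let $u$ be an even solution of \eqref{mean} and $f$ its developing map, i.e.\ the locally univalent multivalued meromorphic function with $\log\frac{8|f'|^2}{(1+|f|^2)^2}=u$, whose branch locus is $\{\pm p\}$. Since $u$ is single‑valued and real, the projective monodromy group $\Gamma$ of $f$ is conjugate into $PSU(2)$; since the conic angles $4\pi$ at $\pm p$ are multiples of $2\pi$, the local monodromies are trivial in $PSL(2,\mathbb C)$, so $\Gamma$ is generated by two commuting elements $N_1,N_2\in PSU(2)$. The evenness of $u$, together with the hypothesis $p\notin E_\tau[2]$, excludes the exceptional (Klein four‑group) configuration, so $N_1,N_2$ are simultaneously diagonalizable; consequently $f$ is, up to a M\"obius change, the ratio of two solutions of \eqref{GLE} for a suitable accessory parameter. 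Reading off $(r,s)\in\mathbb R^2\setminus\frac12\mathbb Z^2$ from $N_1,N_2$ via \cite[Theorem~4.1]{CKL-PAMQ} and putting $q:=r+s\tau$, the isomonodromy property of \eqref{GLE} forces $\wp(p)$ to be given by \eqref{513-1}, so $\pm q$ is a pair of nontrivial critical points of $G_p(z)$ by Theorem~\ref{thm-0B}. The two maps are mutually inverse: $u$ determines $f$ up to $PSU(2)$, hence the conjugacy class of $\Gamma$, hence $(r,s)$ modulo $\mathbb Z^2$ and sign, hence the pair $\pm q$ (this is $q\mapsto u_q$ run backwards), and conversely $\pm q$ determines $N_1,N_2$, hence $u$.

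The main difficulty is the reverse direction, i.e.\ the surjectivity of $q\mapsto u_q$: one has to show that the developing map of an \emph{arbitrary} even solution of \eqref{mean} has monodromy of the required simultaneously‑diagonalizable unitary type and is the ratio of two solutions of \eqref{GLE} with branch locus forced to be $\{\pm p\}$. This is exactly where the single‑source arguments of \cite{CLW,LW} must be adapted to the two symmetric sources $\pm p$, including the verification that the degenerate commuting configurations (and the half‑lattice values of $(r,s)$) cannot occur when $p\notin E_\tau[2]$. By contrast, the forward construction — fixing the accessory parameter via \eqref{513-1} and \cite[Theorem~4.1]{CKL-PAMQ}, checking that the branch locus of $f_q$ is $\{\pm p\}$, and normalizing $f_q$ so that $f_q(-z)=1/f_q(z)$ — is longer but essentially routine.
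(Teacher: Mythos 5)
Your overall architecture coincides with the paper's: reduce to a bijection with even solutions, pass to the Liouville developing map and the linear ODE \eqref{GLE}, exploit the unitarity of the monodromy and Hitchin's formula \eqref{513-1}, and realize the counts $0,1,2,3$ via Theorems \ref{thm-B} and \ref{main-thm-2}. The one genuine difference is in the forward direction: the paper does not invoke \cite[Theorem 4.1]{CKL-PAMQ} there at all. It writes the developing map explicitly as $f(z)=\exp\bigl((\zeta(q+p)+\zeta(q-p))z\bigr)\sigma(z-q)/\sigma(z+q)$ and verifies directly, using the critical-point equation \eqref{a+1p}, the Legendre relation and $\zeta(p\mp q)=-\zeta(q\mp p)$, that $|f|$ is doubly periodic, that $f'$ has exactly the two simple zeros $\pm p$, and that $f(-z)=1/f(z)$. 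This explicit formula is what makes your ``one checks that the branch locus of $f_q$ is exactly $\{\pm p\}$'' transparent, and it simultaneously produces the whole scaling family $u_\beta=\log\frac{8|\beta f'|^2}{(1+|\beta f|^2)^2}$ together with its blow-up at $q$ (resp.\ $-q$) as $\beta\to\infty$ (resp.\ $0$), which the paper then uses, via uniqueness of blow-up points \cite{BT}, to separate distinct pairs. Your monodromy-conjugacy argument for injectivity also works, but is heavier than needed.

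In the reverse direction the two steps you defer are exactly where the content lies, and the mechanism you propose for one of them is off. The paper's Lemma \ref{lemma4-2} treats an \emph{arbitrary} solution (not only even ones) and never uses evenness to rule out the degenerate commuting configurations: after showing $\overline{N_j}^TN_j=I_2$ it obtains the existence of $(r,s)\in\mathbb R^2\setminus\frac12\mathbb Z^2$ with diagonal monodromy by citing the monodromy classification of \eqref{GLE} in \cite[Section 2]{CKL-PAMQ} (the local monodromies at the apparent singularities $\pm p$, with exponents $-\tfrac12,\tfrac32$, are $-I_2$, so $N_1,N_2$ commute in $SL(2,\mathbb C)$ and the Klein-four configuration does not arise); attributing the exclusion to evenness of $u$ is neither what is needed nor obviously sufficient. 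Likewise, your closing line ``conversely $\pm q$ determines $N_1,N_2$, hence $u$'' conceals the actual computation: one must show that any developing map $f=(a\tilde f+b)/(c\tilde f+d)$ of a solution has $a\bar b+c\bar d=0$ (from $\overline P^TP$ commuting with a non-central diagonal matrix), whence $(a,c)=\alpha(-\bar d,\bar b)$ and $u=u_{\beta_0}$ with $\beta_0=|\alpha|$, so that $u$ lies in the scaling family attached to $\pm q$. These are not misdirections of approach --- they are precisely the paper's Lemmas \ref{lemma4-1}--\ref{lemma4-2} --- but as written your proof flags them rather than closes them.
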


The rest of this paper is organized as follows. In Section \ref{section2}, we prove Theorems \ref{thm-A00} and \ref{main-thm-1}. In Section \ref{section3}, we prove Theorems \ref{main-thm-2} and \ref{main-thm-01}. In Section \ref{section4}, we prove  Theorem \ref{main-thm-6}. In Section \ref{section5}, we prove Theorems \ref{thm-section5} and \ref{thm-section5-5}. Finally, Theorems \ref{main-thm-10} and \ref{main-thm-3} will be proved in Section \ref{section6}.

\section{Possible numbers of critical points}

\label{section2}

In this section, we follow the approach of using anti-holomorphic dynamics from \cite{BE} to prove Theorems \ref{thm-A00} and \ref{main-thm-1}.

Fix any $p\in E_{\tau}\setminus E_{\tau}[2]$. 
Recall \eqref{a+001p} that $z=r+s\tau$ with $r,s\in\mathbb R$ is a critical point of $G_p$ if and only if 
\begin{equation}\label{a+1p}\zeta(z+p)+\zeta(z-p)-2(r\eta_1+s\eta_2)=0.\end{equation}
By the Legendre relation $\tau\eta_1-\eta_2=2\pi i$, we easily obtain that \begin{equation}\label{rsab}-r\eta_1-s\eta_2=az+b\bar{z},\quad\text{where }a=\frac{\pi}{\operatorname{Im}\tau}-\eta_1,\quad b=-\frac{\pi}{\operatorname{Im}\tau}<0.\end{equation}
Inserting this into \eqref{a+1p}, we obtain
$$\zeta(z+p)+\zeta(z-p)+2(az+b\bar{z})=0,$$
or equivalently,
\begin{equation}\label{a+2p}
z=-\frac{1}{2b}\left(\overline{\zeta(z+p)}+\overline{\zeta(z-p)}+2\bar{a}\bar{z}\right)=:g(z).
\end{equation}
In conclusion, $z$ is a critical point of $G_p$ if and only if $g(z)=z$, i.e. $z$ is a fixed point of $g$.
Clearly $g(z)$ is an anti-meromorphic function in the plane, and a direct computation shows that
\begin{align}\label{g-w}
g(z+\omega)=g(z)+\omega,\quad\forall \omega\in \Lambda_{\tau}=\mathbb Z+\mathbb Z\tau.
\end{align}

\begin{Remark}
In \cite{BE}, Bergweiler and Eremenko studied the number of fixed points of the anti-meromorphic function\begin{equation}\label{hgz}\hat{g}(z):=-\frac{1}{b}\left(\overline{\zeta(z)}+\bar{a}\bar{z}\right)\end{equation}
to obtain the number of critical points of $G$. In particular, they proved that $\hat{g}$ has either $3$ or $5$ fixed points. In the following, we follow their argument to prove that $g$ has at most $10$ fixed points. We will point out the differences between $g$ and $\hat{g}$ during the following proof, which lead to the different upper bounds for the number of fixed points. 
\end{Remark}

To proceed, let us recall some notions as in \cite{BE}. A Riemann surface $S$ is called {\it hyperbolic} if its universal covering is the unit disk \cite{Ahlfors}. Let $S$ be a hyperbolic Riemann surface. For a holomorphic map $f: S\to S$, a point $z_0\in S$ is called {\it critical} if $f'(z_0)=0$; this definition does not depend on the local coordinates. A point $z_0\in S$ is called {\it fixed} if $f(z_0)=z_0$, and for such a point $f'(z_0)$ is called the {\it multiplier}. It is easy to verify that the multipier does not depend on the local coordinates. A fixed point $z_0$ is called {\it attracting}, {\it neutral} or {\it repelling} depending on whether $|f'(z_0)|$ is less than, equal to or greater than $1$, respectively. 

We say that a curve $\gamma: [0,1)\to S$ {\it escapes} if for every compact subset $K$ in $S$, there exists $t_0\in (0,1)$ such that $\gamma(t)\notin K$ for $t\in [t_0,1)$. A curve $\gamma: [0,1)\to S$ is called an {\it asymptotic curve} of $f: S\to S$ if $\gamma$ escapes and the limit $\lim_{t\to 1}f(\gamma(t))$ exists and is contained in $S$.
Recall the following Fatou's theorem.

\medskip
\noindent{\bf Fatou's theorem.} (see \cite[page 2914]{BE}) {\it Let $S$ be a hyperbolic Riemann surface and let $f: S\to S$ be a holomorphic map with an attracting fixed point $z_0\in S$. Then $f$ has a critical point or an asymptotic curve in $S$. Moreover, $f^n\to z_0$ locally uniformly in $S$}.
\medskip

In applications to holomorphic dynamics, $S$ is the immediate attraction basin of an attracting fixed point $z_0$; that is, the component of the Fatou set (see below for the definition) which contains $z_0$. Then it follows that the immediate attracting basin contains a critical point or an asymptotic curve.

Note that $g$ is anti-meromorphic. As in \cite{BE}, we apply the terminology attracting, neutral and repelling also to fixed points $z_0$ of anti-holomorphic maps $f$, considering $\overline{\partial}f(z_0)$ as the multiplier. Here $\partial=\partial_{z}$ and $\overline{\partial}=\partial_{\bar z}$. Then the kep step of proving Theorem \ref{main-thm-1} is to prove the following result.

\begin{Lemma}\label{lemma5}
The function $g$ has at most $4$ attracting fixed points (resp. at most $2$ attracting fixed points if $\wp(2p)=-2a=2\eta_1-\frac{2\pi}{\operatorname{Im}\tau}$), modulo $\Lambda_{\tau}$.
\end{Lemma}

Following \cite{BE}, we consider the set $P_0$ of poles of $g$. For $n\geq 1$ we define inductively $P_n=g^{-1}(P_{n-1})$ and $P_\infty=\cup_{n=0}^{+\infty} P_n$. Then all iterates of $g$ are defined on the set $\mathbb C\setminus P_\infty$. Let $J$ be the closure of $P_\infty$ and $F:=\mathbb{C}\setminus J$. As in \cite{BE}, we call $J$ the {\it Julia set} and $F$ the {\it Fatou set} of $g$. To apply Fatou's theorem, we need $F\neq \emptyset$. The non-empty of the Fatou set may be well known to experts in this field, but we can not find a proper reference for this fact. Since our goal is to prove Lemma \ref{lemma5}, we may assume that $g$ has at least one attracting fixed point. Then $F\neq\emptyset$ can be easily obtained as follows.

\begin{Lemma}
Suppose that $g$ has at least one attracting fixed point $z_0$, i.e. $g(z_0)=z_0$ and $|\overline{\partial}g(z_0)|<1$. Then $F\neq\emptyset$.
\end{Lemma}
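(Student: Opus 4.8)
The plan is to produce an explicit point of $F$ by exhibiting a neighbourhood of the attracting fixed point $z_0$ which is disjoint from the closure of $P_\infty$. First I would record the elementary observation that, since $g(z_0)=z_0$ and $|\overline{\partial}g(z_0)|<1$, the anti-holomorphic map $g$ is a contraction near $z_0$: there is $\rho\in(0,1)$ and a radius $\delta>0$ so that $g$ maps the closed disk $\overline{D}=\overline{D(z_0,\delta)}$ into its interior, with $|g(z)-z_0|\le \rho|z-z_0|$ for $z\in\overline D$. In particular $\overline D$ contains no pole of $g$ (shrink $\delta$ so that $g$ is holomorphic — in the anti-holomorphic sense — and finite on $\overline D$, which is possible because the poles of $g$ are isolated and $z_0$ is not one of them).

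Next I would show $\overline D\cap P_\infty=\emptyset$. Since $P_\infty=\bigcup_{n\ge 0}P_n$ with $P_n=g^{-n}(P_0)$, a point $w\in\overline D\cap P_n$ would satisfy $g^{n}(w)\in P_0$, i.e. $g^{n}(w)$ is a pole of $g$. But $g(\overline D)\subset D\subset\overline D$, so by induction $g^{m}(w)\in\overline D$ for all $m\le n$, and in particular $g^{n}(w)\in\overline D$; this contradicts the fact that $\overline D$ contains no pole. Hence $\overline D\cap P_n=\emptyset$ for every $n$, so $\overline D\cap P_\infty=\emptyset$. Because $\overline D$ is closed and actually $D$ is open with $\overline D\cap P_\infty=\emptyset$, we get $D\cap\overline{P_\infty}=\emptyset$ — more carefully, any point of $\overline{P_\infty}$ in $D$ would be a limit of points of $P_\infty$ lying in $D$ for large index, again contradicting $D\cap P_\infty=\emptyset$; so $D\cap J=\emptyset$, i.e. $D\subset F$ and $F\neq\emptyset$.

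The one technical point that needs a word of care — and the only place I expect any friction — is the passage from ``$\overline D$ misses every $P_n$'' to ``$D$ misses the closure $J=\overline{P_\infty}$''. The clean way is to choose two nested disks $D(z_0,\delta')\subset\overline{D(z_0,\delta')}\subset D(z_0,\delta)$ with $g(\overline{D(z_0,\delta)})\subset D(z_0,\delta')$ (possible by the contraction estimate, after shrinking $\delta$); then every $P_n$ is disjoint from $\overline{D(z_0,\delta)}$ by the argument above, so $P_\infty\subset \mathbb C\setminus\overline{D(z_0,\delta)}$, whence $J=\overline{P_\infty}\subset \mathbb C\setminus D(z_0,\delta)$ and $D(z_0,\delta)\subset F$. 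This also makes transparent that $z_0\in F$. I would remark in passing that $g$ is well-defined and smooth on this disk precisely because $\zeta$, hence $g$, has poles only on the lattice-translates occurring in \eqref{a+2p}, and $z_0$, being a fixed point, cannot be such a pole; this is the same reason $P_0$ is a discrete set, which is implicitly what lets us find the initial $\delta$.
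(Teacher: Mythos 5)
Your proof is correct and follows essentially the same route as the paper: choose a small disk around $z_0$ that $g$ maps into itself by the attracting hypothesis, observe that the forward orbit of any point of the disk stays in the disk and hence never meets a pole, so the disk is disjoint from $P_\infty$, and conclude that the open disk is disjoint from the closure $J$ as well. The extra care you take with the closure step is fine but not needed beyond the standard fact that an open set disjoint from a set is disjoint from its closure, which is exactly what the paper uses.
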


\begin{proof} Under our assumption, there is small $\varepsilon_0>0$ such that $g(B_{\varepsilon_0}(z_0))\subset B_{\varepsilon_0}(z_0)$, where $B_{\varepsilon_0}(z_0):=\{z\in\mathbb{C}\;: \; |z-z_0|<\varepsilon\}$.
If there is $z_1\in B_{\varepsilon_0}(z_0)\cap P_\infty$, then there is $n\geq 1$ such that $g^n(z_1)=\infty$, a contradiction with $g^n(B_{\varepsilon_0}(z_0))\subset B_{\varepsilon_0}(z_0)$. Thus, $B_{\varepsilon_0}(z_0)\cap P_\infty=\emptyset$, which implies $B_{\varepsilon_0}(z_0)\cap J=\emptyset$, i.e. $B_{\varepsilon_0}(z_0)\subset F$ and so $F\neq \emptyset$. 
\end{proof}

The Fatou set is thus the maximal open subset of the plane such that $g(F)\subset F$. 
Furthermore, $F$ is $\Lambda_{\tau}$-invariant.
Let $\pi: \mathbb C\to E_{\tau}=\mathbb{C}/\Lambda_{\tau}$ be the projection map and denote $\widetilde F:=\pi(F)$. Then the map $g: F\to F$ descends to a map $\widetilde g: \widetilde F\to \widetilde F$ which satsifies $\widetilde g\circ \pi=\pi\circ g$. The Riemann surface $\widetilde F$ is a subset of the torus, and it is hyperbolic because its completement is infinite. 

Notice that $g: F\to F$ and $\widetilde g: \widetilde F\to \widetilde F$ are anti-holomorphic, and
\begin{align}\label{2-12}
\overline{\overline{\partial}g(z)}&=-\frac1{2b}\left(\zeta'(z+p)+\zeta'(z-p)+2a\right)\nonumber\\
&=\frac1{2b}\left(\wp(z+p)+\wp(z-p)-2a\right).
\end{align}
As in \cite{BE},
to obtain holomorphic dynamics instead of the anti-holomorphic one, we consider the second iterates $h=g^2$ and $\widetilde h=\widetilde g^2$. Then
\begin{equation}
\widetilde{h}\circ \pi=\pi\circ h,
\end{equation}
and images of fixed points of $g$ and $h$ in $F$ under $\pi$ are fixed points of $\widetilde g$ and $\widetilde h$, respectively. Furthermore, it follows from the chain role that
\begin{equation}\label{2-11}
h'=((\overline{\partial}g)\circ g)\cdot \partial \overline{g}=((\overline{\partial}g)\circ g)\cdot\overline{\overline{\partial}g}.
\end{equation}
Thus, for a fixed point $z_0$ of $g$, we have $h'(z_0)=|\overline{\partial}g(z_0)|^2$. We want to apply Fatou's theorem to $\widetilde h: \widetilde F\to \widetilde F$, so we need to consider the critical points and asymptotic curves of $\widetilde h$.

\begin{Lemma}
The map $\widetilde h: \widetilde F\to \widetilde F$ has no asymptotic curves.
\end{Lemma}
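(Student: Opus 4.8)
The plan is to follow the Bergweiler--Eremenko argument and reduce the nonexistence of asymptotic curves for $\widetilde h$ to control of the map $g$ (and hence $h=g^2$) near its poles, which lie over the lattice $\Lambda_\tau$ together with the two points $\pm p$. Recall that an asymptotic curve $\gamma\colon[0,1)\to\widetilde F$ must escape every compact subset of $\widetilde F$, yet $\widetilde h(\gamma(t))$ must converge to a point of $\widetilde F$. Since $\widetilde F$ is an open subset of the \emph{compact} torus $E_\tau$, escaping $\widetilde F$ forces $\gamma(t)$ to accumulate on the complement $\widetilde J = E_\tau\setminus\widetilde F$, i.e.\ on the closure of the (projected) preimages $P_\infty$ of the pole set. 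So the first step is: lift $\gamma$ to a curve $\tilde\gamma$ in $F\subset\mathbb C$ (using $\widetilde h\circ\pi=\pi\circ h$ and local injectivity of $\pi$), and observe that $\tilde\gamma(t)$ must accumulate on $J$, while $h(\tilde\gamma(t))$ converges to a point $w_0\in F$. The key structural fact, exactly as in \cite{BE}, is that the set of poles $P_0$ of $g$ is discrete, and near each pole $g$ behaves like $z\mapsto \overline{c/(z-z_0)}+O(1)$ for some constant $c\neq 0$ coming from the residue of $\overline{\zeta}$; in particular $g$ maps a small punctured neighborhood of any pole \emph{onto} a neighborhood of $\infty$, and $\infty$ is not in $F$, so $g$ (and $h$) cannot have a finite limit along a curve approaching a pole unless the curve is eventually confined away from all poles.

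The second step is to rule out the two ways $\gamma$ could escape $\widetilde F$: either $\tilde\gamma(t)$ approaches a point of $J$ that is a pole (or a preimage of a pole) of some iterate of $g$, or it approaches a point of $J$ that is a limit of such points but not one of them. In the first case, as just noted, along $\tilde\gamma$ one of the finitely many iterates $g,g^2$ blows up (or $h$ itself is unbounded near that accumulation point), contradicting that $h(\tilde\gamma(t))\to w_0\in\mathbb C$; here one uses that $h=g^2$ is a composition of two anti-meromorphic maps with only polar singularities, so the only obstruction to continuity is a pole, and near any pole the image goes to $\infty$. In the second case one argues that $\tilde\gamma$ cannot converge to a single such boundary point (it would have to be a fixed "limit point of the orbit structure" and $h$ would still fail to extend), while if $\tilde\gamma$ does not converge then $h(\tilde\gamma(t))$ cannot converge either, contradicting the asymptotic-curve hypothesis. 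One must also handle the doubly-periodic feature that $g(z+\omega)=g(z)+\omega$: because $\gamma$ lives on the \emph{torus} $\widetilde F$, this periodicity is already quotiented out, so $\widetilde F$ genuinely has compact complement-type behavior and there is no "escape to infinity in $\mathbb C$" hiding in the lift — the only escape is toward $\widetilde J$. This is the one place where the argument for $\widetilde g$ differs from the plane dynamics of $\hat g$ in \cite{BE}, and it makes the torus case, if anything, cleaner.

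The main obstacle I expect is the careful local analysis near the poles and their iterated preimages: one needs that for each pole $z_0$ of $g$ there is a punctured neighborhood $U$ with $z_0\notin F$, $g|_{U\setminus\{z_0\}}$ proper onto a neighborhood of $\infty$, and — crucially — that $h=g^2$ likewise has no removable behavior there, so that a curve approaching $z_0$ forces $|h|\to\infty$ along a subsequence. This requires knowing the leading-order expansion of $g$ at its poles (governed by the simple poles of $\overline{\zeta(z+p)}$ and $\overline{\zeta(z-p)}$, each with residue $-1$ up to the $-1/(2b)$ factor), and then tracking how $g$ composes with itself near the resulting large values. Once this local picture is in hand, the topological conclusion — that an escaping curve in $\widetilde F$ cannot have $\widetilde h$ converging in $\widetilde F$ — is essentially forced, and we conclude that $\widetilde h$ has no asymptotic curves, which is what we need to feed into Fatou's theorem in the proof of Lemma \ref{lemma5}.
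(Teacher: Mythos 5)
The paper's own ``proof'' is just the observation that, thanks to the periodicity \eqref{g-w}, the argument of \cite[Lemma 1]{BE} goes through verbatim, so what matters is whether your sketch reproduces that argument correctly. It does not quite: two of its load-bearing steps are wrong or unjustified. First, your mechanism for the contradiction is blow-up of $h$ near the set $P_\infty$, but $h=g^2$ blows up only at $P_0\cup P_1$ (and, as a small factual point, the poles of $g$ sit at $\pm p+\Lambda_\tau$, not at $\Lambda_\tau\cup\{\pm p\}$). At a point $z_0\in P_n$ with $n\ge 2$, and likewise at a point of $J\setminus P_\infty$, the map $h$ is continuous with a \emph{finite} value; the value simply lies in $P_{n-2}\subset J$ (resp.\ in $J\cup\{\infty\}$ by continuity and closedness of $J$). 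The actual contradiction is therefore not ``$|h|\to\infty$'' but the forward invariance $h(J)\subset J\cup\{\infty\}$ together with the hypothesis that the asymptotic value $A$ lies in $F=\mathbb C\setminus J$. Your ``second case'' gestures at this but never states or uses the invariance of $J$, which is the whole point.

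Second, your reduction to ``the lifted curve converges to a single boundary point'' rests on the claim that if $\tilde\gamma$ does not converge then $h(\tilde\gamma(t))$ cannot converge either; that implication is false for a non-injective map and is exactly what has to be proved. The correct route, as in \cite{BE}, is: lift so that $h(\Gamma(t))\to A\in F$, take a small disk $B$ around $A$, and show that the component $V$ of $h^{-1}(B)$ containing the tail of $\Gamma$ is bounded and that $h:V\to B$ is proper, whence $\Gamma(t)$ converges to some $z_0\in h^{-1}(A)$; since $\gamma$ escapes $\widetilde F$, $z_0\in J$, and then invariance of $J\cup\{\infty\}$ under $h$ gives $A=h(z_0)\notin F$, a contradiction. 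The boundedness of $V$ is precisely where \eqref{g-w} (equivalently, the ellipticity of $h(z)-z$) is used; in your write-up \eqref{g-w} only appears in the peripheral remark that the torus quotient prevents escape to infinity, so the one place the new setting could have differed from \cite{BE} is not actually checked.
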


\begin{proof}
Thanks to \eqref{g-w}, the proof is the same as \cite[Lemma 1]{BE} where the same statement was proved for \eqref{hgz}, so we omit the details.
\end{proof}

As in \cite{BE}, we consider the equivalence relation on $\widetilde F$ defined by $z\sim z'$ if $\widetilde h^n(z)=\widetilde h^m(z')$ for some $n,m\in\mathbb N$. The equivalence classes are called the {\it grand orbits}. Notice that if $\widetilde h^n(z)$ converges for some $z\in\widetilde F$, as $n\to+\infty$, then for any $z'$ in the grand orbit of $z$, the sequence  $\widetilde h^n(z')$ converges to the same limit. We call this the limit of the grand orbit.

\begin{Lemma}\label{lemma2}
The set of critical points of $\widetilde h$ belongs to at most $8$ grand orbits (resp. at most $4$ grand orbits if $\wp(2p)=-2a$) under $\widetilde h$.
\end{Lemma}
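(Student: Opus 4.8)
The plan is to count the critical points of $\widetilde h=\widetilde g^2$ by relating them to the critical points of $\widetilde g$ (equivalently, critical points of $g$ modulo $\Lambda_\tau$), and then to show that these fall into boundedly many grand orbits. First I would use the chain rule: since $h=g\circ g$ and $g$ is anti-holomorphic, a point $z$ is critical for $h$ precisely when $\overline{\partial}g(z)=0$ or $\overline{\partial}g(g(z))=0$ (using \eqref{2-11}). So the critical set of $h$ is $C\cup g^{-1}(C)$, where $C=\{z: \overline{\partial}g(z)=0\}$ is the critical set of $g$ itself. From \eqref{2-12}, $\overline{\partial}g(z)=0$ is equivalent to $\wp(z+p)+\wp(z-p)=2a$, which is an elliptic equation in $z$ of a controlled degree: the left-hand side, as an elliptic function of $z$, has poles only at $z=\pm p\pmod{\Lambda_\tau}$, each of order $2$, so it takes every value (in particular $2a$) exactly $4$ times in $E_\tau$ counted with multiplicity. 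Hence $C$ consists of at most $4$ points modulo $\Lambda_\tau$, i.e. at most $4$ critical points of $\widetilde g$.

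Next I would pass from $C$ to the critical points of $\widetilde h$. The key observation is that grand orbits are $\widetilde h$-invariant and that $\widetilde g$ maps grand orbits of $\widetilde h$ to grand orbits of $\widetilde h$: indeed if $\widetilde h^n(z)=\widetilde h^m(z')$ then applying $\widetilde g$ (which commutes with $\widetilde h=\widetilde g^2$) gives $\widetilde h^n(\widetilde g z)=\widetilde h^m(\widetilde g z')$. Therefore the grand orbit of any point in $g^{-1}(C)$ (more precisely, its image in $\widetilde F$) coincides with the grand orbit of its image under $\widetilde g$, which lies in the image of $C$ — wait, more carefully: if $\widetilde g(w)\in \pi(C)$ then $w$ and $\widetilde g(w)$ lie in the same grand orbit only after one more application, but since $\widetilde h(w)=\widetilde g(\widetilde g(w))$ and $\widetilde g(w)$ is in the grand orbit of something in $\pi(C)$, the point $w$ is too. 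Thus every critical point of $\widetilde h$ lies in the grand orbit of one of the at most $4$ points of $\pi(C)$. That would give only $4$ grand orbits, which is better than the claimed $8$; the discrepancy is presumably because $\widetilde F$ is only a subset of the torus, so $\widetilde g$ is merely a partial map and $g^{-1}(C)$ may contain points not in $F$, or a point of $\pi(C)$ may split into several grand orbits because preimages under $\widetilde g$ can escape $\widetilde F$. Concretely: a critical point $z$ of $\widetilde h$ with $\overline\partial g(z)\neq 0$ has $g(z)\in C$, and $g(z)$ lies in $F$, so it is genuinely in $\pi(C)\cap\widetilde F$; but $z$ itself and $g(z)$ differ by one iterate of $\widetilde g$, not of $\widetilde h$, so $z$ need not be in the grand orbit of $g(z)$ under $\widetilde h$ — rather, $z$ is in the grand orbit of $g(z)$ under $\widetilde g$, and each $\widetilde h$-grand-orbit of a point in $\pi(C)$ is contained in its $\widetilde g$-grand-orbit, which can be a union of at most $2$ $\widetilde h$-grand-orbits. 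This doubling, applied to the $\le 4$ points of $\pi(C)$, yields at most $8$ grand orbits under $\widetilde h$.

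So the clean argument is: (i) $C$ has at most $4$ points in $E_\tau$, by the degree-$4$ ellipticity of $\wp(z+p)+\wp(z-p)$; (ii) every critical point of $\widetilde h$ either lies in $\pi(C)$ or is mapped by $\widetilde g$ into $\pi(C)$, hence lies in the $\widetilde g$-grand-orbit of a point of $\pi(C)$; (iii) each $\widetilde g$-grand-orbit decomposes into at most two $\widetilde h$-grand-orbits (corresponding to the two $\widetilde g$-parity classes of iterates), so we get at most $2\times 4=8$ grand orbits. I would then invoke the standard fact from \cite{BE} that within a single grand orbit all but boundedly many points are accounted for once one critical point is identified, but for this lemma only the count of grand orbits is needed, which is exactly what (i)--(iii) deliver.

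The main obstacle I anticipate is step (i)'s degree count done \emph{correctly at the level of $E_\tau$ rather than a fundamental domain}, together with the subtlety that a nominally degree-$4$ elliptic equation might have fewer than $4$ distinct solutions (multiple roots) — but that only helps, since it can only decrease the number of grand orbits. A genuine subtlety is whether points of $C$ could lie at $z=\pm p$ (the poles) or coincide with poles of $g$; since at $z=\pm p$ the function $\wp(z+p)+\wp(z-p)$ blows up rather than equalling $2a$, such $z$ are not in $C$, so $C$ is disjoint from the pole set $P_0$ and the argument is unaffected. The other delicate point is the passage from "$\widetilde g$-grand-orbit" to "at most two $\widetilde h$-grand-orbits": one must check that a $\widetilde g$-orbit is genuinely a $\widetilde h$-orbit union its $\widetilde g$-shift, which is immediate from $\widetilde g\circ\widetilde g=\widetilde h$, but one should be careful that both the point and its $\widetilde g$-image actually lie in $\widetilde F$ — this is where the hypothesis that we only care about critical points \emph{of $\widetilde h$ in $\widetilde F$} is used.
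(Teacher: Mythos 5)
Your proof is correct and follows essentially the same route as the paper: the critical set of $g$ is cut out by the degree-$4$ elliptic equation $\wp(z+p)+\wp(z-p)=2a$, giving two pairs $\pm c_1,\pm c_2$, and by the chain rule the critical points of $\widetilde h$ are these points together with $\widetilde g^{-1}(\pm c_j)$, each of which lies in the $\widetilde h$-grand-orbit of $\widetilde g(\pm c_j)$ since $\widetilde h(w)=\widetilde g(c_j)$ whenever $\widetilde g(w)=c_j$. Your reformulation via ``each $\widetilde g$-grand-orbit splits into at most two $\widetilde h$-grand-orbits'' is just a slightly more roundabout phrasing of the paper's direct choice of the $8$ representatives $\pm c_j$, $\widetilde g(\pm c_j)$.
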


\begin{proof}
By \eqref{2-12}, we know that $\overline{\partial}g(z)=0$ is equivalent to $\wp(z+p)+\wp(z-p)=2a$. Since the elliptic function $\wp(\cdot+p)+\wp(\cdot-p)$ has two double poles $\pm p$, it follows from the theory of elliptic functions that $\wp(z+p)+\wp(z-p)=2a$ has $4$  solutions modulo $\Lambda_{\tau}$, which defines two pairs of points $\pm c_1$ and $\pm c_2$ on $E_{\tau}$. 

If any of $\{\pm c_1, \pm c_2\}$ is in $\widetilde F$, then it is a zero of $\widetilde h'$. By \eqref{2-11}, we see that the other zeros of $\widetilde h'$ are $\widetilde g^{-1}(\pm c_j)$. Note that any point in $\widetilde g^{-1}(\pm c_j)$ belongs to the same grand orbit as $\widetilde g(\pm c_j)$. Therefore, the critical points of $\widetilde h$ are contained in at most $8$ grand orbits represented by $\pm c_j,   \widetilde g(\pm c_j)$, $j=1,2$.

Now we further assume $\wp(2p)=-2a$. Then by the additional formula of elliptic functions
$$\wp(u-v)=-\wp(u)-\wp(v)+\frac{(\wp'(u)+\wp'(v))^2}{4(\wp(u)-\wp(v))^2},$$
we see that $\wp(z+p)+\wp(z-p)=2a=-\wp(2p)$ implies $\wp'(z+p)+\wp'(z-p)=0,$
so $\wp(z+p)+\wp(z-p)=2a$ has only $2$ solutions $\pm c_1=\pm c_2$ of both multiplicities $2$ on $E_{\tau}$. Consequently, the critical points of $\widetilde h$ are contained in at most $4$ grand orbits represented by $\pm c_1,   \widetilde g(\pm c_1)$.
\end{proof}

Remark that this is the first difference from \cite{BE} where $\hat{g}(z)$ defined in \eqref{hgz} was studied. Since $\overline{\partial}\hat{g}(z)=0$ is equivalent to $\wp(z)=a$, which has $2$ solutions modulo $\Lambda_{\tau}$, it was proved in \cite{BE} that the set of critical points of the associated $\widetilde h$ of $\hat{g}$ belongs to at most $4$ grand orbits.

\begin{Lemma}\label{lemma3}
Let $z_0$ be a fixed point of $\widetilde g$. If $\widetilde h^n(z)\to z_0$ as $n\to\infty$ for some $z$, then $\widetilde h^n(\widetilde g(z))\to z_0$.
\end{Lemma}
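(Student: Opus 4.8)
The plan is to exploit that $\widetilde h$ is by construction the second iterate $\widetilde g^{\,2}$, so that iteration of $\widetilde h$ commutes with $\widetilde g$, and then to pass to the limit using the continuity of $\widetilde g$ together with the hypothesis $\widetilde g(z_0)=z_0$.

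First I would record the commutation identity. Since $\widetilde g(\widetilde F)\subset\widetilde F$, every iterate $\widetilde g^{\,m}$ is a well-defined self-map of $\widetilde F$, and for each $n\in\mathbb N$ one has on $\widetilde F$
\[
\widetilde h^{\,n}\circ\widetilde g=\widetilde g^{\,2n}\circ\widetilde g=\widetilde g^{\,2n+1}=\widetilde g\circ\widetilde g^{\,2n}=\widetilde g\circ\widetilde h^{\,n}.
\]
Evaluating at the point $z$ of the statement gives $\widetilde h^{\,n}(\widetilde g(z))=\widetilde g(\widetilde h^{\,n}(z))$ for all $n$. It then remains only to let $n\to\infty$: by hypothesis $\widetilde h^{\,n}(z)\to z_0$ with $z_0\in\widetilde F$, and $\widetilde g$ is anti-holomorphic, hence continuous, on $\widetilde F$, so
\[
\widetilde h^{\,n}(\widetilde g(z))=\widetilde g(\widetilde h^{\,n}(z))\longrightarrow\widetilde g(z_0)=z_0,
\]
which is the assertion.

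There is essentially no obstacle here. The only point deserving a word of care is that the precise hypothesis --- that $z_0$ is a fixed point of $\widetilde g$ itself, and not merely of $\widetilde h=\widetilde g^{\,2}$ --- is exactly what delivers $\widetilde g(z_0)=z_0$ in the last step; everything else is the routine bookkeeping that the orbit $\{\widetilde h^{\,n}(z)\}$ and the point $\widetilde g(z)$ stay inside $\widetilde F$, where $\widetilde g$ is defined and continuous. The role of this soft lemma in the sequel is to permit grouping the grand orbits of $\widetilde h$ in pairs --- a critical point $c$ and its image $\widetilde g(c)$ fall into the basin of the same attracting fixed point --- which is what will let the bound of Lemma \ref{lemma2} on the number of critical grand orbits be converted into the sharper bound of Lemma \ref{lemma5} on the number of attracting fixed points.
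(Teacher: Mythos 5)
Your proof is correct and follows exactly the paper's own route: the paper likewise deduces the lemma as a direct consequence of the commutation identity $\widetilde h^n\circ \widetilde g=\widetilde g\circ \widetilde h^n$, and your version merely spells out the continuity of $\widetilde g$ and the use of $\widetilde g(z_0)=z_0$ in passing to the limit.
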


\begin{proof}
This is a direct consequence of $\widetilde h^n\circ \widetilde g=\widetilde g\circ \widetilde h^n$.
\end{proof}

\begin{proof}[Proof of Lemma \ref{lemma5}]
Lemma \ref{lemma3} implies that if $\widetilde h^n(\pm c_j)\to z_0$ for some fixed point $z_0$ of $\widetilde g$, then $\widetilde h^n(\widetilde g(\pm c_j))\to z_0$. Clearly attracting fixed points of $\widetilde g$ are always attracting fixed points of $\widetilde h=\widetilde g^2$, so Fatou's theorem and Lemma \ref{lemma3} together imply that each attracting fixed point of $\widetilde{g}$ attracts at least two critical orbits of $\widetilde{h}$. Then by applying Fatou's theorem to $\widetilde{h}: S\to S$ (where $S$ is any connected component of $\widetilde{F}$ that contains an attracting fixed point of $\widetilde{g}$) and Lemma \ref{lemma2}, we deduce that $\widetilde g: \widetilde F\to \widetilde F$ has at most $4$ attracting fixed points (resp. at most $2$ attracting fixed points if $\wp(2p)=-2a$).

Since the $\pi$-image of an attracting fixed point of $g$ is an attracting fixed point of $\widetilde g$,
we obtain that $g$ has at most $4$  attracting fixed points (resp. at most $2$ attracting fixed points if $\wp(2p)=-2a$), modulo $\Lambda_{\tau}$.

Remark that this is different from \cite{BE}, where it was proved that $\hat{g}$ defined in \eqref{hgz} has at most $2$ attracting fixed points, modulo $\Lambda_{\tau}$.
\end{proof}

\begin{Remark}\label{rmk2-6}
For any $w\in \mathbb{C}$, we consider the function $g_{w}(z):=g(z)+w$. Then $g_{w}$ also satisfies \eqref{g-w}, i.e.
$$g_w(z+\omega)=g_w(z)+\omega,\quad\forall \omega\in \Lambda_{\tau}=\mathbb Z+\mathbb Z\tau,$$
and $\overline{\partial}g_w(z)\equiv\overline{\partial}g(z)$. Therefore, by applying the same arguments to $g_w$, we conclude that $g_w$ has at most $4$ attracting fixed points (resp. at most $2$ attracting fixed points if $\wp(2p)=-2a$), modulo $\Lambda_{\tau}$.
\end{Remark}

As in \cite{BE}, we will also use the following result from \cite{BE-2010}.

\begin{Lemma}\cite[Proposition 3]{BE-2010}\label{prop3}
Let $D\subset \mathbb C$ be a domain and let $f: D\to \mathbb C$ be a harmonic map. Suppose that there exists $m\in\mathbb{N}$ such that every $w\in\mathbb C$ has at most $m$ preimages. Then the set of points which have $m$ preimages is open.
\end{Lemma}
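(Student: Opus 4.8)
The plan is to argue via the Brouwer degree, reducing the statement to a local question at the preimages of a value that is attained the maximal number of times. Fix $w_0\in\mathbb C$ with $\#f^{-1}(w_0)=m$. Since $m<\infty$, the map $f$ is non-constant (a constant harmonic map attains its value at every point of the domain $D$), and $f^{-1}(w_0)=\{z_1,\dots,z_m\}$ is finite, hence discrete. Choose pairwise disjoint closed disks $\overline{B_j}\subset D$ centered at $z_j$, so small that $f\neq w_0$ on $\overline{B_j}\setminus\{z_j\}$, and set $v_j(w):=\#(f^{-1}(w)\cap B_j)$. Since $\#f^{-1}(w)\geq\sum_j v_j(w)$ and, by hypothesis, $\#f^{-1}(w)\leq m$ for \emph{every} $w$, it suffices to show $\sum_j v_j(w)\geq m$ for all $w$ in a neighbourhood of $w_0$; note that this reduction makes the behaviour of $f$ near $\partial D$ irrelevant. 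Writing $d_j:=\deg(f|_{\partial B_j},w_0)$, stability of the degree already yields $v_j(w)\geq 1$ for $w$ near $w_0$ whenever $d_j\neq 0$, so the whole difficulty is concentrated at the \emph{degenerate} preimages, those with $d_j=0$. The naive hope that $d_j\neq 0$ always holds at a maximally-attained value is false — the local degree of a harmonic map at an isolated preimage can genuinely vanish (e.g. for $2\operatorname{Re}(z)-iz^2$ at $0$) — so one must instead exploit the finer local structure.

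The second step is a local analysis of $f$ at a degenerate preimage. Writing $f=h+\overline g$ with $h,g$ holomorphic and expanding $h(z)-h(z_j)=A_j(z-z_j)^{p_j}+\cdots$, $g(z)-g(z_j)=A_j'(z-z_j)^{q_j}+\cdots$, one computes the winding of $f|_{\partial B_j}$ and finds that $d_j=0$ forces $p_j=q_j$ and $|A_j|=|A_j'|$; after an affine change of the $z$- and $w$-coordinates this says that, to leading order, $f-w_0$ takes values on a single line $\ell_j$ through $0$. Inspecting the next-order terms then shows that, on a small disk, $f$ behaves like a fold: its image is essentially a closed half-plane $H_j$ with $w_0\in\partial H_j$ and boundary direction $\ell_j$, with $v_j(w)=2$ in $\operatorname{int}H_j$, $v_j(w)=1$ on $\partial H_j$, and $v_j(w)=0$ outside. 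Harmonicity enters twice: it supplies the expansions $f=h+\overline g$, and it forces the critical set $C=\{\,|h'|=|g'|\,\}$ to be a proper real-analytic subset of $D$ (it is proper, since otherwise $f$ would map into a line and $m$ could not be finite), so that the set $\Sigma$ of critical values of $f$ near $w_0$ is one-dimensional (nowhere dense, of measure zero) and $\sum_j v_j$ is locally constant on the complement of $\Sigma$ in a punctured neighbourhood of $w_0$.

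The final step uses maximality to pin down the configuration of the half-planes $H_j$. If some generic direction $\psi$ (one for which $w_0+\varepsilon e^{i\psi}$ is a regular value of $f$ lying on no $\partial H_j$) were contained in the interiors of strictly more than half of the $H_j$, then along that direction $\sum_j v_j(w)$ would exceed $\#f^{-1}(w_0)=m$, contradicting $\#f^{-1}(w)\leq m$; but a half-plane through $w_0$ and its opposite cover complementary sides, so for generic $\psi$ the number of $H_j$ containing $\psi$ and the number containing $\psi+\pi$ add up to the total number of folds, which forces each count to be exactly half. Hence $\sum_j v_j(w)=m$ on a full-measure subset of a punctured neighbourhood of $w_0$, and by the constancy of $\sum_j v_j$ on the components of the complement of $\Sigma$ this holds on the entire punctured neighbourhood; together with $\sum_j v_j(w_0)=m$ this gives $\#f^{-1}(w)\geq\sum_j v_j(w)=m$, hence $=m$, for all $w$ near $w_0$, so $\{w:\#f^{-1}(w)=m\}$ is open. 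The same counting argument simultaneously excludes branch points and higher-order folds over $w_0$ (each would push $\sum_j v_j$ above $m$ on an open set), which closes the gap in the second step. I expect the main obstacle to be precisely this local classification — proving rigorously that over a maximally-attained value only local homeomorphisms and simple folds occur, and that the folds must sit in the balanced configuration above; once that is in hand, the degree-stability reduction and the combinatorics of half-planes are routine.
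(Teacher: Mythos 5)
This lemma is nowhere proved in the paper: it is quoted verbatim from \cite[Proposition 3]{BE-2010} and used as a black box, so there is no internal argument to compare yours against. Judged on its own, your proposal correctly identifies the genuine difficulty --- the local degree of a harmonic map at an isolated preimage can vanish (your example $2\operatorname{Re}(z)-iz^{2}$ at $0$ is correct), so the naive degree-stability argument fails and the global bound $\#f^{-1}(w)\le m$ must enter --- and the overall strategy (local models at the preimages of $w_0$ plus a balancing count over directions) is of the right kind. But as written it is an outline rather than a proof, and the gap sits exactly where you yourself place it.

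Concretely: (i) the local classification is asserted, not proved. From $d_j=0$ you correctly deduce $p_j=q_j$ and $|A_j|=|A_j'|$, but ``inspecting the next-order terms shows that $f$ behaves like a fold'' is not a deduction: the next-order terms can degenerate ($p_j=q_j\ge 2$, cusp-type behaviour of the critical arc through $z_j$, etc.), and the counting step that is supposed to exclude these configurations presupposes knowing the list of possible local models --- the logic is circular until that list is established. (ii) Even for a genuine simple fold the stated model $v_j=2,1,0$ on $\operatorname{int}H_j$, $\partial H_j$, and the exterior is incorrect: the dividing curve is the critical-value arc $f(C\cap B_j)$, which is only \emph{tangent} to the line $\partial H_j$ at $w_0$. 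For your own example the local image is $\{v\ge -u^{2}/4+\cdots\}$, so $v_j=2$ on all of $\partial H_j\setminus\{w_0\}$, not $1$. This matters because the final step needs $\sum_j v_j(w)=m$ for \emph{all} $w$ near $w_0$, not only on generic rays. (iii) The globalization is incomplete: $\sum_j v_j$ is indeed locally constant off the critical-value set $\Sigma$, but $\Sigma$ is a finite union of analytic arcs through $w_0$ whose complement can contain ``sliver'' components (between two arcs mutually tangent at $w_0$) meeting no initial segment of any ray $w_0+te^{i\psi}$; your argument pins down $\sum_j v_j$ only on the ray-containing components and says nothing about the slivers, nor about points of $\Sigma$ itself, where each $v_j$ is merely lower semicontinuous and can drop. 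Completing the argument would require the full local structure theory of light harmonic mappings, which is precisely the content you defer; as it stands the proposal does not establish the lemma, and the paper sidesteps the issue entirely by citing \cite{BE-2010}.
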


Now we are ready to prove Theorems \ref{thm-A00} and \ref{main-thm-1}.

\begin{proof}[Proof of Theorem \ref{main-thm-1}]
By \eqref{g-w}, we know that the map $\phi: E_{\tau}\to \overline{\mathbb C}=\mathbb C\cup\{\infty\}$, $z\mapsto \phi(z):=z-g(z)$ is well-defined. Notice that the set of fixed points of $g$ is precisely $\phi^{-1}(0)$, so our goal is to prove $N:=\# \phi^{-1}(0)\leq 10$ (resp. $N\leq 6$ if $\wp(2p)=-2a$). 

Let $J_{\phi}=1-|\overline{\partial}g|^2$ be the Jacobian determinant of $\phi$. Then $D^+=\{z\in E_{\tau} : J_\phi(z)>0\}$ is the set where $\phi$ preserves the orientation and $D^-=\{z\in E_{\tau} : J_\phi(z)<0\}$ is the set where $\phi$ reverses the orientation. Notice from \eqref{a+2p} that $\phi$ has two simple poles $\pm p$, then a point $w$ of large modulus has exactly two preimages in $E_{\tau}$ and the map is reversing the orientation at these two preimages. Therefore, the degree of $\phi$ equals to $-2$; see \cite[Section 5]{26} for the definition of the degree. (Remark that this is different from \cite{BE}, where $\hat{\phi}:=z-\hat{g}(z)$ has degree $-1$ because it has only a simple pole $0$.)

{\bf Step 1.}
Suppose first that $0$ is a regular value of $\phi$, i.e. $\phi^{-1}(0)\subset D^+\cup D^-$. Denote by $N^+=\# (\phi^{-1}(0)\cap D^+)$ and $N^-=\# (\phi^{-1}(0)\cap D^-)$. Then $N^+-N^-$ equals the degree of $\phi$, i.e. $N^+-N^-=-2$ and so
$$N=N^++N^-=2N^--2=2N^++2.$$
Since $J_{\phi}=1-|\overline{\partial}g|^2>0$ is equivalent to $|\overline{\partial}g|<1$, we see that the zeros of $\phi$ in $D^+$ are precisely the attracting fixed points of $g$, so Lemma \ref{lemma5} implies $N^+\leq 4$ (resp. $N^+\leq 2$ if $\wp(2p)=-2a$). Thus $N=2N^++2\leq 10$ (resp. $N\leq 6$ if $\wp(2p)=-2a$).

{\bf Step 2.}
For the general case that $0$ might be a singular value of $\phi$, we can modify the proof as follows. Let $w\in\mathbb{C}$ be any regular value of $\phi$, i.e. $\phi^{-1}(w)\subset D^+\cup D^-$. Note that $\phi(z)=w$ is equivalent to $z=g(z)+w=g_{w}(z)$, so  $\phi^{-1}(w)$ is precisely the set of fixed points of $g_{w}$. Thanks to Remark \ref{rmk2-6}, we can repeat the same argument as Step 1 to obtain that $\#\phi^{-1}(w)\leq 10$ (resp. $\#\phi^{-1}(w)\leq 6$ if $\wp(2p)=-2a$). Therefore, $\#\phi^{-1}(w)\leq 10$ (resp. $\#\phi^{-1}(w)\leq 6$ if $\wp(2p)=-2a$) for any regular value $w$ of $\phi$. 

Now in order to apply Lemma \ref{prop3}, we claim that there is $m\in \mathbb{N}_{\geq 10}$ such that $\#\phi^{-1}(w)\leq m$ for any $w\in\mathbb{C}$.

Recall that a point $w$ of large modulus has exactly two preimages in $E_{\tau}$, so we only need to consider $|w|\leq R$ for some large $R$. Note that $g_w(z)$ is anti-meromorphic, so $g_w^2(z)=g_w\circ g_w(z)$ is meromorphic. Define a meromphic function $\Phi_w: E_{\tau}\to \mathbb{C}\cup\{\infty\}$ by $\Phi_w(z):=z-g_w^2(z)$ for each $w$. Since $\phi(z)=w$ is equivalent to $z=g_w(z)$ and so $\Phi_w(z)=0$, we have $\phi^{-1}(w)\subset \Phi_w^{-1}(0)$. Since the zeros of $\Phi_w$ are isolated, so $\Phi_w^{-1}(0)$  is finite for every $w$. Furthermore, the argument principle implies that up to counting mulitiplicites, $\#\Phi_w^{-1}(0)$ is locally a constant for $w$. Therefore, the finite covering theorem implies the existence of $m\geq 10$ such that $\#\Phi_w^{-1}(0)\leq m$ for any $|w|\leq R$. 
This proves the claim.

Since Sard  theorem says that the set of singular values of $\phi$ is of  Lebesgue measure $0$, we conclude from Lemma \ref{prop3} that $\#\phi^{-1}(w)\leq 10$ (resp. $\#\phi^{-1}(w)\leq 6$ if $\wp(2p)=-2a$) for any $w\in\mathbb{C}$ and in particular, $\#\phi^{-1}(0)\leq 10$ (resp. $\#\phi^{-1}(0)\leq 6$ if $\wp(2p)=-2a$).

This proves that $G_p(z)$ has at most $10$ critical points (at most $6$ if $\wp(2p)=-2a$). Since $\frac{\omega_k}{2}$ are all critical points of $G_p$ for $k\in \{0,1,2,3\}$, and nontrivial critical points must appear in pairs, we conclude that the number of critical points of $G_p(z)$ belongs to $\{4,6,8,10\}$ (resp. $\{4,6\}$ if $\wp(2p)=-2a$).
The proof is complete.
\end{proof}

\begin{Remark}\label{section2-2} Recall $\mathcal{S}(\wp(p))$ defined in \eqref{sp}.
Given $t_1, t_2\in\mathbb{C}$ and suppose $(t_1+r, t_2+s)\in\mathcal{S}(\wp(p))$ with $(r,s)\in\mathbb R^2$. Then there exist $r_0, s_0\in\mathbb{R}$ such that $t_1+t_2\tau=r_0+s_0\tau$. Write $q=(t_1+r)+(t_2+s)\tau=(r_0+r)+(s_0+s)\tau$. Clearly $(t_1+r, t_2+s)\in\mathcal{S}(\wp(p))$ is equivalent to
$$\zeta(q+p)+\zeta(q-p)-2((t_1+r)\eta_1+(t_2+s)\eta_2)=0,$$
or equivalently,
$$\zeta(q+p)+\zeta(q-p)-2((r_0+r)\eta_1+(s_0+s)\eta_2)=2((t_1-r_0)\eta_1+(t_2-s_0)\eta_2).$$
This, together with \eqref{rsab}-\eqref{a+2p}, is equivalent to
\[
\phi(q)=q-g(q)=\frac{\overline{(t_1-r_0)\eta_1+(t_2-s_0)\eta_2}}{b}=:w.
\]
Therefore, $\#((\mathcal{S}(\wp(p))\cap ((t_1,t_2)+\mathbb R^2))/\mathbb{Z}^2)\leq \#\phi^{-1}(w)\leq 10$.
\end{Remark}

\begin{corollary}\label{coro2-8} Suppose $0$ is a regular value of $\phi$, i.e. $\phi^{-1}(0)\subset D^+\cup D^-$. Then
$\# (D^+\cap\{\frac{\omega_k}{2} \,:\, k=0,1,2,3\})\leq 2$. Furthermore,
\begin{itemize}
\item[(1)] If $\# (D^+\cap\{\frac{\omega_k}{2} \,:\, k=0,1,2,3\})=0$, then $\#\phi^{-1}(0)\in\{6,10\}$, namely $G_p(z)$ has exactly either $6$ or $10$ critical points, and both the two possibilities occur for different $(\tau, p)$'s.
\item[(2)] If $\# (D^+\cap\{\frac{\omega_k}{2} \,:\, k=0,1,2,3\})=1$, then $\#\phi^{-1}(0)\in\{4,8\}$, namely $G_p(z)$ has exactly either $4$ or $8$ critical points, and both the two possibilities occur for different $(\tau, p)$'s.
\item[(3)] If $\# (D^+\cap\{\frac{\omega_k}{2} \,:\, k=0,1,2,3\})=2$ (this case occurs for some $(\tau, p)$; see Section \ref{section5}), then $\#\phi^{-1}(0)=6$, namely $G_p(z)$ has exactly $6$ critical points.
\end{itemize}
\end{corollary}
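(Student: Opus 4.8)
The plan is to refine the degree count from the proof of Theorem~\ref{main-thm-1}. Assume $0$ is a regular value of $\phi$ and set $N:=\#\phi^{-1}(0)$, $N^{\pm}:=\#(\phi^{-1}(0)\cap D^{\pm})$. Exactly as there, $\phi$ has only the two simple poles $\pm p$ and reverses orientation near them, so $\deg\phi=-2$, giving $N^{+}-N^{-}=-2$ and $N=2N^{+}+2$; and since $J_\phi=1-|\overline{\partial}g|^2>0$ is the same as $|\overline{\partial}g|<1$, the points of $\phi^{-1}(0)\cap D^{+}$ are exactly the attracting fixed points of $g$ modulo $\Lambda_\tau$, so $N^{+}\le 4$ by Lemma~\ref{lemma5}. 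The four half-periods are always critical points of $G_p$, hence lie in $\phi^{-1}(0)$, and since $0$ is a regular value each of them lies in $D^{+}$ or $D^{-}$; in particular $N\ge 4$, so $N^{+}\in\{1,2,3,4\}$.

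Next, by \eqref{2-12} and the evenness of $\wp$ we have $\overline{\partial}g(-z)=\overline{\partial}g(z)$, so $J_\phi(-z)=J_\phi(z)$, and a nontrivial pair $\pm q$ of critical points of $G_p$ lies entirely in $D^{+}$ or entirely in $D^{-}$; thus nontrivial critical points contribute to $N^{+}$ in even amounts. Writing $m_0:=\#(D^{+}\cap\{\tfrac{\omega_k}{2}:k=0,1,2,3\})$ and $N^{+}=m_0+2a$ with $a\ge 0$ the number of nontrivial pairs inside $D^{+}$, the relations $N^{+}\equiv m_0\pmod 2$ and $1\le N^{+}\le 4$ already give: $m_0=0\Rightarrow N^{+}\in\{2,4\}\Rightarrow N\in\{6,10\}$; $m_0=1\Rightarrow N^{+}\in\{1,3\}\Rightarrow N\in\{4,8\}$; $m_0=2\Rightarrow N^{+}\in\{2,4\}\Rightarrow N\in\{6,10\}$. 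It remains to prove $m_0\le 2$ and that $N^{+}=4$ cannot happen when $m_0=2$ (so that $N=6$), which is exactly where the raw degree count falls short.

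For these I would sharpen the grand-orbit bookkeeping of Lemma~\ref{lemma5} using that $g$ is odd, $g(-z)=-g(z)$ (immediate from \eqref{a+2p} and the oddness of $\zeta$): then $\widetilde g$ on $E_\tau$ commutes with $\iota:z\mapsto -z$, so $\widetilde F$ is $\iota$-invariant and $\iota$ permutes the at most $8$ grand orbits of $\widetilde h$-critical points, which are represented by $\pm c_1,\pm c_2$ (the four solutions of $\wp(z+p)+\wp(z-p)=2a$, split into two $\iota$-pairs) and their $\widetilde g$-images. If a half-period $\tfrac{\omega_j}{2}$ is an attracting fixed point of $\widetilde g$, its immediate basin $S$ is $\iota$-invariant (the immediate basin is unique and $\iota$ conjugates $\widetilde h$ to itself), so besides a $\widetilde h$-critical point $w\in S$ and its Fatou companion $\widetilde g(w)\in S$, the set $S$ also contains $-w$ and $\widetilde g(-w)$; the key point to establish is that these represent four distinct grand orbits, whence each attracting half-period consumes four of the eight grand orbits, and these four-element sets are disjoint for distinct half-periods since a convergent grand orbit has a unique limit. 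Hence $m_0\le 2$, and when $m_0=2$ all eight grand orbits are used up, leaving none to converge to a hypothetical nontrivial attracting pair, so $a=0$, $N^{+}=2$, $N=6$. The main obstacle is precisely the ``four distinct grand orbits'' claim: under only the hypothesis that $0$ is a regular value of $\phi$ one must exclude coincidences such as $[w]=[-w]$, $[w]=[\widetilde g(w)]$ or $[w]=[-\widetilde g(w)]$; I would do this by showing any such coincidence forces a $\widetilde h$-critical orbit to land exactly on $\tfrac{\omega_j}{2}$ (a point that is both periodic and has a convergent orbit under $\widetilde h$ must be fixed), then disposing of that exceptional case separately. Finally, that every value in (1)--(3) actually occurs is quoted from later sections: $N=4$ in (2) from Theorem~\ref{thm-B}, $N=8$ in (2) from Theorem~\ref{main-thm-2}(3-1), $N=6$ and $N=10$ in (1) (with $m_0=0$) from Theorems~\ref{main-thm-2}(1) and \ref{thm-section5-5}, and the occurrence of $m_0=2$ in (3) from the examples in Section~\ref{section5}.
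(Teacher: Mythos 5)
Your proposal follows essentially the same route as the paper: the degree computation $\deg\phi=-2$ giving $N=2N^{+}+2$, Lemma \ref{lemma5} giving $N^{+}\le 4$, and the Fatou/critical-orbit bookkeeping to bound the number of attracting half-periods. Your parity argument for (1)--(2) (nontrivial critical points come in pairs $\pm q$ lying in the same set $D^{\pm}$ because $J_\phi$ is even, hence $N^{+}\equiv m_0\pmod 2$) is a clean equivalent of the paper's counting of $N^{-}$, and your citations for the occurrence statements match the paper's (for $N=6$ in case (1) the paper quotes Theorem \ref{thm-5c}, which is proved independently of this corollary; citing Theorem \ref{thm-section5-5} works too but is logically more delicate, since that theorem is proved downstream of this corollary --- it only uses the dichotomy, not the occurrence, so there is no actual circularity, but Theorem \ref{thm-5c} is the cleaner reference).

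The one place you stall --- the ``four distinct grand orbits'' claim --- is not actually an obstacle, and you do not need the case analysis you sketch. The point is that the accounting should be done not over grand orbits but over the eight explicit representative points $c_1,-c_1,c_2,-c_2,\widetilde g(c_1),\widetilde g(-c_1),\widetilde g(c_2),\widetilde g(-c_2)$, each of which, if its $\widetilde h$-orbit converges, has a \emph{unique} limit. If $\tfrac{\omega_k}{2}$ is attracting, Fatou's theorem puts a critical point $w$ of $\widetilde h$ in its immediate basin; by the description of the critical points of $\widetilde h$ in Lemma \ref{lemma2}, $w$ is either some $\epsilon c_j$ or a $\widetilde g$-preimage of some $\epsilon c_j$, and in either case Lemma \ref{lemma3} (together with $L(\widetilde h(z))=L(z)$ for shifted orbits) forces both $\epsilon c_j$ and $\widetilde g(\epsilon c_j)$ to have limit $\tfrac{\omega_k}{2}$; oddness of $g$ then forces $-\epsilon c_j$ and $\widetilde g(-\epsilon c_j)$ to have the same limit. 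So all four representatives carrying the index $j$ have limit $\tfrac{\omega_k}{2}$, \emph{whether or not} they represent distinct grand orbits or even distinct points. Since a representative cannot have two different limits, two distinct attracting half-periods must use the two different indices $j=1$ and $j=2$, which gives $m_0\le 2$ immediately; and when $m_0=2$ every representative already has a half-period as its limit, so no further attracting fixed point of $\widetilde g$ can claim the representative that Fatou's theorem would assign to it, whence $a=0$ and $N=6$. With this reformulation your argument closes completely; no separate treatment of orbits landing on $\tfrac{\omega_j}{2}$ is required.
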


\begin{proof}
Suppose $\frac{\omega_k}{2}\in D^+$ for some $k$. Then $\frac{\omega_k}{2}$ is an attracting fixed point of $g$ and hence attracts a critical orbit (say the orbit represented by $c_j$ for some $j\in \{1,2\}$ for example) by Fatou's theorem. Then $\widetilde{h}^n(c_j)\to \frac{\omega_k}{2}$. However, since $g$ is odd, we also have  $\widetilde{h}^n(-c_j)=-\widetilde{h}^n(c_j)\to -\frac{\omega_k}{2}=\frac{\omega_k}{2}$. Together with Lemma \ref{lemma3}, we see that $\frac{\omega_k}{2}$ actually attracts $4$ critical orbits represented by $\pm c_j,   \widetilde g(\pm c_j)$ respectively. From here and Lemma \ref{lemma2}, we obtain $\# (D^+\cap\{\frac{\omega_k}{2} \,:\, k=0,1,2,3\})\leq 2$. Furthermore, if $\# (D^+\cap\{\frac{\omega_k}{2} \,:\, k=0,1,2,3\})=2$, then these two attracting fixed points attract totally all the $8$ critical orbits, so there are no other attracting fixed points, which implies $N^+=2$ and so $N=2N^++2=6$. This proves (3).

(1) Suppose $\# (D^+\cap\{\frac{\omega_k}{2} \,:\, k=0,1,2,3\})=0$, i.e. $\{\frac{\omega_k}{2} \,:\, k=0,1,2,3\}\subset D^-$, so $N^-\geq 4$ and then $N=2N^--2\geq 6$. If $N^-=4$, we obtain $N=6$. If $N^-\geq 5$, i.e. there is a zero $q\notin\{\frac{\omega_k}{2} \,:\, k=0,1,2,3\}$ of $\phi$ such that $q\in D^-$, then $-q\neq q$ is also a zero of $\phi$ and $-q\in D^-$. Thus $N^-\geq 6$ and $N\geq 10$, which implies $N^-=6$ and $N=10$. We will see from Theorem \ref{thm-5c} and Theorem \ref{main-thm-2}-(1) that $N=6$ and $N=10$ both occur for different $(\tau, p)$'s.

(2) Suppose $\# (D^+\cap\{\frac{\omega_k}{2} \,:\, k=0,1,2,3\})=1$, then $\# (D^-\cap\{\frac{\omega_k}{2} \,:\, k=0,1,2,3\})=3$. The same argument as (1) implies $N^-\in\{3, 5\}$ and $N=2N^--2\in\{4, 8\}$. We will see from Theorem \ref{thm-B} and  Theorem \ref{main-thm-2}-(3-1) that $N=4$ and $N=8$ both occur for different $(\tau, p)$'s.
The proof is complete.
\end{proof}

\begin{proof}[Proof of Theorem \ref{thm-A00}]
Remark that in \cite{BE}, as mentioned above, since $\hat{\phi}=z-\hat{g}(z)$ has degree $-1$ and $\hat{g}$ has at most $2$ attracting fixed points, it follows that the number of fixed points of $\hat{g}$ equals $2N^++1\leq 5$. This is basically the proof in \cite{BE} that $G(z)$ has at most $5$ critical points.

Write $z=x+iy$ with $x,y\in\mathbb{R}$. By \eqref{G_z} and \eqref{rsab} we have
$$2\pi\frac{\partial G}{\partial x}=-\operatorname{Re}(\zeta(z)+az+b \bar{z}),$$
$$2\pi\frac{\partial G}{\partial y}=\operatorname{Im}(\zeta(z)+az+b \bar{z}),$$
and then
\begin{equation}\label{Gxx}2\pi\frac{\partial^2 G}{\partial x^2}=\operatorname{Re}(\wp(z)-a-b)=\operatorname{Re}(\wp(z)+\eta_1),\end{equation}
$$2\pi\frac{\partial^2 G}{\partial x\partial y}=2\pi\frac{\partial^2 G}{\partial y\partial x}=-\operatorname{Im}(\wp(z)-a)=-\operatorname{Im}(\wp(z)+\eta_1),$$
\begin{equation}\label{Gyy}2\pi\frac{\partial^2 G}{\partial y^2}=-\operatorname{Re}(\wp(z)-a)-b=-\operatorname{Re}(\wp(z)+\eta_1)+\frac{2\pi}{\operatorname{Im}\tau}.\end{equation}
Thus,
\begin{align}\label{detG}\det D^2 G(z)=&\det\begin{pmatrix}\frac{\partial^2 G}{\partial x^2}&\frac{\partial^2 G}{\partial x\partial y}\\\frac{\partial^2 G}{\partial y\partial x}&\frac{\partial^2 G}{\partial y^2}\end{pmatrix}
=\frac{\frac{2\pi}{\operatorname{Im}\tau}\operatorname{Re}(\wp(z)+\eta_1)-|\wp(z)+\eta_1|^2}{4\pi^2}\nonumber\\
=&\frac{1}{4(\operatorname{Im}\tau)^2}\Big(1-\Big|\frac{\wp(z)+\eta_1}{\pi}\operatorname{Im}\tau-1\Big|^2\Big).
\end{align}
On the other hand, since \eqref{hgz} and \eqref{rsab} imply
$$\overline{\overline{\partial}\hat{g}(z)}=\frac1{b}\left(\wp(z)-a\right)=-\Big(\frac{\wp(z)+\eta_1}{\pi}\operatorname{Im}\tau-1\Big),$$
we have
$$J_{\hat{\phi}}(z)=1-|\overline{\partial}\hat{g}(d)|^2=1-\Big|\frac{\wp(z)+\eta_1}{\pi}\operatorname{Im}\tau-1\Big|^2,$$
so
\begin{equation}\label{20-12}
\det D^2 G(z)=\frac{1}{4(\operatorname{Im}\tau)^2}J_{\hat{\phi}}(z).
\end{equation}

Suppose that $G(z)$ has $5$ critical points $\frac{\omega_k}{2}$, $k=1,2,3$, and $\pm q$.
For any critical point $d$ of $G(z)$, 
it follows from \eqref{20-12} that
$$\det D^2 G(d)>0\quad\text{if and only if }\quad J_{\hat{\phi}}(d)>0,$$
$$\det D^2 G(d)<0\quad\text{if and only if }\quad J_{\hat{\phi}}(d)<0,$$
$$\det D^2 G(d)=0\quad\text{if and only if }\quad J_{\hat{\phi}}(d)=0.$$
Then by applying Theorem \ref{thm-A0}, we have 
$$\frac{\omega_k}{2}\in \hat{D}^-:=\Big\{z\in E_{\tau} : J_{\hat{\phi}}(z)<0\Big\},\quad\forall k=1,2,3.$$
Denote also $\hat{D}^+:=\{z\in E_{\tau} : J_{\hat{\phi}}(z)>0\}$.

Assume by contradiction that $\det D^2 G(q)=0$, i.e. $J_{\hat{\phi}}(q)=0$, or equivalently, $q\in \partial \hat{D}^+=\partial\hat{D}^-$. Since $J_{\hat{\phi}}(z)=0$ if and only if
$$\wp(z)\in \partial \mathcal{B}_0=\Big\{z'\in\mathbb{C}\; :\; \Big|z'-\Big(\frac{\pi}{\operatorname{Im}\tau}-\eta_1\Big)\Big|=\frac{\pi}{\operatorname{Im}\tau}\Big\},$$
where $\mathcal{B}_0$ is defined in \eqref{B00}, we have $\partial\hat{D}^-=\wp^{-1}(\partial \mathcal{B}_0)$ and $\hat{D}^-=\wp^{-1}(\mathbb{C}\setminus\overline{\mathcal{B}_0})$. Denote $B_{\varepsilon}(q)=\{z\in\mathbb{C} : |z-q|<\varepsilon\}$. Then we can take $\varepsilon>0$ small such that $B_{\varepsilon}(q)\cap\partial\hat{D}^-$ is a connected analytic curve with $q$ being an interior point, and $B_{\varepsilon}(q)\cap\hat{D}^-$ is open and connected. Since the Jacobian determinant $J_{\hat{\phi}}(z)<0$ for any $z\in B_{\varepsilon}(q)\cap\hat{D}^-$, the inverse function theorem implies that $\hat{\phi}: B_{\varepsilon}(q)\cap\hat{D}^-\to\mathbb{C}$ is an open mapping, so $\hat{\phi}(B_{\varepsilon}(q)\cap\hat{D}^-)$ is open with $0=\hat{\phi}(q)\in \partial (\hat{\phi}(B_{\varepsilon}(q)\cap\hat{D}^-))$.
From here and Sard theorem, there is a regular value $w\in \hat{\phi}(B_{\varepsilon}(q)\cap\hat{D}^-)$ of $\hat{\phi}$ close to $0$ such that $\hat{\phi}^{-1}(w)$ contains a point in $B_{\varepsilon}(q)\cap\hat{D}^-$ that is close to $q$, and also contains a point in  $\hat{D}^-$ that is close to $\frac{\omega_k}{2}$ for each $k\in\{1,2,3\}$. This implies $\#(\hat{\phi}^{-1}(w)\cap \hat{D}^-)\geq 4$. Meanwhile, since $w$ is a regular value of $\hat{\phi}$, by \cite{BE} or by the same argument as the proof of Theorem \ref{main-thm-1}, we know that $\#\hat{\phi}^{-1}(w)\leq 5$, so $\#(\hat{\phi}^{-1}(w)\cap \hat{D}^+)\leq 1$. Together with the fact that the degree of $\hat{\phi}$ is $-1$, we obtain
$$-1=\#(\hat{\phi}^{-1}(w)\cap \hat{D}^+)-\#(\hat{\phi}^{-1}(w)\cap \hat{D}^-)\leq 1-4=-3,$$
a contradiction. Thus $\det D^2 G(-q)=\det D^2 G(q)\neq 0$. Then we must have $\pm q\in \hat{D}^+$, or equivalently, 
 $\det D^2 G(\pm q)> 0$. The proof is complete.
\end{proof}

\section{Criterion for degeneracy}
\label{section3}

In order to apply Corollary \ref{coro2-8}, we need the key assumption that $0$ is a regular value of $\phi$. We will see in Remark \ref{rmk3-11} that $0$ is a regular value of $\phi$ if and only if all critical points of $G_p(z)$ are non-degenerate. Therefore, it is important for us to study the non-degeneracy of critical points. In this section, we study the criterion for the degeneracy of trivial critical points, and prove Theorems \ref{main-thm-2} and \ref{main-thm-01}.

\subsection{Criterion for degeneracy of trivial critical points}

\begin{Lemma}\label{lemma3-1}
Let $p\in E_{\tau}\setminus E_{\tau}[2]$ and fix $k\in\{0,1,2,3\}$. Then $\frac{\omega_k}{2}$ is a degenerate critical point of $G_p(z)$ if and only if $\wp(p-\frac{\omega_k}{2})\in \partial\mathcal{B}_0$, where $\mathcal{B}_0$ is defined in \eqref{B00}.

In particular, there is $\varepsilon>0$ such that if $0<|p-\frac{\omega_k}{2}|<\varepsilon$, then the trivial critical point $\frac{\omega_k}{2}$ is non-degenerate.
\end{Lemma}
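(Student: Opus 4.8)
The plan is to reduce the Hessian of $G_p$ at the half-period $\frac{\omega_k}{2}$ to the Hessian of the ordinary Green function $G$, and then to quote the closed form \eqref{detG} of $\det D^2G$. First I would use that $G$, being a function on $E_\tau=\mathbb C/\Lambda_\tau$ which is smooth away from $0$, has a Hessian $D^2G$ that is simultaneously $\Lambda_\tau$-periodic and even, i.e. $D^2G(-w)=D^2G(w)$ (the Hessian of an even function is even). Since $(\frac{\omega_k}{2}+p)+(\frac{\omega_k}{2}-p)=\omega_k\in\Lambda_\tau$, we have $\frac{\omega_k}{2}+p\equiv-(\frac{\omega_k}{2}-p)\pmod{\Lambda_\tau}$, hence
\[D^2G\Big(\frac{\omega_k}{2}+p\Big)=D^2G\Big(-\Big(\frac{\omega_k}{2}-p\Big)\Big)=D^2G\Big(\frac{\omega_k}{2}-p\Big)=D^2G\Big(p-\frac{\omega_k}{2}\Big),\]
where the last Hessian makes sense because $p\notin E_\tau[2]$ forces $p-\frac{\omega_k}{2}\notin\Lambda_\tau$. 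Differentiating $G_p(z)=\frac12(G(z-p)+G(z+p))$ twice and evaluating at $z=\frac{\omega_k}{2}$ then collapses the two terms:
\[D^2G_p\Big(\frac{\omega_k}{2}\Big)=\frac12\Big(D^2G\Big(\frac{\omega_k}{2}-p\Big)+D^2G\Big(\frac{\omega_k}{2}+p\Big)\Big)=D^2G\Big(p-\frac{\omega_k}{2}\Big).\]

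Next I would apply \eqref{detG}, which expresses $\det D^2G(z)$ purely through $\wp(z)$, at the point $z=p-\frac{\omega_k}{2}$, obtaining
\[\det D^2G_p\Big(\frac{\omega_k}{2}\Big)=\det D^2G\Big(p-\frac{\omega_k}{2}\Big)=\frac{1}{4(\operatorname{Im}\tau)^2}\Big(1-\Big|\frac{\wp(p-\frac{\omega_k}{2})+\eta_1}{\pi}\operatorname{Im}\tau-1\Big|^2\Big).\]
Thus $\frac{\omega_k}{2}$ is a degenerate critical point of $G_p$ iff the modulus above equals $1$, i.e. iff $|\wp(p-\frac{\omega_k}{2})-(\frac{\pi}{\operatorname{Im}\tau}-\eta_1)|=\frac{\pi}{\operatorname{Im}\tau}$, which by the definition \eqref{B00} of $\mathcal B_0$ is precisely $\wp(p-\frac{\omega_k}{2})\in\partial\mathcal B_0$.

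For the ``in particular'' clause, $\mathcal B_0$ is a bounded disk while $\wp(w)\to\infty$ as $w\to0$; hence there is $\varepsilon>0$ with $\wp(p-\frac{\omega_k}{2})\notin\overline{\mathcal B_0}\supset\partial\mathcal B_0$ whenever $0<|p-\frac{\omega_k}{2}|<\varepsilon$, so $\frac{\omega_k}{2}$ is non-degenerate. I do not anticipate a genuine obstacle here: the only step requiring care is the reduction $D^2G_p(\frac{\omega_k}{2})=D^2G(p-\frac{\omega_k}{2})$, which hinges on combining the $\Lambda_\tau$-periodicity and the evenness of $D^2G$. If one prefers to bypass this, an equivalent route is to differentiate \eqref{G_z}--\eqref{rsab} directly, note that $\wp(\frac{\omega_k}{2}\pm p)$ both equal $\wp(p-\frac{\omega_k}{2})$, and substitute into $\det D^2u=4(\partial_{z\bar z}u)^2-4|\partial_z^2u|^2$; this also yields the identity $\det D^2G_p=\frac{1}{4(\operatorname{Im}\tau)^2}J_\phi$ with $\phi$ as in Section \ref{section2}, which feeds into the later discussion of when $0$ is a regular value of $\phi$.
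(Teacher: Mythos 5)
Your proof is correct and follows essentially the same route as the paper: both arguments reduce to the explicit formula $\det D^2G_p(\frac{\omega_k}{2})=\frac{1}{4\pi^2}\big(\frac{\pi^2}{(\operatorname{Im}\tau)^2}-|\wp(p-\frac{\omega_k}{2})-(\frac{\pi}{\operatorname{Im}\tau}-\eta_1)|^2\big)$ via the symmetry $\wp(\frac{\omega_k}{2}+p)=\wp(\frac{\omega_k}{2}-p)$, read off the circle $\partial\mathcal{B}_0$, and conclude the non-degeneracy near $\frac{\omega_k}{2}$ from $\wp(p-\frac{\omega_k}{2})\to\infty$. The only cosmetic difference is that you identify the full Hessian matrices, $D^2G_p(\frac{\omega_k}{2})=D^2G(p-\frac{\omega_k}{2})$, before taking determinants, whereas the paper first establishes $\det D^2G_p(z)=\frac{1}{4(\operatorname{Im}\tau)^2}J_\phi(z)$ for general $z$ (a formula it needs later for Remark \ref{rmk3-11}) and then specializes to $z=\frac{\omega_k}{2}$.
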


\begin{proof}
By \eqref{Gxx}-\eqref{Gyy}, \eqref{2-12} and $J_{\phi}=1-|\overline{\partial}g|^2$, a direct computation gives
\begin{align}\label{Gpdet}\det D^2 G_p(z)=&\frac{1}{4(\operatorname{Im}\tau)^2}\Big(1-\Big|\frac{\wp(z+p)+\wp(z-p)+2\eta_1}{2\pi}\operatorname{Im}\tau-1\Big|^2\Big)\nonumber\\=&\frac{1}{4(\operatorname{Im}\tau)^2}J_{\phi}(z).\end{align}
Thus, for the trivial critical point $\frac{\omega_k}{2}$, since $\wp(p+\frac{\omega_k}{2})=\wp(p-\frac{\omega_k}{2})=\wp(\frac{\omega_k}{2}-p)$, we have
{\allowdisplaybreaks
\begin{align}\label{eq3-2}\det D^2 G_p\Big(\frac{\omega_k}{2}\Big)&=\frac{1}{4(\operatorname{Im}\tau)^2}J_{\phi}\Big(\frac{\omega_k}{2}\Big)\nonumber\\
&=\frac{1}{4(\operatorname{Im}\tau)^2}\Big(1-\Big|\frac{\wp(p-\frac{\omega_k}{2})+\eta_1}{\pi}\operatorname{Im}\tau-1\Big|^2\Big)\nonumber\\
&=\frac{1}{4\pi^2}\bigg(\frac{\pi^2}{(\operatorname{Im}\tau)^2}-\Big|\wp\Big(p-\frac{\omega_k}{2}\Big)-\Big(\frac{\pi}{\operatorname{Im}\tau}-\eta_1\Big)\Big|^2\bigg).\end{align}
}%
Recalling the open disk $\mathcal{B}_0$ defined in \eqref{B00}, 
it follows that the trivial critical point $\frac{\omega_k}{2}$ is degenerate if and only if
\begin{align}\label{S0}\wp\Big(p-\frac{\omega_k}{2}\Big)\in \partial\mathcal{B}_0
=\Big\{z\in\mathbb{C}\; :\; \Big|z-\Big(\frac{\pi}{\operatorname{Im}\tau}-\eta_1\Big)\Big|=\frac{\pi}{\operatorname{Im}\tau}\Big\}.\end{align}
 In particular, since $\wp(p-\frac{\omega_k}{2})\to\infty$ as $p\to \frac{\omega_k}{2}$, there is $\varepsilon>0$ such that if $0<|p-\frac{\omega_k}{2}|<\varepsilon$, the trivial critical point $\frac{\omega_k}{2}$ is non-degenerate. 
 This completes the proof.
\end{proof}

\begin{Remark}\label{rmk3-11}
For any critical point $q$ of $G_p(z)$, we see from \eqref{Gpdet} that
$$\det D^2 G_p(q)>0\;\Leftrightarrow\; J_{\phi}(q)>0\;\Leftrightarrow\;\left|\frac{\wp(q+p)+\wp(q-p)+2\eta_1}{2\pi}\operatorname{Im}\tau-1\right|<1,$$
$$\det D^2 G_p(q)<0\;\Leftrightarrow\; J_{\phi}(q)<0\;\Leftrightarrow\;\left|\frac{\wp(q+p)+\wp(q-p)+2\eta_1}{2\pi}\operatorname{Im}\tau-1\right|>1,$$
$$\det D^2 G_p(q)=0\;\Leftrightarrow\; J_{\phi}(q)=0\;\Leftrightarrow\;\left|\frac{\wp(q+p)+\wp(q-p)+2\eta_1}{2\pi}\operatorname{Im}\tau-1\right|=1.$$
Therefore, $0$ is a regular value of $\phi$ if and only if all critical points of $G_p(z)$ are non-degenerate, and the degree counting formula \eqref{deg-count} holds.
In particular, by \eqref{eq3-2}, we conclude that for $q=\frac{\omega_k}{2}$,
\begin{equation}\label{eq330}\det D^2 G_p\Big(\frac{\omega_k}{2}\Big)>0\;\Leftrightarrow\; J_{\phi}\Big(\frac{\omega_k}{2}\Big)>0\;\Leftrightarrow\;\wp\Big(p-\frac{\omega_k}{2}\Big)\in \mathcal{B}_0,\end{equation}
$$\det D^2 G_p\Big(\frac{\omega_k}{2}\Big)<0\;\Leftrightarrow\; J_{\phi}\Big(\frac{\omega_k}{2}\Big)<0\;\Leftrightarrow\;\wp\Big(p-\frac{\omega_k}{2}\Big)\in \mathbb{C}\setminus\overline{\mathcal{B}_0},$$
$$\det D^2 G_p\Big(\frac{\omega_k}{2}\Big)=0\;\Leftrightarrow\; J_{\phi}\Big(\frac{\omega_k}{2}\Big)=0\;\Leftrightarrow\;\wp\Big(p-\frac{\omega_k}{2}\Big)\in \partial\mathcal{B}_0.$$
\end{Remark}

\begin{Lemma}\label{coro3-1}
For $p\in E_{\tau}\setminus E_{\tau}[2]$ and $k\in \{1,2,3\}$, $\frac{\omega_k}{2}$ is a degenerate critical point of $G_p(z)$ if and only if 
\[\wp(p)\in\partial\mathcal{B}_k=\begin{cases}\bigg\{z\in\mathbb{C}\; :\; \bigg|z-e_k-\frac{\overline{\alpha_k}}{|\alpha_k|^2-\beta_k^2}\bigg|=\frac{\beta_k}{\left||\alpha_k|^2-\beta_k^2\right|}\bigg\}\;\text{if}\;|\alpha_k|\neq \beta_k,\\
\Big\{z\in\mathbb{C}\; :\; \operatorname{Re}(\alpha_k (z-e_k))=\frac12\Big\}\;\text{if}\;|\alpha_k|=\beta_k.\end{cases}\]
Here $\alpha_k, \beta_k$ and $\mathcal{B}_k$ are defined in \eqref{alphak0}-\eqref{alphak1}.
\end{Lemma}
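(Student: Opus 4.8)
The plan is to reduce the statement of Lemma~\ref{coro3-1} to Lemma~\ref{lemma3-1} by a direct change of variables, using the addition theorem for $\wp$ to translate the condition ``$\wp(p-\frac{\omega_k}{2})\in\partial\mathcal{B}_0$'' into a condition on $\wp(p)$ alone. First I would recall the standard identity for the Weierstrass $\wp$ function under a half-period shift: for $k\in\{1,2,3\}$,
\[
\wp\Big(p-\frac{\omega_k}{2}\Big)=e_k+\frac{(e_k-e_{k'})(e_k-e_{k''})}{\wp(p)-e_k},
\]
where $\{k',k''\}=\{1,2,3\}\setminus\{k\}$, and note that $(e_k-e_{k'})(e_k-e_{k''})=3e_k^2-\frac{g_2}{4}$ because $4\prod_{j}(x-e_j)=4x^3-g_2x-g_3$ differentiates appropriately at $x=e_k$ (equivalently, this is $\tfrac12\wp''(\frac{\omega_k}{2})$ up to the known normalization). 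Setting $w:=\wp(p)$, this gives $\wp(p-\frac{\omega_k}{2})=e_k+\frac{3e_k^2-g_2/4}{w-e_k}$.

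Next I would substitute this expression into the defining equation of $\partial\mathcal{B}_0$ from \eqref{S0}, namely $\big|\,\wp(p-\frac{\omega_k}{2})-(\frac{\pi}{\operatorname{Im}\tau}-\eta_1)\,\big|=\frac{\pi}{\operatorname{Im}\tau}$. Writing $A:=3e_k^2-\frac{g_2}{4}$ and using the definitions $\alpha_k=\frac{\frac{\pi}{\operatorname{Im}\tau}-(\eta_1+e_k)}{A}$, $\beta_k=\frac{\pi}{|A|\operatorname{Im}\tau}$ from \eqref{alphak0}, the quantity inside the modulus becomes $\frac{A}{w-e_k}-(\frac{\pi}{\operatorname{Im}\tau}-\eta_1-e_k)=\frac{A}{w-e_k}-\alpha_k A=\frac{A\big(1-\alpha_k(w-e_k)\big)}{w-e_k}$, so the condition reads
\[
|A|\,\big|1-\alpha_k(w-e_k)\big|=\frac{\pi}{\operatorname{Im}\tau}\,|w-e_k|=|A|\,\beta_k\,|w-e_k|,
\]
i.e. $\big|1-\alpha_k(w-e_k)\big|=\beta_k|w-e_k|$. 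The claim is then that this equation describes exactly the circle (when $|\alpha_k|\neq\beta_k$) or line (when $|\alpha_k|=\beta_k$) in \eqref{alphak1}. This is the Apollonius-type computation: squaring and expanding, the $|w-e_k|^2$ terms have coefficient $|\alpha_k|^2-\beta_k^2$; if this vanishes one is left with a linear (real-affine) equation in $w$, which rearranges to $\operatorname{Re}(\alpha_k(w-e_k))=\frac12$; if it does not vanish one completes the square to obtain a circle centered at $e_k+\frac{\overline{\alpha_k}}{|\alpha_k|^2-\beta_k^2}$ with radius $\frac{\beta_k}{||\alpha_k|^2-\beta_k^2|}$, matching \eqref{alphak1} precisely. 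I should also check that the excluded value $w=e_k$ (i.e. $p=\frac{\omega_k}{2}$, not allowed since $p\notin E_\tau[2]$) and the pole $w=\infty$ cause no trouble: $w=e_k$ is never on $\partial\mathcal{B}_k$ since $|1-0|=1\neq 0$, consistent with Lemma~\ref{lemma3-1} which already presumes $p\neq\frac{\omega_k}{2}$.

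The main obstacle, such as it is, is purely bookkeeping: verifying that $(e_k-e_{k'})(e_k-e_{k''})$ equals $3e_k^2-\frac{g_2}{4}$ with the correct sign and normalization, and then carrying the Apollonius algebra through without sign errors so that the center and radius come out exactly as written in \eqref{alphak1} rather than off by a conjugate or a factor. There is no genuine analytic difficulty here — the substance was already done in Lemma~\ref{lemma3-1} (which identifies degeneracy of $\frac{\omega_k}{2}$ with $\wp(p-\frac{\omega_k}{2})\in\partial\mathcal{B}_0$), and Lemma~\ref{coro3-1} is just the image of that condition under the Möbius map $w\mapsto e_k+\frac{A}{w-e_k}$, which necessarily sends the circle $\partial\mathcal{B}_0$ to a circle or line. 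I would present the proof as: invoke Lemma~\ref{lemma3-1}, apply the half-period addition formula, and do the one-line reduction to $|1-\alpha_k(w-e_k)|=\beta_k|w-e_k|$ followed by the standard identification of that locus.
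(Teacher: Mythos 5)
Your proposal is correct and follows essentially the same route as the paper: invoke Lemma \ref{lemma3-1}, apply the half-period addition formula $\wp(p-\frac{\omega_k}{2})=e_k+\frac{3e_k^2-g_2/4}{\wp(p)-e_k}$, reduce to $\big|\frac{1}{\wp(p)-e_k}-\alpha_k\big|=\beta_k$ (equivalently your $|1-\alpha_k(w-e_k)|=\beta_k|w-e_k|$), and identify the locus as the circle or line in \eqref{alphak1} by the Apollonius computation. The only content in the paper's proof beyond this is the case analysis of which side of $\partial\mathcal{B}_k$ corresponds to each sign of $\det D^2G_p(\frac{\omega_k}{2})$, which is used later but is not needed for the lemma as stated.
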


\begin{proof}
Let $k\in \{1,2,3\}$. By applying the additional formula of elliptic functions,
we have
$$\wp\Big(p-\frac{\omega_k}{2}\Big)=e_k+\frac{\wp''(\frac{\omega_k}{2})}{2(\wp(p)-e_k)}=e_k+\frac{3e_k^2-\frac{g_2}{4}}{\wp(p)-e_k}.$$
Note $3e_k^2-\frac{g_2}{4}\neq 0$.
Then the trivial critical point $\frac{\omega_k}{2}$ is degenerate if and only if $\wp(p-\frac{\omega_k}{2})\in \partial\mathcal{B}_0$, if and only if
$$\Big|e_k+\frac{3e_k^2-\frac{g_2}{4}}{\wp(p)-e_k}-\Big(\frac{\pi}{\operatorname{Im}\tau}-\eta_1\Big)\Big|=\frac{\pi}{\operatorname{Im}\tau},$$
which, together with \eqref{alphak0}, is equivalent to
\begin{equation}\label{betakk}
\Big|\frac{1}{\wp(p)-e_k}-\alpha_k\Big|=\beta_k.
\end{equation}
There are two cases.

{\bf Case 1.} $|\alpha_k|\neq \beta_k$. Notice from \eqref{alphak0} that this is equivalent to
\begin{equation}
\left|\frac{\eta_1+e_k}{\pi}\operatorname{Im}\tau-1\right|\neq 1,
\end{equation}
which, together with \eqref{detG}, is equivalent to that
 $\frac{\omega_k}{2}$ is a non-degenerate critical point of $G(z)$, and $|\alpha_k|> \beta_k$
if and only if $\det D^2G(\frac{\omega_k}{2})<0$.

Then a direct computation shows that \eqref{betakk} is equivalent to
$$\bigg|\wp(p)-e_k-\frac{\overline{\alpha_k}}{|\alpha_k|^2-\beta_k^2}\bigg|=\frac{\beta_k}{\left||\alpha_k|^2-\beta_k^2\right|},$$
so $\wp(p-\frac{\omega_k}{2})\in \partial\mathcal{B}_0$ is equivalent to
\begin{equation}\label{Sk-11}
\wp(p)\in\partial\mathcal{B}_k=\bigg\{z\in\mathbb{C}\; :\; \bigg|z-e_k-\frac{\overline{\alpha_k}}{|\alpha_k|^2-\beta_k^2}\bigg|=\frac{\beta_k}{\left||\alpha_k|^2-\beta_k^2\right|}\bigg\}.
\end{equation}

{\bf Case 1-1.} $|\alpha_k|>\beta_k$, i.e. $\det D^2G(\frac{\omega_k}{2})<0$. Then $\wp(p-\frac{\omega_k}{2})\in \mathcal{B}_0$ is equivalent to $|\frac{1}{\wp(p)-e_k}-\alpha_k|<\beta_k$, which is equivalent to 
$$\bigg|\wp(p)-e_k-\frac{\overline{\alpha_k}}{|\alpha_k|^2-\beta_k^2}\bigg|<\frac{\beta_k}{\left||\alpha_k|^2-\beta_k^2\right|},$$
i.e. $\wp(p)\in \mathcal{B}_k$. Together with Remark \ref{rmk3-11}, we obtain that 
\begin{equation}\label{eq331}
\det D^2 G_p\Big(\frac{\omega_k}{2}\Big)>0\;\Leftrightarrow\; J_{\phi}\Big(\frac{\omega_k}{2}\Big)>0\;\Leftrightarrow\;\wp(p)\in \mathcal{B}_k,
\end{equation}
$$\det D^2 G_p\Big(\frac{\omega_k}{2}\Big)<0\;\Leftrightarrow\; J_{\phi}\Big(\frac{\omega_k}{2}\Big)<0\;\Leftrightarrow\;\wp(p)\in \mathbb C\setminus \overline{\mathcal{B}_k},$$
\begin{equation}\label{eq331-0}\det D^2 G_p\Big(\frac{\omega_k}{2}\Big)=0\;\Leftrightarrow\; J_{\phi}\Big(\frac{\omega_k}{2}\Big)=0\;\Leftrightarrow\;\wp(p)\in \partial\mathcal{B}_k.\end{equation}

{\bf Case 1-2.} $|\alpha_k|<\beta_k$, i.e. $\det D^2G(\frac{\omega_k}{2})>0$. Then $\wp(p-\frac{\omega_k}{2})\in \mathcal{B}_0$ is equivalent to $|\frac{1}{\wp(p)-e_k}-\alpha_k|<\beta_k$, which is equivalent to 
$$\bigg|\wp(p)-e_k-\frac{\overline{\alpha_k}}{|\alpha_k|^2-\beta_k^2}\bigg|>\frac{\beta_k}{\left||\alpha_k|^2-\beta_k^2\right|},$$
i.e. $\wp(p)\in \mathbb C\setminus \overline{\mathcal{B}_k}$. Thus 
\begin{equation}\label{eq332}
\det D^2 G_p\Big(\frac{\omega_k}{2}\Big)>0\;\Leftrightarrow\; J_{\phi}\Big(\frac{\omega_k}{2}\Big)>0\;\Leftrightarrow\;\wp(p)\in \mathbb C\setminus \overline{\mathcal{B}_k},
\end{equation}
$$\det D^2 G_p\Big(\frac{\omega_k}{2}\Big)<0\;\Leftrightarrow\; J_{\phi}\Big(\frac{\omega_k}{2}\Big)<0,\;\Leftrightarrow\;\wp(p)\in  {\mathcal{B}_k},$$
$$\det D^2 G_p\Big(\frac{\omega_k}{2}\Big)=0\;\Leftrightarrow\; J_{\phi}\Big(\frac{\omega_k}{2}\Big)=0\;\Leftrightarrow\;\wp(p)\in \partial\mathcal{B}_k.$$

{\bf Case 2.} $|\alpha_k|=\beta_k$, which is equivalent to that
 $\frac{\omega_k}{2}$ is a degenerate critical point of $G(z)$.
 Again a direct computation shows that \eqref{betakk} is equivalent to $\operatorname{Re}(\alpha_k (\wp(p)-e_k))=\frac12$, so $\wp(p-\frac{\omega_k}{2})\in \partial\mathcal{B}_0$ is equivalent to
 \begin{equation}\label{Sk-12}
\wp(p)\in\partial\mathcal{B}_k=\Big\{z\in\mathbb{C}\; :\; \operatorname{Re}(\alpha_k (z-e_k))=\frac12\Big\},
\end{equation} 
and similarly, $\wp(p-\frac{\omega_k}{2})\in \mathcal{B}_0$ is equivalent to
$$\wp(p)\in\mathcal{B}_k=\Big\{z\in\mathbb{C}\; :\; \operatorname{Re}(\alpha_k (z-e_k))>\frac12\Big\}.$$
Together with Remark \ref{rmk3-11}, we see that \eqref{eq331}-\eqref{eq331-0} hold.

The proof is complete.
\end{proof}

\begin{proof}[Proof of Theorem \ref{main-thm-01}]
Theorem \ref{main-thm-01} follows directly from Lemmas \ref{lemma3-1} and \ref{coro3-1}. 
Consequently, define
\begin{align}\label{O0tau}\mathcal{T}_{\tau}:=&\Big\{p\in E_{\tau}\setminus E_{\tau}[2]\;:\;\text{$G_p(z)$ has degenerate trivial critical points}\Big\}\nonumber\\
=&\bigcup_{k=0}^3\wp^{-1}(\partial\mathcal{B}_k)\setminus E_{\tau}[2].\end{align}
Since $\wp: E_{\tau}\to \mathbb{C}\cup\{\infty\}$ is a double cover with branch points at $\frac{\omega_k}{2}$'s, so $\wp^{-1}(\partial \mathcal{B}_k)$ consists of at most $2$ analytic curves, and then $\mathcal{T}_{\tau}\cup E_{\tau}[2]$ consists of at most $8$ analytic curves  (and so of Lebesgue measure zero).
\end{proof}

\begin{Remark}
Note that $0\notin\wp^{-1}(\partial \mathcal{B}_0)$ and $\wp^{-1}(\partial \mathcal{B}_0)=-\wp^{-1}(\partial \mathcal{B}_0)$ consists of either one or two analytic curves that are symmetric with respect to $0$. Then
$$\wp^{-1}(\partial \mathcal{B}_0)+\frac{\omega_k}{2}\neq \wp^{-1}(\partial \mathcal{B}_0)+\frac{\omega_j}{2},\quad\text{for }\;j\neq k.$$
 Since Lemmas \ref{lemma3-1} and \ref{coro3-1} together imply $\wp^{-1}(\partial \mathcal{B}_k)=\wp^{-1}(\partial \mathcal{B}_0)+\frac{\omega_k}{2}$, we obtain \begin{align}\label{Bjk}\partial{B}_j\neq \partial \mathcal B_k,\quad \text{for }\;j\neq k.\end{align}
\end{Remark}

\subsection{Examples that evey number in $\{4,6,8,10\}$ occurs}
Now we want to give examples to show that for the number of critical points of $G_p(z)$, every number in $\{4,6,8,10\}$ really occurs. The following result from \cite{CKL-2025} proves that the number $4$ occurs.

\begin{theorem}\cite[Theorem 1.5]{CKL-2025}\label{thm-B}
Fix any $\tau$ (such as $\tau\in i\mathbb{R}_{>0}$) such that $G(z;\tau)$ has exactly $3$ critical points that are all non-degenerate. Then there is $\varepsilon>0$ small such that if $0<|p-\frac{\omega_k}{2}|<\varepsilon$ for some $k\in \{0,1,2,3\}$, then $G_p(z)$ has no nontrivial critical points, or equivalently, has exactly $4$ critical points $\frac{\omega_k}{2}$, $k=0,1,2,3$, that are all non-degenerate.
\end{theorem}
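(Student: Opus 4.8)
The plan is to prove this by a perturbation argument: after translating to reduce to $k=0$, I would localize separately near and away from the singularity. Writing $p=\frac{\omega_k}{2}+\delta$ and using $\omega_k\in\Lambda_\tau$, periodicity gives $G_p(z)=\frac12\big(G(z-\frac{\omega_k}{2}-\delta)+G(z+\frac{\omega_k}{2}+\delta)\big)=G_\delta\big(z-\frac{\omega_k}{2}\big)$, so the critical points of $G_p$ are exactly those of $G_\delta$ translated by $\frac{\omega_k}{2}$, and the trivial/nontrivial dichotomy is preserved. Hence it suffices to prove: if $G$ has exactly $3$ critical points (which by \eqref{gde} are $\frac{\omega_1}{2},\frac{\omega_2}{2},\frac{\omega_3}{2}$), all non-degenerate, then $G_p$ has no nontrivial critical point whenever $0<|p|<\varepsilon$.

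Fix a disk $U=B_{\rho_0}(0)$ with $\frac{\omega_j}{2}\notin\overline U$ for $j=1,2,3$. Since $G_p\to G$ in $C^2_{\mathrm{loc}}(E_\tau\setminus\{0\})$ as $p\to0$, and $G$ is a Morse function on $E_\tau\setminus U$ with critical set $\{\frac{\omega_1}{2},\frac{\omega_2}{2},\frac{\omega_3}{2}\}$, a standard structural-stability argument --- the inverse function theorem for $\nabla G_p$ near each $\frac{\omega_j}{2}$, where $D^2G_p(\frac{\omega_j}{2})\to D^2G(\frac{\omega_j}{2})$ is invertible, together with $\inf|\nabla G|>0$ off small balls around the $\frac{\omega_j}{2}$ --- shows that for $p$ small the only critical points of $G_p$ in $E_\tau\setminus U$ are the three trivial ones $\frac{\omega_1}{2},\frac{\omega_2}{2},\frac{\omega_3}{2}$, and they are non-degenerate. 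Thus any nontrivial critical point of $G_p$ must lie inside $U$.

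The heart of the argument is to rule out extra critical points in $U$. Near $0$ one writes $G(w)=-\frac1{2\pi}\log|w|+\gamma(w)$, where $\gamma$ is even and, by elliptic regularity (it satisfies $\Delta\gamma=|E_\tau|^{-1}$), real-analytic, so $\gamma_z$ is odd with $\gamma_z(0)=0$. For $z\in U$ this gives $G_p(z)=-\frac1{4\pi}\log|z^2-p^2|+\frac12\big(\gamma(z-p)+\gamma(z+p)\big)$, and clearing the denominator, the equation $\partial_z G_p(z)=0$ becomes, for $z\neq\pm p$, equivalent to $H_p(z)=0$, where
\[
H_p(z):=-\frac{z}{4\pi}+\frac{z^2-p^2}{2}\big(\gamma_z(z-p)+\gamma_z(z+p)\big).
\]
This $H_p$ is jointly smooth in $(z,p)$ near $(0,0)$, has $H_p(0)=0$ (since $\gamma_z$ is odd) and $H_p(\pm p)=\mp\frac{p}{4\pi}\neq0$. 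The crucial point is that $H_0(z)=z\big(-\frac1{4\pi}+z\gamma_z(z)\big)$ has $z=0$ as its only zero on $\overline U$ (shrink $\rho_0$ so that $|z\gamma_z(z)|<\frac1{4\pi}$ there), and this zero is non-degenerate with $DH_0(0)=-\frac1{4\pi}I$. Because $H_p\to H_0$ in $C^1(\overline U)$, every zero of $H_p$ in $U$ tends to $0$ as $p\to0$, and on a fixed small ball the derivative $DH_p$ stays invertible and uniformly close to $-\frac1{4\pi}I$, so $H_p$ is injective there; together with $H_p(0)=0$ this forces $z=0$ to be the unique zero of $H_p$ in $U$ for $p$ small. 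Hence $\frac{\omega_0}{2}=0$ is the only critical point of $G_p$ in $U$; it is trivial, and non-degenerate by Lemma \ref{lemma3-1}. Combining with the previous step, $G_p$ has exactly the four trivial critical points $\frac{\omega_k}{2}$, $k=0,1,2,3$, all non-degenerate, for $0<|p|<\varepsilon$, which is the assertion.

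I expect this last step to be the main obstacle, because Morse-type stability cannot be applied directly on $U$: there $G_p$ is not uniformly $C^1$, the poles $\pm p$ lying inside $U$. Clearing the factor $z^2-p^2$ to pass to the smooth family $H_p$, and verifying that $H_0$ has a single non-degenerate zero at the origin --- equivalently, that $z\gamma_z(z)=\frac1{4\pi}$ has no small solution --- is what makes the perturbation argument go through. An alternative route would be through Hitchin's formula \eqref{513-1}: one would show that $e_k=\wp(\frac{\omega_k}{2})$ lies neither in the image nor in the limit set of the right-hand side of \eqref{513-1} as $(r,s)$ ranges over $\mathbb R^2\setminus\frac12\mathbb Z^2$ --- the limit set near the half-periods being the circles $\partial\mathcal B_j$ of Lemma \ref{coro3-1}, which avoid $e_k$ precisely because all critical points of $G$ are non-degenerate --- but the continuity argument above seems more transparent.
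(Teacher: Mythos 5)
Your proof is correct, but it is not the route the paper takes: in Section \ref{section3} the statement is simply imported from \cite[Theorem 1.5]{CKL-2025}, the ``proof'' consisting of the remark that the hypothesis $\tau\in i\mathbb{R}_{>0}$ in that reference is used only to guarantee that $G(\cdot;\tau)$ is Morse with three critical points, together with Lemma \ref{lemma3-1} for the non-degeneracy of the trivial critical point near $\frac{\omega_k}{2}$. What you supply is exactly the content the paper outsources. The region away from the colliding singularities is routine Morse stability (the paper argues identically in, e.g., Theorem \ref{thm-5c}); the genuine difficulty is excluding critical points born from the collision at $0$, where $\nabla G_p$ is not uniformly controlled, and your device of clearing the pole factor $z^2-p^2$ to pass to the uniformly smooth family $H_p$ does handle it: the computations check out ($\gamma_z$ odd gives $H_p(0)=0$; $H_p(\pm p)=\mp\frac{p}{4\pi}\neq 0$ rules out spurious roots from the multiplication; $DH_0(0)=-\frac{1}{4\pi}I$ since $z^2\gamma_z(z)=O(|z|^3)$; and the $C^1$-injectivity argument on a small convex ball is valid). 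The trade-off is the expected one: the paper's route is a two-line citation inheriting whatever machinery \cite{CKL-2025} uses, while yours is self-contained and elementary, and makes visible \emph{why} no nontrivial critical point emerges from the collision, namely that $z\gamma_z(z)=\frac{1}{4\pi}$ has no small root. A minor simplification: the non-degeneracy of the trivial critical point $0$ can be read off directly from $DH_p(0)$ being close to $-\frac{1}{4\pi}I$, since $\partial_z G_p=(z^2-p^2)^{-1}H_p$ and $0^2-p^2\neq 0$, so the appeal to Lemma \ref{lemma3-1} at the end is not actually needed.
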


\begin{proof}
For the case $\tau\in i\mathbb{R}_{>0}$, this theorem follows directly from \cite[Theorem 1.5]{CKL-2025} and Lemma \ref{lemma3-1}. Remak that in \cite[Theorem 1.5]{CKL-2025},
the condition $\tau\in i\mathbb{R}_{>0}$ was only used to imply that $G(z;\tau)$ has exactly $3$ critical points that are all non-degenerate. Thus, the proof of \cite[Theorem 1.5]{CKL-2025} also works for any $\tau$ such that $G(z;\tau)$ has exactly $3$ critical points that are all non-degenerate.
\end{proof}

Here we can prove the following result as a counterpart of Theorem \ref{thm-B} for those $\tau$ such that $G(z)=G(z;\tau)$ has $5$ critical points.

\begin{theorem}\label{thm-5c}
Fix any $\tau$ such that $G(z)=G(z;\tau)$ has $5$ critical points. Then there exists $\varepsilon>0$ small such that if $0<|p-\frac{\omega_k}{2}|<\varepsilon$ for some $k\in \{0,1,2,3\}$, $G_p(z;\tau)$ has exactly $6$ critical points, which are all non-degenerate. Furthermore, the unique pair of nontrivial critical points are non-degenerate minimal points of $G_p(z)$.
\end{theorem}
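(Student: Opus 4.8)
The plan is to translate to the case $k=0$ and then study $G_p(z)$ as $p\to 0$, combining Hitchin's formula \eqref{513-1} (Theorem \ref{thm-0B}) with the degeneracy criterion for trivial critical points from Lemmas \ref{lemma3-1} and \ref{coro3-1}. Since $2\cdot\frac{\omega_k}{2}\in\Lambda_\tau$ and $G$ is $\Lambda_\tau$-periodic, one has $G_p(z)=G_{p-\omega_k/2}(z-\frac{\omega_k}{2})$, so it suffices to treat $G_p$ with $0<|p|<\varepsilon$. For the four trivial critical points $\frac{\omega_l}{2}$, $l=0,1,2,3$, I would use Lemma \ref{lemma3-1} and Remark \ref{rmk3-11}: because $G(z;\tau)$ has $5$ critical points, Theorem \ref{thm-A0} gives $\det D^2G(\frac{\omega_l}{2})<0$ for $l=1,2,3$, hence by \eqref{detG} each $e_l=\wp(\frac{\omega_l}{2})$ lies in $\mathbb{C}\setminus\overline{\mathcal B_0}$; since $\wp(p-\frac{\omega_l}{2})\to e_l$ and $\wp(p)\to\infty\in\mathbb{C}\setminus\overline{\mathcal B_0}$ as $p\to0$, it follows that for $p$ small all of $\frac{\omega_l}{2}$, $l=0,1,2,3$, are non-degenerate with $\det D^2G_p(\frac{\omega_l}{2})<0$, i.e. non-degenerate saddle points.

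The core of the proof is to count the nontrivial critical points via \eqref{513-1}: $z=r+s\tau\notin E_\tau[2]$ is a nontrivial critical point of $G_p$ exactly when $\wp(p)=\Phi(z)$, where $\Phi(z):=\wp(z)+\frac{\wp'(z)}{2(\zeta(z)-r\eta_1-s\eta_2)}$ and, by \eqref{G_z}, the denominator equals $-4\pi\partial_zG(z)$. I would establish the key fact that $\Phi$ is bounded on $E_\tau\setminus E_\tau[2]$ outside any fixed neighbourhoods of the two nontrivial critical points $\pm q$ of $G$, while $|\Phi(z)|\to\infty$ as $z\to\pm q$. Away from $\Lambda_\tau$ and from the critical points of $G$ (the zeros of the denominator) this is clear. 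Near $0$ the expansions $\wp(z)=z^{-2}+O(z^2)$, $\wp'(z)=-2z^{-3}+O(z)$, $\zeta(z)=z^{-1}+O(z^3)$ and $r\eta_1+s\eta_2=O(|z|)$ show that the $z^{-2}$ contributions cancel, so $\Phi(z)=O(1)$. Near $\frac{\omega_l}{2}$ ($l=1,2,3$), where $\wp'$ has a simple zero, writing $w=z-\frac{\omega_l}{2}$ and using $-4\pi\partial_zG(z)=(a-e_l)w+b\bar w+O(|w|^2)$ (with $a,b$ as in \eqref{rsab}) one gets $\Phi(z)=e_l+\frac{\wp''(\omega_l/2)+O(w)}{2((a-e_l)+b\bar w/w+O(|w|))}$, whose denominator is bounded away from $0$ because $|a-e_l|\neq|b|$ ($\frac{\omega_l}{2}$ being a non-degenerate critical point of $G$); this is precisely the computation underlying the circles $\partial\mathcal B_l$. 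Near $q$, where $\wp'(q)\neq 0$, the same expansion gives $\Phi(z)\sim\frac{\wp'(q)}{2((a-\wp(q))w+b\bar w)}$, which blows up since the $\mathbb{R}$-linear map $w\mapsto(a-\wp(q))w+b\bar w$ is bounded below by a positive multiple of $|w|$ (again by non-degeneracy of $q$ for $G$).

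Granting this, for $|\wp(p)|$ large every solution of $\Phi(z)=\wp(p)$ lies near $\pm q$, and near $q$ the equation reads $(a-\wp(q))w+b\bar w=\frac{\wp'(q)}{2\wp(p)}(1+o(1))$; since $\det D^2G(q)>0$ forces $|a-\wp(q)|<|b|$ by \eqref{detG}, this $\mathbb{R}$-linear system is invertible and a contraction-mapping argument produces exactly one solution $q'$ near $q$, hence exactly one, $-q'$, near $-q$ since $\Phi$ is even. Therefore, for $p$ small, $G_p$ has precisely the $4$ trivial critical points together with the pair $\pm q'$, i.e. exactly $6$ critical points, in agreement with Theorem \ref{main-thm-1}. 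Moreover $q'\to q\neq 0$, so $G_p\to G$ in $C^2$ near $q$ and, since $D^2G(q)$ is positive definite (Theorems \ref{thm-A0} and \ref{thm-A00}), $D^2G_p(q')$ is positive definite for $p$ small; thus $\pm q'$ are non-degenerate, and with the four saddles this shows all $6$ critical points are non-degenerate. Finally, to see $\pm q'$ are the minimal points, fix a punctured disk $\{0<|z|\le\rho_0\}$ with $\rho_0$ depending only on $\tau$ and smaller than the distance from $0$ to $\{\frac{\omega_l}{2},\pm q\}$; writing $G_p(z)=-\frac{1}{4\pi}\log(|z-p|\,|z+p|)+\frac12(h(z-p)+h(z+p))$ with $h$ the bounded regular part of $G$ at $0$, one gets $G_p(z)\ge-\frac{1}{2\pi}\log(2\rho_0)-C$ there, which we may arrange to exceed $\max_{|z|\ge\rho_0}G+1$, while $G_p\to G$ uniformly on $\{|z|\ge\rho_0\}$ gives $\min_{|z|\ge\rho_0}G_p\to\min G=G(q)$; hence $\min_{E_\tau}G_p$ is attained near $\pm q$, and since $\pm q'$ are the only local minima among the six critical points and $G_p(q')=G_p(-q')$, we conclude $\min_{E_\tau}G_p=G_p(\pm q')$.

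The main obstacle I anticipate is the boundedness lemma for $\Phi$, namely controlling $\Phi$ near the half-periods $\frac{\omega_l}{2}$ and near $0$, where numerator and denominator degenerate simultaneously; the delicate cancellations there rely essentially on the non-degeneracy of the trivial critical points of $G$ (Theorem \ref{thm-A0}) and are exactly the analysis that produces the degeneracy loci $\partial\mathcal B_l$. Once this lemma is in place, the counting and the identification of $\pm q'$ are routine.
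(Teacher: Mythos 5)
Your proposal is correct, and although the overall skeleton (four non-degenerate trivial saddles plus exactly one pair of nontrivial critical points persisting near $\pm q$) coincides with the paper's, the decisive counting step is carried out by a genuinely different and more self-contained argument. The paper produces the pair near $\pm q$ by the implicit function theorem applied to $\nabla G_p\to\nabla G$, and then excludes extra nontrivial critical points by a compactness/contradiction argument whose crucial ingredient --- that nontrivial critical points of $G_{p_n}$ cannot accumulate at the singularity $0$ as $p_n\to 0$ --- is quoted from \cite[Theorem 1.5]{CKL-2025}. You instead read everything off Hitchin's formula \eqref{513-1}: the map $\Phi(z)=\wp(z)+\frac{\wp'(z)}{2(\zeta(z)-r\eta_1-s\eta_2)}$ is bounded on $E_\tau$ away from $\pm q$ (the cancellation of the $z^{-2}$ terms at $0$, and the lower bound $|(a-e_l)w+b\bar w|\geq c|w|$ at the half-periods, the latter being exactly the non-degeneracy of $\frac{\omega_l}{2}$ for $G$ from Theorem \ref{thm-A0}), while $|\Phi|\to\infty$ at $\pm q$; since $\wp(p)\to\infty$, every nontrivial critical point is forced into a small neighbourhood of $\pm q$, where invertibility of $w\mapsto(a-\wp(q))w+b\bar w$ (equivalent to $\det D^2G(q)\neq 0$, Theorem \ref{thm-A00}, with $|a-\wp(q)|<|b|$ from \eqref{detG}) yields exactly one solution by contraction. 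What this buys you is independence from the external reference: your boundedness of $\Phi$ near $0$ is precisely a proof of the fact the paper cites, and your analysis at the half-periods reproves, in passing, why no nontrivial critical points appear there. Your treatment of the trivial critical points via Lemma \ref{lemma3-1} and Remark \ref{rmk3-11} is equivalent to the paper's use of \eqref{eq330}, and the minimality argument (the global minimum of $G_p$ is attained at a smooth point, hence at a critical point that is not a saddle, hence at $\pm q'$, where $D^2G_p(q')\to D^2G(q)$ positive definite by Theorems \ref{thm-A0} and \ref{thm-A00}) matches the paper's; the estimates you flag as the main obstacle do go through as you describe.
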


\begin{proof}
Fix any $\tau$ such that $G(z)=G(z;\tau)$ has $5$ critical points $\frac{\omega_k}{2}$, $k=1,2,3$, and $\pm q\notin E_{\tau}[2]$. It follows from Theorems \ref{thm-A0} and \ref{thm-A00} that these critical points are all non-degenerate and 
$$\det D^2G(\pm q)>0,\quad\text{and}\quad \det D^2G\Big(\frac{\omega_k}{2}\Big)<0\;\text{for}\;k=1,2,3.$$
Then by the implicit function theorem, there is $\varepsilon>0$ small such that for any $0<|p|<\varepsilon$, $G_p(z)$ has a unique critical point (which is non-degenerate) in a small neighborhood of each critical point of $G(z)$ (clearly the unique critical point of $G_p(z)$ in the small neighborhood of $\frac{\omega_k}{2}$ is also $\frac{\omega_k}{2}$ for $k=1,2,3$). Denote by $\pm q_p$ to be the unique critical point  of $G_p(z)$ in the small neighborhood of $\pm q$. Then
$$\det D^2G_p(\pm q_p)>0,\quad\text{and}\quad \det D^2G_p\Big(\frac{\omega_k}{2}\Big)<0\;\text{for}\;k=1,2,3.$$
Together with the trivial critical point $0$ which satisfies $\det D^2G_p(0)<0$ by using \eqref{eq330} and $\wp(p)\notin \overline{\mathcal B_0}$, we see that for any $0<|p|<\varepsilon$, $G_p(z)$ already has $6$ non-degenerate critical points $\frac{\omega_k}{2}$, $k=0,1,2,3$, and $\pm q_p\notin E_{\tau}[2]$. 

Assume by contradiction that there is $p_n\to 0$ such that $G_{p_n}(z)$ has more than $6$ critical points, i.e. has another critical point $z_{p_n}\notin E_{\tau}[2]\cup\{\pm q_{p_n}\}$. Then $z_{p_n}$ is a nontrivial critical point. Up to a subsequence, we may assume $z_{p_n}\to z_0\in E_{\tau}$. 
Suppose $z_0=0$ in $E_{\tau}$, by replacing $z_{p_n}$ with some element from $z_{p_n}+\mathbb{Z}+\mathbb{Z}\tau$, we may assume $z_{p_n}=r_n+s_n\tau\to 0$ with $\mathbb R\ni r_n, s_n\to 0$. Then 
 it follows from the Laurent series of $\zeta(z)$ and $\wp(z)$ that
\[
\zeta(z_{p_n})=\frac{1}{z_{p_n}}+O(|z_{p_n}|^{3}),\quad
\wp(z_{p_n})=\frac{1}{z_{p_n}^{2}}+O(|z_{p_n}|^{2}),
\]%
\[
\wp^{\prime}(z_{p_n})=\frac{-2}{z_{p_n}^{3}}+O(|z_{p_n}|).
\]
Inserting these into \eqref{513-1}, we obtain
\begin{align*}
\wp(p_n)=\wp(z_{p_n})+\frac{\wp^{\prime
}(z_{p_n})}{2(\zeta(z_{p_n})-r_n\eta_1-s_n\eta_2) }=-\frac{r_n\eta_1+s_n\eta_2}{z_{p_n}}+O(|z_{p_n}|^{2}).
\end{align*}
Note from $\operatorname{Im}\tau>0$ and $\mathbb{R}^2\ni (r_n, s_n)\to (0,0)$ that $\frac{r_n\eta_1+s_n\eta_2}{z_{p_n}}=\frac{r_n\eta_1+s_n\eta_2}{r_n+s_n\tau}$ is uniformly bounded, we obtain a contradiction with $\wp(p_n)\to \infty$ as $p_n\to 0$. 
This proves $z_0\neq 0$ in $E_{\tau}$, so $z_0$ is a critical point of $G(z)$, i.e. $z_0=\frac{\omega_k}{2}$ for some $k=1,2,3$ or $z_0=\pm q$. But this is a contradiction with the fact that the critical point of $G_{p_n}(z)$ in the small neighborhood of $z_0$ is unique. Therefore, by taking $\varepsilon>0$ smaller if necessary, we conclude that for any $0<|p|<\varepsilon$, $G_p(z)$ exactly $6$ critical points that are all non-degenerate, with $\det D^2G_p(\frac{\omega_k}{2})<0$ for $k\in\{0,1,2,3\}$ and $\det D^2G_p(\pm q_p)>0$, namely $\pm q_p$ are minimal points of $G_p(z)$. This is the case in Corollary \ref{coro2-8}-(1) with $6$ critical points.

Finally, since $G_{p}(z)=G_{p-\frac{\omega_k}{2}}(z+\frac{\omega_k}{2})$, the same assertion also holds for any $0<|p-\frac{\omega_k}{2}|<\varepsilon$.
The proof is complete.
\end{proof}

Now we turn to the proof of Theorem \ref{main-thm-2} (1)-(3) (the proof of Theorem \ref{main-thm-2} (3-1)-(3-2) will be postponed in Section \ref{section5}). Recall the following result from \cite{CKLW}.

\begin{theorem}\cite[Theorem 1.2]{CKLW}\label{thm-CKLW}
Define
$$\Omega_5:=\big\{\tau\in \mathbb{H}\;:\; G(z;\tau)\; \text{has $5$ critical points}\big\},$$
$$\Omega_3:=\big\{\tau\in \mathbb{H}\;:\; G(z;\tau)\; \text{has $3$ critical points}\big\}.$$
Then $\Omega_5$ is open and $\Omega_3^{\circ}\neq \emptyset$, where $\Omega_3^{\circ}$ denotes the subset of interior points of $\Omega_3$. Furthermore,
\begin{itemize}
\item[(1)] $\mathcal{C}:=\partial \Omega_5\cap \mathbb H=\partial \Omega_3\cap \mathbb H$ consists of analytic curves in $\mathbb{H}$, and $\mathcal{C}$ consists of those $\tau$ so that some half-period $\frac{\omega_k}{2}$ is degenerate critical point of $G(z;\tau)$.
\item[(2)]
Fix $k\in\{1,2,3\}$. Then there exists $\tau\in\mathcal{C}$ such that $\frac{\omega_k}{2}$ is a degenerate critical point of $G(z;\tau)$. In this case,  $\frac{\omega_l}{2}$ are non-degenerate saddle points of $G(z;\tau)$ for each $l\in\{1,2,3\}\setminus\{k\}$.
\end{itemize}
\end{theorem}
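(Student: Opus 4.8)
The plan is to carry out the Bergweiler--Eremenko anti-holomorphic dynamics of Section \ref{section2} in families over $\tau\in\mathbb H$. Put $w_k(\tau):=\frac{(e_k(\tau)+\eta_1(\tau))\operatorname{Im}\tau}{\pi}$, so that \eqref{detG} reads $\det D^2G(\frac{\omega_k}{2};\tau)=\frac{1}{4(\operatorname{Im}\tau)^2}\bigl(1-|w_k(\tau)-1|^2\bigr)$; thus $\frac{\omega_k}{2}$ is a degenerate critical point of $G(\cdot;\tau)$ exactly on the real-analytic locus $\Gamma_k:=\{\tau\in\mathbb H:|w_k(\tau)-1|=1\}$, and the set in the statement equals $\Gamma_1\cup\Gamma_2\cup\Gamma_3$. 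The inputs are: $G(\cdot;\tau)$ always has $3$ or $5$ critical points (Theorem \ref{thm-LW}); if it has $5$ they are all non-degenerate with the nontrivial pair consisting of minima (Theorems \ref{thm-A0} and \ref{thm-A00}); and $\hat\phi_\tau(z):=z-\hat g_\tau(z)$ has degree $-1$, hence $\sum_q\deg_\tau(q)=-1$ over all critical points $q$ whenever $0$ is a regular value of $\hat\phi_\tau$, i.e.\ whenever all critical points of $G(\cdot;\tau)$ are non-degenerate.

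\emph{Openness and $\Omega_3^\circ\neq\emptyset$.} If $\tau_0\in\Omega_5$, its nontrivial pair is non-degenerate, so the implicit function theorem yields a nearby nontrivial pair of critical points for $\tau$ close to $\tau_0$, and the $3$-or-$5$ dichotomy forces $\tau\in\Omega_5$; thus $\Omega_5$ is open, $\Omega_3=\mathbb H\setminus\Omega_5$ is closed, and $\partial\Omega_5=\partial\Omega_3$ automatically. For $\Omega_3^\circ$, fix $\tau_0$ with exactly three critical points, all non-degenerate (e.g.\ $\tau_0\in i\mathbb R_{>0}$, by Theorem \ref{thm-LW}). If $\tau_n\to\tau_0$ with $\tau_n\in\Omega_5$, then after passing to a subsequence the nontrivial pair $\pm q_n$ of $G(\cdot;\tau_n)$ converges to a critical point $q_0$ of $G(\cdot;\tau_0)$: if $q_0\neq-q_0$ it is a nontrivial critical point of $G(\cdot;\tau_0)$, contradiction; if $q_0=-q_0$, applying the mean value theorem to $\nabla G(q_n;\tau_n)-\nabla G(-q_n;\tau_n)=0$ along a segment collapsing to $q_0$ forces $D^2G(q_0;\tau_0)$ to be singular, again a contradiction. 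Hence $\tau_0\in\Omega_3^\circ$.

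\emph{The boundary is the degeneracy locus, plus part (2).} If $\tau_0\in\mathcal C=\partial\Omega_3$ then $\tau_0\notin\Omega_3^\circ$, so by the argument just given $G(\cdot;\tau_0)$ cannot have three non-degenerate critical points; since it has exactly three (Theorem \ref{thm-A00}), one half-period is degenerate. Conversely, if $\frac{\omega_k}{2}$ is degenerate at $\tau_0$, then $G(\cdot;\tau_0)$ has exactly the three half-periods as critical points, with $d_k+d_l+d_m=-1$ for the local degrees of $\hat\phi_{\tau_0}$; using that any nontrivial pair on nearby $\tau$ consists of minima (Theorem \ref{thm-A0}), a local degree count shows that near $\tau_0$ the side of $\Gamma_k$ where $\det D^2G(\frac{\omega_k}{2};\cdot)>0$ lies in $\Omega_3$ (three critical points: one attracting, two saddle half-periods) while the side where $\det D^2G(\frac{\omega_k}{2};\cdot)<0$ lies in $\Omega_5$ (three saddle half-periods, so the count $-1$ forces a nontrivial pair of minima). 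This same bookkeeping excludes a second degenerate half-period at $\tau_0$ and shows $\frac{\omega_l}{2},\frac{\omega_m}{2}$ are non-degenerate saddles there --- the second sentence of part (2). In particular $\tau_0\in\partial\Omega_5=\mathcal C$, so $\mathcal C=\Gamma_1\cup\Gamma_2\cup\Gamma_3$; its analyticity reduces to showing each $\Gamma_k$ is a smooth real-analytic curve, i.e.\ that $\nabla_\tau\bigl(|w_k(\tau)-1|^2\bigr)$ is nowhere zero on $\Gamma_k$ and $\Gamma_k$ has no isolated points. This is the main obstacle, to be settled by an explicit computation with $e_k(\tau),\eta_1(\tau)$ using the (Ramanujan/Halphen) differential equations they satisfy.

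\emph{Existence.} Since $\Omega_5\neq\emptyset$ (e.g.\ $\tau=\frac12+\frac{\sqrt3}{2}i$, Theorem \ref{thm-LW}), $\Omega_3^\circ\neq\emptyset$, and $\mathbb H$ is connected, $\mathcal C\neq\emptyset$, hence $\Gamma_{k_0}\neq\emptyset$ for some $k_0$; and the modular group acts on $\mathbb H$ permuting $\{e_1,e_2,e_3\}$ (equivalently the half-periods) transitively while preserving $\mathcal C$, so $\Gamma_k\neq\emptyset$ for each $k\in\{1,2,3\}$, which yields part (2). Besides the regularity of the $\Gamma_k$, the only delicate bookkeeping is the local-degree analysis of $\hat\phi_\tau$ at a degenerate zero.
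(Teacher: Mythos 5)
This statement is not proved in the paper at all: it is quoted verbatim from \cite[Theorem 1.2]{CKLW} and used as a black box, so there is no internal proof to compare against. Your attempt to re-derive it from the tools of Section \ref{section2} is partly sound: the openness of $\Omega_5$ via the implicit function theorem at the non-degenerate nontrivial pair (Theorem \ref{thm-A00}), the compactness argument giving $\Omega_3^{\circ}\neq\emptyset$ and the inclusion $\mathcal{C}\subseteq\Gamma_1\cup\Gamma_2\cup\Gamma_3$ (where $\Gamma_k$ is the locus $|w_k(\tau)-1|=1$ read off from \eqref{detG}), the degree-$(-1)$ count forcing $\tau\in\Omega_5$ when all three half-periods are saddles, and the use of the $SL(2,\mathbb Z)$-action to permute the half-periods for the existence part are all legitimate and consistent with the paper's framework.

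However, there are two genuine gaps. First, the regularity of $\mathcal{C}$ is not a side issue you can defer: you need to know that $|w_k(\tau)-1|^2-1$ changes sign across $\Gamma_k$ (equivalently, that its $\tau$-gradient does not vanish on $\Gamma_k$) even to prove the set-theoretic inclusion $\Gamma_k\subseteq\partial\Omega_5$. If $|w_k(\tau)-1|^2-1$ had an interior zero with a sign-definite neighborhood of the wrong sign, a point of $\Gamma_k$ could lie in $\Omega_3^{\circ}$ and the claimed identity $\mathcal{C}=\Gamma_1\cup\Gamma_2\cup\Gamma_3$ would fail. This non-vanishing is precisely the hard analytic content of \cite[Theorem 1.2]{CKLW}, established there through the connection with Eisenstein series of weight one and premodular forms; it is not a routine computation with the Ramanujan/Halphen system, and leaving it as ``the main obstacle, to be settled by an explicit computation'' leaves the theorem unproved. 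Second, the ``local degree bookkeeping'' at the degenerate half-period is asserted but never carried out. The local degree of the harmonic map $\hat{\phi}_\tau$ at a degenerate isolated zero (where $J_{\hat\phi}=0$) need not be $\pm1$, so the identity $d_k+d_l+d_m=-1$ does not by itself exclude a second degenerate half-period, nor does it show that $\frac{\omega_l}{2}$, $\frac{\omega_m}{2}$ are non-degenerate saddles at $\tau_0$ --- which is exactly the second sentence of part (2). In \cite{CKLW} (and \cite{LW4}) these facts are obtained by different, non-dynamical arguments. As it stands your argument establishes parts of the theorem but assumes the two deepest ingredients.
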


\begin{proof}[Proof of Theorem \ref{main-thm-2} (1)-(3)] (1) First, we consider $p=\frac{\omega_k}{4}$ for $k\in\{1,2,3\}$. Let us take $k=2$, i.e. $p=\frac{\omega_2}{4}=\frac{\tau}{4}$ for example. Fix any $\tau$ such that the Green function $G(z;\frac{\tau}{2})$ (i.e. the Green function on the torus $E_{\frac{\tau}{2}}=\mathbb{C}/(\mathbb Z+\mathbb Z\frac{\tau}{2})$) has $5$ critical points
\begin{equation}\label{500}\frac{1}{2}, \quad\frac{\tau}{4}, \quad\frac12+\frac{\tau}{4},\quad \pm q\notin E_{\frac{\tau}{2}}[2].\end{equation}
It follows from Theorems \ref{thm-A0} and \ref{thm-A00} that these critical points are all non-degenerate:
\begin{equation}\label{detd2}\det D^2G\Big(\pm q;\frac{\tau}{2}\Big)>0,\;\text{and}\; \det D^2G\Big(z;\frac{\tau}{2}\Big)<0\;\text{for}\;z\in\Big\{\frac{1}{2}, \frac{\tau}{4}, \frac12+\frac{\tau}{4}\Big\}.\end{equation}

Since $G(z)=G(z;\tau)$ is doubly periodic with periods $1$ and $\tau$, we see that $G_{\frac{\tau}{4}}(z)=\frac12(G(z+\frac{\tau}{4})+G(z-\frac{\tau}{4}))$ satisfies
\[G_{\frac{\tau}{4}}\Big(z+\frac{\tau}{2}\Big)=\frac{G(z+\frac{3\tau}{4})+G(z+\frac{\tau}{4})}{2}=\frac{G(z-\frac{\tau}{4})+G(z+\frac{\tau}{4})}{2}=G_{\frac{\tau}{4}}(z),\]
namely $G_{\frac{\tau}{4}}(z)$ is doubly periodic with periods $1$ and $\frac{\tau}{2}$, so $G_{\frac{\tau}{4}}(z)$ is well-defined on $E_{\frac{\tau}{2}}$. Furthermore, since $\frac{\tau}{4}=-\frac{\tau}{4}$ in $E_{\frac{\tau}{2}}$, it follows that
$$-\Delta (2G_{\frac{\tau}{4}}(z))=\delta_{\frac{\tau}{4}}-\frac{1}{\left \vert E_{\frac{\tau}{2}}\right \vert }\text{
\ on }E_{\frac{\tau}{2}},$$
i.e. $2G_{\frac{\tau}{4}}(z)$ is the Green function of $E_{\frac{\tau}{2}}$ with singularity at $\frac{\tau}{4}$. By the uniqueness of the Green function up to adding a constant, it follows that
 there is a constant $C$ such that 
$$2G_{\frac{\tau}{4}}(z)=G\Big(z-\frac{\tau}{4};\frac{\tau}{2}\Big)+C.$$ 
Then \eqref{500} implies that $G_{\frac{\tau}{4}}(z)$ has $5$ non-degenerate critical points on $E_{\frac{\tau}{2}}$:
$$\frac{1}{2}+\frac{\tau}{4}, \quad0, \quad\frac12,\quad q+\frac{\tau}{4}\notin E_{\frac{\tau}{2}}[2], \quad -q+\frac{\tau}{4}\notin E_{\frac{\tau}{2}}[2].$$
Since $z=z+\frac{\tau}{2}$ in $E_{\frac{\tau}{2}}$ but $z\neq z+\frac{\tau}{2}$ in $E_{\tau}$, we finally conclude that 
$G_{\frac{\tau}{4}}(z)$ has $10$ non-degenerate critical points on $E_{{\tau}}$:
$$ 0,\quad \frac12, \quad\frac{\tau}{2},\quad\frac{1+\tau}{2},\quad \pm\Big(\frac{1}{2}+\frac{\tau}{4}\Big),\quad \pm \Big(q+\frac{\tau}{4}\Big),\quad \pm \Big(q-\frac{\tau}{4}\Big).
$$
Furthermore, it follows from \eqref{detd2} that
$$\det D^2G_{\frac{\tau}{4}}(z)<0\quad\text{for}\quad z\in\Big\{0, \frac12, \frac{\tau}{2},\frac{1+\tau}{2}, \pm\Big(\frac{1}{2}+\frac{\tau}{4}\Big)\Big\},$$
$$\det D^2G_{\frac{\tau}{4}}(z)>0\quad\text{for}\quad z\in\Big\{\pm \Big(q+\frac{\tau}{4}\Big), \pm \Big(q-\frac{\tau}{4}\Big)\Big\}.$$
This is the case in Corollary \ref{coro2-8}-(1) with $10$ critical points. 

Consequently, by the implicit function theorem, there is $\varepsilon>0$ small such that for any $0<|p-\frac{\tau}{4}|<\varepsilon$, $G_p(z)$ has a unique critical point (which is also non-degenerate) in a small neighborhood of each critical point of $G_{\frac{\tau}{4}}(z)$. This implies that $G_p(z)$ has at least $10$ non-degenerate critical points. Together with Theorem \ref{main-thm-1}, we conclude that  $G_p(z)$ has exactly $10$ critical points that are all non-degenerate for any $|p-\frac{\tau}{4}|<\varepsilon$. This proves the assertion (1) for $k=2$. 

For $k=1$ (resp. $k=3$), it suffices to fix any $\tau$ such that the Green function $G(z;\frac12,\tau)$ on the torus $\mathbb{C}/(\mathbb Z\frac12+\mathbb Z{\tau})$ (resp. the Green function $G(z;\frac{1+\tau}{2})$ on the torus $E_{\frac{1+\tau}{2}}=\mathbb{C}/(\mathbb Z+\mathbb Z\frac{1+\tau}{2}$)) has $5$ critical points. Then the rest proof is the similar and we omit the details.

(2)-(3) Again let us take $k=2$, i.e. $p=\frac{\omega_2}{4}=\frac{\tau}{4}$ for example (again the cases $k\in\{1,3\}$ are similar and we omit the details). Fix any $\tau$ such that the Green function $G(z;\frac{\tau}{2})$ on the torus $E_{\frac{\tau}{2}}=\mathbb{C}/(\mathbb Z+\mathbb Z\frac{\tau}{2})$ has exactly $3$ critical points $\frac12, \frac{\tau}{4}, \frac{1}{2}+\frac{\tau}{4}$. Then the same proof as (1) implies that $G_{\frac{\tau}{4}}(z)$ has exactly $6$ critical points
$$ 0,\quad \frac12, \quad\frac{\tau}{2},\quad\frac{1+\tau}{2},\quad \pm\Big(\frac{1}{2}+\frac{\tau}{4}\Big),
$$
with $\{0,\frac{\tau}{2}\}$ coming from the critical point $\frac{\tau}{4}$ of $G(z;\frac{\tau}{2})$, $\{\frac12, \frac{1+\tau}{2}\}$ coming from $\frac{1}{2}+\frac{\tau}{4}$ and $\{\pm(\frac{1}{2}+\frac{\tau}{4})\}$ coming from $\frac12$.

{\bf Case 1.} By Theorem \ref{thm-CKLW}, we can further take $\tau$ such that $\frac12$ is a degenerate critical point of $G(z;\frac{\tau}{2})$ and $\det D^2G(z;\frac{\tau}{2})<0$ for $z\in\{\frac{\tau}{4}, \frac{1}{2}+\frac{\tau}{4}\}$. Then
$$\det D^2G_{\frac{\tau}{4}}(z)<0\quad\text{for}\quad z\in\Big\{0, \frac12, \frac{\tau}{2},\frac{1+\tau}{2}\Big\},$$
and the nontrivial critical points $\pm(\frac{1}{2}+\frac{\tau}{4})$ are degenerate minimal points of $G_{\frac{\tau}{4}}(z)$. This proves the assertion (2).

{\bf Case 2.} By Theorem \ref{thm-CKLW}, we can further take $\tau$ such that $\frac{\tau}{4}$ is a degenerate critical point of $G(z;\frac{\tau}{2})$ and $\det D^2G(z;\frac{\tau}{2})<0$ for $z\in\{\frac{1}{2}, \frac{1}{2}+\frac{\tau}{4}\}$. Then $\det D^2G_{\frac{\tau}{4}}(z)=0$ for $z\in \{0,\frac{\tau}{2}\}$ and
$$\det D^2G_{\frac{\tau}{4}}(z)<0\quad\text{for}\quad z\in\Big\{\frac12,\frac{1+\tau}{2},\pm\Big(\frac{1}{2}+\frac{\tau}{4}\Big)\Big\},$$
namely the nontrivial critical points $\pm(\frac{1}{2}+\frac{\tau}{4})$ are non-degenerate saddle points of $G_{\frac{\tau}{4}}(z)$. This proves the first assertion of (3).
Finally, we postpone the proof of the assertions (3-1)-(3-2) in Section \ref{section5}.
\end{proof}

\section{Generic non-degeneracy of critical points}
\label{section4}

In this section, we study the non-degeneracy of nontrivial critical points and prove Theorem \ref{main-thm-6}. Fix any $\tau$ and recall Hitchin's formula \eqref{513-1}. Define $f: \mathbb{R}^2\setminus\frac12\mathbb{Z}^2\to \mathbb{C}\cup\{\infty\}$ by
\begin{equation}\label{eq4-11}
f(r,s):=\wp (r+s\tau)+\frac{\wp ^{\prime }(r+s\tau)}{%
2(\zeta(r+s\tau)-r\eta_1-s\eta_2)}.
\end{equation}

Remark that in \cite{Hit1}, Hitchin's formula \eqref{513-1} was used as a function of $\tau$ with fixed $(r,s)$. Here we use Hitchin's formula \eqref{513-1} in a different way, i.e. we fix $\tau$ and consider it as a function of $(r,s)$.

Note that $f^{-1}(\{e_1,e_2,e_3,\infty\})$ is a closed subset of $\mathbb{R}^2$, so $$U:=\mathbb{R}^2\setminus\Big(\frac12\mathbb{Z}^2\cup f^{-1}(\{e_1,e_2,e_3,\infty\})\Big)$$ is an open subset of $\mathbb{R}^2$. We only need to consider the analytic map $f: U\to \mathbb{C}\setminus \{e_1,e_2,e_3\}$. 
Define
\begin{equation}\label{rmk-1-6}\Omega_{\tau}:=\left \{ p\in E_{\tau }\left \vert
\begin{array}{l}
\wp (p)=\wp (r+s\tau)+\frac{\wp ^{\prime }(r+s\tau)}{%
2(\zeta(r+s\tau)-r\eta_1-s\eta_2)}\text{ } \\
\text{for some }(r,s)\in \mathbb{R}^{2}\backslash \frac{1}{2}\mathbb{Z}^{2}%
\end{array}%
\right. \right \},\end{equation}
then $\Omega_{\tau}\setminus E_{\tau}[2]\neq \emptyset$, and Theorem \ref{thm-0B} implies that $G_p(z)$ has nontrivial critical points if and only if $p\in\Omega_{\tau}\setminus E_{\tau}[2]$. 
Clearly we have
\begin{equation}\label{Otau1}
\wp(\Omega_{\tau}\setminus E_{\tau}[2])=f(U).
\end{equation}
To prove Theorem \ref{main-thm-6}, we need to establish a precise formula of the Hessian $\det D^2G_p(q)$ in terms of $f$.

\begin{theorem}\label{thm-nonde}
Let $p\in \Omega_{\tau}\setminus E_{\tau}[2]$ and $q$ be a nontrivial critical point of $G_p(z)$. Write $q=r_0+s_0\tau$ with $(r_0, s_0)\in\mathbb{R}^2\setminus\frac12\mathbb{Z}^2$. Then $(r_0, s_0)\in U$, $\wp(p)=f(r_0, s_0)$ and there exists $c_{p,q}>0$ such that
\begin{equation}
\det D^2G_p(q)=-\frac{c_{p,q}}{16\pi^2\operatorname{Im}\tau}\det\begin{pmatrix}\frac{\partial \operatorname{Re}f}{\partial r} (r_0,s_0)& \frac{\partial \operatorname{Re}f}{\partial s}(r_0,s_0)\\\frac{\partial \operatorname{Im}f}{\partial r}(r_0,s_0) & \frac{\partial \operatorname{Im}f}{\partial s}(r_0,s_0)\end{pmatrix}.
\end{equation}
Consequently, $q$ is a degenerate nontrivial critical point of $G_p(z)$ if and only if
\[\det\begin{pmatrix}\frac{\partial \operatorname{Re}f}{\partial r} (r_0,s_0)& \frac{\partial \operatorname{Re}f}{\partial s}(r_0,s_0)\\\frac{\partial \operatorname{Im}f}{\partial r}(r_0,s_0) & \frac{\partial \operatorname{Im}f}{\partial s}(r_0,s_0)\end{pmatrix}=0.\]
\end{theorem}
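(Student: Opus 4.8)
The plan is to connect the Hessian of $G_p$ at a nontrivial critical point $q = r_0 + s_0\tau$ directly to the Jacobian of the Hitchin map $f$ at $(r_0,s_0)$. The starting point is formula \eqref{Gpdet}, which expresses $\det D^2 G_p(z)$ in terms of $\wp(z+p) + \wp(z-p) + 2\eta_1$. Using the critical point equation \eqref{a+001p}–\eqref{a+1p}, I would first re-derive, via the addition formula $\zeta(z+w)+\zeta(z-w)-2\zeta(z) = \wp'(z)/(\wp(z)-\wp(w))$, the identity $\wp(q+p)+\wp(q-p) = \wp'(q)/(f(r_0,s_0)-\wp(q))+2\wp(q)$ or an equivalent rational expression; since $q$ is a critical point, the consistency condition is exactly $\wp(p) = f(r_0,s_0)$, which gives the first two claims (that $(r_0,s_0) \in U$, i.e., $f(r_0,s_0)\notin\{e_1,e_2,e_3,\infty\}$ because $p\notin E_\tau[2]$ and $p\neq\pm q$, and that $\wp(p) = f(r_0,s_0)$).

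Next I would compute the Wirtinger derivatives of the map $\phi(z) = z - g(z)$ from Section~\ref{section2}. Recall $\phi$ has Jacobian $J_\phi = 1 - |\overline\partial g|^2$, and $\det D^2 G_p(q) = \frac{1}{4(\operatorname{Im}\tau)^2} J_\phi(q)$. The key observation is that $q$ being a \emph{degenerate} critical point means $|\overline\partial g(q)| = 1$, i.e. $\overline\partial g(q)$ lies on the unit circle; in that regime the orientation-reversing map $\phi$ near $q$ is "tangent to" a conformal map, and the precise statement is that $J_\phi(q)$ vanishes exactly when $\partial\phi(q)$ and $\overline{\partial}\phi(q)$ have equal modulus. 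I would then relate $\partial\phi, \overline\partial\phi$ to the derivatives of $f$. Concretely: fix the critical-point condition as the equation $\Psi(z,w) := \zeta(z+p)+\zeta(z-p) + 2(az+b\bar z) = 0$ where $w=\wp(p)$ enters through $p$; differentiating the relation $w = f(r,s)$ and the identity $\phi(r+s\tau) = $ (the appropriate constant) with respect to $(r,s)$ via the chain rule $\partial_r = \partial_z + \partial_{\bar z}$, $\partial_s = \tau\partial_z + \bar\tau\partial_{\bar z}$, one expresses $\partial_r f, \partial_s f$ as linear combinations of $\partial\phi(q)$ and $\overline{\partial\phi(q)}$ with coefficients involving $1, \tau, \bar\tau$. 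Taking the determinant of the resulting $2\times 2$ real system and using $\tau - \bar\tau = 2i\operatorname{Im}\tau$ produces a factor $\operatorname{Im}\tau$ and yields
\[
\det\begin{pmatrix}\partial_r\operatorname{Re}f & \partial_s\operatorname{Re}f\\ \partial_r\operatorname{Im}f & \partial_s\operatorname{Im}f\end{pmatrix} = (\operatorname{Im}\tau)\bigl(|\partial\phi(q)|^2 - |\overline\partial\phi(q)|^2\bigr)\cdot(\text{nonzero factor}),
\]
where the nonzero factor is $|2(\zeta(q)-r_0\eta_1-s_0\eta_2)|^2/|\wp'(q)|^2$ or similar, coming from differentiating $f$ through $\wp(p)$ versus $p$ (here one uses $\wp'(p)\neq 0$ since $p\notin E_\tau[2]$). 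Since $J_\phi(q) = |\partial\phi(q)|^2 - |\overline\partial\phi(q)|^2$ up to sign and normalization (one must track that $\phi$ is orientation-reversing, so its Jacobian is $|\partial\phi|^2 - |\overline\partial\phi|^2$ written with $\overline\partial g$), combining with $\det D^2 G_p(q) = \frac{1}{4(\operatorname{Im}\tau)^2}J_\phi(q)$ gives the stated formula with $c_{p,q} = 4\pi^2|2(\zeta(q)-r_0\eta_1-s_0\eta_2)|^2/|\wp'(q)|^2 > 0$, up to the precise constant bookkeeping.

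The main obstacle I anticipate is the careful bookkeeping in the chain-rule step: $f$ depends on $(r,s)$ both explicitly (through $r+s\tau$ appearing in $\wp, \wp', \zeta$) and the target variable is $\wp(p)$ rather than $p$ itself, so one must correctly insert the factor $\wp'(p) = 2\wp'(p)$ when converting between "$\wp(p) = f(r,s)$" and the $\phi$-equation "$\phi(r+s\tau) = \text{const}$", and one must verify the claimed sign (that $\det D^2 G_p(q) = -c_{p,q}/(16\pi^2\operatorname{Im}\tau)\cdot(\text{Jacobian of }f)$, with the minus sign) by checking orientations at a single explicit example — e.g. a nearby non-degenerate case — or by a homotopy/degree argument. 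Getting the positivity of $c_{p,q}$ and the exact power of $\operatorname{Im}\tau$ right requires patience but no new idea. Once the formula is established, the final "Consequently" is immediate: $\det D^2 G_p(q) = 0$ iff the displayed Jacobian vanishes, since $c_{p,q}>0$ and $\operatorname{Im}\tau > 0$.
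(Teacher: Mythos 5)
Your proposal is correct and follows essentially the same route as the paper: both arguments implicitly differentiate the critical-point relation $\zeta(q+p)+\zeta(q-p)=2(r\eta_1+s\eta_2)$ and combine the outcome with the identity $\det D^2G_p(q)=\frac{1}{4(\operatorname{Im}\tau)^2}J_\phi(q)$ from \eqref{Gpdet}; the paper merely organizes the chain rule by computing $f_r$ and $\operatorname{Im}(f_s/f_r)$ directly in the real variables $(r,s)$ instead of through Wirtinger derivatives of $\phi$. One small correction to your bookkeeping: the positive factor comes out as $c_{p,q}=|\wp(q-p)-\wp(q+p)|^2/|\wp'(p)|^2$ (equal to $|\wp'(q)|^2/|\wp(q)-\wp(p)|^4$ by the addition theorem), not $|2(\zeta(q)-r_0\eta_1-s_0\eta_2)|^2/|\wp'(q)|^2=1/|\wp(q)-\wp(p)|^2$, but since only the positivity of $c_{p,q}$ is used this does not affect the conclusion.
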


\begin{proof}
For any $(r,s)\in U$, since $f(r,s)\in\mathbb{C}\setminus \{e_1,e_2,e_3\}$, there exists a unique pair $\pm p(r,s)\in E_{\tau}\setminus E_{\tau}[2]$ (i.e. $p(r,s)\neq -p(r,s)$ in $E_{\tau}$) such that $\wp(p(r,s))=f(r,s)$. Then we may assume that $p(r,s)$ is locally smooth as a function of $(r,s)\in U$. Recall \eqref{a+001p}-\eqref{513-1} that
$$\wp(p(r,s))=f(r,s)=\wp (r+s\tau)+\frac{\wp ^{\prime }(r+s\tau)}{%
2(\zeta(r+s\tau)-r\eta_1-s\eta_2)}$$
 is equivalent to
\begin{equation}\label{frsrs}
\zeta(r+s\tau+p(r,s))+\zeta(r+s\tau-p(r,s))-2(r\eta_1+s\eta_2)=0.
\end{equation}
Taking derivative with respect to $r$, we obtain
\begin{align*}-\big(\wp(r+s\tau&+p(r,s))+\wp(r+s\tau-p(r,s))+2\eta_1\big)\\
&-\big(\wp(r+s\tau+p(r,s))-\wp(r+s\tau-p(r,s))\big)\frac{\partial p}{\partial r}(r,s)=0,\end{align*}
and so
\begin{align}\label{frfr}
f_r&=\frac{\partial f}{\partial r}=\wp'(p(r,s))\frac{\partial p}{\partial r}\nonumber\\
&=\wp'(p(r,s))\frac{\wp(r+s\tau+p(r,s))+\wp(r+s\tau-p(r,s))+2\eta_1}{\wp(r+s\tau-p(r,s))-\wp(r+s\tau+p(r,s))}.
\end{align}
Here we have used $$\wp(r+s\tau-p(r,s))-\wp(r+s\tau+p(r,s))\neq 0.$$Indeed, if $\wp(r+s\tau-p(r,s))=\wp(r+s\tau+p(r,s))$, then $r+s\tau-p(r,s)=\pm (r+s\tau+p(r,s))$ in $E_{\tau}$, which implies $p(r,s)\in E_{\tau}[2]$ or $r+s\tau\in E_{\tau}[2]$, a contradiction.

Similarly, by taking derivative with respect to $s$ and using the Legendre relation $\tau\eta_1-\eta_2=2\pi i$, we have
\begin{align*}-\tau\big(\wp(r+s\tau&+p(r,s))+\wp(r+s\tau-p(r,s))+2\eta_1\big)+4\pi i\\
&-\big(\wp(r+s\tau+p(r,s))-\wp(r+s\tau-p(r,s))\big)\frac{\partial p}{\partial s}(r,s)=0,\end{align*}
and so
\begin{align*}f_s=\wp'(p(r,s))\bigg[&\tau\frac{\wp(r+s\tau+p(r,s))+\wp(r+s\tau-p(r,s))+2\eta_1}{\wp(r+s\tau-p(r,s))-\wp(r+s\tau+p(r,s))}\\
&\quad-\frac{4\pi i}{\wp(r+s\tau-p(r,s))-\wp(r+s\tau+p(r,s))}\bigg].\end{align*}
Therefore,
\begin{align}\label{frfs}
\frac{f_s}{f_r}(r,s)=\tau-\frac{4\pi i}{\wp(r+s\tau+p(r,s))+\wp(r+s\tau-p(r,s))+2\eta_1}.
\end{align}

Now $q=r_0+s_0\tau$ is a nontrivial critical point of $G_p(z)$, it follows from \eqref{a+1p} that
\[\zeta(r_0+s_0\tau+p)+\zeta(r_0+s_0\tau-p)-2(r_0\eta_1+s_0\tau_2)=0.\]
So we see from \eqref{frsrs} that $p=\pm p(r_0,s_0)$ and $\wp(p)=f(r_0,s_0)$. Since $p\notin E_{\tau}[2]$ implies $\wp(p)\notin\{e_1,e_2,e_3,\infty\}$, we obtain \begin{equation}\label{pfrs}(r_0,s_0)\in U\qquad\text{and}\qquad\wp(p)=f(r_0,s_0)\in f(U).\end{equation} Consequently, we deduce from \eqref{Gpdet}, \eqref{frfr} and \eqref{frfs} that
{\allowdisplaybreaks
\begin{align*}&\det D^2 G_p(q)\\
=&\frac{1}{4(\operatorname{Im}\tau)^2}\Big(1-\Big|\frac{\wp(q+p)+\wp(q-p)+2\eta_1}{2\pi}\operatorname{Im}\tau-1\Big|^2\Big)\\=&\frac{|\wp(q+p)+\wp(q-p)+2\eta_1|^2}{16\pi^2\operatorname{Im}\tau}\operatorname{Im}\left(\frac{4\pi i}{\wp(q+p)+\wp(q-p)+2\eta_1}-\tau\right)\\
=&-\frac{c_{p,q}}{16\pi^2\operatorname{Im}\tau}|f_r(r_0,s_0)|^2\operatorname{Im}\left(\frac{f_s}{f_r}(r_0,s_0)\right)\\
=&-\frac{c_{p,q}}{16\pi^2\operatorname{Im}\tau}\left(\operatorname{Re}f_r\cdot\operatorname{Im}f_s-\operatorname{Im}f_r\cdot\operatorname{Re}f_s\right)(r_0,s_0)\\
=&-\frac{c_{p,q}}{16\pi^2\operatorname{Im}\tau}\det\begin{pmatrix}\frac{\partial \operatorname{Re}f}{\partial r} (r_0,s_0)& \frac{\partial \operatorname{Re}f}{\partial s}(r_0,s_0)\\\frac{\partial \operatorname{Im}f}{\partial r}(r_0,s_0) & \frac{\partial \operatorname{Im}f}{\partial s}(r_0,s_0)\end{pmatrix},\end{align*}
}%
where
$$c_{p,q}:=\frac{|\wp(q-p)-\wp(q+p)|^2}{|\wp'(p)|^2}>0.$$
The proof is complete.
\end{proof}

Now we are ready to prove Theorem \ref{main-thm-6}. 

\begin{theorem}[=Theorem \ref{main-thm-6}]\label{main-thm-06}
Fix any $\tau$.
Recalling $\Omega_{\tau}$ defined in \eqref{rmk-1-6} and $\mathcal{T}_{\tau}$ defined in \eqref{O0tau}, 
we define
\begin{equation}\label{Otau}\mathcal{O}_{\tau}:=\Big\{p\in \Omega_{\tau}\setminus E_{\tau}[2]\;:\;\text{$G_p(z)$ has degenerate nontrivial critical points}\Big\}.\end{equation}
Then $\mathcal{O}_{\tau}$ is of Lebesgue measure zero, and $ \Omega_{\tau}\setminus (E_{\tau}[2]\cup \mathcal{O}_{\tau}\cup \mathcal{T}_{\tau})$ is open.  

Consequently, for almost all $p\in E_{\tau}\setminus E_{\tau}[2]$, all critical points of $G_p(z)$ are non-degenerate.
\end{theorem}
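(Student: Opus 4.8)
The plan is to combine Theorem \ref{thm-nonde} with Theorem \ref{main-thm-01} (equivalently, the criterion in Lemmas \ref{lemma3-1} and \ref{coro3-1}) to show that both the set $\mathcal{O}_\tau$ of bad parameters producing degenerate \emph{nontrivial} critical points and the set $\mathcal{T}_\tau$ producing degenerate \emph{trivial} critical points have measure zero; the union of these with $E_\tau[2]$ is then a null set, and outside it every critical point of $G_p$ is non-degenerate. For $\mathcal{T}_\tau$, this is already contained in the proof of Theorem \ref{main-thm-01}: $\mathcal{T}_\tau\cup E_\tau[2]=\bigcup_{k=0}^3\wp^{-1}(\partial\mathcal{B}_k)$ is a finite union of analytic curves, hence of Lebesgue measure zero. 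So the real work is the nontrivial part.

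First I would reduce the study of $\mathcal{O}_\tau$ to the analytic map $f:U\to\mathbb{C}\setminus\{e_1,e_2,e_3\}$ of \eqref{eq4-11}. By Theorem \ref{thm-nonde}, if $p\in\Omega_\tau\setminus E_\tau[2]$ and $q=r_0+s_0\tau$ is a degenerate nontrivial critical point of $G_p$, then $(r_0,s_0)\in U$, $\wp(p)=f(r_0,s_0)$, and the real Jacobian $J_f(r_0,s_0):=\det\!\bigl(\partial(\operatorname{Re}f,\operatorname{Im}f)/\partial(r,s)\bigr)$ vanishes. Thus, setting $Z:=\{(r,s)\in U : J_f(r,s)=0\}$, we have $\wp(\mathcal{O}_\tau)\subseteq f(Z)$ and hence $\mathcal{O}_\tau\subseteq\wp^{-1}(f(Z))$. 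Since $\wp$ is a (branched) double cover and so maps null sets to null sets and vice versa, it suffices to prove that $f(Z)$ has Lebesgue measure zero in $\mathbb{C}\cong\mathbb{R}^2$. Now $f$ is real-analytic on $U$, and $J_f$ is a real-analytic function on $U$; the key point is that $J_f\not\equiv 0$ on any connected component of $U$ — this is because $f$ is holomorphic in $r+s\tau$ away from its poles (indeed $f$ is built from $\wp,\wp',\zeta$ which are meromorphic in $z=r+s\tau$, together with the linear term $r\eta_1+s\eta_2$), so a short computation shows $J_f$ is, up to a nonvanishing factor, the modulus-squared of $f_r$ times $\operatorname{Im}(f_s/f_r)$, and \eqref{frfs} gives $\operatorname{Im}(f_s/f_r)=\operatorname{Im}\tau-\operatorname{Im}\bigl(4\pi i/(\wp(q+p)+\wp(q-p)+2\eta_1)\bigr)$, which is not identically zero. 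Hence $Z$ is a proper real-analytic subset of $U$, so $Z$ has Lebesgue measure zero; but I need more, namely that its \emph{image} $f(Z)$ is null. This follows from Sard's theorem applied to $f$ (the critical values of a $C^1$ map $\mathbb{R}^2\to\mathbb{R}^2$ form a null set, and $Z$ is precisely the critical set of $f$), or alternatively from the fact that a real-analytic map sends a lower-dimensional real-analytic set to a null set; either way $f(Z)$ is Lebesgue-null, and therefore $\mathcal{O}_\tau$ is Lebesgue-null.

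Combining the two pieces: $E_\tau[2]\cup\mathcal{T}_\tau\cup\mathcal{O}_\tau$ has Lebesgue measure zero in $E_\tau$. For any $p$ outside this set we have $p\in E_\tau\setminus E_\tau[2]$, and by construction $G_p$ has no degenerate trivial critical point (since $p\notin\mathcal{T}_\tau$) and no degenerate nontrivial critical point (since $p\notin\mathcal{O}_\tau$, using that nontrivial critical points only exist when $p\in\Omega_\tau$, in which case degeneracy forces $p\in\mathcal{O}_\tau$). Hence all critical points of $G_p(z)$ are non-degenerate for almost all $p\in E_\tau\setminus E_\tau[2]$. The openness of $\Omega_\tau\setminus(E_\tau[2]\cup\mathcal{O}_\tau\cup\mathcal{T}_\tau)$ is a byproduct: by Remark \ref{rmk3-11}, for $p$ in this set $0$ is a regular value of $\phi$, so the critical points of $G_p$ are finitely many non-degenerate points, and the implicit function theorem shows this configuration persists under small perturbations of $p$ (staying inside $\Omega_\tau$, which is where nontrivial critical points live), which gives the claimed openness.

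The main obstacle I anticipate is the step asserting $J_f\not\equiv 0$, i.e. that $f$ is genuinely non-degenerate somewhere on each component of $U$. One must rule out the degenerate scenario in which $\operatorname{Im}(f_s/f_r)\equiv 0$, i.e. $\wp(q+p)+\wp(q-p)+2\eta_1$ lies on a fixed line through the origin as $(r,s)$ varies — this is exactly the condition $\det D^2 G_p(q)\equiv 0$ along a whole two-parameter family, which would contradict, say, the existence of parameters $(\tau,p)$ with $G_p$ having $10$ non-degenerate critical points (Theorem \ref{main-thm-2}-(1)) or simply a direct evaluation at a convenient point. A clean way to finish is: were $J_f\equiv 0$ on some component, then by Theorem \ref{thm-nonde} every nontrivial critical point of $G_p$ would be degenerate for all $p$ with $\wp(p)\in f$ of that component, contradicting Theorem \ref{thm-5c} (where one exhibits $\tau$ and an open set of $p$ near a half-period for which $G_p$ has a non-degenerate pair of nontrivial critical points). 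With $J_f\not\equiv 0$ in hand, the measure-zero conclusion is immediate from Sard.
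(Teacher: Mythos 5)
Your proposal is correct and follows essentially the same route as the paper: Theorem \ref{thm-nonde} identifies $\wp(\mathcal{O}_\tau)$ with the set of singular values of the analytic map $f$ on $U$, Sard's theorem makes this null, $\mathcal{T}_\tau\cup E_\tau[2]$ is a finite union of analytic curves, and openness comes from persistence of non-degenerate critical points. Two minor remarks: your worry about $J_f\not\equiv 0$ is superfluous, since Sard's theorem already bounds the measure of the critical values even when the critical set has nonempty interior; and for the openness step the implicit function theorem alone only shows that the existing critical points persist as non-degenerate ones --- one also needs the short compactness argument of the paper's Lemma \ref{lemma50-0} to rule out new (possibly degenerate) critical points appearing elsewhere.
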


\begin{proof}{\bf Step 1.} We prove that $\mathcal{O}_\tau$ is of Lebesgue measure $0$.

Theorem \ref{thm-nonde} shows that $G_p(z)$ has degenerate nontrivial critical points if and only if $(\operatorname{Re}\wp(p), \operatorname{Im}\wp(p))$ is a singular value of the analytic map $(\operatorname{Re}f, \operatorname{Im}f): U\subset\mathbb{R}^2\to \mathbb{R}^2$, or equivalently, $\wp(p)$ is a singular value of the analytic map $f:U\to\mathbb C$.
So by applying Sard theorem, we see that
{\allowdisplaybreaks
\begin{align*}
&\wp(\mathcal{O}_\tau)\\
=&\{\wp(p)\in \wp(\Omega_{\tau}\setminus E_{\tau}[2]):\text{$G_p(z)$ has degenerate nontrivial critical points}\}\\
=&\{\wp(p)\in f(U)\;:\; \text{$\wp(p)$ is a singular value of $f:U\to\mathbb C$}\}
\end{align*}
}%
is of Lebesgue measure zero, so $\mathcal{O}_\tau$ is also of Lebesgue measure zero.
This proves that $\mathcal{O}_{\tau}\cup \mathcal{T}_{\tau}$ is of Lebesgue measure zero, and all critical points of $G_p(z)$ are non-degenerate for any $p\in E_{\tau}\setminus(E_{\tau}[2]\cup \mathcal{O}_{\tau}\cup \mathcal{T}_{\tau})$.

{\bf Step 2.}  We prove that $\Omega_{\tau}\setminus (E_{\tau}[2]\cup \mathcal{O}_{\tau}\cup \mathcal{T}_{\tau})$ is open.

Take any $p_0\in \Omega_{\tau}\setminus (E_{\tau}[2]\cup \mathcal{O}_{\tau}\cup \mathcal{T}_{\tau})$. Then $G_{p_0}(z)$ has nontrivial critical points and all critical points are non-degenerate. Denote by $N$ the number of critical points of $G_{p_0}(z)$, so $N\in\{6,8,10\}$.  Then by Lemma \ref{lemma50-0} below, there is $\varepsilon>0$ small such that for any $|p-p_0|<\varepsilon$, $G_{p}(z)$ has also exactly $N\geq 6$ critical points that are all non-degenerate.
Thus, $p\in \Omega_{\tau}\setminus (E_{\tau}[2]\cup \mathcal{O}_{\tau}\cup \mathcal{T}_{\tau})$ for any $|p-p_0|<\varepsilon$, namely 
$\Omega_{\tau}\setminus (E_{\tau}[2]\cup \mathcal{O}_{\tau}\cup \mathcal{T}_{\tau})$ is open.
The proof is complete.
\end{proof}

\begin{Lemma}\label{lemma50-0}
Fix $\tau$ and let $p_0\in E_{\tau}\setminus E_{\tau}[2]$. Suppose $G_{p_0}(z)$ has exactly $N\geq 4$ critical points which are all non-degenerate. Then there is $\varepsilon>0$ small such that for any $|p-p_0|<\varepsilon$, $G_{p}(z)$ has also exactly $N$ critical points that are all non-degenerate.

\end{Lemma}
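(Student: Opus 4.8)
The plan is a standard perturbation argument phrased through the map $\phi=\phi_p$ of Section~\ref{section2}: $z$ is a critical point of $G_p$ iff $\phi_p(z):=z-g_p(z)=0$, where $g_p(z)=-\frac1{2b}\bigl(\overline{\zeta(z+p)}+\overline{\zeta(z-p)}+2\bar a\bar z\bigr)$, and by Remark~\ref{rmk3-11} the hypothesis that $G_{p_0}$ has $N$ critical points, all non-degenerate, is precisely the statement that $0$ is a regular value of $\phi_{p_0}$ with $\#\phi_{p_0}^{-1}(0)=N$. Two features are used. First, $\phi_p$ depends real-analytically on $(z,p)$ away from $z\equiv\pm p\pmod{\Lambda_\tau}$. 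Second, $\phi_p\colon E_\tau\to\overline{\mathbb C}$ is smooth including at its two simple poles $\pm p$, where $1/\phi_p$ behaves like $\overline{(z\mp p)}$ times a nonvanishing real-analytic factor, hence is a local diffeomorphism; so $p\mapsto\phi_p$ is continuous into $C(E_\tau,\overline{\mathbb C})$ for the spherical metric. The goal is therefore to show that $0$ remains a regular value with exactly $N$ preimages for $p$ near $p_0$.

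First I would localize at the critical points $q_1,\dots,q_N\in E_\tau\setminus\{\pm p_0\}$ of $G_{p_0}$. Since the Jacobian $J_{\phi_{p_0}}=1-|\overline\partial g_{p_0}|^2$ has the same sign as $\det D^2G_{p_0}$ by \eqref{Gpdet} and is nonzero at each $q_i$, the inverse function theorem lets me choose open sets $V_i\ni q_i$ with $\overline{V_i}$ compact, pairwise disjoint and disjoint from $\{\pm p_0\}$, on which $\phi_{p_0}$ is injective, $0\notin\phi_{p_0}(\partial V_i)$, and $J_{\phi_{p_0}}$ never vanishes. On the compact complement $K:=E_\tau\setminus\bigcup_iV_i$ the image $\phi_{p_0}(K)\subset\overline{\mathbb C}$ is compact and avoids $0$ (it equals $\infty$ at $\pm p_0\in K$, but $\infty\neq 0$), so it has positive spherical distance from $0$; by continuity of $p\mapsto\phi_p$ there is $\varepsilon_1>0$ with $0\notin\phi_p(K)$ whenever $|p-p_0|<\varepsilon_1$, i.e.\ all critical points of $G_p$ then lie inside $\bigcup_iV_i$. (Equivalently: $|\nabla G_p|\to+\infty$ as $z\to\pm p$, uniformly for $p$ near $p_0$, so fixed small disks about $\pm p_0$ contain no critical point of $G_p$, while on the remaining compact set $\nabla G_p\to\nabla G_{p_0}$ uniformly.)

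Second I would count the zeros in each $V_i$. For $p$ close to $p_0$ the poles $\pm p$ stay off $\overline{V_i}$, so $\phi_p$ is finite there and $\phi_p|_{\partial V_i}\to\phi_{p_0}|_{\partial V_i}$ uniformly; since $0\notin\phi_{p_0}(\partial V_i)$, the winding number of $\phi_p$ about $0$ along $\partial V_i$ equals that of $\phi_{p_0}$, namely $\operatorname{sign}J_{\phi_{p_0}}(q_i)=\pm1$, so $\phi_p$ has a zero in $V_i$. After shrinking $\varepsilon$ so that $J_{\phi_p}$ stays nonzero on $\overline{V_i}$ for $|p-p_0|<\varepsilon$ — possible since $J_{\phi_p}(z)$ is continuous in $(z,p)$ near the compact set $\overline{V_i}\times\{p_0\}$ — every zero of $\phi_p$ in $V_i$ is non-degenerate and all carry the common index $\operatorname{sign}J_{\phi_{p_0}}(q_i)$; as the signed count of zeros in $V_i$ is the winding number $\pm1$, there is exactly one. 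Summing over $i$ and combining with the first step, $\phi_p$ has exactly $N$ zeros for $|p-p_0|<\varepsilon$, all non-degenerate; by Remark~\ref{rmk3-11} these are exactly the $N$ critical points of $G_p(z)$ and they are non-degenerate.

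The only genuinely delicate point is that the poles $\pm p$ of $\phi_p$ move with $p$, so one cannot naively invoke uniform convergence $\phi_p\to\phi_{p_0}$ on all of $E_\tau$; the remedy, used above, is to regard $\phi_p$ as a map into $\overline{\mathbb C}=S^2$ (smooth across its poles, continuous in $p$ for the spherical metric), or, equivalently, to excise fixed small neighborhoods of $\pm p_0$ using that $G_p$ has no critical point there. Once this is arranged, the rest is the routine stability of non-degenerate zeros under perturbation together with degree/argument-principle bookkeeping, and the value $N\geq4$ plays no role beyond being finite.
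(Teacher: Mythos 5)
Your proof is correct and follows essentially the same route as the paper's: persistence of exactly one non-degenerate zero near each $q_i$ (you via local degree and the sign of $J_{\phi_p}$, the paper via the implicit function theorem), combined with the exclusion of new critical points elsewhere --- in particular near the poles $\pm p_0$, where $\nabla G_p$ blows up. The paper phrases this exclusion step as a sequential compactness contradiction using $\zeta(q+p)+\zeta(q-p)\to\infty$, which is the same mechanism as your spherical-metric/excision argument.
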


\begin{proof} Denote by $q_1,\cdots,q_N$ the $N$ critical points of $G_{p_0}(z)$. Since they are all non-degenerate,  it follows from the implicit function theorem that there is $\varepsilon>0$ small such that for any $0<|p-p_0|<\varepsilon$, $G_p(z)$ has $N$ critical points $q_{p,1},\cdots, q_{p,N}$ (in small neighborhoods of each critical point $q_j$ of $G_{p_0}(z)$) and they are also non-degenerate. 
Assume by contradiction that there is $p_n\to p_0$ such that $G_{p_n}(z)$ has more than $N$ critical points, i.e. has another nontrivial critical point $q_{p_n,N+1}\in E_{\tau}\setminus \{q_{p_n,j}\}_{j=1}^{N}$. Up to a subsequence, we may assume 
$q_{p_n,N+1}\to q$. Write $q_{p_n,N+1}=r_n+s_n\tau$ with $\mathbb{R}^2\setminus\frac{1}{2}\mathbb{Z}^2\ni(r_n, s_n)\to (r,s)$ and $q=r+s\tau$. Then \eqref{a+1p} implies
$$\zeta(q_{p_n,N+1}+p_n)+\zeta(q_{p_n,N+1}-p_n)-2(r_n\eta_1+s_n\eta_2)=0.$$
This implies $q\neq \pm p_0$ in $E_{\tau}$ (otherwise $\zeta(q_{p_n,N+1}+p_n)+\zeta(q_{p_n,N+1}-p_n)\to \zeta(q+p_0)+\zeta(q-p_0)=\infty$, a contradiction). Then $q$ is a critical point of $G_{p_0}(z)$, so $q=q_j$ for some $1\leq j\leq N$, i.e. $q_{p_n,j}\to q_j$ and $q_{p_n,N+1}\to q_j$, a contradiction with the fact that $G_{p_n}(z)$ has a unique critical point $q_{p_n,j}$ in the small neighborhood of $q_j$ for $n$ large. Thus, by taking $\varepsilon>0$ smaller if necessary, $G_p(z)$ has exactly $N$ critical points that are all non-degenerate.
This completes the proof.
\end{proof}

\section{Counting the number of critical points}
\label{section5}

Thanks to Theorem \ref{main-thm-6} which says that all critical points of $G_p(z)$ are non-degenerate and so $0$ is a regular value of $\phi$ for almost all $p$, we can apply Corollary \ref{coro2-8} to study the possible distribution of the numbers of critical points of $G_p(z)$ when $p$ varies.
In this section, we apply this idea to prove Theorems \ref{thm-section5}, \ref{thm-section5-5} and Theorem \ref{main-thm-2} (3-1)-(3-2). 
\subsection{Proofs of Theorems \ref{thm-section5}, \ref{thm-section5-5} and Theorem \ref{main-thm-2} (3-1)-(3-2)}
\begin{Lemma}\label{lemma5-0}
Fix $\tau$ and let $\wp(p_0)\in\mathbb{C}\setminus (\{e_1,e_2,e_3\}\cup\cup_{k=0}^3\partial \mathcal{B}_k)$. Suppose $G_{p_0}(z)$ has exactly $4$ critical points. Then there is $\varepsilon>0$ small such that for any $|p-p_0|<\varepsilon$, $G_{p}(z)$ has also exactly $4$ critical points.
\end{Lemma}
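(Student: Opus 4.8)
The plan is to deduce this lemma immediately from the degeneracy criterion Theorem~\ref{main-thm-01} together with the stability statement Lemma~\ref{lemma50-0}. The key observation is that the hypothesis $\wp(p_0)\notin\bigcup_{k=0}^3\partial\mathcal B_k$ is exactly what upgrades ``$G_{p_0}$ has $4$ critical points'' to ``$G_{p_0}$ has $4$ critical points that are \emph{all non-degenerate}'', and once non-degeneracy is in hand, Lemma~\ref{lemma50-0} (with $N=4$) gives the conclusion directly. Note this is the special feature of the count $4$: there are no nontrivial critical points to worry about, so only the non-degeneracy of the four trivial ones is at issue, and that is governed purely by the disks/half-planes $\mathcal B_k$.

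First I would record that $\wp(p_0)$ is finite and $\neq e_1,e_2,e_3$, so $p_0\notin E_\tau[2]$ and hence $G_{p_0}$ is genuinely the two-point Green function studied here. As observed after \eqref{gde}, the four half-periods $\frac{\omega_k}{2}$, $k=0,1,2,3$, are always trivial critical points of $G_{p_0}$; since by assumption there are exactly $4$ critical points in total, they are precisely $\frac{\omega_k}{2}$, $k=0,1,2,3$, and $G_{p_0}$ has no nontrivial critical point. By Theorem~\ref{main-thm-01}, $\frac{\omega_k}{2}$ is a degenerate critical point of $G_{p_0}$ if and only if $\wp(p_0)\in\partial\mathcal B_k$; as $\wp(p_0)\notin\bigcup_{k=0}^3\partial\mathcal B_k$, each $\frac{\omega_k}{2}$ is non-degenerate. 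Thus $G_{p_0}$ has exactly $N=4$ critical points, all non-degenerate, and Lemma~\ref{lemma50-0} yields $\varepsilon>0$ such that $G_p$ has exactly $4$ critical points for every $|p-p_0|<\varepsilon$ (after possibly shrinking $\varepsilon$ so that $p\notin E_\tau[2]$, which is automatic since $E_\tau[2]$ is finite and $p_0\notin E_\tau[2]$).

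There is essentially no obstacle beyond correctly invoking these two already-established ingredients; the genuine content sits in Lemma~\ref{lemma50-0}, whose proof combines the implicit function theorem at the non-degenerate critical points with a blow-up/compactness argument excluding a stray nontrivial critical point escaping toward $\pm p_0$. The only point worth flagging is that the non-degeneracy of the trivial critical points is \emph{necessary} here: if the hypothesis $\wp(p_0)\notin\partial\mathcal B_k$ failed, some $\frac{\omega_k}{2}$ could be a degenerate critical point of $G_{p_0}$, and then a nearby $G_p$ might acquire an extra pair of nontrivial critical points bifurcating off it, so the count $4$ would not be locally constant — which is precisely the phenomenon analyzed in Corollary~\ref{coro2-8} and exploited in Theorems~\ref{thm-section5} and \ref{thm-section5-5}.
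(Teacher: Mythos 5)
Your proposal is correct and follows exactly the paper's route: the hypothesis $\wp(p_0)\notin\bigcup_{k=0}^3\partial\mathcal B_k$ forces the four trivial critical points $\frac{\omega_k}{2}$ to be non-degenerate (via Theorem~\ref{main-thm-01}), and then Lemma~\ref{lemma50-0} with $N=4$ gives the local constancy of the count. The paper's proof is a one-line version of precisely this argument.
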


\begin{proof}
Under the assumption on $p_0$, the $4$ critical points of $G_{p_0}(z)$ are precisely $\frac{\omega_k}{2}$'s that are all non-degenerate, so this lemma follows directly from Lemma \ref{lemma50-0}.
\end{proof}

\begin{proof}[Proof of Theorem \ref{thm-section5}]
Let $\Xi$ be a connected component of $\mathbb{C}\setminus (\{e_1,e_2,e_3\}\cup\cup_{k=0}^3\partial \mathcal{B}_k)$ and recall
$$m(\Xi)=\#\Big\{\frac{\omega_k}{2}\;: \;0\leq k\leq 3,\; \det D^2G_p\Big(\frac{\omega_k}{2}\Big)>0\;\text{for }p\in\wp^{-1}(\Xi)\Big\},$$ which is a constant independent of the choice of $p\in \wp^{-1}(\Xi)$ by Theorem \ref{main-thm-01}. By \eqref{eq330}, we have
$$m(\Xi)=\#\Big(D^+\cap\Big\{\frac{\omega_k}{2} \,:\, k=0,1,2,3\Big\}\Big).$$
Recall Theorem \ref{main-thm-6} that $0$ is a regular value of $\phi$ for almost all $\wp(p)\in \Xi$, so we can apply Corollary \ref{coro2-8} to obtain $m(\Xi)\leq 2$. 

(1) If $m(\Xi)=0$, then Corollary \ref{coro2-8}-(1) implies that $G_p(z)$ has exactly either $6$ or $10$ critical points for almost all $\wp(p)\in \Xi$, and so $G_p(z)$ has at least $6$ critical points for any $\wp(p)\in\Xi$ by using Lemma \ref{lemma5-0}.
Furthermore, when the number is $6$, since $m(\Xi)=0$ implies that $\frac{\omega_k}{2}$ are non-degenerate saddle points of $G_p(z)$ for all $k$, then the unique pair of nontrivial critical points are minimal points. 

(2) If $m(\Xi)=1$, then Corollary \ref{coro2-8}-(2) implies that $G_p(z)$ has exactly either $4$ or $8$ critical points for almost all $\wp(p)\in \Xi$.

(3) If $m(\Xi)=2$, then Corollary \ref{coro2-8}-(3) implies that $G_p(z)$ has exactly $6$ critical points for almost all $\wp(p)\in \Xi$. For such $\wp(p)$, since $m(\Xi)=2$, we see from the proof of Corollary \ref{coro2-8} that the unique pair of nontrivial critical points $\pm q$ satisfy $\pm q\in D^-$, i.e. $\det D^2G_p(\pm q)<0$ and so $\pm q$ are non-degenerate saddle points.

Again Lemma \ref{lemma5-0} implies that $G_p(z)$ has at least $6$ critical points for any $\wp(p)\in\Xi$. 
Recalling the analytic map $f: U\to\mathbb C$ defined in \eqref{eq4-11}, it follows from \eqref{Otau1} that $\Xi\subset f(U)$. Recall the proof of Theorem \ref{main-thm-6} in Section \ref{section4} that for any $\wp(p)\in \Xi$, all nontrivial critical points of $G_p(z)$ are non-degenerate (i.e. all critical points of $G_p(z)$ are non-degenerate) if and only if $\wp(p)$ is a regular value of $f$, so almost all $\wp(p)$ are regular values of $f$.

Now we claim that $G_p(z)$ has exactly $6$ critical points for any $\wp(p)\in\Xi$.

Assume by contradiction that there is $\wp(p_0)\in \Xi\subset f(U)$ such that $G_{p_0}(z)$ has $8$ or $10$ critical points.
Since $G_p(z)$ has exactly $6$ critical points for almost all $\wp(p)\in \Xi$, then $G_{p_0}(z)$ has a nontrivial critical point $q_0=r_0+s_0\tau$ (with $(r_0, s_0)\in U$) such that the following hold:
\begin{itemize}
\item[(P1)] There is a small $\varepsilon_0\in (0,\frac{1}{4})$ such that $r+s\tau$ is not a critical point of $G_{p_0}(z)$ for any $(r,s)\in \overline{B_{\varepsilon_0}}\setminus\{(r_0, s_0)\}$, where $$B_{\varepsilon_0}:=\big\{(r,s)\in\mathbb R^2 : |(r,s)-(r_0,s_0)|<\varepsilon_0\big\}\Subset U,$$
also satisifes $f(\overline{B_{\varepsilon_0}})\subset\Xi$.
In particular, $\wp(p_0)=f(r_0, s_0)$ and $\wp(p_0)\neq f(r,s)$ for any $(r,s)\in \overline{B_{\varepsilon_0}}\setminus\{(r_0, s_0)\}$.
\item[(P2)] There are regular values $\wp(p_n)\in \Xi$ of $f: U\to \mathbb C$ satisfying $p_n\to p_0$ such that $G_{p_n}(z)$ has no critical points in $\{r+s\tau : (r,s)\in \overline{B_{\varepsilon_0}}\}$, i.e. \begin{equation}\label{eq551}\wp(p_n)\notin f(\overline{B_{\varepsilon_0}}),\quad \forall n.\end{equation}
\end{itemize}

Note from $\wp(p_0)\notin f(\partial B_{\varepsilon_0})$ that the Brouwer degree
$$\deg(f, B_{\varepsilon_0}, \wp(p_0)):=\deg\big((\operatorname{Re}f, \operatorname{Im}f), B_{\varepsilon_0}, (\operatorname{Re}\wp(p_0), \operatorname{Im}\wp(p_0))\big)$$
is well-defined. Then there is a small $\delta>0$ such that for any $|p-p_0|<\delta$, $\wp(p)\notin f(\partial B_{\varepsilon_0})$, namely $\deg(f, B_{\varepsilon_0}, \wp(p))$ is well-defined and
$$\deg(f, B_{\varepsilon_0}, \wp(p))=\deg(f, B_{\varepsilon_0}, \wp(p_0)),\quad\forall |p-p_0|<\delta.$$
From here and \eqref{eq551}, we obtain 
\begin{equation}\label{fb0}
\deg(f, B_{\varepsilon_0}, \wp(p))=0, \quad\forall |p-p_0|<\delta.
\end{equation}

Recall $\Xi\subset f(U)$. Since $\wp(p)=f(r,s)$ is equivalent to that $\pm(r+s\tau)$ is a pair of nontrivial critical points of $G_p(z)$, we have $\#f^{-1}(\wp(p))\cap B_{\varepsilon_0}\leq 3$ for any $p$. Consequently, for any $B_\varepsilon:=\{(r,s)\in\mathbb R^2 : |(r,s)-(r_0,s_0)|<\varepsilon\}$ with $0<\varepsilon\leq \varepsilon_0$, the image $f(B_{\varepsilon})$ of the analytic map $f: B_{\varepsilon}\to \mathbb C$ has a positive Lesbegue measure (although $f(B_{\varepsilon})$ might not be open), so there is a regular value $\wp(p_1)\in f(B_{\varepsilon_0})\subset\Xi$ of $f$ satisfying $|p_1-p_0|<\delta$.
Then $G_{p_1}(z)$ has exactly $6$ critical points. 

On the other hand,  $\wp(p_1)\in f(B_{\varepsilon_0})$ implies the existence of $(r_1, s_1)\in B_{\varepsilon_0}$ such that $f(r_1, s_1)=\wp(p_1)$. Since $\wp(p_1)$ is a regular value of $f$, so $(r_1, s_1)$ contributes degree $\epsilon_1\in \{\pm 1\}$. But \eqref{fb0} says that $\deg(f, B_{\varepsilon_0}, \wp(p_1))=0$, so there is another $(r_2, s_2)\in B_{\varepsilon_0}$ such that $f(r_2, s_2)=\wp(p_1)$ and $(r_2, s_2)$ contributes the degree $-\epsilon_1$. This implies that $G_{p_1}(z)$ has at least two pairs of nontrivial critical points $\pm(r_1+s_1\tau)$ and $\pm(r_2+s_2\tau)$, or equivalently, at least $8$ critical points, a contradiction.

Therefore, $G_p(z)$ has exactly $6$ critical points for any $\wp(p)\in\Xi$.
This completes the proof. 
\end{proof}

To prove Theorem \ref{thm-section5-5}, first we prove the following weaker version.

\begin{Lemma}
\label{thm-section5-50}
There exist $\tau$ and a connected component $\Xi$ of $\mathbb{C}\setminus (\{e_1,e_2,e_3\}\cup\cup_{k=0}^3\partial \mathcal{B}_k)$ satisfying $m(\Xi)=0$ such that the following holds: for each $N\in \{6,10\}$, there is $\wp(p)\in\Xi$ such that $G_p(z;\tau)$ has exactly $N$ critical points that are all non-degenerate.
\end{Lemma}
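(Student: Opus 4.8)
By Theorem \ref{thm-section5}(1), once we have a pair $(\tau,\Xi)$ with $m(\Xi)=0$, the Green function $G_p(z;\tau)$ has exactly $6$ or exactly $10$ critical points, all non-degenerate, for almost every $\wp(p)\in\Xi$; so it suffices to produce such a pair for which \emph{both} numbers occur on open sets of $\wp(p)$. The plan is to pin $\tau$ down via Theorem \ref{thm-CKLW} together with the doubling trick from the proof of Theorem \ref{main-thm-2}, and then to realise $6$ and $10$ simultaneously inside $\Xi$ by unfolding a single degenerate critical point.

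First I would choose $\tau$. By Theorem \ref{thm-CKLW}(2) with $k=1$, pick $\sigma_0\in\mathcal C$ so that $\tfrac{\omega_1}{2}$ is a degenerate critical point of $G(z;\sigma_0)$ while $\tfrac{\omega_2}{2},\tfrac{\omega_3}{2}$ are non-degenerate saddles. Since $\sigma_0\in\partial\Omega_5$ and, for $\sigma\in\Omega_5$, the nontrivial critical points $\pm q(\sigma)$ of $G(z;\sigma)$ are non-degenerate minima (Theorems \ref{thm-A0}--\ref{thm-A00}) which collapse onto $\tfrac{\omega_1}{2}$ as $\sigma\to\sigma_0$, the point $\tfrac{\omega_1}{2}$ is a degenerate local minimum of $G(z;\sigma_0)$, born from the confluence of three critical points; as $G(\cdot;\sigma_0)$ is even about $\tfrac{\omega_1}{2}$, its restriction to the kernel line of $D^2G(\tfrac{\omega_1}{2};\sigma_0)$ is an even function with vanishing second derivative, so the degeneracy is of $A_3$ type ($\sim t^4$). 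Set $\tau:=2\sigma_0$. Then, exactly as in the proof of Theorem \ref{main-thm-2}(1)--(2), $2G_{\frac\tau4}(z;\tau)=G(z-\tfrac\tau4;\sigma_0)+C$ on $E_{\sigma_0}$, and lifting to $E_\tau$ shows that $G_{\frac\tau4}(z;\tau)$ has exactly $6$ critical points: the four half-periods $0,\tfrac12,\tfrac\tau2,\tfrac{1+\tau}{2}$, which are non-degenerate saddles (coming from $\tfrac{\omega_2}{2},\tfrac{\omega_3}{2}$ of $G(\cdot;\sigma_0)$), and $\pm(\tfrac12+\tfrac\tau4)$, which are degenerate local minima of $A_3$ type (coming from $\tfrac{\omega_1}{2}$). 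Since the four half-periods are non-degenerate, Lemma \ref{coro3-1} gives $\xi^\ast:=\wp(\tfrac\tau4;\tau)\notin\{e_1,e_2,e_3\}\cup\bigcup_{k=0}^{3}\partial\mathcal B_k$; let $\Xi$ be the component containing $\xi^\ast$. By \eqref{eq330} and Lemma \ref{coro3-1}, $\det D^2G_p(\tfrac{\omega_k}{2};\tau)<0$ for all $k$ whenever $\wp(p)\in\Xi$, so $m(\Xi)=0$.

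It remains to see that arbitrarily near $\xi^\ast$ inside $\Xi$ the count $6$ and the count $10$ both occur. I would unfold the degeneracy at $\pm(\tfrac12+\tfrac\tau4)$ by moving $p$ near $\tfrac\tau4$. For $p$ close to $\tfrac\tau4\notin E_\tau[2]$, Lemma \ref{lemma3-1} keeps the four half-periods non-degenerate saddles, and a compactness argument as in the proof of Lemma \ref{lemma50-0} confines all critical points of $G_p(z;\tau)$ to fixed small neighbourhoods of the six critical points of $G_{\frac\tau4}(z;\tau)$; hence the number of critical points of $G_p$ equals $4+2k(p)$, where $k(p)$ counts them near $\tfrac12+\tfrac\tau4$ (the factor $2$ from $z\mapsto-z$). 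By Theorems \ref{thm-0B} and \ref{thm-nonde}, $k(p)$ is the number of preimages of $\wp(p)$ near $(r_0,s_0)$ under the analytic map $f=f^\tau$ of \eqref{eq4-11}, where $r_0+s_0\tau=\tfrac12+\tfrac\tau4$, and $(r_0,s_0)$ is a critical point of $f$. As the two real parameters carried by $p$ versally unfold the $A_3$ minimum at $\tfrac12+\tfrac\tau4$ (there being no reflection symmetry about the perturbed critical point once $p\neq\tfrac\tau4$), the values $\wp(p)$ cover a full neighbourhood of $\xi^\ast$ cut by the cuspidal critical-value locus of $f$ through $\xi^\ast$: on the outer sheet $k(p)=1$, so $G_p$ has $6$ critical points on an open set, and on the inner sheet $k(p)=3$, so $G_p$ has $10$ on an open set; at regular values of $f$ these critical points are all non-degenerate. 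All such values of $\wp(p)$ stay off $\bigcup_{k=0}^{3}\partial\mathcal B_k$ (the half-periods remaining non-degenerate), hence lie in $\Xi$, and the Lemma follows.

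The hard part is the versality assertion: that $p\mapsto\nabla G_p(z;\tau)$ transversally unfolds the $A_3$ minimum at $\tfrac12+\tfrac\tau4$, equivalently that $f^\tau$ has a Whitney-cusp singularity at $(r_0,s_0)$ with a genuine $3$-to-$1$ sheet nearby. I would settle it either by computing the first- and second-order variations of $\nabla G_p$ in $p$ at $p=\tfrac\tau4$ from \eqref{Gpdet} and the formula for $\partial G_p/\partial z$ and checking that they span the relevant two-dimensional transversal, or, avoiding normal forms, by a perturbation argument: for $\sigma$ in $\Omega_5$ (resp.\ $\Omega_3^{\circ}$) near $\sigma_0$, putting $\tau':=2\sigma$, the doubled function $G_{\frac{\tau'}{4}}(z;\tau')$ realises the $10$- (resp.\ $6$-)critical-point configuration with all four half-periods still non-degenerate saddles, so $\wp(\tfrac{\tau'}{4};\tau')$ lies in an $m=0$ component; tracking this component as $\sigma\to\sigma_0$ and identifying it with $\Xi$ for $\sigma$ close to $\sigma_0$ (possible since $\xi^\ast$ is interior to $\Xi$ and the loci $\{e_k(\tau)\}\cup\bigcup_{k=0}^{3}\partial\mathcal B_k(\tau)$ vary continuously) again deposits both configurations into $\Xi$. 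If necessary I would perturb $\sigma_0$ along $\mathcal C$ to put the cusp in Whitney position.
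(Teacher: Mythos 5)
Your setup coincides with the paper's: pick $\sigma_0\in\mathcal C$ with one half-period degenerate, set $\tau=2\sigma_0$, use the doubling identity $2G_{\frac\tau4}(z;\tau)=G(z-\tfrac\tau4;\tfrac\tau2)+C$ to get a $6$-point configuration with a degenerate nontrivial pair, and take $\Xi\ni\wp(\tfrac\tau4;\tau)$ with $m(\Xi)=0$. The gap is in the last step, and neither of your two proposed mechanisms closes it. For the cusp route, you need (i) that the degenerate minimum is exactly of type $A_3$ (your evenness argument only shows the restriction to the kernel line is even with vanishing second derivative -- the fourth derivative could also vanish), and (ii) that the two real parameters in $p$ unfold it versally, i.e.\ that $f$ has a genuine Whitney cusp at $(r_0,s_0)$ with nonempty one-sheeted and three-sheeted regions. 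You correctly flag (ii) as ``the hard part'' but do not prove it, and it is precisely the content of the lemma: if the unfolding were non-transversal, only one of the counts $6$, $10$ might occur near $\wp(\tfrac\tau4;\tau)$.

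Your fallback perturbation argument does not repair this, because it conflates configurations on different tori. For $\tau'=2\sigma$ with $\sigma\in\Omega_5$ you get a $10$-configuration of $G_{\tau'/4}(\cdot;\tau')$, and for $\sigma\in\Omega_3^\circ$ a $6$-configuration of $G_{\tau''/4}(\cdot;\tau'')$ -- but these live on $E_{\tau'}$ and $E_{\tau''}$ respectively, one count per value of the modulus. Knowing that $\wp(\tau'/4;\tau')\to\xi^\ast\in\Xi$ does not produce a point $p$ on the \emph{fixed} torus $E_\tau$ with $\wp(p;\tau)\in\Xi$ realizing the other count; persistence radii shrink as $\sigma\to\sigma_0$, so you cannot transport a non-degenerate configuration from $\tau'$ back to $\tau$. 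The paper resolves exactly this difficulty by arguing by contradiction: assuming the count is a.e.\ constant on every $m=0$ component for \emph{every} modulus, it fixes a generic $p_0$ near $\tau_0/4$ whose (say) $6$-configuration is non-degenerate and hence persists for all $(\tau,p)$ in a fixed neighbourhood of $(\tau_0,p_0)$; then for $\tau_n\to\tau_0$ with $\tau_n/2\in\Omega_5$, the doubling trick forces the constant on $\Xi_{\tau_n}$ to be $10$, while $\wp(p_0;\tau_0)\in\Xi_{\tau_0}\cap\Xi_{\tau_n}$ and the a.e.-constancy hypothesis force a nearby $(\tau_n,p_n)$ in the persistence neighbourhood to carry $10$ critical points -- a contradiction. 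That indirect device (or a genuine proof of your transversality claim) is what is missing from your argument.
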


\begin{proof} Clearly $\mathcal{B}_k$ depends on $\tau$, and we denote $\mathcal{B}_k=\mathcal{B}_k(\tau)$ in this proof.
Assume by contradiction that such $\tau$ and $\Xi$ does not exist. Then by Theorem \ref{thm-section5}-(1), we have that
\begin{equation} \label{eq: data0}
  \parbox{\dimexpr\linewidth-5em}{Given any connected component $\Xi$ of $\mathbb{C}\setminus (\{e_1,e_2,e_3\}\cup\cup_{k=0}^3\partial \mathcal{B}_k)$ satisfying $m(\Xi)=0$, then for almost all $\wp(p)\in\Xi$, the number of critical points of $G_p(z)$ is the same (this number is $6$ or $10$) and all critical points are non-degenerate.
  }
\end{equation}
Recall the proof of Theorem \ref{main-thm-2}-(2) in Section \ref{section3}: We can take $\tau_0$  
such that the Green function $G(z;\frac{\tau_0}{2})$ on the torus $E_{\frac{\tau_0}{2}}=\mathbb{C}/(\mathbb Z+\mathbb Z\frac{\tau_0}{2})$ has exactly $3$ critical points $\frac12, \frac{\tau_0}{4}, \frac{1}{2}+\frac{\tau_0}{4}$ with $\frac12$ being a degenerate critical point of $G(z;\frac{\tau_0}{2})$ and $\det D^2G(z;\frac{\tau_0}{2})<0$ for $z\in\{\frac{\tau_0}{4}, \frac{1}{2}+\frac{\tau_0}{4}\}$.
Then
 $G_{\frac{\tau_0}{4}}(z; \tau_0)$ has exactly $6$ critical points
$$ 0,\quad \frac12, \quad\frac{\tau_0}{2},\quad\frac{1+\tau_0}{2},\quad \pm\Big(\frac{1}{2}+\frac{\tau_0}{4}\Big),
$$
with $\pm(\frac{1}{2}+\frac{\tau_0}{4})$ being degenerate minimal points of $G_{\frac{\tau_0}{4}}(z;\tau_0)$ and
\begin{equation}\label{eq-data3}\det D^2G_{\frac{\tau_0}{4}}(z;\tau_0)<0\quad\text{for}\quad z\in\Big\{0, \frac12, \frac{\tau_0}{2},\frac{1+\tau_0}{2}\Big\}.\end{equation}
Let $\Xi_{\tau_0}$ be the connected component of $\mathbb{C}\setminus (\{e_1(\tau_0),e_2(\tau_0),e_3(\tau_0)\}\cup\cup_{k=0}^3\partial \mathcal{B}_k(\tau_0))$ such that $\wp(\frac{\tau_0}{4};\tau_0)\in \Xi_{\tau_0}$. Then $m(\Xi_{\tau_0})=0$ and by \eqref{eq: data0}, there are two possibilities.

{\bf Case 1.} $G_p(z;\tau_0)$ has exactly $6$ critical points that are all non-degenerate for almost all $\wp(p;\tau_0)\in\Xi_{\tau_0}$.

Fix $p_0$ close to $\frac{\tau_0}{4}$ such that $\wp(p_0;\tau_0)\in\Xi_{\tau_0}$ and $G_{p_0}(z;\tau_0)$ has exactly $6$ critical points that are all non-degenerate. Then by the implicit function theorem, the same proof as Lemma \ref{lemma50-0} implies the existence of small $\delta>0$ such that 
\begin{equation} \label{eq: data}
  \parbox{\dimexpr\linewidth-5em}{for any $|\tau-\tau_0|<\delta$ and $|p-p_0|<\delta$, $G_p(z;\tau)$ has exactly $6$ critical points that are all non-degenerate.
  }
\end{equation}
On the other hand, by Theorem \ref{thm-CKLW}, we have $\frac{\tau_0}{2}\in\mathcal{C}=\partial\Omega_5\cap\mathbb H$, so there is $\tau_n\to \tau_0$ such that the Green function $G(z;\frac{\tau_n}{2})$ on the torus $E_{\frac{\tau_n}{2}}$ has exactly $5$ critical points. Consequently, the proof of Theorem \ref{main-thm-2}-(1) implies that $G_{\frac{\tau_n}{4}}(z;\tau_n)$ has exactly $10$ critical points that are all non-degenerate and 
\begin{equation}\label{eq-data4}\det D^2G_{\frac{\tau_n}{4}}(z;\tau_n)<0\quad\text{for}\quad z\in\Big\{0, \frac12, \frac{\tau_n}{2},\frac{1+\tau_n}{2}\Big\}.\end{equation}
Let $\Xi_{\tau_n}$ be the connected component of $\mathbb{C}\setminus (\{e_1(\tau_n),e_2(\tau_n),e_3(\tau_n)\}\cup\cup_{k=0}^3\partial \mathcal{B}_k(\tau_n))$ such that $\wp(\frac{\tau_n}{4};\tau_n)\in \Xi_{\tau_n}$. Then $m(\Xi_{\tau_n})=0$ and by \eqref{eq: data0}, $G_p(z;\tau_n)$ has exactly $10$ critical points that are all non-degenerate for almost all $\wp(p;\tau_n)\in\Xi_{\tau_n}$. Since $\wp(p_0;\tau_0)\in\Xi_{\tau_0}\cap\Xi_{\tau_n}$ for $n$ large, there are large $n$ and $p_n$ satisfying $|\tau_n-\tau_0|<\delta, |p_n-p_0|<\delta$ and $\wp(p_n;\tau_n)\in \Xi_{\tau_n}$ such that $G_{p_n}(z;\tau_n)$ has exactly $10$ critical points, a contradiction with \eqref{eq: data}.

{\bf Case 2.} $G_p(z;\tau_0)$ has exactly $10$ critical points that are all non-degenerate for almost all $\wp(p;\tau_0)\in\Xi_{\tau_0}$.

Fix $p_0$ close to $\frac{\tau_0}{4}$ such that $\wp(p_0;\tau_0)\in\Xi_{\tau_0}$ and $G_{p_0}(z;\tau_0)$ has exactly $10$ critical points that are all non-degenerate. Again the same proof as Lemma \ref{lemma50-0} implies the existence of small $\delta>0$ such that 
\begin{equation} \label{eq: data1}
  \parbox{\dimexpr\linewidth-5em}{for any $|\tau-\tau_0|<\delta$ and $|p-p_0|<\delta$, $G_p(z;\tau)$ has exactly $10$ critical points that are all non-degenerate.
  }
\end{equation}
  On the other hand, by Theorem \ref{thm-CKLW}, we have $\frac{\tau_0}{2}\in\mathcal{C}=\partial\Omega_3\cap\mathbb H$, so there is $\tau_n\to \tau_0$ such that $\frac{\tau_n}{2}\in\Omega_3^\circ$, namely the Green function $G(z;\frac{\tau_n}{2})$ on the torus $E_{\frac{\tau_n}{2}}$ has exactly $3$ critical points that are all non-degenerate. Consequently, $G_{\frac{\tau_n}{4}}(z;\tau_n)$ has exactly $6$ critical points that are all non-degenerate. Furthermore, \eqref{eq-data3} and $\tau_n\to \tau_0$ together imply that \eqref{eq-data4} holds for $n$ large. Let $\Xi_{\tau_n}$ be the connected component of $\mathbb{C}\setminus (\{e_1(\tau_n),e_2(\tau_n),e_3(\tau_n)\}\cup\cup_{k=0}^3\partial \mathcal{B}_k(\tau_n))$ such that $\wp(\frac{\tau_n}{4};\tau_n)\in \Xi_{\tau_n}$. Then $m(\Xi_{\tau_n})=0$ and by \eqref{eq: data0}, $G_p(z;\tau_n)$ has exactly $6$ critical points that are all non-degenerate for almost all $\wp(p;\tau_n)\in\Xi_{\tau_n}$. Thus, there are large $n$ and $p_n$ satisfying $|\tau_n-\tau_0|<\delta, |p_n-p_0|<\delta$ and $\wp(p_n;\tau_n)\in \Xi_{\tau_n}$ such that $G_{p_n}(z;\tau_n)$ has exactly $6$ critical points, again a contradiction with \eqref{eq: data1}.
The proof is complete.
\end{proof}

\begin{proof}[Proof of Theorem \ref{thm-section5-5}] Lemma \ref{thm-section5-50} implies $\Omega_6\neq\emptyset$ and $\Omega_{10}\neq\emptyset$. Then it follows from Lemma \ref{lemma50-0} that $\Omega_6$ and $\Omega_{10}$ are both open subsets of $\Xi$. Furthermore,  
it follows from Theorem \ref{thm-section5}-(1) that $\Xi\subset\overline{\Omega_6}\cup\overline{\Omega_{10}}$. Since $\Xi$ is connected, we see that $\overline{\Omega_6}\cap\overline{\Omega_{10}}\cap\Xi\neq\emptyset$, which implies $\partial\Omega_6\cap\partial\Omega_{10}\cap \Xi\neq \emptyset$. Finally, take any $\wp(p)\in (\partial\Omega_6\cup\partial\Omega_{10})\cap\Xi$. If all critical points of $G_p(z)$ are non-degenerate, then $\wp(p)\in\Omega_6\cup\Omega_{10}$, a contradiction. Thus $G_p(z)$ has degenerate nontrivial critical points. The proof is complete.
\end{proof}

\begin{proof}[Proof of Theorem \ref{main-thm-2} (3-1)-(3-2)] As in the proof of Theorem \ref{main-thm-2}-(3), let us take $k=2$ for example. Recall the proof of Theorem \ref{main-thm-2}-(3) that we fix any $\tau$ such that $\frac{\tau}{4}$ is a degenerate critical point of $G(z;\frac{\tau}{2})$ and $\det D^2G(z;\frac{\tau}{2})<0$ for $z\in\{\frac{1}{2}, \frac{1}{2}+\frac{\tau}{4}\}$. Then $G_{\frac{\tau}{4}}(z)$ has exactly $6$ critical points
$$ 0,\quad \frac12, \quad\frac{\tau}{2},\quad\frac{1+\tau}{2},\quad \pm\Big(\frac{1}{2}+\frac{\tau}{4}\Big),
$$
with $\det D^2G_{\frac{\tau}{4}}(z)=0$ for $z\in \{0,\frac{\tau}{2}\}$ and
$$\det D^2G_{\frac{\tau}{4}}(z)<0\quad\text{for}\quad z\in\Big\{\frac12,\frac{1+\tau}{2},\pm\Big(\frac{1}{2}+\frac{\tau}{4}\Big)\Big\}.$$
Consequently, the implicit function theorem implies that for any $p$ close to $\frac{\tau}{4}$, $G_p(z)$ has exactly one pair of non-degenerate nontrivial critical points $\pm q_p$ (with $\det D^2G_p(\pm q_p)<0$) close to $\pm(\frac{1}{2}+\frac{\tau}{4})$, so $G_p(z)$ has at least $6$ critical points.

Note that $\wp(\frac{\tau}{4})\in \partial\mathcal{B}_0\cap\partial\mathcal{B}_2\cap(\mathbb C\setminus (\partial\mathcal{B}_1\cup\partial\mathcal{B}_3))$. Since \eqref{Bjk} says that $\partial\mathcal{B}_0\neq\partial\mathcal{B}_2$, there exist a connected component $\Xi_1$ of $\mathbb{C}\setminus (\{e_1,e_2,e_3\}\cup\cup_{k=0}^3\partial \mathcal{B}_k)$ and $j\in\{0,2\}$ such that $\wp(\frac{\tau}{4})\in\partial \Xi_1$ and 
$$\det D^2 G_p\left(\frac{\omega_j}{2}\right)>0,\quad \det D^2 G_p\left(\frac{\omega_{l}}{2}\right)<0\;\text{for }l\in\{1,3,2-l\}$$
holds for any $\wp(p)\in \Xi_1$, namely $m(\Xi_1)=1$. It follows from Theorem \ref{thm-section5}-(2) that $G_p(z)$ has exactly either $4$ or $8$ critical points for almost all $\wp(p)\in \Xi_1$. 
Since $G_p(z)$ has at least $6$ critical points for $p$ close to  $\frac{\tau}{4}$, so there exists $p$ close to $\frac{\tau}{4}$ satisfying $\wp(p)\in\Xi_1$ such that $G_p(z)$ has exactly $8$ critical points that are all non-degenerate. This proves (3-1).

 Similarly, there exists a connected component $\Xi_2$ of $\mathbb{C}\setminus (\{e_1,e_2,e_3\}\cup\cup_{k=0}^3\partial \mathcal{B}_k)$ satisfying $\wp(\frac{\tau}{4})\in\partial \Xi_2$ such that either 
\begin{equation}\label{cas-1}\det D^2 G_p\left(\frac{\omega_{l}}{2}\right)<0\;\text{for }l\in\{0,1,2,3\}\end{equation}
or
\begin{equation}\label{cas-2}\det D^2 G_p\left(\frac{\omega_l}{2}\right)>0\;\text{for }l\in\{0,2\},\quad \det D^2 G_p\left(\frac{\omega_{l}}{2}\right)<0\;\text{for }l\in\{1,3\}\end{equation}
holds for any $\wp(p)\in \Xi_2$. 
If \eqref{cas-1} holds, then $m(\Xi_2)=0$ and Theorem \ref{thm-section5}-(1) implies that $G_p(z)$ has exactly $6$ or $10$ critical points for almost all $\wp(p)\in \Xi_2$. Since $\det D^2G_p(\pm q_p)<0$ for $p$ close to $\frac{\tau}{4}$,  
 there exists $p$ close to $\frac{\tau}{4}$ satisfying $\wp(p)\in\Xi_2$ such that $\{\pm q_p, \frac{\omega_k}{2}, k=0,1,2,3\}\subset D^-$ and so
  $G_p(z)$ has exactly $10$ critical points that are all non-degenerate.  
  
 If \eqref{cas-2} holds, then $m(\Xi_2)=2$ and Theorem \ref{thm-section5}-(3) implies that $G_p(z)$ has exactly $6$ critical points for almost all $\wp(p)\in \Xi_2$, so 
 there exists $p$ close to $\frac{\tau}{4}$ satisfying $\wp(p)\in\Xi_2$ such that $G_p(z)$ has exactly $6$ critical points that are all non-degenerate. This proves (3-2).
  The proof is complete.
  \end{proof}
  
 \begin{Remark}\label{rmk-5-2}
Recall the proof of Theorem \ref{main-thm-2} (3-1)-(3-2) that $\wp(\frac{\tau}{4})\in \partial\mathcal{B}_0\cap\partial\mathcal{B}_2$. Suppose that $\partial\mathcal{B}_0$ is not tangent to $\partial\mathcal{B}_2$ at $\wp(\frac{\tau}{4})$ (we believe this assumption holds automatically), then there exists a connected component $\Xi_2$ satisfying $\wp(\frac{\tau}{4})\in\partial \Xi_2$ such that \eqref{cas-1} holds for any $\wp(p)\in \Xi_2$, and there also exists another connected component $\Xi_3$ satisfying $\wp(\frac{\tau}{4})\in\partial \Xi_3$ such that \eqref{cas-2} holds for any $\wp(p)\in \Xi_3$. Thus Theorem \ref{main-thm-2} (3-1)-(3-2) can be improved to that for each $N\in\{6,8,10\}$, there exists $p$ close to $\frac{\tau}{4}$ such that $G_p(z)$ has exactly $N$ critical points that are all non-degenerate.

 \end{Remark}


\subsection{Structure of critical points for $|\wp(p)|$ large}
Some consequences of Theorem \ref{thm-section5} are as follows, which shows how the geometry of the torus affects the structure of critical points of $G_p(z)$ when $|\wp(p)|$ is large.

\begin{corollary}\label{coro1-8}
Fix any $\tau$ such that $G(z)=G(z;\tau)$ has $5$ critical points, and define $$\Xi_\infty:=\mathbb{C}\setminus \big(\{e_1,e_2,e_3\}\cup\cup_{k=0}^3\overline{\mathcal{B}_k}\big).$$Then $G_p(z)$ has at least $6$ critical points for any $\wp(p)\in\Xi_\infty$, and $G_p(z)$ has exactly either $6$ or $10$ critical points for almost all $\wp(p)\in \Xi_\infty$. 
Furthermore, if the number is $6$, then the unique pair of nontrivial critical points are the minimal points of $G_p(z)$.
\end{corollary}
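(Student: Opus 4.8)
The plan is to obtain Corollary~\ref{coro1-8} as the specialization of Theorem~\ref{thm-section5}-(1) to the ``exterior'' region $\Xi_\infty$. Concretely, I would show two things: first, that every connected component $\Xi$ of $\Xi_\infty$ is also a connected component of the open set $\mathbb{C}\setminus(\{e_1,e_2,e_3\}\cup\cup_{k=0}^3\partial\mathcal{B}_k)$ that appears in Theorem~\ref{thm-section5}; second, that $m(\Xi)=0$ on every such component. Granting these, Theorem~\ref{thm-section5}-(1) applied to each component $\Xi$ of $\Xi_\infty$ states precisely that $G_p(z)$ has at least $6$ critical points for every $\wp(p)\in\Xi$, exactly $6$ or $10$ (all non-degenerate) for almost every $\wp(p)\in\Xi$, and that the nontrivial pair are the minimal points whenever the count is $6$; taking the union over the (at most countably many) components of $\Xi_\infty$ then yields the corollary. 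Note that $\Xi_\infty$ is nonempty, since all the $\mathcal{B}_k$ are bounded disks in the present situation, so $\Xi_\infty$ contains $\{|z|>R\}$ for $R$ large.

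To verify $m(\Xi)=0$, I would first invoke Theorem~\ref{thm-A0}: because $G(z;\tau)$ has $5$ critical points, its trivial critical points $\frac{\omega_k}{2}$, $k=1,2,3$, are non-degenerate and satisfy $\det D^2G(\frac{\omega_k}{2})<0$. By Lemma~\ref{coro3-1} this places us in Case~1-1 ($|\alpha_k|>\beta_k$) for every $k\in\{1,2,3\}$ — and in particular rules out Case~1-2 and Case~2 — so that, by \eqref{eq331}, $\det D^2G_p(\frac{\omega_k}{2})>0$ holds precisely when $\wp(p)\in\mathcal{B}_k$; and for $k=0$, \eqref{eq330} gives $\det D^2G_p(0)>0$ precisely when $\wp(p)\in\mathcal{B}_0$. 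Since $\wp(p)\in\Xi_\infty$ means $\wp(p)\notin\overline{\mathcal{B}_k}$ for all $k\in\{0,1,2,3\}$, it follows that $\det D^2G_p(\frac{\omega_k}{2})<0$ for every $k$, i.e. $m(\Xi)=0$ for the component $\Xi$ containing $\wp(p)$.

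For the topological point, I would argue as follows. Given $z_0\in\Xi_\infty$, let $C$ be its connected component in $\mathbb{C}\setminus(\{e_1,e_2,e_3\}\cup\cup_{k=0}^3\partial\mathcal{B}_k)$ and $C'$ its connected component in $\Xi_\infty$. Since $\Xi_\infty$ is a subset of $\mathbb{C}\setminus(\{e_1,e_2,e_3\}\cup\cup_{k=0}^3\partial\mathcal{B}_k)$, we have $C'\subseteq C$. Conversely, $C$ is connected, meets none of the circles $\partial\mathcal{B}_k$, and contains $z_0$, which lies in the common exterior of the four closed disks $\overline{\mathcal{B}_k}$; a connected set cannot pass from the exterior of a disk into its interior without meeting the boundary circle, so $C$ remains in that common exterior, i.e. $C\subseteq\Xi_\infty$, whence $C\subseteq C'$. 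Thus $C=C'$, and the two families of connected components coincide, which is exactly what is needed to feed $\Xi$ into Theorem~\ref{thm-section5}-(1).

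I do not expect a genuine obstacle here: the corollary is essentially Theorem~\ref{thm-section5}-(1) read on the region $\Xi_\infty$. The one place that requires care is the sign computation giving $m(\Xi)=0$, and there the essential input is Theorem~\ref{thm-A0}, which forces $\det D^2G(\frac{\omega_k}{2})<0$ and thereby guarantees we are in Case~1-1 of Lemma~\ref{coro3-1} — in Case~1-2 the correspondence \eqref{eq332} would be reversed and the conclusion would fail. The remaining identification of connected components is routine point-set topology.
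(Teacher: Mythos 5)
Your proof is correct and follows essentially the same route as the paper: use Theorem \ref{thm-A0} to get $\det D^2G(\frac{\omega_k}{2})<0$, hence Case 1-1 of Lemma \ref{coro3-1} and the equivalence $\det D^2G_p(\frac{\omega_k}{2})>0\Leftrightarrow\wp(p)\in\mathcal{B}_k$ for all $k$, so $m=0$ on $\Xi_\infty$ and Theorem \ref{thm-section5}-(1) applies. Your extra point-set step identifying the components of $\Xi_\infty$ with components of $\mathbb{C}\setminus(\{e_1,e_2,e_3\}\cup\cup_k\partial\mathcal{B}_k)$ is a harmless (and slightly more careful) elaboration of what the paper leaves implicit.
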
 

\begin{proof}
Since $G(z)$ has $5$ critical points, Theorem \ref{thm-A0} says that $\det D^2G(\frac{\omega_k}{2})<0$ for $k=1,2,3$, so $\mathcal{B}_k$ are all open disks. This implies that $\Xi_\infty$ is unbounded. Moreover, it follows from Remark \ref{rmk3-11} and the proof of Lemma \ref{coro3-1} that
\begin{equation}\label{fcdc}\det D^2 G_p\Big(\frac{\omega_k}{2}\Big)>0\quad\Leftrightarrow\quad\wp(p)\in \mathcal{B}_k, \quad\text{for }k=0,1,2,3,\end{equation}
so
$m(\Xi_\infty)=0.$
Thus, Corollary \ref{coro1-8} follows from Theorem \ref{thm-section5}-(1).
\end{proof}

\begin{corollary}\label{coro1-82}
Fix any $\tau$ such that $G(z)=G(z;\tau)$ has exactly $3$ critical points that are all non-degenerate, and define $$\Xi_\infty:=\mathbb{C}\setminus \big(\{e_1,e_2,e_3\}\cup\cup_{k=0}^3\overline{\mathcal{B}_k}\big).$$Then $G_p(z)$ has exactly either $4$ or $8$ critical points for almost all $\wp(p)\in\Xi_\infty$.
\end{corollary}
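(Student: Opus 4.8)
The plan is to imitate the proof of Corollary \ref{coro1-8} almost word for word, the only change being that the integer $m(\Xi)$ will equal $1$ here instead of $0$. First I would pin down the shape of the sets $\mathcal{B}_k$. Since $G(z)$ has exactly three critical points $\frac{\omega_k}{2}$, $k\in\{1,2,3\}$, and all of them are non-degenerate, the equivalence recorded in Case 1 of the proof of Lemma \ref{coro3-1} (namely that $|\alpha_k|\neq\beta_k$ is equivalent, via \eqref{detG}, to $\frac{\omega_k}{2}$ being a non-degenerate critical point of $G$) shows that for every $k\in\{1,2,3\}$ we are in Case 1, so that every $\mathcal{B}_k$, $0\le k\le 3$, is an open disk. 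Consequently $\bigcup_{k=0}^{3}\overline{\mathcal{B}_k}$ is compact, so $\Xi_\infty$ is a nonempty unbounded open set, and because $\partial\Xi_\infty\subset\bigcup_{k=0}^3\partial\mathcal{B}_k$ while $\Xi_\infty$ avoids $\{e_1,e_2,e_3\}$, each connected component $\Xi$ of $\Xi_\infty$ is also a connected component of $\mathbb{C}\setminus(\{e_1,e_2,e_3\}\cup\bigcup_{k=0}^3\partial\mathcal{B}_k)$; thus Theorem \ref{thm-section5} applies to $\Xi$.

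Second, I would count how many of the half-periods are local minima of $G$. Recall from \cite{BE} that $\hat\phi(z)=z-\hat g(z)$ has degree $-1$; since its three fixed points, which are the critical points of $G$, are non-degenerate, the degree count gives that exactly one of them, say $\frac{\omega_{k_0}}{2}$ with $k_0\in\{1,2,3\}$, satisfies $J_{\hat\phi}>0$, i.e.\ $\det D^2G(\frac{\omega_{k_0}}{2})>0$ by \eqref{20-12}, while $\det D^2G(\frac{\omega_k}{2})<0$ for the other two values of $k$.

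Third, I would compute $m(\Xi)$ for an arbitrary component $\Xi$ of $\Xi_\infty$. Fix $\wp(p)\in\Xi$, so $\wp(p)\notin\overline{\mathcal{B}_k}$ for every $k\in\{0,1,2,3\}$. By \eqref{eq330} with $k=0$ we get $\det D^2G_p(0)<0$. For $k\in\{1,2,3\}\setminus\{k_0\}$ we are in Case 1-1 of the proof of Lemma \ref{coro3-1}, so by \eqref{eq331} and $\wp(p)\notin\overline{\mathcal{B}_k}$ we get $\det D^2G_p(\frac{\omega_k}{2})<0$; for $k=k_0$ we are in Case 1-2, so by \eqref{eq332} and $\wp(p)\in\mathbb{C}\setminus\overline{\mathcal{B}_{k_0}}$ we get $\det D^2G_p(\frac{\omega_{k_0}}{2})>0$. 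Hence $m(\Xi)=1$, and Theorem \ref{thm-section5}-(2) yields that $G_p(z)$ has exactly either $4$ or $8$ critical points, all non-degenerate, for almost all $\wp(p)\in\Xi$. Since $\Xi_\infty$ has only finitely many connected components, the union of the corresponding exceptional sets is still of Lebesgue measure zero, which proves the corollary.

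Since all of this is bookkeeping built on results already in place, I do not anticipate a genuine obstacle; the only point requiring care is to correctly match the two subcases of Lemma \ref{coro3-1}---Case 1-1 with the inclusion $\wp(p)\in\mathcal{B}_k$, Case 1-2 with $\wp(p)\in\mathbb{C}\setminus\overline{\mathcal{B}_k}$---against the sign of $\det D^2G(\frac{\omega_k}{2})$, and then to use the fact that exactly one of the three half-periods is a local minimum of $G$ in order to conclude $m(\Xi)=1$ uniformly over all connected components of $\Xi_\infty$.
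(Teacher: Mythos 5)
Your proposal is correct and reaches the conclusion by the same overall route as the paper: show every $\mathcal{B}_k$ is an open disk, verify $m(\Xi_\infty)=1$, and invoke Theorem \ref{thm-section5}-(2). The one place you diverge is in how $m(\Xi_\infty)=1$ is established. The paper quotes Theorem \ref{thm-B} (for $|\wp(p)|$ large, $G_p$ has exactly $4$ non-degenerate critical points) and then uses the degree count \eqref{deg-count} for $G_p$ to conclude that exactly one half-period satisfies $\det D^2G_p>0$, which in turn pins down whether each $k$ falls in Case 1-1 or Case 1-2 of Lemma \ref{coro3-1}. You instead work with $G$ itself: the degree $-1$ of $\hat\phi$ from \cite{BE} forces exactly one of the three non-degenerate critical points of $G$ to satisfy $\det D^2G(\tfrac{\omega_{k_0}}{2})>0$ (equivalently, $|\alpha_{k_0}|<\beta_{k_0}$), and then the explicit equivalences \eqref{eq330}, \eqref{eq331}, \eqref{eq332} give the sign of $\det D^2G_p(\tfrac{\omega_k}{2})$ for $\wp(p)$ outside all the closed disks. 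Your version is marginally more self-contained, since it does not rely on the perturbative result from \cite{CKL-2025}, and your explicit check that each component of $\Xi_\infty$ is a component of $\mathbb{C}\setminus(\{e_1,e_2,e_3\}\cup\cup_{k=0}^3\partial\mathcal{B}_k)$ (so that Theorem \ref{thm-section5} genuinely applies) is a detail the paper glosses over; both arguments are sound.
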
 

\begin{proof}
Again $\mathcal{B}_k$ are all open disks and $\Xi_\infty$ is unbounded. Recall Theorem \ref{thm-B} that if $|\wp(p)|$ is large enough, $G_p(z)$ has exactly $4$ critical points that are all non-degenerate, which together with Theorem \ref{thm-section5} implies the existence of $k_0\in\{1,2,3\}$ such that $\det D^2G_p(\frac{\omega_k}{2})<0$ for any $k\in\{0,1,2,3\}\setminus\{k_0\}$ and $\det D^2G_p(\frac{\omega_{k_0}}{2})>0$. This implies that $\det D^2G_p(\frac{\omega_k}{2})>0$ if and only if $\wp(p)\in\mathcal{B}_k$ for any $k\in\{0,1,2,3\}\setminus\{k_0\}$ and $\det D^2G_p(\frac{\omega_{k_0}}{2})>0$ if and only if $\wp(p)\in\mathbb{C}\setminus\overline{\mathcal{B}_{k_0}}$, so
$m(\Xi_\infty)=1.$
Thus, Corollary \ref{coro1-82} follows from Theorem \ref{thm-section5}-(2).
\end{proof}

\begin{corollary}\label{coro1-83}
Fix any $\tau$ such that $G(z)=G(z;\tau)$ has exactly $3$ critical points and one of them is degenerate (say $\frac{\omega_3}{2}$ for example). Define 
$$\Xi_{1,\infty}:=\mathbb{C}\setminus\big(\{e_1,e_2,e_3\}\cup\cup_{k=0}^3\overline{\mathcal{B}_k}\big),$$
$$\Xi_{2,\infty}:=\mathcal{B}_3\setminus\big(\{e_1,e_2,e_3\}\cup\cup_{k=0}^2\overline{\mathcal{B}_k}\big).$$
Then $\Xi_{j,\infty}$ are both unbounded, $G_p(z)$ has at least $6$ critical points for any $\wp(p)\in\Xi_{1,\infty}$, $G_p(z)$ has exactly either $6$ or $10$ critical points for almost all $\wp(p)\in \Xi_{1,\infty}$, and $G_p(z)$ has exactly either $4$ or $8$ critical points for almost all $\wp(p)\in\Xi_{2,\infty}$.
\end{corollary}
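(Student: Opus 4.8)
The plan is to recognize $\Xi_{1,\infty}$ and $\Xi_{2,\infty}$ as unions of connected components of $\mathbb{C}\setminus(\{e_1,e_2,e_3\}\cup\bigcup_{k=0}^3\partial\mathcal{B}_k)$ on which the invariant $m(\cdot)$ of Theorem \ref{thm-section5} equals $0$ and $1$ respectively, exactly in the spirit of the proofs of Corollaries \ref{coro1-8} and \ref{coro1-82}, and then to quote Theorem \ref{thm-section5}(1)--(2).

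First I would record the shapes of the four sets $\mathcal{B}_k$. Since $G(z;\tau)$ has exactly $3$ critical points and $\frac{\omega_3}{2}$ is degenerate, Theorem \ref{thm-CKLW}(2) forces $\frac{\omega_1}{2}$ and $\frac{\omega_2}{2}$ to be non-degenerate saddle points, i.e. $\det D^2G(\frac{\omega_l}{2})<0$ for $l=1,2$, while $|\alpha_3|=\beta_3$ because $\frac{\omega_3}{2}$ is degenerate (Case 2 of Lemma \ref{coro3-1}). Hence, by \eqref{alphak1}, $\mathcal{B}_0$ (always) and $\mathcal{B}_1,\mathcal{B}_2$ (Case 1-1 of Lemma \ref{coro3-1}) are bounded open disks, whereas $\mathcal{B}_3$ is an open half-plane. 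This already gives the unboundedness claim: fix $R$ with $\overline{\mathcal{B}_0}\cup\overline{\mathcal{B}_1}\cup\overline{\mathcal{B}_2}\subset\overline{B_R(0)}$; the complement of the closed half-plane $\overline{\mathcal{B}_3}$ is an unbounded open half-plane, and $\Xi_{1,\infty}$ contains it minus $\overline{B_R(0)}\cup\{e_1,e_2,e_3\}$, hence is unbounded; similarly $\mathcal{B}_3\setminus(\overline{B_R(0)}\cup\{e_1,e_2,e_3\})\subset\Xi_{2,\infty}$ is unbounded. It is precisely here that the half-plane shape of $\mathcal{B}_3$, forced by the degeneracy of $\frac{\omega_3}{2}$, is used.

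Next I would evaluate $m$ on the relevant components. By \eqref{eq330} for $k=0$ and by \eqref{eq331} for $k=1,2$ (Case 1-1) and $k=3$ (Case 2) --- Case 1-2 does not occur here since none of the trivial critical points of $G(z)$ is a local extremum --- one has, for every $k\in\{0,1,2,3\}$ and every $p$ with $\wp(p)\notin\partial\mathcal{B}_k$, the equivalence $\det D^2G_p(\frac{\omega_k}{2})>0\iff\wp(p)\in\mathcal{B}_k$. Since $\Xi_{1,\infty}=\mathbb{C}\setminus(\bigcup_{k=0}^3\overline{\mathcal{B}_k}\cup\{e_1,e_2,e_3\})$ is open and disjoint from every $\mathcal{B}_k$ and every $\partial\mathcal{B}_k$, each of its connected components is a component of $\mathbb{C}\setminus(\{e_1,e_2,e_3\}\cup\bigcup_k\partial\mathcal{B}_k)$ on which $\det D^2G_p(\frac{\omega_k}{2})<0$ for all $k$, i.e. $m(\Xi)=0$; similarly, each component $\Xi$ of $\Xi_{2,\infty}=\mathcal{B}_3\setminus(\bigcup_{k=0}^2\overline{\mathcal{B}_k}\cup\{e_1,e_2,e_3\})$ lies in $\mathcal{B}_3$ and is disjoint from $\mathcal{B}_0,\mathcal{B}_1,\mathcal{B}_2$ and from all four boundary curves, hence $m(\Xi)=1$.

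Finally, Theorem \ref{thm-section5}(1) applied to each component of $\Xi_{1,\infty}$ shows that $G_p(z)$ has at least $6$ critical points for every $\wp(p)\in\Xi_{1,\infty}$, and Theorem \ref{thm-section5}(2) applied to each of the (at most countably many) components of $\Xi_{2,\infty}$ shows that $G_p(z)$ has exactly $4$ or $8$ critical points for almost all $\wp(p)\in\Xi_{2,\infty}$. The argument needs no new analytic input beyond Theorem \ref{thm-section5}; the only delicate points are combinatorial --- tracking which case of Lemma \ref{coro3-1} governs each $\mathcal{B}_k$ so that the equivalence $\det D^2G_p(\frac{\omega_k}{2})>0\iff\wp(p)\in\mathcal{B}_k$ points the correct way, and observing that the degeneracy of $\frac{\omega_3}{2}$ (hence the half-plane shape of $\mathcal{B}_3$) is exactly what makes both $\Xi_{1,\infty}$ and $\Xi_{2,\infty}$ unbounded.
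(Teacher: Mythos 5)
Your proposal is correct and follows essentially the same route as the paper: use Theorem \ref{thm-CKLW} to determine that $\mathcal{B}_0,\mathcal{B}_1,\mathcal{B}_2$ are disks while $\mathcal{B}_3$ is a half-plane (whence the unboundedness), verify via Remark \ref{rmk3-11} and Lemma \ref{coro3-1} that $\det D^2G_p(\frac{\omega_k}{2})>0\Leftrightarrow\wp(p)\in\mathcal{B}_k$ for all $k$, compute $m=0$ on $\Xi_{1,\infty}$ and $m=1$ on $\Xi_{2,\infty}$, and invoke Theorem \ref{thm-section5}(1)--(2). Your componentwise bookkeeping and the explicit exclusion of Case 1-2 are just more detailed versions of what the paper leaves implicit.
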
 
\begin{proof}
Since $\frac{\omega_3}{2}$ is degenerate, Theorem \ref{thm-CKLW} says that $\det D^2G(\frac{\omega_k}{2})<0$ for $k=1,2$, so $\mathcal{B}_k$ is an open disk for $k=0,1,2$, but $\mathcal{B}_3$ is an open half plane, namely $\Xi_{1,\infty}$ and $\Xi_{2,\infty}$ are both unbounded. Moreover, it follows from Remark \ref{rmk3-11} and the proof of Lemma \ref{coro3-1} that \eqref{fcdc} holds, so $m(\Xi_{1,\infty})=0$ and $m(\Xi_{2,\infty})=1$. The proof is complete by using Theorem \ref{thm-section5}.
\end{proof}

\subsection{Two special examples}
Let us consider two special examples. The first example is to consider $\tau=e^{\pi i/3}=\frac12+\frac{\sqrt{3}}{2}i$. Note from Theorem \ref{thm-LW} that $G(z)$ has $5$ critical points for $\tau=\frac12+\frac{\sqrt{3}}{2}i$, so this is a special case covered in Corollary \ref{coro1-8}.

Let us recall the modular property. Given any $\bigl(\begin{smallmatrix}a & b\\
c & d\end{smallmatrix}\bigr)
\in SL(2,\mathbb{Z})$, it is well known that
\begin{equation}\label{II-301-00}
\wp \Big( \frac{z}{c\tau+d}; \frac{a\tau+b}{c\tau+d}\Big)  =\left(
c\tau+d\right)  ^{2}\wp (z;\tau),\end{equation}
\[
\zeta \Big(  \frac{z}{c\tau+d}; \frac{a\tau+b}{c\tau+d}\Big)
=\left(  c\tau+d\right)  \zeta( z;\tau),
\]
\[g_2\Big(\frac{a\tau+b}{c\tau+d}\Big)=(c\tau+d)^4g_2(\tau).\]
From here and (\ref{40-2}) we can obtain
\begin{equation}%
\begin{pmatrix}
\eta_{2}(\tfrac{a\tau+b}{c\tau+d})\\
\eta_{1}(\tfrac{a\tau+b}{c\tau+d})
\end{pmatrix}
=(c\tau+d)\begin{pmatrix}
a & b\\
c & d
\end{pmatrix}
\begin{pmatrix}
\eta_{2}(\tau)\\
\eta_{1}(\tau)
\end{pmatrix}
, \label{II-31-1-00}%
\end{equation}
and by (\ref{II-301-00}), we can easily derive
\[e_{1}\Big(\frac{a\tau+b}{c\tau+d}\Big)=\begin{cases}(c\tau+d)^2 e_{1}(\tau),\;\;\text{if $c$ even and $d$ odd}\\
(c\tau+d)^2 e_{2}(\tau),\;\;\text{if $c$ odd and $d$ even}\\
(c\tau+d)^2 e_{3}(\tau),\;\;\text{if $c$ odd and $d$ odd},\end{cases}\]
\begin{equation}\label{e3mo}e_{3}\Big(\frac{a\tau+b}{c\tau+d}\Big)=\begin{cases}(c\tau+d)^2 e_{1}(\tau),\;\;\text{if $a+c$ even and $b+d$ odd}\\
(c\tau+d)^2 e_{2}(\tau),\;\;\text{if $a+c$ odd and $b+d$ even}\\
(c\tau+d)^2 e_{3}(\tau),\;\;\text{if $a+c$ odd and $b+d$ odd}.\end{cases}\end{equation}
We recall the following expansion for $e_1(\tau)$ (See e.g. \cite[p.70]{Shimura}):
\begin{equation}
e_{1}(\tau)=\frac{2\pi^{2}}{3}+16\pi^{2}\sum_{k=1}^{\infty}a_{k}e^{2k\pi i \tau},\;\text{where}\; a_{k}=\sum_{d|k,d\text{ is odd}}d.
\label{ii-4}%
\end{equation}

Fix $\tau=e^{\pi i/3}=\frac12+\frac{\sqrt{3}}{2}i$. Then using the above modular properties with $\bigl(\begin{smallmatrix}a & b\\
c & d\end{smallmatrix}\bigr)=\bigl(\begin{smallmatrix}0 & 1\\
-1 & 1\end{smallmatrix}\bigr)$ and the Legendre relation $\tau\eta_1-\eta_2=2\pi i$, it follows from $\frac{1}{1-e^{\pi i/3}}=e^{\pi i/3}$ that
$$g_2(e^{\pi i/3})=0,\quad \eta_1(e^{\pi i/3})=\frac{2\pi}{\sqrt{3}}=\frac{\pi}{\operatorname{Im}\tau},$$
$$e_1(e^{\pi i/3})=\frac{2\pi^{2}}{3}+16\pi^{2}\sum_{k=1}^{\infty}(-1)^ka_{k}e^{-k\sqrt{3}\pi}\approx 1.8775\pi>\frac{2\pi}{\sqrt{3}},$$
$$e_2(e^{\pi i/3})=\overline{e_3(e^{\pi i/3})}=-e^{\pi i/3}e_1(e^{\pi i/3}).$$
From here, we obtain
$$\mathcal{B}_0=\Big\{z\in\mathbb{C}\; :\; |z|<\frac{2\pi}{\sqrt{3}}\Big\}.$$
Furthermore, 
\begin{align*}
\alpha_k=\frac{-e_k}{3e_k^2}=\frac{-1}{3e_k},
\quad
\beta_k=\frac{2\pi}{\sqrt{3}|3e_k^2|}=\frac{2\pi}{3\sqrt{3}e_1^2}<|\alpha_k|.
\end{align*}
Thus, 
\begin{align*}
e_k+\frac{\overline{\alpha_k}}{|\alpha_k|^2-\beta_k^2}=\begin{cases}-e_1\frac{6e_1^2+4\pi^2}{3e_1^2-4\pi^2}\approx-7.1816\pi & k=1,\\
e^{\pi i/3}e_1\frac{6e_1^2+4\pi^2}{3e_1^2-4\pi^2} & k=2,\\
e^{-\pi i/3}e_1\frac{6e_1^2+4\pi^2}{3e_1^2-4\pi^2} & k=3,\end{cases}
\end{align*}
\begin{align*}
\frac{\beta_k}{\left||\alpha_k|^2-\beta_k^2\right|}=\frac{6\sqrt{3}\pi e_1^2}{3e_1^2-4\pi^2}\approx 5.5715\pi.
\end{align*}
Then
$$\mathcal{B}_1=\bigg\{z\in\mathbb{C}\; :\; \bigg|z+e_1\frac{6e_1^2+4\pi^2}{3e_1^2-4\pi^2} \bigg|<\frac{6\sqrt{3}\pi e_1^2}{3e_1^2-4\pi^2}\bigg\},$$
$$\mathcal{B}_2=-e^{\pi i/3}\mathcal{B}_1,\quad \mathcal{B}_3=-e^{-\pi i/3}\mathcal{B}_1.$$
 Note that $\mathcal{B}_j\cap\mathcal{B}_k=\emptyset$ for any $j\neq k$, and $e_j\notin \mathcal{B}_k$ for any $j,k$.
The figure of these four disks is seen in Figure 1.
\begin{theorem}\label{main-thm-08}
Let $\tau=e^{\pi i/3}=\frac12+\frac{\sqrt{3}}{2}i$. Then the followings hold.
\begin{itemize}
\item[(1)] Denote$$\Xi_\infty:=\mathbb C\setminus\big(\{e_1,e_2,e_3\}\cup\cup_{k=0}^3\overline{\mathcal{B}_k}\big)\neq \emptyset,$$
then $G_p(z)$ has at least $6$ critical points for any $\wp(p)\in \Xi_\infty$, and $G_p(z)$ has exactly either $6$ or $10$ critical points for almost all $\wp(p)\in \Xi_\infty$.  Furthermore, if the number is $6$, then the unique pair of nontrivial critical points are the minimal points of $G_p(z)$.

\item[(2)] $G_p(z)$ has exactly either $4$ or $8$ critical points for almost all $\wp(p)\in \cup_{k=0}^3\mathcal{B}_k$. 
\end{itemize}
\end{theorem}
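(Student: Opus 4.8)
The plan is to deduce both parts from the machinery of Section \ref{section5} together with the explicit description of the four disks $\mathcal{B}_0,\mathcal{B}_1,\mathcal{B}_2,\mathcal{B}_3$ computed above for $\tau=e^{\pi i/3}$. First I record the two structural inputs. By Theorem \ref{thm-LW}, $G(z;e^{\pi i/3})$ has exactly $5$ critical points, since $E_{e^{\pi i/3}}$ is the rhombus torus; and by Theorem \ref{thm-A0} this forces $\det D^2G(\frac{\omega_k}{2};e^{\pi i/3})<0$ for $k=1,2,3$. Hence each $\mathcal{B}_k$ ($k=0,1,2,3$) is a genuine open disk (Case 1-1 in the proof of Lemma \ref{coro3-1}), and the equivalence \eqref{fcdc}, namely $\det D^2G_p(\frac{\omega_k}{2})>0\Leftrightarrow\wp(p)\in\mathcal{B}_k$, is available for all four indices. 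The computation preceding the theorem also supplies the two geometric facts I will use: the open disks $\mathcal{B}_k$ are pairwise disjoint, and $e_j\notin\mathcal{B}_k$ for all $j,k$.

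For part (1): since the closed disks $\overline{\mathcal{B}_k}$ are bounded, $\Xi_\infty=\mathbb{C}\setminus(\{e_1,e_2,e_3\}\cup\bigcup_{k=0}^3\overline{\mathcal{B}_k})$ is a nonempty, unbounded, connected open set; and because the disks are pairwise disjoint and miss each $e_j$, it coincides with the unbounded connected component of $\mathbb{C}\setminus(\{e_1,e_2,e_3\}\cup\bigcup_{k=0}^3\partial\mathcal{B}_k)$. On $\Xi_\infty$ every $\wp(p)$ lies outside all $\overline{\mathcal{B}_k}$, so \eqref{fcdc} gives $\det D^2G_p(\frac{\omega_k}{2})<0$ for all $k$, i.e. $m(\Xi_\infty)=0$. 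Part (1) is then precisely Corollary \ref{coro1-8} (equivalently Theorem \ref{thm-section5}-(1)) applied to this $\tau$ and this component, which also records that when the number is $6$ the nontrivial pair are minimal points.

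For part (2): the key observation is that each $\mathcal{B}_k$, $k\in\{0,1,2,3\}$, is itself a connected component of $\mathbb{C}\setminus(\{e_1,e_2,e_3\}\cup\bigcup_{j=0}^3\partial\mathcal{B}_j)$; this again uses only that the four disks are pairwise disjoint and that no $e_j$ lies in any $\mathcal{B}_k$. For $\wp(p)\in\mathcal{B}_k$, \eqref{fcdc} gives $\det D^2G_p(\frac{\omega_k}{2})>0$ while $\det D^2G_p(\frac{\omega_j}{2})<0$ for the remaining three indices $j$, so $m(\mathcal{B}_k)=1$. Theorem \ref{thm-section5}-(2) then shows $G_p(z)$ has exactly $4$ or $8$ critical points, all non-degenerate, for almost all $\wp(p)\in\mathcal{B}_k$. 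Taking the union over $k=0,1,2,3$, which is disjoint, yields the assertion on $\bigcup_{k=0}^3\mathcal{B}_k$.

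The only substantive input beyond quoting these results is the geometry of the four disks for $\tau=e^{\pi i/3}$ — pairwise disjointness and avoidance of $e_1,e_2,e_3$ — which was already established above from the closed forms $g_2(e^{\pi i/3})=0$, $\eta_1(e^{\pi i/3})=\frac{2\pi}{\sqrt3}=\frac{\pi}{\operatorname{Im}\tau}$, and the estimate $e_1(e^{\pi i/3})\approx 1.8775\pi$ coming from \eqref{ii-4}. Granting that, there is no further obstacle: the content of the theorem is the identification $m(\Xi_\infty)=0$ and $m(\mathcal{B}_k)=1$, after which Corollary \ref{coro1-8} and Theorem \ref{thm-section5}-(2) finish the argument.
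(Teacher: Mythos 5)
Your proposal is correct and follows essentially the same route as the paper: part (1) is Corollary \ref{coro1-8} applied to $\tau=e^{\pi i/3}$, and part (2) follows from \eqref{fcdc} giving $m(\mathcal{B}_k)=1$ on each disk and then invoking Theorem \ref{thm-section5}-(2). Your additional verification that each $\mathcal{B}_k$ (and $\Xi_\infty$) is genuinely a connected component of $\mathbb{C}\setminus(\{e_1,e_2,e_3\}\cup\cup_{j=0}^3\partial\mathcal{B}_j)$ — using pairwise disjointness and $e_j\notin\mathcal{B}_k$ — is a detail the paper leaves implicit, and it is correctly handled.
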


\begin{proof}
Theorem \ref{main-thm-08}-(1) follows directly from Corollary \ref{coro1-8}. Furthermore, 
for any $\wp(p)\in \cup_{k=0}^3\mathcal{B}_k$, it follows from \eqref{fcdc} that
$$\#\Big\{\frac{\omega_k}{2}\;: \;0\leq k\leq 3,\; \det D^2G_p\Big(\frac{\omega_k}{2}\Big)>0\Big\}\equiv 1.$$
Therefore, Theorem \ref{main-thm-08}-(2) follows directly from Theorem \ref{thm-section5}-(2).
\end{proof}

The second example is to consider $\tau=i$. Note from Theorem \ref{thm-LW} that $G(z)$ has only $3$ critical points that are all non-degenerate for $\tau=i$, so this is a special case covered in Corollary \ref{coro1-82}.
Fix $\tau=i$. Then using \eqref{II-31-1-00}-\eqref{ii-4} with $\bigl(\begin{smallmatrix}a & b\\
c & d\end{smallmatrix}\bigr)=\bigl(\begin{smallmatrix}0 & -1\\
1 & 0\end{smallmatrix}\bigr)$ and the Legendre relation $\tau\eta_1-\eta_2=2\pi i$, we obtain 
$$e_3(i)=0,\quad \eta_1(i)=\pi,$$
$$-e_2(i)=e_1(i)=\frac{2\pi^{2}}{3}+16\pi^{2}\sum_{k=1}^{\infty}a_{k}e^{-2k\pi}\approx 2.18844\pi,$$
so
$$e_3\in\mathcal{B}_0=\Big\{z\in\mathbb{C}\; :\; |z|<\pi\Big\}.$$
Furthermore, $g_2(i)=4(e_1(i)^2-e_2(i)e_3(i))=4e_1(i)^2$, which implies
\begin{align*}
\alpha_k=\frac{-e_k}{3e_k^2-e_1^2}=\begin{cases}\frac{-1}{2e_1} & k=1,\\
\frac{1}{2e_1} & k=2,\\
0 & k=3,\end{cases}
\end{align*}
\begin{align*}
\beta_k=\frac{\pi}{|3e_k^2-e_1^2|}=\begin{cases}\frac{\pi}{2e_1^2}<|\alpha_k| & k=1,2,\\
\frac{\pi}{e_1^2}>|\alpha_k| & k=3.\end{cases}
\end{align*}
Thus, 
\begin{align*}
e_k+\frac{\overline{\alpha_k}}{|\alpha_k|^2-\beta_k^2}=\begin{cases}-e_1\frac{e_1^2+\pi^2}{e_1^2-\pi^2}\approx-3.3435\pi & k=1,\\
e_1\frac{e_1^2+\pi^2}{e_1^2-\pi^2} & k=2,\\
0 & k=3,\end{cases}
\end{align*}
\begin{align*}
\frac{\beta_k}{\left||\alpha_k|^2-\beta_k^2\right|}=\begin{cases}\frac{2\pi e_1^2}{e_1^2-\pi^2}\approx 2.5278\pi & k=1,2,\\
\frac{e_1^2}{\pi}\approx 4.78927\pi & k=3.\end{cases}
\end{align*}
Then
$$e_3\in\mathcal{B}_0\subsetneqq\mathcal{B}_3=\bigg\{z\in\mathbb{C}\; :\; |z|<\frac{e_1^2}{\pi}\bigg\},$$
$$-\mathcal{B}_2=\mathcal{B}_1=\bigg\{z\in\mathbb{C}\; :\; \bigg|z+e_1\frac{e_1^2+\pi^2}{e_1^2-\pi^2} \bigg|<\frac{2\pi e_1^2}{e_1^2-\pi^2}\bigg\}.$$
 Note that $\mathcal{B}_1\cap\mathcal{B}_2=\emptyset$, $e_1\in \mathcal{B}_2\cap\mathcal{B}_3$ and $e_2\in \mathcal{B}_1\cap\mathcal{B}_3$.
The figure of these four disks is seen in Figure 2.
Then we have the following result.

\begin{theorem}\label{main-thm-8}
Let $\tau=i$. Then the followings hold.
\begin{itemize}
\item[(1)] Denote$$\Xi_1:=\mathbb{C}\setminus\cup_{k=0}^3 \overline{\mathcal{B}_k}\neq \emptyset,\quad \Xi_2:=\mathcal{B}_0\setminus(\{0\}\cup\cup_{k=1,2} \overline{\mathcal{B}_k})\neq \emptyset,$$
$$\Xi_3:=\mathcal{B}_2\cap\mathcal{B}_3\setminus(\{e_1\}\cup\overline{\mathcal{B}_0})\neq \emptyset,\quad\widetilde{\Xi}_3:=\mathcal{B}_1\cap\mathcal{B}_3\setminus(\{e_2\}\cup\overline{\mathcal{B}_0})=-\Xi_3,$$
then $G_p(z)$ has exactly either $4$ or $8$ critical points for almost all $\wp(p)\in \Xi_1\cup \Xi_2\cup \Xi_3\cup\widetilde{\Xi}_3$.

\item[(2)] Denote $$\Xi_4:=\mathcal{B}_2\setminus\overline{\mathcal{B}_3}\neq \emptyset,\quad\widetilde{\Xi}_4:=\mathcal{B}_1\setminus \overline{\mathcal{B}_3}=-\Xi_4,$$
$$\Xi_5:=\mathcal{B}_0\cap\mathcal{B}_2\neq \emptyset,\quad\widetilde\Xi_5:=\mathcal{B}_0\cap\mathcal{B}_1=-\Xi_5,$$
then $G_p(z)$ has exactly $6$ critical points for any $\wp(p)\in \Xi_4\cup\widetilde\Xi_4\cup\Xi_5\cup\widetilde\Xi_5$, and the unique pair of nontrivial critical points are non-degenerate saddle points of $G_p(z)$ for almost all $\wp(p)\in \Xi_4\cup\widetilde\Xi_4\cup\Xi_5\cup\widetilde\Xi_5$.

\item[(3)]  Denote $$\Xi_6:=\mathcal{B}_3\setminus\cup_{k=0,1,2} \overline{\mathcal{B}_k}\neq \emptyset,$$ then $G_p(z)$ has at least $6$ critical points for any $\wp(p)\in \Xi_6$, and $G_p(z)$ has exactly either $6$ or $10$ critical points for almost all $\wp(p)\in \Xi_6$.  Furthermore, if the number is $6$, then the unique pair of nontrivial critical points are the minimal points of $G_p(z)$.
\end{itemize}
\end{theorem}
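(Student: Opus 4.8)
The plan is to determine, for each of the regions $\Xi_1,\dots,\widetilde{\Xi}_5,\Xi_6$ occurring in the statement, the integer $m(\Xi)$ introduced in Theorem \ref{thm-section5}, and then to read off each assertion from the corresponding case of that theorem. The two ingredients needed are the sign pattern of the Hessians of $G$ at the three half-periods when $\tau=i$, and the explicit description of the four disks $\mathcal{B}_0,\mathcal{B}_1,\mathcal{B}_2,\mathcal{B}_3$ worked out above.

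First I would record, using \eqref{detG} together with $e_3(i)=0$, $e_1(i)=-e_2(i)\approx 2.188\pi$ and $\eta_1(i)=\pi$, that $\det D^2 G(\tfrac{\omega_3}{2})>0$ while $\det D^2 G(\tfrac{\omega_1}{2})<0$ and $\det D^2 G(\tfrac{\omega_2}{2})<0$; thus $G(z;i)$ has exactly three non-degenerate critical points, which is precisely the setting of Corollary \ref{coro1-82} with the distinguished index being $k_0=3$. Hence, combining Remark \ref{rmk3-11} with the case analysis in the proof of Lemma \ref{coro3-1} (Case 1-1 for $k=1,2$ and Case 1-2 for $k=3$), one obtains
\begin{align*}
\det D^2 G_p\Big(\tfrac{\omega_k}{2}\Big)>0\ &\Leftrightarrow\ \wp(p)\in\mathcal{B}_k\qquad(k=0,1,2),\\
\det D^2 G_p\Big(\tfrac{\omega_3}{2}\Big)>0\ &\Leftrightarrow\ \wp(p)\in\mathbb{C}\setminus\overline{\mathcal{B}_3}.
\end{align*}
Therefore $m(\Xi)$ equals the number of the disks $\mathcal{B}_0,\mathcal{B}_1,\mathcal{B}_2$ that contain the component $\Xi$, increased by $1$ whenever $\Xi\subset\mathbb{C}\setminus\overline{\mathcal{B}_3}$.

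Next I would run through the list, using only the explicit formulas for the $\mathcal{B}_k$ and the displayed relations ($\mathcal{B}_0\subsetneqq\mathcal{B}_3$, $\mathcal{B}_1\cap\mathcal{B}_2=\emptyset$, $e_1\in\mathcal{B}_2\cap\mathcal{B}_3$, $e_2\in\mathcal{B}_1\cap\mathcal{B}_3$, $e_3=0\in\mathcal{B}_0$). One checks: every component of $\Xi_1$ lies outside $\mathcal{B}_0,\mathcal{B}_1,\mathcal{B}_2$ and outside $\mathcal{B}_3$, so $m=0+1=1$; each of $\Xi_2,\Xi_3,\widetilde{\Xi}_3$ lies inside exactly one of $\mathcal{B}_0,\mathcal{B}_1,\mathcal{B}_2$ (namely $\mathcal{B}_0$, $\mathcal{B}_2$, $\mathcal{B}_1$ respectively) and inside $\mathcal{B}_3$, so $m=1+0=1$; $\Xi_4=\mathcal{B}_2\setminus\overline{\mathcal{B}_3}$ lies inside $\mathcal{B}_2$ and outside $\mathcal{B}_3$, and $\Xi_5=\mathcal{B}_0\cap\mathcal{B}_2$ lies inside $\mathcal{B}_0$ and $\mathcal{B}_2$ (hence inside $\mathcal{B}_3$), so $m=2$ for both, and likewise for $\widetilde{\Xi}_4,\widetilde{\Xi}_5$ by the symmetry $z\mapsto -z$, which fixes $\mathcal{B}_0,\mathcal{B}_3$ and interchanges $\mathcal{B}_1,\mathcal{B}_2$; and $\Xi_6=\mathcal{B}_3\setminus\cup_{k=0,1,2}\overline{\mathcal{B}_k}$ lies inside $\mathcal{B}_3$ and outside $\mathcal{B}_0,\mathcal{B}_1,\mathcal{B}_2$, so $m=0$. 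In every case the region is nonempty and contains none of $e_1,e_2,e_3$, hence is a union of connected components of $\mathbb{C}\setminus(\{e_1,e_2,e_3\}\cup\cup_{k=0}^3\partial\mathcal{B}_k)$ on each of which $m$ takes the value just computed. Feeding $m=1$ into Theorem \ref{thm-section5}-(2) gives part (1); feeding $m=2$ into Theorem \ref{thm-section5}-(3) gives part (2), the ``exactly $6$ for every $\wp(p)$'' and the non-degenerate-saddle statement being exactly the content of that case; and feeding $m=0$ into Theorem \ref{thm-section5}-(1) gives part (3), including ``at least $6$ for every $\wp(p)$'' and the minimal-point conclusion when the count is $6$.

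The one point where I would slow down is the mutual position of the four disks, specifically that $\mathcal{B}_1$ (resp. $\mathcal{B}_2$) reaches from inside $\mathcal{B}_0$ out past $\partial\mathcal{B}_3$; this is what makes $\Xi_6$ split into two components and what guarantees the regions in parts (1)--(2) are nonempty. It is forced by the elementary inequalities $0.82\pi<\pi$ and $5.87\pi>4.79\pi\approx e_1(i)^2/\pi$ together with the center $\approx-3.34\pi$ and radius $\approx 2.53\pi$ of $\mathcal{B}_1$ recorded above, so the verification reduces to comparing the displayed numerical constants; once this is done, the remainder of the argument is bookkeeping over the three cases of Theorem \ref{thm-section5}.
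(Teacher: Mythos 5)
Your proposal is correct and follows essentially the same route as the paper: you compute $\det D^2G_p(\frac{\omega_k}{2})>0\Leftrightarrow\wp(p)\in\mathcal{B}_k$ for $k=0,1,2$ and $\det D^2G_p(\frac{\omega_3}{2})>0\Leftrightarrow\wp(p)\in\mathbb{C}\setminus\overline{\mathcal{B}_3}$ exactly as the paper does via \eqref{eq330}--\eqref{eq332}, read off $m(\Xi)=1,2,0$ for the three groups of regions, and invoke the corresponding cases of Theorem \ref{thm-section5}. The $m$-values and the disk-position checks all agree with the paper's computation.
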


\begin{proof}
By \eqref{eq330} with $k=0$, \eqref{eq331} with $k=1,2$, and \eqref{eq332} with $k=3$, we have
$$\det D^2 G_p\Big(\frac{\omega_k}{2}\Big)>0\quad\Leftrightarrow\quad\wp(p)\in \mathcal{B}_k, \quad\text{for }k=0,1,2,$$
$$\det D^2 G_p\Big(\frac{\omega_3}{2}\Big)>0\quad\Leftrightarrow\quad\wp(p)\in \mathbb C\setminus\overline{\mathcal{B}_3}.$$
Then there are the following cases.

(1). For any $\wp(p)\in \Xi_1\cup \Xi_2\cup \Xi_3\cup\widetilde{\Xi}_3$, it is easy to see that
$$\#\Big\{\frac{\omega_k}{2}\;: \;0\leq k\leq 3,\; \det D^2G_p\Big(\frac{\omega_k}{2}\Big)>0\Big\}\equiv 1.$$
Therefore, Theorem \ref{main-thm-8}-(1) follows directly from Theorem \ref{thm-section5}-(2).

(2). For any $\wp(p)\in \Xi_4\cup\widetilde\Xi_4\cup\Xi_5\cup\widetilde\Xi_5$, 
it is easy to see that
$$\#\Big\{\frac{\omega_k}{2}\;: \;0\leq k\leq 3,\; \det D^2G_p\Big(\frac{\omega_k}{2}\Big)>0\Big\}\equiv 2.$$
Therefore, Theorem \ref{main-thm-8}-(2) follows directly from Theorem \ref{thm-section5}-(3).

(3) For any $\wp(p)\in \Xi_6$, 
it is easy to see that
$$\#\Big\{\frac{\omega_k}{2}\;: \;0\leq k\leq 3,\; \det D^2G_p\Big(\frac{\omega_k}{2}\Big)>0\Big\}\equiv 0.$$
Therefore, Theorem \ref{main-thm-8}-(3) follows directly from Theorem \ref{thm-section5}-(1).
\end{proof}

\section{Connection with the curvature equation}
\label{section6}

In this section,
we study the deep connection between $G_p(z)$ and the curvature equation \eqref{mean}. First, we prove Theorem \ref{main-thm-3} by following the approach from \cite{CLW,LW} where the equation $\Delta u+e^u=8\pi n\delta_0$ on $E_{\tau}$ was studied.
We divide the proof into two lemmas.
\begin{Lemma}\label{lemma4-1}
Let $p\in E_{\tau}\setminus E_{\tau}[2]$. Suppose $G_p(z)$ has a pair of nontrivial critical points $\pm q$. Then the curvature equation \eqref{mean} has a one-parameter scaling family of solutions $u_{\beta}(z)$, where $\beta>0$ is arbitrary. Furthermore, $u_{\beta}(z)=u_\beta(-z)$ if and only if $\beta=1$, and $u_{\beta}(z)$ blows up at $q$ as $\beta\to+\infty$, and $u_{\beta}(z)$ blows up at $-q$ as $\beta\to0$.
\end{Lemma}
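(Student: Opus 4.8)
The plan is to construct the solutions explicitly from a developing map, following the method of \cite{LW,CLW}. Write the given nontrivial critical point as $q=r_0+s_0\tau$ with $(r_0,s_0)\in\mathbb R^2\setminus\tfrac12\mathbb Z^2$, let $\sigma(z)=\sigma(z;\tau)$ be the Weierstrass sigma function (so that $\sigma'/\sigma=\zeta$), and put
\[
c:=\zeta(q+p)+\zeta(q-p),\qquad f(z):=e^{cz}\,\frac{\sigma(z-q)}{\sigma(z+q)},\qquad u_\beta(z):=\log\frac{8\beta^{2}|f'(z)|^{2}}{\bigl(1+\beta^{2}|f(z)|^{2}\bigr)^{2}}\quad(\beta>0).
\]
I will show that every $u_\beta$ solves \eqref{mean}, that $u_\beta$ is even precisely for $\beta=1$, and that $u_\beta$ blows up at $q$ as $\beta\to+\infty$ and at $-q$ as $\beta\to 0$. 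The external inputs are only the quasi-periodicity laws of $\sigma$, the Legendre relation $\tau\eta_1-\eta_2=2\pi i$, and the classical fact that $\tfrac12 e^{u}dz^{2}$ is the pull-back under a meromorphic $f$ of the spherical metric $\tfrac{4|dw|^{2}}{(1+|w|^{2})^{2}}$ of curvature $+1$.

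First I would record how $f$ transforms under $\Lambda_\tau$: the $\sigma$-laws give $f(z+1)=e^{c-2\eta_1 q}f(z)$ and $f(z+\tau)=e^{c\tau-2\eta_2 q}f(z)$. The key point is that the critical-point equation \eqref{a+1p} for $q$, rewritten via the Legendre relation as $c=\zeta(q+p)+\zeta(q-p)=2\eta_1 q-4\pi i s_0$, forces $c-2\eta_1 q=-4\pi i s_0$ and $c\tau-2\eta_2 q=4\pi i r_0$ (both purely imaginary since $r_0,s_0\in\mathbb R$), so that
\[
f(z+1)=e^{-4\pi i s_0}f(z),\qquad f(z+\tau)=e^{4\pi i r_0}f(z),\qquad\text{hence}\quad |f(z+\omega)|=|f(z)|\ \ (\omega\in\Lambda_\tau).
\]
Consequently $e^{u_\beta}$ is $\Lambda_\tau$-periodic and each $u_\beta$ descends to $E_\tau$. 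This is exactly the origin of the one-parameter family: the scaling $f\mapsto\beta f$ preserves periodicity only because the monodromy lies in the abelian group $\{w\mapsto e^{i\theta}w\}\subset\mathrm{PSU}(2)$, which fixes $|w|$ and so is an isometry of every metric $\tfrac{4\beta^{2}|dw|^{2}}{(1+\beta^{2}|w|^{2})^{2}}$ simultaneously.

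Next I would identify the singular set. Logarithmic differentiation gives $f'=f\cdot h$, where $h(z):=c+\zeta(z-q)-\zeta(z+q)$ is elliptic of degree $2$ with simple poles at $\pm q$; using that $\zeta$ is odd and that $c=\zeta(q+p)+\zeta(q-p)$, one checks $h(p)=h(-p)=0$, so (as $\deg h=2$ and $p\notin E_\tau[2]$) the points $\pm p$ are its only, simple, zeros. Hence on a fundamental domain $f'$ is finite and nonzero at $q$, has a double pole at $-q$, has simple zeros exactly at $\pm p$, and no other zeros or poles. At the simple pole $-q$ the density $\tfrac{8|f'|^{2}}{(1+|f|^{2})^{2}}$ extends smoothly, while near $\pm p$ one has $u_\beta=2\log|z\mp p|+(\text{smooth})$. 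Since $\tfrac12 e^{u_\beta}dz^{2}$ is the $f$-pull-back of the curvature-$+1$ metric, $\Delta u_\beta+e^{u_\beta}=0$ off $\pm p$, and the conical behaviour at $\pm p$ contributes exactly $4\pi(\delta_p+\delta_{-p})$ — the standard computation of \cite{LW,CLW}. For the remaining claims: oddness of $\sigma$ gives $f(z)f(-z)\equiv1$, whence $u_\beta(-z)=u_{1/\beta}(z)$; evaluating $e^{u_\beta}$ at $z=q$ (where $f=0$) yields $8\beta^{2}|f'(q)|^{2}$, strictly increasing in $\beta$, so the $u_\beta$ are pairwise distinct and $u_\beta$ is even iff $\beta=1/\beta$ iff $\beta=1$. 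Finally, as $\beta\to+\infty$, $e^{u_\beta}\to0$ locally uniformly on $E_\tau\setminus\{q\}$ while $\int_{E_\tau}e^{u_\beta}=8\pi$, so $e^{u_\beta}\rightharpoonup8\pi\delta_q$; applying $u_\beta(-z)=u_{1/\beta}(z)$ turns this into blow-up at $-q$ as $\beta\to 0$.

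The hardest part is the distributional identity at the cone points $\pm p$: one must verify carefully that a simple zero of $f'$ produces exactly a $4\pi$ Dirac mass and no logarithmic term, so that $\Delta u_\beta+e^{u_\beta}=4\pi(\delta_p+\delta_{-p})$ holds on all of $E_\tau$ — classical, but it should be done honestly. Everything else rests on the single observation that the critical-point equation \eqref{a+1p} for $q$ is at once the condition putting the zeros of $f'$ at $\pm p$ and the condition making the monodromy of $f$ unitary diagonal, together with the (here immediate) fact that \emph{every} $u_\beta$, not just $u_1$, is defined on $E_\tau$, which is what distinguishes this symmetric two-point problem from the general mean field setting where the scaling destroys periodicity.
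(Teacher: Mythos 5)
Your construction is exactly the paper's: the same developing map $f(z)=e^{(\zeta(q+p)+\zeta(q-p))z}\sigma(z-q)/\sigma(z+q)$, the same use of the critical-point equation \eqref{a+1p} plus the Legendre relation to make the multipliers $e^{-4\pi i s_0}$, $e^{4\pi i r_0}$ unimodular, the same identification of the simple zeros of $f'$ at $\pm p$, the same Liouville formula for $u_\beta$, and the same relation $f(-z)=1/f(z)$ for the evenness and blow-up claims. The only (harmless) presentational differences are that you prove non-evenness for $\beta\neq 1$ via the explicit identity $u_\beta(-z)=u_{1/\beta}(z)$ and the injectivity of $\beta\mapsto e^{u_\beta(q)}=8\beta^2|f'(q)|^2$, and you deduce blow-up from mass concentration rather than from the direct evaluation $u_\beta(q)\to+\infty$; both are correct.
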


Here we say $u_{\beta}(z)$ blows up at $q$, or equivalently, $q$ is a blowup point of $u_{\beta}(z)$, as $\beta\to+\infty$, if there exists $z_\beta\to q$ such that $u_{\beta}(z_{\beta})\to+\infty$.

\begin{proof}
Write $q=r+s\tau$ with $(r,s)\in\mathbb{R}^2\setminus\frac12\mathbb Z^2$. Recall \eqref{a+1p} that
\begin{equation}\label{a+4p}\zeta(q+p)+\zeta(q-p)-2(r\eta_1+s\eta_2)=0.\end{equation}
Let $\sigma(z)=\sigma(z;\tau):=\exp(\int^{z}\zeta(\xi;\tau)d\xi)$ be the Weierstrass sigma function, which is an odd
entire function with simple zeros at $\Lambda_{\tau}$, and satisfies the following transformation law
\begin{equation}\label{40-3}
\sigma(z+\omega_{k})=-e^{\eta_{k}(z+\omega_{k}/2)}\sigma(z),\quad k=1,2.
\end{equation}
Define
\begin{align}\label{4-00}
f(z):=&\exp((\zeta(q+p)+\zeta(q-p))z)\frac{\sigma(z-q)}{\sigma(z+q)}\nonumber\\
=&\exp(2(r\eta_1+s\eta_2)z)\frac{\sigma(z-q)}{\sigma(z+q)}.
\end{align}
Then it follows from \eqref{40-3} and the Legendre relation $\tau\eta_1-\eta_2=2\pi i$ that
\begin{align}\label{4-01}
f(z+1)=e^{-4\pi i s}f(z),\quad f(z+\tau)=e^{4\pi i r}f(z).
\end{align}
Define $u_\beta(z)$ via the Liouville formula
\begin{align}\label{liou}
u_\beta(z):=\log\frac{8|\beta f'(z)|^2}{(1+|\beta f(z)|^2)^2},\quad \forall \beta>0.
\end{align}
Then \eqref{4-01} and $(r,s)\in\mathbb{R}^2$ imply that $u_\beta(z)$ is doubly-periodic and hence well-defined in $E_{\tau}$.
Notice from \eqref{4-00} that
$$\frac{f'(z)}{f(z)}=\zeta(q+p)+\zeta(q-p)+\zeta(z-q)-\zeta(z+q),$$
so $f'(z)$ has exactly two simple zeros $\pm p$ and then
$$u_\beta(z)=2\log|z\mp p|+O(1)\quad\text{near }\pm p.$$
Furthermore, it is easy to see from \eqref{liou} that $u_\beta(z)\neq \infty$ for any $z\in E_{\tau}\setminus\{\pm p\}$ and a direct computation implies
\begin{align}
\Delta u_\beta+ e^{u_{\beta}}=0\quad\text{in}\quad E_{\tau}\setminus\{\pm p\}.
\end{align}
This proves that $u_\beta(z)$ is a one-parameter scaling family of solutions of \eqref{mean}. Furthermore, since $f(-z)=\frac{1}{f(z)}$ but $\beta f(-z)\neq\frac{1}{\beta f(z)}$ for $\beta\neq 1$, we see that $u_{\beta}(z)=u_\beta(-z)$ if and only if $\beta=1$.

Finally, by $f(q)=0$ and $f'(q)\neq 0$ we know that $u_\beta(q)\to+\infty$ as $\beta\to+\infty$, so $q$ is a blowup point of $u_\beta$ as $\beta\to+\infty$. Since $\int_{E_{\tau}}e^{u_\beta}\equiv 8\pi$, it follows from \cite{BT} that $q$ is the unique blowup point. Similarly, we know that $-q$ is the unique blowup point of $u_{\beta}(z)$ as $\beta\to 0$. The proof is complete.
\end{proof}

\begin{Lemma}\label{lemma4-2}
Let $p\in E_{\tau}\setminus E_{\tau}[2]$. Suppose the curvature equation \eqref{mean} has a solution $u(z)$, then it belongs to a one-parameter scaling family of solutions $u_{\beta}(z)$. Furthermore, $u_{\beta}(z)$ blows up at some $q\notin E_{\tau}[2]$ as $\beta\to+\infty$. In particular, $\pm q$ is a pair of nontrivial critical points of $G_p(z)$.
\end{Lemma}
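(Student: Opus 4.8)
The first assertion is Theorem~\ref{thm-C}(1): every solution of \eqref{mean} lies in a one-parameter scaling family $u_\beta$, $\beta>0$, so I fix one and a developing map $f$ for it. The plan is to run the developing-map argument of Lin--Wang \cite{LW} (see also \cite{CLW}), adapted to the two equal sources at $\pm p$, in order to pin down the blow-up point. Since $u\sim 2\log|z\mp p|$ near $\pm p$, the metric $\tfrac12 e^{u}|dz|^2$ has constant curvature $+1$, is smooth off $\pm p$, and has a conical singularity of angle $4\pi$ at each of $\pm p$. Hence it admits a developing map: a meromorphic $f$ on the universal cover $\mathbb C$ with $u=\log\frac{8|f'|^2}{(1+|f|^2)^2}$, unique up to post-composition by $\mathrm{PSU}(2)$, with monodromy $\rho\colon\pi_1(E_\tau)\cong\mathbb Z^2\to\mathrm{PSU}(2)$; since the cone angles lie in $2\pi\mathbb Z$, $f$ extends meromorphically across $\pm p$ with a branch point of order $2$ there and is a local homeomorphism on $\mathbb C\setminus(\{\pm p\}+\Lambda_\tau)$, and (after reparametrizing) the family is $u_\beta=\log\frac{8|\beta f'|^2}{(1+|\beta f|^2)^2}$.

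\emph{Structure of $f$.} A dilation $w\mapsto\beta w$ of the target is not a rotation of the round sphere when $\beta\ne 1$, so if $\rho$ were non-coaxial the Liouville formula applied to $\beta f$ would not descend to $E_\tau$; since every $u_\beta$ is a solution on $E_\tau$, $\rho$ must land in the diagonal torus of $\mathrm{PSU}(2)$, and after the allowed normalization $f(z+1)=e^{-4\pi is}f(z)$, $f(z+\tau)=e^{4\pi ir}f(z)$ for some $r,s\in\mathbb R$. Then $g:=f'/f$ is elliptic on $E_\tau$: its poles are simple, at the zeros and poles of $f$, and its zeros are exactly the immersion-critical points of $f$ lying off the zeros/poles of $f$, hence at most $p$ and $-p$ and simple there. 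Comparing $\deg g=\#\{\text{zeros of }g\}\le 2$ with $\deg g=\#\{\text{poles of }g\}=\#\{\text{distinct zeros of }f\}+\#\{\text{distinct poles of }f\}\ge 2$ (each $\ge 1$ since $|f|$ is $\Lambda_\tau$-periodic and $f$ nonconstant) forces all of these to equal $2$; so $\{p,-p\}$ are exactly the two zeros of $g$, $f$ has a single zero $q$ and a single pole, neither of which is $\pm p$ (a point cannot be both a zero and a pole of $g$), and $q$ and the pole of $f$ are simple because $f$ is an immersion off $\pm p$. The Abel relation for $g$ gives $p+(-p)\equiv q+(\text{pole of }f)$, so the pole of $f$ is at $-q$; and $q\ne -q$ in $E_\tau$ since $f$ cannot vanish and have a pole at the same point, i.e.\ $q\notin E_\tau[2]$. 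Matching zeros, poles and the unimodular multipliers (here the Legendre relation $\tau\eta_1-\eta_2=2\pi i$ is used) yields $f(z)=C\exp\!\big(2(r\eta_1+s\eta_2)z\big)\,\dfrac{\sigma(z-q)}{\sigma(z+q)}$ with $q=r+s\tau$, and $p\ne\pm q$ forces $f(\pm p)\in\mathbb C\setminus\{0\}$.

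\emph{Conclusion.} From this explicit form, $\dfrac{f'}{f}(z)=2(r\eta_1+s\eta_2)+\zeta(z-q)-\zeta(z+q)$, so the branch condition $f'(p)=0$, equivalently $\frac{f'}{f}(p)=0$, becomes (by the oddness of $\zeta$) $\zeta(q+p)+\zeta(q-p)-2(r\eta_1+s\eta_2)=0$, which is precisely \eqref{a+1p} with $q=r+s\tau$. Hence $\pm q$ is a pair of critical points of $G_p(z)$, nontrivial because $q\notin E_\tau[2]$. Finally, $f(q)=0$ and $f'(q)\ne0$ give $u_\beta(q)=\log\big(8\beta^2|f'(q)|^2\big)\to+\infty$ as $\beta\to+\infty$, so $q$ is a blow-up point of $u_\beta$, and the unique one since $\int_{E_\tau}e^{u_\beta}\equiv 8\pi$ (by \cite{BT}); symmetrically $-q$ is the blow-up point as $\beta\to 0$.

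\emph{Expected main obstacle.} The technical heart is the structural step: showing the monodromy of the developing map can be normalized to the coaxial form and reading off the precise $\sigma$-function expression for $f$. The observation that makes it go through cleanly is that $\Lambda_\tau$-periodicity of the entire scaling family $\{u_\beta\}_{\beta>0}$ (guaranteed by Theorem~\ref{thm-C}(1)) forces the monodromy to be coaxial, ruling out the non-coaxial abelian (Klein four-group) alternative; after that, everything reduces to the elliptic-function bookkeeping above, carried out exactly as in \cite{LW,CLW}.
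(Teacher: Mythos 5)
Your proof is correct in its elliptic-function content but follows a genuinely different route from the paper's. The paper differentiates the Liouville formula to get the Schwarzian, realizes the developing map as a ratio $y_1/y_2$ of solutions of the second-order ODE \eqref{GLE}, shows the monodromy matrices $N_1,N_2$ are unitary because $e^{-u/2}$ is single-valued, and then imports the classification of unitary monodromy from \cite{CKL-PAMQ} to produce the explicit basis $y_{\pm q}$, the critical-point relation \eqref{a+1p}, and (via a computation with the conjugating matrix $P$) the identity $u=u_{\beta_0}$. You instead run the classical Lin--Wang developing-map argument: normalize the monodromy into the diagonal torus, analyze $g=f'/f$ as an elliptic function, and read off $f=Ce^{2(r\eta_1+s\eta_2)z}\sigma(z-q)/\sigma(z+q)$ by matching zeros, poles and multipliers. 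Your bookkeeping is sound: the degree count for $g$ correctly forces $f(\pm p)\notin\{0,\infty\}$, a single simple zero $q$ and a single simple pole at $-q$ with $q\notin E_\tau[2]$, the Legendre relation pins down the exponential factor, and $f'(p)=0$ reproduces \eqref{a+1p}. This is arguably more transparent than the paper's matrix manipulation; what the paper's route buys is that the reducibility of the monodromy is settled at the $SU(2)$ level by the cited classification, so no separate argument about abelian subgroups of the projective unitary group is needed.

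The one step I would not let stand as written is your coaxiality argument. You rule out the Klein four-group alternative by invoking Theorem \ref{thm-C}(1) to assert that the entire scaling family obtained by scaling \emph{your} developing map descends to $E_\tau$. But ``the given solution belongs to a scaling family generated by its own developing map'' is precisely the first assertion of the lemma and is \emph{equivalent} to coaxiality, whereas Theorem \ref{thm-C}(1) as stated only guarantees that \eqref{mean} admits \emph{some} scaling family once it is solvable; read literally it does not say that the given $u$ lies in that family, so your deduction is either a citation of the statement to be proved or a non sequitur. The gap is easily closed without the citation: write $f=y_1/y_2$ for solutions of the associated ODE \eqref{GLE}. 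The local exponents at $\pm p$ are $-\tfrac12,\tfrac32$, so both local monodromies equal $-I$ and hence $N_1N_2N_1^{-1}N_2^{-1}=I$, i.e.\ $N_1,N_2$ commute in $SL(2,\mathbb C)$; single-valuedness of $e^{-u/2}$ makes them unitary, exactly as in the paper; and commuting unitary matrices are simultaneously diagonalizable, which is your coaxial normal form. With that substitution your argument is complete and self-contained.
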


\begin{proof} 
Let $u(z)$ be a solution of (\ref{mean}). Then
the classical Liouville theorem (cf. \cite[Section 1.1]{CLW}) says that there
is a local meromorphic function $f(z)$ away from $\{\pm p\}$ such that%
\begin{equation}
u(z)=\log \frac{8|f^{\prime}(z)|^{2}}{(1+|f(z)|^{2})^{2}}. \label{502}%
\end{equation}
This $f(z)$ is called a developing map. By differentiating (\ref{502}), we
have
\begin{equation}
u_{zz}-\frac{1}{2}u_{z}^{2}= \{ f;z \}:=\left(  \frac{f^{\prime \prime}%
}{f^{\prime}}\right)  ^{\prime}-\frac{1}{2}\left(  \frac{f^{\prime \prime}%
}{f^{\prime}}\right)  ^{2}. \label{new22}%
\end{equation}
Conventionally, the RHS of this identity is called the Schwarzian derivative
of $f(z)$, denoted by $\{ f;z \}$. It was proved in \cite[Section 3]{CKL-2025} that there are constants $A$ and $B$ such that $u_{zz}-\frac{1}{2}u_{z}^{2}$ is an elliptic function that can be expressed as
\begin{align}
u_{zz}-\frac{1}{2}u_{z}^{2}=-2\Big[
\frac{3}{4}
(\wp(z+p)+\wp(z-p))+A(\zeta(z+p)-\zeta(z-p))+B
\Big]. \label{cc-1}%
\end{align}
Then a classical result in complex analysis says that there are linearly independent solutions $y_1(z), y_2(z)$ of the second order linear ODE
\begin{align}\label{GLE}
y''(z)=\Big[&
\frac{3}{4}
(\wp(z+p)+\wp(z-p))\\
&+A(\zeta(z+p)-\zeta(z-p))+B
\Big]y(z),\quad z\in\mathbb{C}\nonumber
\end{align}
 such that
\begin{equation}\label{fy}f(z)=\frac{y_1(z)}{y_2(z)}.\end{equation}
Let $\gamma_j^*y(z)$ denote the analytic continuation of $y(z)$ along the fundamental cycle $z\to z+\omega_j$ of $E_{\tau}$, $j=1,2$. Then there are two monodromy matrices $N_1, N_2$ such that
\begin{equation}\label{monon1n2}\gamma_j^*\begin{pmatrix}
y_{1}(z)\\
y_{2}(z)
\end{pmatrix}
=N_j
\begin{pmatrix}
y_{1}(z)\\
y_{2}(z)
\end{pmatrix},\quad j=1,2.\end{equation}
Define the Wronskian
$W:=y_{1}'(z)y_2(z)-y_1(z)y_2'(z).$
Then $W$ is a nonzero constant, and inserting (\ref{fy}) into (\ref{502}) leads to
\[2\sqrt{2}W e^{-\frac{1}{2}u(z)}=|y_1(z)|^2+|y_2(z)|^2.\]
Since $u(z)$ is single-valued and doubly periodic, so $e^{-\frac{1}{2}u(z)}$ is invariant under analytic continuation along $ \gamma_j$, which implies
$$(\overline{y_1(z)}, \overline{y_2(z)})\overline{N_j}^TN_j\begin{pmatrix}y_1(z)\\ y_2(z)\end{pmatrix}=|y_1(z)|^2+|y_2(z)|^2.$$
Thus $\overline{N_j}^TN_j=I_2$, where $I_2$ is the identity matrix. This proves that $N_j\in SU(2)$ are unitary matrices, namely the monodromy group of \eqref{GLE} is a subgroup of the unitary group $SU(2)$. 

On the other hand, the monodromy theory of \eqref{GLE} has been studied in \cite{CKL-PAMQ}. In particular,  togethwer with $N_j\in SU(2)$, 
 it follows from \cite[Section 2]{CKL-PAMQ} that there exists $(r,s)\in\mathbb{R}^2\setminus\frac{1}{2}\mathbb{Z}^2$ such that by denoting $q=r+s\tau$, the following facts hold:
\begin{itemize}
\item[(1)] $q\neq \pm p$ in $E_{\tau}$ and
\begin{align}\label{q+p}
\zeta(q+p)+\zeta(q-p)-2(r\eta_1+s\eta_2)=0.\end{align}
\item[(2)] Define
$$y_{\pm q}(z):=\exp(\pm (r\eta_1+s\eta_2)z)\frac{\sigma(z\mp q)}{\sqrt{\sigma(z-p)\sigma(z+p)}},$$
then $(y_{q}(z), y_{-q}(z))$ is a basis of solutions of \eqref{GLE}, and 
$$\gamma_1^*\begin{pmatrix}
y_{q}(z)\\
y_{-q}(z)
\end{pmatrix}
=\begin{pmatrix}
e^{-2\pi is}&\\
&e^{2\pi is}
\end{pmatrix}
\begin{pmatrix}
y_{q}(z)\\
y_{-q}(z)
\end{pmatrix},$$
$$\gamma_2^*\begin{pmatrix}
y_{q}(z)\\
y_{-q}(z)
\end{pmatrix}
=\begin{pmatrix}
e^{2\pi ir}&\\
&e^{-2\pi ir}
\end{pmatrix}
\begin{pmatrix}
y_{q}(z)\\
y_{-q}(z)
\end{pmatrix}.$$
\end{itemize}
Remark that \eqref{q+p} already implies that $q$ is a nontrivial critical point of $G_p(z)$. Define
$$\tilde{f}(z):=\frac{y_{q}(z)}{y_{-q}(z)}=\exp(2(r\eta_1+s\eta_2)z)\frac{\sigma(z-q)}{\sigma(z+q)},$$
and 
\begin{align}\label{liou0}
u_\beta(z):=\log\frac{8|\beta \tilde{f}'(z)|^2}{(1+|\beta \tilde{f}(z)|^2)^2},\quad \forall \beta>0.
\end{align}
Then by Lemma \ref{lemma4-1}, it suffices to prove the existence of $\beta_0>0$ such that $u(z)=u_{\beta_0}(z)$.

Indeed, there is an invertible matrix $P=\bigl(\begin{smallmatrix}a & b\\
c & d\end{smallmatrix}\bigr)$ such that 
$$\begin{pmatrix}
y_{1}(z)\\
y_{2}(z)
\end{pmatrix}=\begin{pmatrix}
a&b\\
c&d
\end{pmatrix}\begin{pmatrix}
y_{q}(z)\\
y_{-q}(z)
\end{pmatrix},$$
so
\begin{equation}\label{fft}f(z)=\frac{ay_{q}(z)+by_{-q}(z)}{cy_{q}(z)+dy_{-q}(z)}=\frac{a\tilde{f}(z)+b}{c\tilde{f}(z)+d},\end{equation}
and
$$P\begin{pmatrix}
e^{-2\pi is}&\\
&e^{2\pi is}
\end{pmatrix}P^{-1}=N_1,\quad P\begin{pmatrix}
e^{2\pi ir}&\\
&e^{-2\pi ir}
\end{pmatrix}P^{-1}=N_2.$$
Since $(r,s)\in\mathbb{R}^2\setminus \frac{1}{2}\mathbb{Z}^2$, without loss of generality, we may assume $r\in \mathbb{R}\setminus\frac12\mathbb Z$, i.e. $e^{2\pi ir}\neq \pm 1$. By $N_2\in SU(2)$, i.e. $\overline{N_2}^TN_2=I_2$, we obtain from $P\bigl(\begin{smallmatrix}e^{2\pi ir} & \\
 & e^{-2\pi ir}\end{smallmatrix}\bigr)=N_2P$ that
$$\overline{P}^TP\begin{pmatrix}
e^{2\pi ir}&\\
&e^{-2\pi ir}
\end{pmatrix}=\begin{pmatrix}
e^{2\pi ir}&\\
&e^{-2\pi ir}
\end{pmatrix}\overline{P}^TP,$$
which implies that $\overline{P}^TP$ is a diagonal matrix, i.e. $a\bar{b}+c\bar{d}=0$, so there is $\alpha\neq 0$ such that $(a,c)=\alpha (-\bar{d}, \bar{b})$. Consequently,
$$|ad-bc|=|\alpha| (|b|^2+|d|^2),\quad |a|^2+|c|^2=|\alpha|^2(|b|^2+|d|^2).$$
Let $\beta_0=|\alpha|>0$. From here and by inserting \eqref{fft} into \eqref{502}, we easily obtain that
{\allowdisplaybreaks
\begin{align*}
u(z)=&\log\frac{8|ad-bc|^2|\tilde{f}'(z)|^2}{(|a\tilde{f}(z)+b|^2+|c\tilde{f}(z)+d|^2)^2}\\
=&\log\frac{8|ad-bc|^2|\tilde{f}'(z)|^2}{(|b|^2+|d|^2+(|a|^2+|c|^2)|\tilde{f}(z)|^2)^2}\\
=&\log\frac{8|\beta_0 \tilde{f}'(z)|^2}{(1+|\beta_0 \tilde{f}(z)|^2)^2}=u_{\beta_0}(z).
\end{align*}
}%
The proof is complete.
\end{proof}

\begin{proof}[Proof of Theorem \ref{main-thm-3}]
Lemmas \ref{lemma4-1} and \ref{lemma4-2} together imply the one-to-one correspondence between pairs of nontrivial critical points of $G_p(z)$ and one-parameter scaling families of solutions (or equivalently, even solutions) of \eqref{mean}. 
Consequently, the rest assertions follows directly from those results for $G_p(z)$.
\end{proof}

Finally, we give the proof of Theorem \ref{main-thm-10}.

\begin{proof}[Proof of Theorem \ref{main-thm-10}]
Notice that if \eqref{mean} has solutions for $0\neq p\to 0$ (such as for those $E_{\tau}$ such that $G(z;\tau)$ has $5$ critical points by Theorem \ref{main-thm-3}-(2)), then by Lemma \ref{lemma4-2}, \eqref{mean} has a solution $u_p(z)$ satisfying $\max_z u_p(z)> \frac{1}{|p|}$  for each $p$, so $u_p(z)$ blows up as $p\to 0$. This indicates that Theorem \ref{main-thm-10} (1)-(2) are not true if we delete the assumption ``even''.

Fix any $E_\tau$. There are three cases.

{\bf Case 1.} $G(z;\tau)$ has exactly $3$ critical points $\frac{\omega_k}{2}$'s that are all non-degenerate (such as $\tau\in i\mathbb{R}_{>0}$).

Then Theorem \ref{thm-B} implies that for $|p|>0$ small, $G_p(z)$ has no nontrivial critical points, or equivalently, \eqref{mean} has no solutions.

{\bf Case 2.} $G(z;\tau)$ has $5$ critical points.

Then by Theorem \ref{thm-5c}, for $|p|>0$ small, $G_p(z)$ has a unique pair of nontrivial critical points $\pm q_p$, which converge to the unique pair of nontrivial critical points $\pm q$ of $G(z)$ as $p\to 0$. Consequently, it follows from Theorem \ref{main-thm-3} that the even solution $u_p(z)$ of \eqref{mean} does exist and is unique for $|p|$ small, and it follows from the proof of Lemma \ref{lemma4-1} that
$$u_p(z)=\log\frac{8|f_p'(z)|^2}{(1+|f_p(z)|^2)^2},$$
where 
$$f_p(z)=\exp((\zeta(q_p+p)+\zeta(q_p-p))z)\frac{\sigma(z-q_p)}{\sigma(z+q_p)}.$$
Now $q_p\to q\notin E_{\tau}[2]$ as $p\to 0$ implies that $f_p(z)\to f_0(z)=\exp(2\zeta(q)z)\frac{\sigma(z-q)}{\sigma(z+q)}$ and $u_p(z)\to u(z)=\log\frac{8|f_0'(z)|^2}{(1+|f_0(z)|^2)^2}$, where $u(z)$ is an even solution of \eqref{mfe-8pi} (Note that $q\notin E_{\tau}[2]$ is crucial here, because it follows from \eqref{40-3} that $f_0(z)\equiv -1$ and $u_p(z)\to-\infty$ locally if $q=\frac{\omega_k}{2}$ for some $k=1,2,3$).

{\bf Case 3.}
$G(z;\tau)$ has exactly $3$ critical points $\frac{\omega_k}{2}$'s and some of them is degenerate.

Say $\frac{\omega_3}{2}$ is degenerate for example. Then by Corollary \ref{coro1-83}, 
 we see that $G_p(z)$ has at least $6$ critical points for any $\wp(p)\in\Xi_{1,\infty}$ and $\Xi_{1,\infty}$ is unbounded. This, together with Theorem \ref{main-thm-3}, implies that \eqref{mean} has even solutions $u_p(z)$ for any $\wp(p)\in\Xi_{1,\infty}$. Since it was proved in \cite{LW} that \eqref{mfe-8pi} has no solutions when $G(z;\tau)$ has no nontrivial critical points, we conclude that $u_p(z)$ must blow up as $p\to 0$. 
The proof is complete.
\end{proof}

\subsection*{Acknowledgements} The authors thank A. Eremenko for his interest and comments. Z. Chen was supported by National Key R\&D Program of China (No. 2023YFA1010002) and NSFC (No. 12222109). 
E. Fu was supported by NSFC (No. 12401188) and BIMSA Start-up Research Fund.

\end{document}